\newcommand{\N}{{\mathbb{N}}}
\newcommand{\Z}{{\mathbb{Z}}}
\newcommand{\C}{{\mathbb{C}}}
\newcommand{\uloopr}[1]{\ar@'{@+{[0,0]+(-4,5)}@+{[0,0]+(0,10)}@+{[0,0] +(4,5)}}^{#1}}
\newcommand{\uloopd}[1]{\ar@'{@+{[0,0]+(5,4)}@+{[0,0]+(10,0)}@+{[0,0]+ (5,-4)}}^{#1}}
\newcommand{\dloopr}[1]{\ar@'{@+{[0,0]+(-4,-5)}@+{[0,0]+(0,-10)}@+{[0, 0]+(4,-5)}}_{#1}}
\newcommand{\dloopd}[1]{\ar@'{@+{[0,0]+(-5,4)}@+{[0,0]+(-10,0)}@+{[0,0 ]+(-5,-4)}}_{#1}}
\newcommand{\calV}{{\mathcal V}}
\newcommand{\Lab}{L^{{\rm ab}}}
\newcommand{\luloop}[1]{\ar@'{@+{[0,0]+(-8,2)}@+{[0,0]+(-10,10)}@+{[0, 0]+(2,2)}}^{#1}}
\newcommand{\dotedge}{\ar@{.}}
\newcommand{\eqedge}{\ar@{=}}
\newcommand{\act}{\curvearrowright}
\newcommand{\mon}[1]{\calV(#1)} 
\newcommand{\So}{{\rm Source}}
\DeclareMathOperator{\Idem}{Idem} \DeclareMathOperator{\Tr}{Tr}
\newcommand{\ideal}{\hspace{0.5mm}\triangleleft \hspace{0.5mm}}
\newcommand{\andspace}{\quad \text{and} \quad}
\newcommand{\If}{\text{if }}
\newcommand{\ini}{\mathfrak{i}}
\newcommand{\ter}{\mathfrak{t}}
\newcommand{\niceblue}{rgb:red,1;green,2;blue,3}
\newcommand{\nicegreen}{rgb:red,1;green,4;blue,1}
\newcommand{\nicered}{rgb:red,4;green,1;blue,2}
\newcommand{\ul}{\underline}
\DeclareMathOperator{\id}{id}
\numberwithin{equation}{section}
\theoremstyle{plain}
\newtheorem{theorem}{Theorem}[section]
\newtheorem*{theorem*}{Theorem}
\newtheorem{lemma}[theorem]{Lemma}
\newtheorem*{lemma*}{Lemma}
\newtheorem{proposition}[theorem]{Proposition}
\newtheorem{corollary}[theorem]{Corollary}
\newtheorem*{corollary*}{Corollary}
\theoremstyle{definition}
\newtheorem{definition}[theorem]{Definition}
\newtheorem{example}[theorem]{Example}
\newtheorem{remark}[theorem]{Remark}
\newtheorem*{remark*}{Remark}
\newtheorem*{assumption*}{Assumption}
\newtheorem{construction}[theorem]{Construction}
\newtheorem{notation}[theorem]{Notation}
\begin{document}

\null\vskip-1cm

\title[Ideal structure]{Convex subshifts, separated Bratteli diagrams, and ideal structure of tame separated graph algebras}%
\author{Pere Ara}
\address{Departament de Matem\`atiques, Universitat Aut\`onoma de Barcelona,
08193 Bellaterra (Barcelona), Spain.} \email{para@mat.uab.cat}
\author{Matias Lolk}
\address{Department of Mathematical Sciences, University of Copenhagen, 2100 Copenhagen, Denmark.}\email{lolk@math.ku.dk}
\thanks{The first named author was partially supported by DGI-MINECO (Spain) through the grant
MTM2014-53644-P. The second named author was supported by the Danish National Research Foundation through the Centre for Symmetry and Deformation (DNRF92)} 
\subjclass[2000]{Primary 16D70, 46L35;
Secondary 06F05, 37B10, 46L80} \keywords{Separated graph, tame graph algebra, ideal structure, simplicity, primeness}
\date{\today}

\begin{abstract}
We introduce a new class of partial actions of free groups on
totally disconnected
compact Hausdorff spaces, which 
we call \textit{convex subshifts}. These serve as an abstract framework for 
the partial actions associated with finite separated graphs in much the same way as classical subshifts
generalize the edge shift of a finite graph. We define the notion of
a \textit{finite type} convex subshift and show that any such
subshift is Kakutani equivalent to the partial action associated
with a finite bipartite separated graph. We then study the ideal structure of both the {\it full} and the {\it reduced tame graph C*-algebras}, 
$\mathcal{O}(E,C)$ and $\mathcal O^r(E,C)$,
of a separated graph $(E,C)$, and of the {\it abelianized Leavitt path algebra} $\Lab _K(E,C)$ as well. 
These algebras are the (reduced) crossed products with respect to the above-mentioned 
partial actions, and we prove that there is a lattice isomorphism between the lattice of induced ideals and the lattice of hereditary $D^{\infty}$-saturated 
subsets of a certain infinite separated graph $(F_{\infty},D^{\infty})$ built from $(E,C)$, called the {\it separated Bratteli diagram} of $(E,C)$.
We finally use these tools to study simplicity and primeness of 
the tame separated graph algebras.  
\end{abstract}

\maketitle

\tableofcontents

\section{Introduction}

The study of C*-algebras associated to partial actions of groups on topological spaces is an important current line of investigation, 
see for instance \cite{Exel} and the references therein.  This of course includes the more traditional setting of globally
defined group actions, and is in turn generalized by the often useful setting of C*-algebras associated to \'etale 
topological groupoids \cite{Renault, PatGraphByRuy, CombinByRuy, Abadie2, Li, SW}.  

In \cite[Section 5]{ExelAmena}, Exel describes
any Cuntz-Krieger C*-algebra $\mathcal O _A$ as a crossed product of a commutative
C*-algebra by a \emph{partial action} of a non-abelian free group. This may be interpreted as follows. 
Given a $\{0,1\}$-matrix $A\in M_p(\{ 0,1 \})$, one may consider the one-sided shift of finite type $ X_A$
consisting of the infinite sequences $(x_n)\in \{1,\dots , p \}^{\N}$ such that $A(x_n,x_{n+1}) = 1$ for all $n$. Then, a partial action of the free group $\mathbb F_p$
on the space $X_A$ can be naturally defined so that $\mathcal O _A\cong C(X_A)\rtimes \mathbb F_p$. (This action is spelled out at the beginning of Section \ref{sect:ConvexSubshifts}.) 
This description can be generalized to the more general context of graph C*-algebras, see e.g. \cite[Theorem 37.8]{Exel}, \cite{GR}.

One may also consider two-sided subshifts $\mathcal X\subseteq \{1,\dots , p \}^{\Z}$ (see e.g. \cite{LM}), where the shift map is a true homeomorphism, and thus the associated C*-algebra is simply the 
crossed product $C(\mathcal X)\rtimes \Z$, where the action is induced by the shift. The C*-algebra associated to a given two-sided shift is quite different from the C*-algebra
associated to the corresponding one-sided shift. For instance, in the case of the full shift on $\{1,\dots , p\}$, we get the Cuntz algebra $\mathcal O_p$ from the one-sided shift, and we get the group C*-algebra 
$C^*(\Z_p \wr \Z)$ of the lamplighter group $\Z_p \wr \Z$
from the two-sided shift. 

In this paper, we propose a unified approach to one-sided and two-sided subshifts, under the general notion of a {\it convex subshift}. Given a finite alphabet $A$, 
define $\mathcal C(A)$ to be the space of all the 
(right-)convex
subsets of the free group $\mathbb F(A)$ on $A$ which contain $1$. Then the {\it full convex shift}
on $A$ is defined using the natural action of $\mathbb F(A)$ on $\mathcal C(A)$  (see Definition \ref{def:convexsubshift} for the precise definition).
A convex subshift is just the restriction of the full convex shift to a closed invariant subspace,
and we say that a convex subshift is of \textit{finite type} if it can be obtained by forbidding finitely many balls (or patterns).  

We show that this notion is equivalent to the study of the dynamical systems associated to separated graphs, a concept recently coined by the first-named author and Ruy Exel \cite{AE}. 
Recall that a  {\it separated graph} is a pair $(E,C)$ consisting of a directed
graph $E$ and a set $C=\bigsqcup _{v\in E^0} C_v$, where each $C_v$
is a partition of the set of edges whose terminal vertex is $v$.
By \cite[Theorem 9.1]{AE}, the study of the dynamical systems associated to separated graphs can be reduced to the case of {\it bipartite separated graphs}, which are those separated graphs 
$(E,C)$ such that $E^0= E^{0,0}\sqcup E^{0,1}$ is the disjoint union of two layers $E^{0,0}$ and $E^{0,1}$, and  $s(E^1)= E^{0,1}$,  $r(E^1)= E^{0,0}$, where $s$ and $r$ are the source and range maps respectively.  
Given a finite bipartite separated graph $(E,C)$, there exists a partial action $\theta^{(E,C)}$ of the free group $\mathbb F = \mathbb F (E^1)$ on a zero-dimensional metrizable compact space $\Omega (E,C)$
such that the tame C*-algebra $\mathcal O (E,C)$ is isomorphic to the partial crossed product $C(\Omega (E,C))\rtimes \mathbb F$ (\cite{AE}) --
this in turn allows for the definition of a \textit{reduced} tame graph $C^*$-algebra $\mathcal{O}^r(E,C)$ as the corresponding reduced crossed product.
A similar result holds for the abelianized Leavitt path algebra $\Lab_K(E,C)$ over a field with involution $K$, allowing us to study both types of algebras at the same time.

One of our main results is Theorem \ref{thm:GraphRep}, where we prove that any convex subshift of finite type is Kakutani equivalent to
the dynamical system coming from a finite bipartite separated graph. This provides a far-reaching generalization of the well-known result 
that every shift of finite type is conjugate to an edge shift of a finite graph \cite[Chapter 2]{LM}. 

The $K$-theory of the C*-algebras associated to separated graphs has been computed in \cite{AE2}. For a finite bipartite separated graph $(E,C)$, the $K_0$-group of 
both the reduced and the full tame graph C*-algebras of $(E,C)$ can be computed in terms of an infinite separated graph $(F_{\infty}, D^{\infty})$, which is the union of 
a sequence of finite bipartite separated graphs $(E_n,C^n)$. The infinite separated graph $(F_{\infty}, D^{\infty})$ has a structure which resembles very much the one 
of a usual Bratteli diagram. This leads us to define the new concept of a  {\it separated Bratteli diagram} (Definition \ref{def:sepBratelli}), and to refer to the graph $(F_{\infty}, D^{\infty})$ as the 
separated Bratteli diagram associated to $(E,C)$. 

Given a crossed product $\mathcal{O}=A \rtimes G$, we say that an ideal $J \ideal \mathcal{O}$ is \textit{induced} if $J=(J \cap A) \rtimes G$, and we denote 
by $\text{Ind}(\mathcal{O})$ the 
lattice of induced ideals. We show that the structure of induced ideals of the tame graph algebras of a finite separated graph $(E,C)$ is completely determined by its
associated separated Bratteli diagram $(F_{\infty}, D^{\infty})$. Concretely we obtain lattice isomorphisms
$$\text{Ind}(\Lab_K(E,C)) \cong \text{Ind}(\mathcal{O}(E,C)) \cong \text{Ind}(\mathcal{O}^r(E,C)) \cong \mathcal H (F_{\infty}, D^{\infty}),$$
where $\mathcal H (F_{\infty}, D^{\infty})$ is the lattice of hereditary $D^{\infty}$-saturated subsets of $F_{\infty}^0$ (see Theorem \ref{thm:idealsOr}).
This generalizes the well-known result for ordinary graph C*-algebras \cite{BHRSByRuy, BPRSByRuy}.

In sharp contrast with the situation for ordinary graphs, the quotient algebra $\mathcal O (E,C)/I(H)$ 
of an ideal generated by a hereditary $D^{\infty}$-saturated subset $H$ can be described in terms of the tame algebra 
of a separated graph only when $H$ is of {\it finite type}, meaning that $H$ can be generated by a hereditary $C^n$-saturated subset of vertices of the separated graph $(E_n, C^n)$
for some $n\ge 0$. Otherwise, the quotient $\mathcal O (E,C)/I(H)$ can be described by means of a crossed product of a free group on a compact invariant 
subset of $\Omega (E,C)$, but it will in general not be a tame graph C*-algebra of a separated graph. Even so, its $K$-theory can be computed by using a corresponding separated Bratteli diagram
$(F_{\infty}/H, D^{\infty}/H)$ (Theorem \ref{thm:K-theorytamequotients}). This justifies the introduction of general separated Bratteli diagrams, and indicates that interesting examples (some of them
of a pathological behaviour) can occur if we allow the consideration of hereditary $D^{\infty}$-saturated of infinite type; see the comment after Example \ref{exam:cycles}.

We present in some detail an example which is connected with the full two-sided shift. We believe this example is very useful to understand the various aspects of the theory that have been described
above. It is also the example that allows to study the group algebra of the lamplighter group from the perspective of the theory of separated graphs. Using this, and generalizing \cite{AGreal}, the first-named author
and Joan Claramunt have obtained a concrete approximation of this group algebra by a sequence of finite-dimensional algebras \cite{AC}.

We also study general (i.e.~not necessarily induced) ideals of $\mathcal{O}^r(E,C)$. We say that an ideal $J$ is of \textit{finite type} if the corresponding induced ideal $I(H)=(J \cap C(\Omega(E,C))) \rtimes_r \mathbb{F}$ arises from a hereditary $D^\infty$-saturated set $H$ of finite type. By studying a weakening of topological freeness that we dub \textit{relative strong topological freeness}, we are able to compute the lattice of finite type ideals. We also observe that our techniques, when applied to non-separated graphs, 
yield a complete characterization of the ideal structure.

We close the paper by using our tools to perform a study of simplicity and primeness in tame graph algebras of separated graphs. 

In this paper, we consider three types of tame graph $*$-algebras associated to separated graphs, namely the {\it full tame graph C*-algebra} $\mathcal O (E,C)$, the {\it reduced tame
graph C*-algebra} $\mathcal O ^r(E,C)$, and the {\it abelianized Leavitt path algebra} $\Lab _K(E,C)$ over a field with involution $K$. All three types of algebras have been introduced in \cite{AE}.
The first two types generalize graph C*-algebras \cite{Raeburn}, in the sense that, denoting by $\mathcal T $ the trivial partition on each $r^{-1}(v)$, we have $\mathcal O (E, \mathcal T) = \mathcal O^r(E, \mathcal T) = C^*(E)$, where $C^*(E)$ is the graph C*-algebra of $E$. The third type generalizes Leavitt path algebras \cite{AA1, AMP}, in the sense that $\Lab_K (E,\mathcal T) = L_K(E)$, where $L_K(E)$ is the Leavitt path algebra
of $E$.

\bigskip

\textbf{Contents.} We now describe the contents of the paper in more
detail. In section~\ref{sect:prels}, we recall relevant definitions
and constructions from the existing theory of algebras associated
with separated graphs, in particular the main results of \cite{AE}.
We also introduce the notion of a {\it separated Bratteli diagram}.
In section~\ref{sect:ConvexSubshifts}, we introduce a class of
partial actions that we call \textit{convex subshifts}. These serve
as an abstract framework for the partial actions associated with
finite separated graphs in much the same way as classical subshifts
generalize the edge shift of a finite graph. We define the notion of
a \textit{finite type} convex subshift and show that any such
subshift is Kakutani equivalent to the partial action associated
with a finite bipartite separated graph
(Theorem~\ref{thm:GraphRep}). Using the language of convex
subshifts, we are also able to explain the precise relationship
between the partial action of a finite bipartite separated graph
$(E,C)$ and its successors $(E_n,C^n)$ for $n \ge 1$
(Theorem~\ref{thm:NBallIsNGraph}). In
section~\ref{sec:structure-ideals}, we begin our investigation of
the lattice of induced ideals of the algebras $\Lab(E,C)$,
$\mathcal{O}(E,C)$ and $\mathcal{O}^r(E,C)$. The main result
(Theorem~\ref{thm:idealsOr}) identifies this lattice with the
lattice $\mathcal{L}(M(F_\infty,D^\infty))$ of order ideals of the
monoid $M(F_\infty,D^\infty)$, as well as the lattice of hereditary
and $D^\infty$-saturated subsets of $(F_\infty,D^\infty)$, providing
both an algebraic and a graph theoretic perspective. In the
following section, we study the specific ideals associated with
hereditary and $C$-saturated subsets of $(E,C)$, and we show that
the corresponding quotients arise as separated graph algebras from
the quotient graph (Theorem~\ref{thm:quotientalg}). In
section~\ref{sec:InducedIdeals}, we combine the work of the previous
sections to describe the quotient by an arbitrary induced ideal as a
limit of separated graph algebras
(Proposition~\ref{prop:LimitOfOs}), and we provide a dynamical
description of this approximation. We also pay significant attention
to a concrete example that illustrates the general theory
(Examples~\ref{ex:lamplighter}, \ref{exam:cycles}, and
\ref{exam:ruy}). In particular, we show that the crossed product of
a two-sided finite type subshift can be realised as a full corner of
the tame graph C*-algebra associated with a finite bipartite
separated graph (Proposition~\ref{prop:subshifts}).
We proceed to study general ideals of finite type in section~\ref{sect:GeneralIdeals}, and we provide a complete characterisation in terms of graph theoretic 
data (Theorem~\ref{thm:GeneralIdeals}). This relies on an abstract study of \textit{relatively strongly topologically free} partial actions that we initiate.
We then study $\mathcal{V}$-simplicity of the separated graph algebras in
section~\ref{sec:Vsimplicity}, showing that (up to Morita
equivalence) the tame graph C*-algebras degenerate to either
classical graph C*-algebras or group C*-algebras of free groups if
$M(F_\infty,D^\infty) \cong \mathcal{V}(\Lab(E,C))$ is order simple
(Theorem~\ref{thm:dichotomy}).
In section~\ref{sect:Primeness}, we finally establish a criterion for $\Omega(E,C)$ to be a Cantor space (Proposition~\ref{prop:Cantor}), 
and in this setting we characterize primeness of the algebras $\Lab_K(E,C)$ and $\mathcal{O}^r(E,C)$ (Theorem~\ref{thm:Prime}).

\section{Preliminary definitions}
\label{sect:prels}

In this preliminary section, we will recall all the relevant definitions and constructions from the theory of algebras associated with separated graphs. The reader should note that we use 
the same conventions as in \cite{AE} and \cite{AE2}, but opposite to those of \cite{AG} and \cite{AG2}. One important consequence is that all paths should be read from the right.

\begin{definition}{\rm (\cite{AG})} \label{defsepgraph}
A \emph{separated graph} is a pair $(E,C)$ where $E=(E^0,E^1,r,s)$ is a directed graph,
$C=\bigsqcup _{v\in E^ 0} C_v$, and $C_v$ is a partition of
$r^{-1}(v)$ (into non-empty subsets) for every
vertex $v$. In case $v$ is a source, i.e. $r^{-1}(v)=\emptyset$, we take $C_v$ to be the empty
family of subsets of $r^{-1}(v)$. Given an edge $e$, we shall use the notation $X_e$ for the element of $C$ containing $e$.

If all the sets in $C$ are finite, we say that $(E,C)$ is a \emph{finitely separated} graph. This necessarily holds if $E$ is column-finite (that is, if $r^{-1}(v)$ is a finite set for every
$v\in E^0$.)

The set $C$ is a \emph{trivial separation} of $E$ in case $C_v=\{r^{-1}(v)\}$ for each $v\in E^0\setminus \So (E)$. In that case, $(E,C)$ is called a \emph{trivially separated graph} or a \emph{non-separated graph}.

Finally, $(E,C)$ is called \textit{bipartite} if the vertex set admits a partition $E^0=E^{0,0} \sqcup E^{0,1}$ with $s(E^1) = E^{0,1}$ and $r(E^1) = E^{0,0}$.
\end{definition}

As the first of many different algebras assocated with separated graphs, we now define the Leavitt path algebra.

\begin{definition}
\label{def:LPASG} {\rm Let $(K, *)$ be a field with involution. The
{\it Leavitt path algebra} of the separated graph $(E,C)$ with
coefficients in the field $K$ is the $*$-algebra $L_K(E,C)$ with
generators $\{ v, e\mid v\in E^0, e\in E^1 \}$, subject to the
following relations:}
\begin{enumerate}[leftmargin=2cm,rightmargin=2cm]
\item[(V)] $vv^{\prime} = \delta_{v,v^{\prime}}v$ \ and \ $v=v^*$ \ for all $v,v^{\prime} \in E^0$ ,
\item[(E)] $r(e)e=es(e)=e$ \ for all $e\in E^1$ ,
\item[(SCK1)] $e^*e'=\delta _{e,e'}s(e)$ \ for all $e,e'\in X$, $X\in C$, and
\item[(SCK2)] $v=\sum _{ e\in X }ee^*$ \ for every finite set $X\in C_v$, $v\in E^0$.
\end{enumerate}
\end{definition}

The Leavitt path algebra $L_K(E)$ is just $L_K(E,C)$ where $C_v= \{r^{-1}(v)\}$ if $r^{-1}(v)\ne \emptyset $ and $C_v=\emptyset $ if $r^{-1}(v)=\emptyset$. An arbitrary field can be considered as a
field with involution by taking the identity as the involution. However, our ``default" involution over the complex numbers $\C$ will be the complex conjugation, and we will write $L(E,C):=L_\mathbb{C}(E,C)$.

We now recall the definition of the graph C*-algebra $C^*(E,C)$,
introduced in \cite{AG2}.

\begin{definition} The \emph{graph C*-algebra} of a separated graph $(E,C)$ is the C*-algebra $C^*(E,C)$  with generators $\{ v, e \mid v\in E^0,\ e\in E^1 \}$, subject to the relations (V), (E), (SCK1), (SCK2). In other words, $C^*(E,C)$ is the enveloping C*-algebra of $L(E,C)$.
\end{definition}

In case $(E,C)$ is trivially separated, $C^*(E,C)$ is just the classical graph C*-algebra $C^*(E)$. There is a unique *-homomorphism  $L(E,C) \rightarrow C^*(E,C)$ sending the generators of $L(E,C)$ to their canonical images in $C^*(E,C)$. This map is injective by \cite[Theorem 3.8(1)]{AG2}.

Since both $L_K(E,C)$ and $C^*(E,C)$ are universal objects with respect to the same sets of generators and relations, they can be studied in much the same way. A remarkable difference is that the non-stable $K$-theory $\mon{L_K(E,C)}$ \textit{has} been computed for any separated graph $(E,C)$, while the structure of $\mon{C^*(E,C)}$ is still unknown. However, it is conjectured in \cite{AG2} that the
natural map $L(E,C)\to C^*(E,C)$ induces an isomorphism
$\mon{L(E,C)}\to \mon{C^*(E,C)}$. See \cite[Section 6]{Aone-rel}
for a short discussion on this problem. We now describe $\mon{L_K(E,C)}$ as the \textit{graph monoid} of $(E,C)$.

\begin{definition}
\label{def:graphmonoid-sepgraph} Given a finitely separated graph
$(E,C)$, the \textit{graph monoid} $M(E,C)$ is the abelian monoid
with generators $a_v$ for $v \in E^0$ and relations $a_v=\sum_{e \in
X} a_{s(e)}$ for all $v \in E^0$ and $X \in C_v$.
\end{definition}

Note that there is a natural map $M(E) \to \mon{L_K(E,C)}$ given by $a_v \mapsto [v]$, and this is in fact an isomorphism by \cite[Theorem 4.3]{AG}.

One issue with the algebras $L_K(E,C)$ and $C^*(E,C)$ is that the set of partial isometries represented by the edges $E^1$ is 
not \textit{tame}, that is, products of edges and their adjoints are not in general partial isometries. This motivated Ruy Exel and the first named author to introduce certain quotients $\Lab_K (E,C)$ 
and $\mathcal{O}(E,C)$ of $L_K(E,C)$ and $C^*(E,C)$, respectively, that we shall now describe.

\begin{definition}
For a finitely separated graph $(E,C)$, let $S$ denote the multiplicative subsemigroup of $L_K(E,C)$ generated by $E^1 \cup (E^1)^*$, and define an ideal (respectively a closed ideal)
$$J = \langle \alpha\alpha^*\alpha - \alpha \mid \alpha \in S \rangle = \langle [\alpha\alpha^*,\beta\beta^*] \colon \alpha,\beta \in S \rangle$$
of $L_K(E,C)$ (respectively of $C^*(E,C)$). Then we set
$$\Lab _K (E,C) := L_K(E,C)/J \andspace \mathcal{O}(E,C):=C^*(E,C)/J.$$
Observe that modding out $J$ precisely forces $E^1$ to be tame in these quotients.
\end{definition}

We now recall the main construction of \cite{AE} which will play an
important role in what is to come.

\begin{definition}
\label{def:Finftyandothers}
 Let $(E,C)$ denote a finite bipartite separated
graph, and write
$$C_u=\{X_1^u,\ldots,X_{k_u}^u\}$$
for all $u \in E^{0,0}$. Then $(E_1,C^1)$ is the finite bipartite separated graph defined by
\begin{itemize}
\item $E_1^{0,0}:=E^{0,1}$ and $E_1^{0,1}:=\{v(x_1,\ldots,x_{k_u}) \mid u \in E^{0,0}, x_j \in X_j^u\}$,
\item $E^1:=\{\alpha^{x_i}(x_1,\ldots,\widehat{x_i},\ldots,x_{k_u}) \mid u \in E^{0,0}, i=1,\ldots,k_u, x_j \in X_j^u \}$,
\item $r_1(\alpha^{x_i}(x_1,\ldots,\widehat{x_i},\ldots,x_{k_u})):=s(x_i)$ and $s_1(\alpha^{x_i}(x_1,\ldots,\widehat{x_i},\ldots,x_{k_u})):=v(x_1,\ldots,x_{k_u})$,
\item $C^1_v:=\{X(x) \mid x \in s^{-1}(v)\}$, where
$$X(x_i):=\{\alpha^{x_i}(x_1,\ldots,\widehat{x_i},\ldots,x_{k_u}) \mid x_j \in X_j^u \text{ for } j \ne i\}.$$
\end{itemize}
A sequence of finite bipartite separated graphs $\{(E_n,C^n)\}_{n \ge 0}$ with $(E_0,C^0):=(E,C)$ is then defined inductively by letting $(E_{n+1},C^{n+1})$ denote the $1$-graph of $(E_n,C^n)$. Finally, $(F_n,D^n)$ denotes the union $\bigcup_{i=0}^n (E_n,C^n)$, and $(F_\infty,D^\infty)$ is the infinite layer graph
$$(F_\infty,D^\infty):=\bigcup_{n=0}^{\infty} (F_n,D^n) = \bigcup_{n=0}^\infty (E_n,C^n).$$
Observe that $(F_n,D^n)$ is a finite separated graph, while $(F_\infty,D^\infty)$ is a finitely separated graph.
\end{definition}

By \cite[Theorem 5.1 and Theorem 5.7]{AE}, there are canonical surjective $*$-homomorphisms $*$-homomorphisms
$$L_K(E_n,C^n) \to L_K(E_{n+1},C^{n+1}) \andspace C^*(E_n,C^n) \to C^*(E_{n+1},C^{n+1})$$
such that
$$L_K^{ab}(E,C) \cong \varinjlim_n L_K(E_n,C^n) \andspace \mathcal{O}(E,C) \cong \varinjlim_n C^*(E_n,C^n).$$
On the level of monoids, the induced monoid homomorphism
$$M(E_n,C^n) \cong \mathcal{V}(L_K(E_n,C^n)) \to \mathcal{V}(L_K(E_{n+1},C^{n+1}) \cong M(E_{n+1},C^{n+1})$$
is a unitary embedding, which refines the defining relations of $M(E_n,C^n)$. Consequently, the quotient map $L_K(E,C) \to \Lab_K (E,C)$ induces a (universal) refinement
$$M(E,C) \cong \mathcal{V}(L_K(E,C)) \to \mathcal{V}(\Lab_K (E,C)) \cong M(F_{\infty},D^{\infty}).$$

One of the main advantages of dealing with the quotients $\Lab_K (E,C)$ and $\mathcal{O}(E,C)$ is that, by \cite[Corollary 6.12]{AE}, they admit descriptions as crossed products
$$\Lab_K (E,C) \cong C_{K}(E,C) \rtimes \mathbb{F} \andspace \mathcal{O}(E,C) \cong C(\Omega(E,C)) \rtimes \mathbb{F}$$
of a topological partial action $\theta^{(E,C)} \colon \mathbb{F} \act \Omega(E,C)$, referred to as the \textit{canonical partial $(E,C)$-action}. Here, $\mathbb{F}$ is the free group generated by $E^1$, $\Omega(E,C)$ is a certain compact, zero-dimensional, metrisable space, and $C_{K}(\Omega(E,C))$ denotes the $*$-algebra of continuous functions $\Omega(E,C) \to K$ when $K$ is given the discrete topology, that is, the $*$-algebra of locally constant functions $\Omega(E,C) \to K$. Every vertex $v \in F_{\infty}^0$ corresponds to a compact open subset $\Omega(E,C)_v \subset \Omega(E,C)$ such that $\Omega(E,C)=\bigsqcup_{v \in E_n^0} \Omega(E,C)_v$ for all $n$.

The crossed product description enables the definition of yet another tame graph $C^*$-algebra.

\begin{definition}
Let $(E,C)$ denote a finite bipartite separated graph. Then $\mathcal{O}^r(E,C)$ is the reduced crossed product $C(\Omega(E,C)) \rtimes_r \mathbb{F}$ of the canonical partial $(E,C)$-action.
\end{definition}

The separated graph $(F_{\infty}, D^{\infty})$ resembles very much
the structure of a {\it Bratteli diagram} \cite{Brat}, but
incorporating separations (and up to a change of the convention of
the direction of the arrows). We formalize this with the following
definition.

\begin{definition}
\label{def:sepBratelli} A separated (or colored) Bratteli diagram is
an infinite separated graph $(F,D)$. The vertex set $F^0$ is the
union of finite, non-empty, pairwise disjoint sets $F^{0,j}$, $j\ge
0$. Similarly, the edge set $F^1$ is the union of a sequence of
finite, non-empty, pairwise disjoint sets $F^{1,j}$, $j\ge 0$. The
range and source maps satisfy $r(F^{1,j})=F^{0,j}$ and
$s(F^{1,j})=F^{0,j+1}$ for all $j\ge 0$.
\end{definition}

A Bratteli diagram is just a separated Bratteli diagram with the
trivial separation. The graph $(F_{\infty}, D^{\infty})$ associated
to a finite bipartite separated graph $(E,C)$ is an example of a
separated Bratteli diagram, and we will refer to it as the {\it
separated Bratteli diagram} of the separated graph $(E,C)$. We will
find later other examples of separated Bratteli diagrams, related to
quotients of tame graph C*-algebras, see Theorem
\ref{thm:K-theorytamequotients}.

The {\it graph monoid} of a Bratteli diagram $(F,D)$ is the graph
monoid $M(F,D)$ of the finitely separated graph $(F,D)$, see
Definition \ref{def:graphmonoid-sepgraph}. The Grothendieck group
$G(F,D)$ of the monoid $M(F, D)$ is an analogue of the dimension
group associated to a Bratteli diagram, but it is not a dimension
group in general. It is quite sensible to equip $G(F,D)$ with the
structure of a pre-ordered abelian group, taking $G(F,D)^+ :=
\varphi (M(F,D))$, where $\varphi  \colon M(F,D) \to G(F,D)$ is the
canonical map.

The separated Bratteli diagram of a finite bipartite separated graph
computes the $K_0$ of the tame graph algebras of the graph, as
follows.

\begin{theorem}
\label{thm:K-theoryBrat} Let $(E,C)$ be a finite bipartite separated
graph, and let $(F_{\infty}, D^{\infty})$ be its separated Bratteli
diagram.
\begin{enumerate}
\item[\textup{(1)}] There is a natural group isomorphism
$$K_0(\mathcal O (E,C))\cong K_0(\mathcal O ^r(E,C)) \cong
G(F_{\infty}, D^{\infty}).$$
\item[\textup{(2)}] There is a natural isomorphism $\mathcal V (\Lab _K(E,C))\cong
M(F_{\infty},D^{\infty})$, and so an isomorphism of pre-ordered
abelian groups
$$K_0(\Lab _K(E,C)) \cong G(F_{\infty}, D^{\infty})$$
for any field with involution $K$.
\end{enumerate}
\end{theorem}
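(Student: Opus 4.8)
The plan is to deduce both statements from the inductive-limit descriptions established in \cite{AE}, together with the continuity of the functors $\mathcal{V}(-)$ and $K_0$, and the identification of the relevant colimit of graph monoids with $M(F_{\infty},D^{\infty})$. The unifying observation is that $(F_{\infty},D^{\infty})=\bigcup_n (E_n,C^n)$ is an increasing union of finite bipartite separated graphs, so its graph monoid ought to be the colimit of the $M(E_n,C^n)$ along the refinement maps: since every defining relation of $M(F_{\infty},D^{\infty})$ involves only finitely many vertices and hence already appears at some finite stage, one checks that the canonical map $\varinjlim_n M(E_n,C^n) \to M(F_{\infty},D^{\infty})$ is an isomorphism of monoids. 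Passing to Grothendieck groups, which also commutes with direct limits, yields $\varinjlim_n G(E_n,C^n) \cong G(F_{\infty},D^{\infty})$ as pre-ordered abelian groups, the positive cone being the image of the colimit monoid.

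For part (2), I would start from $\Lab_K(E,C) \cong \varinjlim_n L_K(E_n,C^n)$ and use that $\mathcal{V}(-)$ preserves direct limits of $*$-algebras, so that $\mathcal{V}(\Lab_K(E,C)) \cong \varinjlim_n \mathcal{V}(L_K(E_n,C^n))$. Combining the graph-monoid isomorphisms $\mathcal{V}(L_K(E_n,C^n)) \cong M(E_n,C^n)$ from \cite[Theorem 4.3]{AG} with the colimit identification above gives $\mathcal{V}(\Lab_K(E,C)) \cong M(F_{\infty},D^{\infty})$, which is exactly the universal refinement recorded after Definition \ref{def:Finftyandothers}. Since $K_0$ of a ($*$-)algebra is the Grothendieck group of its $\mathcal{V}$-monoid, the order-isomorphism $K_0(\Lab_K(E,C)) \cong G(F_{\infty},D^{\infty})$ follows, naturally in $(E,C)$ and independently of the field with involution $K$.

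For the full C*-algebra in part (1), I would run the same argument in the topological setting: $\mathcal{O}(E,C) \cong \varinjlim_n C^*(E_n,C^n)$ and continuity of $K_0$ give $K_0(\mathcal{O}(E,C)) \cong \varinjlim_n K_0(C^*(E_n,C^n))$. The base input is the computation $K_0(C^*(E,C)) \cong G(E,C)$ for a finite bipartite separated graph, where $G(E,C)$ is the Grothendieck group of $M(E,C)$ (see \cite{AG2}, \cite{AE2}); note that this is a statement about Grothendieck groups only, so it is not obstructed by the fact that the full monoid $\mathcal{V}(C^*(E,C))$ is unknown. One then checks that the connecting maps on $K_0$ agree with the refinement maps $G(E_n,C^n) \to G(E_{n+1},C^{n+1})$, so the colimit is $G(F_{\infty},D^{\infty})$. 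For the reduced algebra I would instead use the crossed-product pictures $\mathcal{O}(E,C) \cong C(\Omega(E,C)) \rtimes \mathbb{F}$ and $\mathcal{O}^r(E,C) = C(\Omega(E,C)) \rtimes_r \mathbb{F}$: after replacing the partial action by its globalization (enveloping action) on a larger locally compact space, one is looking, up to Morita equivalence, at an honest crossed product by $\mathbb{F}$, and the K-amenability of free groups (Cuntz) forces the canonical map $K_*(A \rtimes \mathbb{F}) \to K_*(A \rtimes_r \mathbb{F})$ to be an isomorphism; transporting this through the Morita equivalence shows that $\mathcal{O}(E,C) \to \mathcal{O}^r(E,C)$ is a $K_0$-isomorphism, completing the chain $K_0(\mathcal{O}(E,C)) \cong K_0(\mathcal{O}^r(E,C)) \cong G(F_{\infty},D^{\infty})$.

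The main obstacle is the reduced case, i.e. identifying $K_0(\mathcal{O}^r(E,C))$ with $K_0(\mathcal{O}(E,C))$, since $\mathcal{O}^r(E,C)$ carries no a priori inductive-limit decomposition and $\mathbb{F}$ is non-amenable, so full and reduced crossed products genuinely differ as algebras. Making the globalization argument precise, by producing a compatible Morita equivalence between the partial and global (full and reduced) crossed products and checking naturality of the full-to-reduced comparison map under it, is the technical heart of the proof; the remaining steps amount to a routine assembly of the continuity of $\mathcal{V}$ and $K_0$, the graph-monoid identifications, and the colimit computation.
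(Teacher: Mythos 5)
Your proposal is correct in outline, but note how it relates to the paper: the paper's entire proof is a citation, deducing (2) from \cite[Theorem 5.1]{AE} and (1) from \cite[Theorem 4.4(c), Corollary 6.9]{AE2}, so what you have written is essentially a reconstruction of the proofs of those cited results rather than of anything carried out in this paper. Your part (2) --- continuity of $\mathcal{V}$ under algebraic direct limits, the identification $\mathcal{V}(L_K(E_n,C^n)) \cong M(E_n,C^n)$ from \cite[Theorem 4.3]{AG}, and the colimit computation $\varinjlim_n M(E_n,C^n)\cong M(F_\infty,D^\infty)$ --- is exactly the route of \cite{AE}; the one thing you gloss over is that the connecting map $M(E_n,C^n)\to M(E_{n+1},C^{n+1})$ sends a layer-$n$ generator $a_v$ to $\sum_{e\in X}a_{s(e)}$ for a \emph{chosen} $X\in C^n_v$, and its well-definedness (independence of $X$) is the refinement property established in \cite{AE}, not a formal consequence of ``relations appearing at finite stages''. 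Your treatment of the full C*-algebra in (1) (continuity of $K_0$, the base computation $K_0(C^*(E_n,C^n))\cong G(E_n,C^n)$, and compatibility of the connecting maps with the refinement maps) is likewise the actual argument behind \cite[Theorem 4.4(c)]{AE2}.

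The one genuine imprecision is in the reduced case, which you rightly single out as the technical heart. The globalization of $\theta^{(E,C)}$ as a topological space need not be Hausdorff --- the paper itself recalls this, citing \cite[Proposition 2.10]{Abadie} --- so ``the enveloping action on a larger locally compact space'' and its crossed product are not directly available. The standard repair is to work at the C*-level with Abadie's \emph{Morita enveloping action} \cite{Abadie}, which always exists and implements compatible Morita equivalences between the full (resp.\ reduced) partial crossed product and a full (resp.\ reduced) global crossed product; naturality of the full-to-reduced comparison map under these equivalences then lets Cuntz's K-amenability of $\mathbb{F}$ finish the argument. Alternatively, one can invoke McClanahan's theorem (K-theory for partial crossed products by discrete groups) that for a K-amenable group the canonical map $A\rtimes G \to A\rtimes_r G$ induces an isomorphism on $K_*$ for \emph{every} partial action; this is precisely what \cite[Corollary 6.9]{AE2} rests on. With either fix your argument is complete, and it buys a self-contained account of what the paper outsources entirely to the references.
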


\begin{proof}
(2) follows from \cite[Theorem 5.1]{AE}, and (1) follows from
\cite[Theorem 4.4(c), Corollary 6.9]{AE2}.
\end{proof}

In the following, we shall recall a very handy description of the
canonical partial $(E,C)$-action introduced in \cite{AE}, although
we will make a slight change to the original definition, coherent
with the conventions of \cite{Lolk1} and \cite{Lolk2}. We remark
that while we only consider finite bipartite graphs, the dynamical
picture can be extended to arbitrary finitely separated graphs
\cite[Definition 2.6 and Theorem 2.10]{Lolk1}. For a comprehensive
study of the general theory of partial actions and their crossed
products, we refer the reader to \cite{Exel}.

\begin{definition}
Suppose that $(E,C)$ is a separated graph, and let $\widehat{E}$ denote the \textit{double} of $E$, i.e. the graph with
$$\widehat{E}^0:=E^0, \widehat{E}^1:=E^1 \sqcup (E^1)^{-1}, \hat{r}(e):=r(e)=\hat{s}(e^{-1}) \andspace \hat{r}(e^{-1}):=s(e)=:\hat{s}(e).$$
A path in the double of $E$ (with the convention that a path is read from the right to the left) is called \textit{an admissible path} if
\begin{itemize}
\item $e \ne f$ for every subpath $ef^{-1}$,
\item $X_e \ne X_f$ for every subpath $e^{-1}f$.
\end{itemize}
We define the range and source of an admissible path to simply be
the range and source in the double, and we view the vertices as the
set of trivial admissible paths.

A \textit{closed path} in $(E,C)$ is a non-trivial admissible path
$\alpha$ with $r(\alpha)=s(\alpha)$, and $\alpha$ is called a
\textit{cycle} if the concatenated word $\alpha\alpha$ is an
admissible path as well. Either way, we shall say that $\alpha$ is
\textit{based} at $r(\alpha)=s(\alpha)$. A cycle is called
\textit{simple} if the only vertex repetition occurs at the end.
\end{definition}

\begin{definition}\label{def:DynamicalPicture}
Suppose that $(E,C)$ is finite bipartite separated graph, and let $\mathbb{F}$ denote the free group on $E^1$. Given $\xi \subset \mathbb{F}$ and $\alpha \in \xi$, the \textit{local configuration} $\xi_\alpha$ of $\xi$ at $\alpha$ is defined as
$$\xi_\alpha :=\{\sigma \in E^1 \sqcup (E^1)^{-1} \mid \sigma \in \xi \cdot \alpha^{-1}\}.$$
Then $\Omega(E,C)$ is the set of $\xi \subset \mathbb{F}$ satisfying the following:
\begin{enumerate}
\item[(a)] $1 \in \xi$.
\item[(b)] 
$\xi$ is \textit{right-convex}: In view of (a), this exactly means that if $e_n^{\varepsilon_n} \cdots e_1^{\varepsilon_1} \in \xi$ for $e_i \in E^1$ and $\varepsilon_i \in \{\pm 1\}$, then $e_m^{\varepsilon_m} \cdots e_1^{\varepsilon_1} \in \xi$ as well for any $1 \le m < n$.
\item[(c)] For every $\alpha \in \xi$, one of the following holds:
\begin{enumerate}
\item[(c1)] $\xi_\alpha = s^{-1}(v)$ for some $v \in E^{0,1}$.
\item[(c2)] $\xi_\alpha = \{e_X^{-1} \mid X \in C_v\}$ for some $v \in E^{0,0}$ and $e_X \in X$.
\end{enumerate}
\end{enumerate}
Observe that for $\xi \in \Omega(E,C)$, every $\alpha \in \xi$ is an admissible path. $\Omega(E,C)$ is made into a topological space by regarding it as a subspace of $\{0,1\}^\mathbb{F}$. Thus it becomes a compact, zero-dimensional, metrisable space, and a topological partial action $\theta=\theta^{(E,C)} \colon \mathbb{F} \act \Omega(E,C)$ with compact open domains is then defined by setting
\begin{itemize}
\item $\Omega(E,C)_\alpha := \{\xi \in \Omega(E,C) \mid \alpha^{-1} \in \xi\}$,
\item $\theta_\alpha(\xi) := \xi \cdot \alpha^{-1}$ for $\xi \in \Omega(E,C)_{\alpha^{-1}}$.
\end{itemize}
This partial action is equivalent to the one defined in \cite{AE} under the map $\xi \mapsto \xi^{-1}$. We choose to invert the elements as we want a common terminology and notation for both the algebraic and the topological setting. The compact, open sets corresponding to the vertices of $E$ are given by
$$\Omega(E,C)_v = \left\{\begin{array}{cl}
\Omega(E,C)_{e^{-1}} \text{ for some } e \in s^{-1}(v) & \text{if } v \in E^{0,1} \\ \bigsqcup_{e \in X}\Omega(E,C)_e \text{ for some } X \in C_v & \text{if } v \in E^{0,0}
\end{array} \right. ,$$
and we note that this is independent of $e$ and $X$ by (c1) and (c2), respectively.  The reader may think of $\Omega(E,C)_v$ as the set of configurations "starting" in $v$, and we shall regard $1 \in \xi$ as the trivial path $v$ when $\xi \in \Omega(E,C)_v$. See Remark~\ref{rem:Omegav} for a description of $\Omega(E,C)_v$ when $v \in E_n^0$ for $n \ge 1$.
\end{definition}

\section{Convex subshifts}\label{sect:ConvexSubshifts}
Given a finite alphabet $A$ and any set $\mathcal{F}$ of finite words in $A$, one can consider the space $X_{\mathcal{F}} \subset A^{\mathbb{N}}$ of infinite 
one-sided sequences that do not contain any words from $\mathcal{F}$. In classical symbolic dynamics, $X_\mathcal{F}$ is then usually turned into a dynamical 
system by equipping it with the one-sided shift $\sigma$, but for the purpose of constructing interesting algebras, one would typically add additional dynamical structure. 
Specifically, a sequence may not only be shifted to the left, but new letters may also be introduced in the beginning, provided that we stay inside $X_\mathcal{F}$, of course.

Formally, this dynamical system can be regarded as a partial action with the free group $\mathbb{F}(A)$ acting on $X_\mathcal{F}$: If $a \in A$ and $x \in X_\mathcal{F}$, then $a$ can act on $x$ by $a.x=ax$ whenever $ax \in X_\mathcal{F}$, while the inverse $a^{-1}$ can act on points $ax \in X_\mathcal{F}$ by $a^{-1}.ax=x=\sigma(ax)$. It is a standard fact that any subshift of finite type (meaning that $\mathcal{F}$ is finite) arises as the edge shift of some finite graph $E$ with no sinks, and the graph $C^*$-algebra may then be recovered as the partial crossed product $C^*(E) \cong C(X_\mathcal{F}) \rtimes \mathbb{F}(A)$ (see for instance \cite[Theorem 36.20]{Exel} or \cite[Theorem 3.1]{CL}). If $E$ is a finite directed graph, one can also consider the \textit{boundary path space} $\partial E$, where -- in addition to the set of right infinite paths -- one also includes the set of finite paths ending in a sink. Then there is natural partial action of $\mathbb{F}(E^1)$ on $\partial E$ defined as above, and we still have $C^*(E) \cong C(\partial E) \rtimes \mathbb{F}(E^1)$. In fact, a similar description exists for arbitrary graphs.

In this section, we shall introduce a class of partial actions that one might consider as a generalisation of both the above partial actions on one-sided sequence spaces (including those of infinite type) and 
the boundary path spaces of finite graphs. We will see later (Example \ref{ex:lamplighter}, Proposition \ref{prop:subshifts}) that also two-sided shifts can be recasted in this language.  
The basic idea is to give up the linear nature of a sequence and allow trees instead. The dynamics still arise from shifting, but there are usually many possible shifting directions with no one being canonical. 
We will refer to these dynamical systems as \textit{convex subshifts} for reasons that should become apparent soon.

The motivation for introducing such systems is two-fold: On one hand, it provides a framework for the dynamical systems associated with finite separated graphs in which one has far more flexibility. 
On the other hand, the dynamical systems of finite bipartite separated graphs encompass a vast class of convex subshifts, in fact all \textit{finite type} convex subshifts up to Kakutani-equivalence 
(see Theorem~\ref{thm:GraphRep}). As such, convex subshifts are to finite bipartite separated graphs as one-sided subshifts are to finite graphs. 
Finally, we shall see how the construction of $(E_n,C^n)$ from $(E,C)$ corresponds to a natural construction in the realm of convex subshifts.

The contents of this section are closely related to classical subshifts of free groups on the alphabet $\{0,1\}$. Indeed, any convex subshift is the restriction of a free group subshift to a partial action, 
but the convexity requirement that we impose is \textit{not} of finite type; hence a convex subshift would typically be regarded as an infinite type subshift. Moreover, it is most beneficial 
for us to define everything from scratch so that the formal framework of convex subshifts is similar to that of partial actions associated with separated graphs.

Throughout this section, $A$ will be a finite alphabet and $\mathbb{F}=\mathbb{F}(A)$ will be the free group on $A$. Though some definitions still make sense for infinite alphabets, we choose to deal only with finite ones for the sake of simplicity.

\begin{definition}
\label{def:convexsubshift}
Denote by $\mathcal{C}=\mathcal{C}(A)$ the set of right-convex subsets $\xi \subset \mathbb{F}$ for which $1 \in \xi$. 
As in Definition~\ref{def:DynamicalPicture}, right-convexity in this setting simply means that if $a_n^{\varepsilon_n} \cdots a_1^{\varepsilon_1} \in \xi$ for $a_i \in A$ and $\varepsilon_i \in \{\pm 1\}$, then $a_m^{\varepsilon_m} \cdots a_1^{\varepsilon_1} \in \xi$ as well for all $1 \le m < n$.
We topologize $\mathcal{C}$ as a subspace of $\mathcal{P}(\mathbb{F}) \cong \{0,1\}^{\mathbb{F}}$, making it into a compact, zero-dimensional, and metrizable space (with many isolated points). The \textit{full convex shift on $A$} is then the partial action $\mathbb{F} \act \mathcal{C}$ given by
\begin{itemize}
\item $\mathcal{C}_\alpha := \{ \xi \in \mathcal{C} \mid \alpha^{-1} \in \xi\}$ for all $\alpha \in \mathbb{F}$.
\item $\alpha.\xi := \xi \cdot \alpha^{-1}$ for $\xi \in \mathcal{C}_{\alpha^{-1}}$.
\end{itemize}
Viewing each $\xi \in \mathcal{C}$ as a tree, rooted in $1$ and labelled relative to the root, the action of any $\alpha \in \xi$ on $\xi$ is thus simply given by moving the root to $\alpha$ and relabelling accordingly.
\end{definition}

\begin{remark}
We could also define $\mathcal{C}$ to be the set of
\textit{left-convex}
subsets with 
$$\mathcal{C}_\alpha=\{\xi \in \mathcal{C} \mid \alpha \in \xi\} \andspace \alpha.\xi=\alpha \cdot \xi,$$
but we choose the above convention to match the one used for separated graphs. It is also very convenient that with our convention, $\alpha$ can act on $\xi$ if and only if $\alpha \in \xi$ (as opposed to $\alpha^{-1} \in \xi$). By \cite[Section 4]{ELQ} and \cite[Proposition 4.5]{ExelLaca}, the full convex shift is conjugate to the universal action for semi-saturated partial representations of $\mathbb{F}$.
\end{remark}

\begin{definition}
A \textit{convex subshift} is the restriction of the full convex shift $\mathbb{F} \act \mathcal{C}$ to any closed invariant subspace $\Omega \subset \mathcal{C}$.
\end{definition}

\begin{definition}
An \textit{$n$-ball} is an element $B \in \mathcal{C}$ such that $\vert \alpha \vert \le n$ for every $\alpha \in B$, together with the \textit{radius} $r(B)=n$; a \textit{ball} is then simply an $n$-ball for some $n$. Note that $B$ as a set does not determine the radius $r(B)$, so one has to specify this.  Now if $\xi \in \mathcal{C}$, we can always consider the $n$-ball
$$\xi^n := \{ \alpha \in \xi \colon \vert \alpha \vert \le n\},$$
and if $B$ is a given $n$-ball, we shall write $\xi \not\equiv B$ if $(\alpha.\xi)^n \ne B$ for all $\alpha \in \xi$. Finally, if $\Omega$ is any convex subshift, we will write
$$\mathcal{B}_n(\Omega):=\{ \xi^n \mid \xi \in \Omega\} \andspace \mathcal{B}(\Omega):=\bigsqcup_{n \ge 0} \mathcal{B}_n(\Omega)$$
for the set of \textit{allowed} $n$-balls and \textit{allowed} balls, respectively.
\end{definition}

With all the relevant terminology in place, we can give an example of a convex subshift.

\begin{definition}
Let $\mathcal{F}$ denote any set of balls; we can then define a convex subshift by
$$\Omega^\mathcal{F}:=\{ \xi \in \mathcal{C} \mid \xi \not\equiv B \text{ for all } B \in \mathcal{F}\},$$
and we shall refer to this as the convex subshift obtained from \textit{forbidding} $\mathcal{F}$.
\end{definition}

In fact, this is not \textit{an} example, but rather \textit{the} example.

\begin{proposition}
If $\mathbb{F} \act \Omega$ is a convex subshift, then $\Omega=\Omega^\mathcal{F}$ for some set of balls $\mathcal{F}$.
\end{proposition}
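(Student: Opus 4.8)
The plan is to let $\mathcal{F}$ be the set of all \emph{forbidden} balls, $\mathcal{F} := \mathcal{B}(\mathcal{C}) \setminus \mathcal{B}(\Omega)$, i.e. every ball that fails to occur as an allowed ball of $\Omega$, and to prove the two inclusions $\Omega \subseteq \Omega^{\mathcal{F}}$ and $\Omega^{\mathcal{F}} \subseteq \Omega$ separately. (Note that $\mathcal{C}$ is itself a convex subshift, so $\mathcal{B}(\mathcal{C})$ is defined and equals the set of all balls, since any $n$-ball $B$ satisfies $B = B^n$.)

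First I would unwind the definitions: $\xi \in \Omega^{\mathcal{F}}$ holds precisely when, for every $\alpha \in \xi$ and every $n \ge 0$, the ball $(\alpha.\xi)^n$ is not forbidden, i.e. $(\alpha.\xi)^n \in \mathcal{B}(\Omega)$. For $\Omega \subseteq \Omega^{\mathcal{F}}$, I would fix $\xi \in \Omega$ and $\alpha \in \xi$; since $\alpha \in \xi$ is exactly the condition that $\alpha.\xi$ is defined, invariance of $\Omega$ yields $\alpha.\xi \in \Omega$, whence $(\alpha.\xi)^n \in \mathcal{B}_n(\Omega) \subseteq \mathcal{B}(\Omega)$ for all $n$. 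Thus no forbidden ball ever appears, so $\xi \not\equiv B$ for all $B \in \mathcal{F}$ and hence $\xi \in \Omega^{\mathcal{F}}$.

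For the reverse inclusion I would take $\xi \in \Omega^{\mathcal{F}}$ and apply the unwound description with $\alpha = 1 \in \xi$ (so $1.\xi = \xi$): every ball $\xi^n$ is then allowed, so for each $n$ I may pick $\eta_n \in \Omega$ with $\eta_n^n = \xi^n$. The key topological point is that, because $A$ is finite, the set $\{\beta \in \mathbb{F} : \vert \beta \vert \le n\}$ is finite, so the sets $\{\zeta \in \mathcal{C} : \zeta^n = \xi^n\}$ form a neighbourhood basis at $\xi$ for the product topology on $\{0,1\}^{\mathbb{F}}$; and since $\eta_n^n = \xi^n$ forces $\eta_n^m = \xi^m$ for every $m \le n$, the sequence $(\eta_n)$ converges to $\xi$. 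As $\Omega$ is closed, I conclude $\xi \in \Omega$.

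I expect no deep obstacle, as this is a standard symbolic-dynamics approximation, but two points need care. Both hypotheses on $\Omega$ are used, and asymmetrically: invariance is what guarantees that \emph{all} translates $\alpha.\xi$, and not merely $\xi$ itself, have allowed balls, which is essential for $\Omega \subseteq \Omega^{\mathcal{F}}$; closedness drives the reverse inclusion through the approximating sequence. The most delicate step is verifying that the balls $\xi^n$ genuinely form a neighbourhood basis, and this is precisely where finiteness of the alphabet is needed—otherwise a finite subset of $\mathbb{F}$ might not be contained in any $\{\vert \beta \vert \le n\}$ and the convergence $\eta_n \to \xi$ would fail.
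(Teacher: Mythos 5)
Your proposal is correct and follows essentially the same route as the paper: the same choice $\mathcal{F}:=\mathcal{B}(\mathcal{C})\setminus\mathcal{B}(\Omega)$, with $\Omega\subseteq\Omega^{\mathcal{F}}$ from invariance and the reverse inclusion from closedness by matching each $\xi^n$ with the $n$-ball of some $\eta_n\in\Omega$. You merely spell out two steps the paper leaves implicit (invariance behind the "clear" inclusion, and the fact that agreement of $n$-balls gives convergence in $\{0,1\}^{\mathbb{F}}$ because the alphabet is finite), which is a faithful elaboration rather than a different argument.
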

\begin{proof}
Define $\mathcal{F}$ to be all the balls that do not occur in $\Omega$, i.e. let
$$\mathcal{F}:=\mathcal{B}(\mathcal{C}) \setminus \mathcal{B}(\Omega).$$
Clearly $\Omega \subset \Omega^\mathcal{F}$, so let us consider the reverse inclusion. Given any $\xi \in \Omega^\mathcal{F}$ and $n \ge 1$, it is enough to check that $\xi^n = \eta^n$ for some $\eta \in \Omega$ since $\Omega$ is closed. But since $\xi^n \notin \mathcal{F}$, we must have $\xi^n \in \mathcal{B}_n(\Omega)$, so $\xi^n = \eta^n$ for some $\eta \in \Omega$ as desired.
\end{proof}

\begin{definition}\label{def:FiniteType}
A convex subshift $\mathbb{F} \act \Omega$ is of \textit{finite type} if $\Omega=\Omega^\mathcal{F}$ for some finite set of forbidden balls $\mathcal{F}$. Observe that for such a convex subshift, we can safely assume that all balls of $\mathcal{F}$ have the same radius $R$; we recognize 
this by saying that $\Omega$ is \textit{$R$-step}. Observe that in this situation, $\Omega$ is \textit{generated} by $\mathcal{B}_R(\Omega)$ in the following sense: An element $\xi \in \mathcal{C}$ belongs to $\Omega$ if and only if $(\alpha.\xi)^R \in \mathcal{B}_R(\Omega)$ for all $\alpha \in \xi$.
\end{definition}

Before venturing on, we need to discuss how one might compare partial actions of different groups.

\begin{definition}\label{def:DynConj}
Consider partial actions $\theta \colon G \act \Omega$ and $\theta' \colon H \act \Omega'$ of discrete groups on topological spaces. Then $\theta$ and $\theta'$ are called \textit{dynamically equivalent} and we shall write $\theta \approx \theta'$, if their transformation groupoids $\mathcal{G}_\theta$ and $\mathcal{G}_{\theta'}$ (see for instance \cite[Example 2.3]{Lolk2}) are isomorphic as topological groupoids. We now spell out exactly what this means: There is a homeomorphism $\varphi \colon \Omega \to \Omega'$ and continuous maps
$$a \colon \bigcup_{g \in G} \{g\} \times \Omega_{g^{-1}} \to H \andspace b \colon \bigcup_{h \in H} \{h\} \times \Omega'_{h^{-1}} \to G$$
such that
\begin{enumerate}
\item $\varphi(x) \in \Omega'_{a(g,x)^{-1}}$ and $\varphi(g.x)=a(g,x).\varphi(x)$,
\item $\varphi^{-1}(y) \in \Omega_{b(h,y)^{-1}}$ and $\varphi^{-1}(h.y)=b(h,y).\varphi^{-1}(y)$,
\item $b(a(g,x),\varphi(x))=g$ and $a(b(h,y),\varphi^{-1}(y))=h$,
\item $a(g'g,x)=a(g',g.x)a(g,x)$ if $g.x \in \Omega_{g'^{-1}}$,
\item $b(h'h,y)=b(h',h.y)b(h,y)$ if $h.y \in \Omega_{h'^{-1}}$
\end{enumerate}
for all $g \in G$, $h \in H$, $x \in \Omega_{g^{-1}}$, $y \in \Omega'_{h^{-1}}$.
\end{definition}

\begin{remark}
\label{rem:GroupoidAlgebras} Given a locally compact Hausdorff
\'etale groupoid $\mathcal{G}$, one can associate to it both a \textit{universal} and a \textit{reduced} groupoid $C^*$-algebra
(see \cite{Renault}), denoted $C^*(\mathcal{G})$ and
$C^*_r(\mathcal{G})$, respectively. If $\mathcal{G}=\mathcal{G}_\theta$ for a partial action $\theta \colon
G \act \Omega$ of a discrete group on a locally compact Hausdorff space, then there are identifications $C^*(\mathcal{G}_\theta) \cong C_0(\Omega) \rtimes_\theta G$ and $C^*_r(\mathcal{G}_\theta) \cong C_0(\Omega) \rtimes_{\theta,r} G$ by \cite[Theorem 3.3]{Abadie2} and \cite[Proposition 2.2]{Li}. For an \textit{ample} groupoid $\mathcal{G}$ and any field $K$ with involution, there is also a purely algebraic analogue $K\mathcal{G}$, known as the \textit{Steinberg algebra} of $\mathcal{G}$ \cite{Steinberg}. If a partial action $\theta$ as above acts on a totally disconnected space, then $\mathcal{G}_\theta$ is ample and $K\mathcal{G}_\theta \cong C_K(\Omega) \rtimes_\theta G$, where $C_K(\Omega)$ denotes the algebra of compactly supported continuous functions $\Omega \to K$, when $K$ is endowed with the discrete topology. Hence if two partial actions $\theta$ and ${\theta'}$ as above are dynamically equivalent, i.e. $\mathcal{G}_\theta \cong \mathcal{G}_{\theta'}$, then they have isomorphic crossed products by base-preserving isomorphisms. In this light, groupoids provide a very flexible framework for identifying the crossed products of partial actions.
\end{remark}

\begin{definition}
Consider partial actions $\theta \colon G \act \Omega$ and ${\theta'} \colon H \act \Omega'$ of discrete groups on topological spaces along with a group homomorphism $\Psi \colon G \to H$. A continuous map $\varphi \colon \Omega \to \Omega'$ is then called $\Psi$\textit{-equivariant} if
\begin{enumerate}
\item $\varphi(\Omega_g) \subset \Omega'_{\Psi(g)}$ for all $g \in G$,
\item $\varphi(g.x)=\Psi(g).\varphi(x)$ for all $x \in \Omega_{g^{-1}}$.
\end{enumerate}
The pair $(\varphi,\Psi)$ is called a \textit{conjugacy} if $\Psi$ is an isomorphism and $\varphi$ admits a $\Psi^{-1}$-equivariant inverse. However, conjugacy is often too rigid a notion and we therefore introduce another type of equivalence in between conjugacy and dynamical equivalence:
The pair $(\varphi,\Psi)$ is called a \textit{direct dynamical equivalence} if
\begin{enumerate}[(a)]
\item $\varphi$ is a homeomorphism,
\item $\Omega_g \cap \Omega_{g'} = \emptyset$ for all $g \ne g'$ with $\Psi(g)=\Psi(g')$,
\item $\Omega'_h=\bigcup_{\Psi(g)=h} \varphi(\Omega_g)$ for all $h \in H$,
\end{enumerate}
and in this case we will write $\theta \xrightarrow{\approx} {\theta'}$.
\end{definition}

Now let us see that our choice of name and notation is justified.

\begin{proposition}\label{prop:DirQCImpliesQC}
If $\theta \xrightarrow{\approx} {\theta'}$, then $\theta \approx {\theta'}$.
\end{proposition}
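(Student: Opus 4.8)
The plan is to produce directly the two continuous cocycles $a$ and $b$ required by the definition of dynamical equivalence, using the very same homeomorphism $\varphi$. Recall that a direct dynamical equivalence $(\varphi,\Psi)$ is in particular a $\Psi$-equivariant homeomorphism satisfying (b) and (c). For $g \in G$ and $x \in \Omega_{g^{-1}}$ I would set $a(g,x):=\Psi(g)$. For $h \in H$ and $y \in \Omega'_{h^{-1}}$, condition (c) applied to $h^{-1}$ produces some $g' \in G$ with $\Psi(g')=h^{-1}$ and $\varphi^{-1}(y) \in \Omega_{g'}$, while condition (b) forces this $g'$ to be unique; I would then set $b(h,y):=(g')^{-1}$. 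The well-definedness of $b$ is thus the first thing to nail down, and it is exactly where (b) and (c) enter.

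First I would check continuity. Since $H$ is discrete and $a$ is constant on each clopen slice $\{g\}\times\Omega_{g^{-1}}$ of its domain, it is locally constant, hence continuous. For $b$, fixing $h$, condition (c) writes $\Omega'_{h^{-1}}$ as a union of the open sets $\varphi(\Omega_{g'})$ over $\Psi(g')=h^{-1}$; by (b) and injectivity of $\varphi$ this union is disjoint, and $b(h,\cdot)$ is the constant $(g')^{-1}$ on each piece, so $b$ is locally constant and therefore continuous.

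Next I would verify the identities (1)--(5). Identities (1) and (2) follow immediately from $\Psi$-equivariance and the defining property of $g'$: for (1) one uses $\varphi(\Omega_{g^{-1}})\subseteq \Omega'_{\Psi(g)^{-1}}$ together with $\varphi(g.x)=\Psi(g).\varphi(x)$; for (2) one applies $\varphi$ to $(g')^{-1}.\varphi^{-1}(y)$, noting $\Psi((g')^{-1})=h$, to obtain $\varphi^{-1}(h.y)=b(h,y).\varphi^{-1}(y)$. Both identities in (3) reduce, via the uniqueness built into the definition of $b$, to identifying the correct element: e.g.\ in $b(a(g,x),\varphi(x))$ the relevant element is forced to be $g^{-1}$, so the value is $g$, and the other identity is analogous. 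Identity (4) is just the homomorphism property $\Psi(g'g)=\Psi(g')\Psi(g)$.

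The hard part will be the cocycle identity (5) for $b$. Writing $g_1=b(h,y)^{-1}$ and $g_2=b(h',h.y)^{-1}$, so that $\varphi^{-1}(y)\in\Omega_{g_1}$ with $\Psi(g_1)=h^{-1}$, and $g_1^{-1}.\varphi^{-1}(y)=\varphi^{-1}(h.y)\in\Omega_{g_2}$ with $\Psi(g_2)=h'^{-1}$, I would argue that $g_1g_2$ is precisely the unique element defining $b(h'h,y)$. It lies over $\Psi(g_1g_2)=h^{-1}h'^{-1}=(h'h)^{-1}$, and the crucial point is that $\varphi^{-1}(y)\in\Omega_{g_1g_2}$, which I would deduce from the partial-action composition law $\theta_{g_2^{-1}}\circ\theta_{g_1^{-1}}\subseteq\theta_{(g_1g_2)^{-1}}$ applied to $\varphi^{-1}(y)$, using that $\varphi^{-1}(y)\in\mathrm{dom}(\theta_{g_1^{-1}})$ and $\theta_{g_1^{-1}}(\varphi^{-1}(y))\in\mathrm{dom}(\theta_{g_2^{-1}})$. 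Uniqueness then yields $b(h'h,y)^{-1}=g_1g_2$, i.e.\ $b(h'h,y)=(g_1g_2)^{-1}=g_2^{-1}g_1^{-1}=b(h',h.y)\,b(h,y)$. With all five identities established, the triple $(\varphi,a,b)$ realises the groupoid isomorphism $\mathcal{G}_\theta\cong\mathcal{G}_{\theta'}$, whence $\theta\approx\theta'$.
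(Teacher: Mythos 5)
Your proof is correct and takes essentially the same route as the paper's: both set $a(g,x):=\Psi(g)$ and define $b(h,y)$ as the inverse of the unique element lying over $h^{-1}$ whose domain contains $\varphi^{-1}(y)$ (existence from condition (c), uniqueness from condition (b)), then verify (1)--(5), with the cocycle identity (5) reduced via that uniqueness to the partial-action composition law. Your explicit continuity check (local constancy of $a$ and $b$) and your pointwise verification of (5) are only presentational refinements of the paper's set-level computation, and if anything slightly more complete.
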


\begin{proof}
Apply the notation from above. We then set $a(g,x):=\Psi(g)$ for all $g \in G$, $x \in \Omega_{g^{-1}}$, and given $h \in H$, $y \in \Omega'_{h^{-1}}$, we define $b(h,y)$ to be the unique element of $G$ satisfying
$$\Psi(b(h,y))=h \andspace y \in \varphi(\Omega_{b(h,y)^{-1}}).$$
Then
$$\varphi^{-1}(h.y)=\varphi^{-1}(\Psi(b(h,y)).y)=b(h,y).\varphi^{-1}(y)$$
and
$$b(a(g,x),\varphi(x))=b(\Psi(g),\varphi(x))=g,$$
so (1)-(4) of Definition~\ref{def:DynConj} surely hold. Finally, if $h' \in H$ and $h.y \in \Omega'_{h'^{-1}}$, then
$$\Psi(b(h',h.y)b(h,y))=h'h$$
and
\begin{align*}
y &\in h^{-1}.\big(\varphi(\Omega_{b(h',h.y)}) \cap \Omega'_h \big)=\Psi(b(h,y))^{-1}.\big(\varphi(\Omega_{b(h',h.y)}) \cap \Omega'_{\Psi(b(h,y))} \big) \\
&= \varphi \big(b(h,y)^{-1}.(\varphi(\Omega_{b(h',h.y)}) \cap \Omega'_{\Psi(b(h,y))}) \big)= \varphi \big(\Omega_{b(h,y)^{-1}b(h',h.y)^{-1}} \cap \Omega_{b(h,y)^{-1}}\big),
\end{align*}
hence
$$b(h'h,y)=b(h',h.y)b(h,y)$$
as desired.
\end{proof}

\begin{remark}\label{rem:DirDynEq}
If $(\varphi,\Phi)$ is a direct dynamical 
equivalence from $\theta \colon G \act \Omega$ to $\theta' \colon H \act \Omega'$, then the induced isomorphisms $C_K(\Omega) \rtimes G \to C_K(\Omega') \rtimes H$ and $C_0(\Omega) \rtimes_{(r)} G \to C_0(\Omega') \rtimes_{(r)} H$ are simply given by $f \delta_g \mapsto f \circ \varphi^{-1} \vert_{\Omega_{\Phi(g)}'} \delta_{\Phi(g)}$ with inverses $f \delta_h \mapsto \sum_{\Phi(g)=h} f \circ \varphi \vert_{\Omega_g} \delta_g$.
\end{remark}

In fact, any dynamical equivalence is a result of two direct dynamical equivalences:

\begin{proposition}
If $\theta \approx {\theta'}$ for partial actions of $G$ and $H$, respectively, then
$$\theta \xleftarrow{\approx} \gamma \xrightarrow{\approx} {\theta'}$$
for some partial action $\gamma$ of $G \times H$.
\end{proposition}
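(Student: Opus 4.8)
The plan is to build $\gamma$ directly on the underlying space $\Omega$ of $\theta$, regarding it as the refinement of the partial action $\theta$ by the $H$-valued cocycle $a$ supplied by the dynamical equivalence; equivalently, I realise the common transformation groupoid $\mathcal G_\theta \cong \mathcal G_{\theta'}$ together with its combined $(G\times H)$-grading $(g,x)\mapsto (g,a(g,x))$. Concretely, using the data $(\varphi,a,b)$ of Definition~\ref{def:DynConj}, I declare $(g,h)\in G\times H$ to act on exactly those $x\in\Omega_{g^{-1}}$ with $a(g,x)=h$, by the rule $(g,h).x:=g.x$. Since $H$ is discrete and $a$ is continuous, $a(g,\cdot)$ is locally constant on $\Omega_{g^{-1}}$, so the domains $\Omega^\gamma_{(g,h)^{-1}}=\{x\in\Omega_{g^{-1}}\mid a(g,x)=h\}$ are open and each $\gamma_{(g,h)}$ is a restriction of the homeomorphism $\theta_g$. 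The cocycle identity (4), applied to the composite $g^{-1}g$, gives $a(g^{-1},g.x)=a(g,x)^{-1}$, so the image is $\Omega^\gamma_{(g,h)}=\{y\in\Omega_g\mid a(g^{-1},y)=h^{-1}\}$, consistent with the indexing.

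First I would verify that $\gamma$ is a partial action. From (4) with $g=g'=1$ one gets $a(1,x)=a(1,x)^2$, hence $a(1,\cdot)\equiv 1$, so $\Omega^\gamma_{(1,1)}=\Omega$ and $\gamma_{(1,1)}=\mathrm{id}$. For the composition axiom I would use Exel's criterion $\gamma_{(g,h)}\gamma_{(g',h')}\subseteq\gamma_{(gg',hh')}$: if $x$ lies in the domain of the left-hand side, then $x\in\Omega_{g'^{-1}}$ with $a(g',x)=h'$ and $g'.x\in\Omega_{g^{-1}}$ with $a(g,g'.x)=h$; the partial action $\theta$ then yields $x\in\Omega_{(gg')^{-1}}$ and $\theta_g\theta_{g'}(x)=(gg').x$, while (4) gives $a(gg',x)=a(g,g'.x)a(g',x)=hh'$, so $x$ lies in the domain of $\gamma_{(gg',hh')}$ and the two maps agree there. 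This is routine once (4) is available.

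Next I would produce the two direct dynamical equivalences via the coordinate projections $\pi_G\colon G\times H\to G$ and $\pi_H\colon G\times H\to H$. For $(\mathrm{id}_\Omega,\pi_G)\colon\gamma\xrightarrow{\approx}\theta$, $\pi_G$-equivariance and clause (a) are immediate; clause (b) holds because for fixed $g$ the sets $\Omega^\gamma_{(g,h)}$ are distinct fibres of $a(g^{-1},\cdot)$, hence disjoint as $h$ varies; and clause (c) holds because $a(g^{-1},\cdot)$ is defined on all of $\Omega_g$, so these fibres cover $\Omega_g$. For $(\varphi,\pi_H)\colon\gamma\xrightarrow{\approx}\theta'$, $\pi_H$-equivariance follows from (1), rewritten through $a(g^{-1},y)=h^{-1}$ as $\varphi(\Omega^\gamma_{(g,h)})\subset\Omega'_h$ and $\varphi((g,h).x)=h.\varphi(x)$; and clause (c) follows from (2) and (3): given $y'\in\Omega'_h$, the element $g:=b(h^{-1},y')^{-1}$ satisfies $\varphi^{-1}(y')\in\Omega^\gamma_{(g,h)}$, since (2) places $\varphi^{-1}(y')$ in $\Omega_g$ and the second relation of (3) gives $a(g^{-1},\varphi^{-1}(y'))=h^{-1}$.

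The main obstacle is clause (b) for $(\varphi,\pi_H)$: I must show $\Omega^\gamma_{(g,h)}\cap\Omega^\gamma_{(g',h)}=\emptyset$ whenever $g\ne g'$, and this is \emph{not} automatic, since $\Omega_g$ and $\Omega_{g'}$ may overlap. This is precisely where the injectivity of the groupoid isomorphism enters. Indeed, if $y$ lay in both sets, then $a(g^{-1},y)=a(g'^{-1},y)=h^{-1}$ with $y\in\Omega_g\cap\Omega_{g'}$, so $\varphi(y)\in\Omega'_h$, and the first relation of (3) would give $g^{-1}=b(h^{-1},\varphi(y))=g'^{-1}$, forcing $g=g'$ --- a contradiction. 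With (b) settled, all of (a)--(c) hold for both projections, yielding $\theta\xleftarrow{\approx}\gamma\xrightarrow{\approx}\theta'$.
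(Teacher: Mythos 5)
Your proposal is correct and follows essentially the same route as the paper: you define $\gamma$ on $\Omega$ with domains given by the fibres of the cocycle $a$, act by restricting $\theta_g$, and exhibit the two coordinate projections as direct dynamical equivalences, using relation (3) exactly where the paper uses the identity $a_{g^{-1}}^{-1}(h^{-1})=b_{h^{-1}}^{-1}(g^{-1})$. The only cosmetic difference is that the paper first replaces $\theta'$ by $\varphi\circ\theta'\circ\varphi^{-1}$ so that $\varphi=\id_\Omega$, whereas you carry $\varphi$ through explicitly (and you spell out the verifications the paper labels ``straightforward'').
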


\begin{proof}
By otherwise replacing ${\theta'}$ with the conjugate partial action $\varphi \circ {\theta'} \circ \varphi^{-1}$, we may assume that $\theta$ and $\theta'$ act on the same space $\Omega$ and $\varphi=\id_\Omega$. Writing $a_g:=a(g,-)$ and $b_h:=b(h,-)$, we then first define domains by
$$\Omega_{(g,h)}:=a_{g^{-1}}^{-1}(h^{-1})=b_{h^{-1}}^{-1}(g^{-1})$$
for $(g,h) \in G \times H$. To see that the above equality holds, assume that $x \in a_{g^{-1}}^{-1}(h^{-1})$, i.e. $x \in \Omega_g$ with $a(g^{-1},x)=h^{-1}$. Then $x \in \Omega_h$ and
$$b(h^{-1},x)=b(a(g^{-1},x),x)=g^{-1},$$
hence $x \in b_{h^{-1}}^{-1}(g^{-1})$, so $a_{g^{-1}}^{-1}(h^{-1})=b_{h^{-1}}^{-1}(g^{-1})$ from symmetry. Then define the action of $(g,h)$ by
$$(g,h).x:=g.x=a(g,x).x=h.x$$
for all $x \in \Omega_{(g,h)^{-1}}$. It is straightforward to verify that this does indeed define a partial action $\gamma \colon G \times H \act \Omega$. Then simply observe that the pairs $(\text{id}_\Omega,\pi_G)$ and $(\text{id}_\Omega,\pi_H)$, where $\pi_G$ and $\pi_H$ denote the projections onto $G$ and $H$, respectively, are direct dynamical equivalences.
\end{proof}

Returning to the world of convex shifts, we recall that any $n$-step subshift $X$ can be recoded into being $1$-step using higher block shifts; one simply replaces the original alphabet with the $n$-blocks $\mathcal{B}_n(X)$ via the map
$$x \mapsto [x_1 \ldots x_n][x_2 \ldots x_{n+1}][x_3 \ldots x_{n+2}] \ldots . $$
In the following, we shall make a similar construction in the world of convex shifts -- although with \textit{blocks} replaced by \textit{balls}.

\begin{construction}
Let $\theta \colon \mathbb{F} \act \Omega$ denote any convex subshift, let $n \ge 1$ and consider the finite alphabet
$$A^{[n \colon \Omega]}:=\Big\{\big[(a.\xi)^n \xleftarrow{a} \xi^n \big] \mid \xi \in \Omega, a \in A \text{ such that } a \in \xi \Big\},$$
where each $\big[(a.\xi)^n \xleftarrow{a} \xi^n \big]$ is just a formal symbol. As every symbol is typically represented by many different configurations $\xi$, we will simply use the notation $[B \xleftarrow{a} B']$ where $B,B' \in \mathcal{B}_n(\Omega)$ in the future. We then consider the corresponding free group $\mathbb{F}^{[n \colon \Omega]}:=\mathbb{F}(A^{[n \colon \Omega]})$ and introduce the notation
$$[B' \xleftarrow{a^{-1}} B]:=[B \xleftarrow{a} B']^{-1}.$$
Observe that if $\Omega \subset \Lambda$ is an inclusion of convex subshifts over $A$, then we obtain corresponding inclusions $A^{[n \colon \Omega]} \subset A^{[n \colon \Lambda]}$ and $\mathbb{F}^{[n \colon \Omega]} \subset \mathbb{F}^{[n \colon \Lambda]}$. When $\Omega= \mathcal{C}$ is the full convex shift, we will simply write $A^{[n]} := A^{[n \colon \mathcal{C}]}$ and $\mathbb{F}^{[n]}:=\mathbb{F}^{[n \colon \mathcal{C}]}$, and define a group homomorphism
$$\Psi_n  \colon \mathbb{F}^{[n]} \to \mathbb{F}  \quad \text{given by} \quad \Psi_n([B \xleftarrow{a} B']):= a.$$
By a slight abuse of notation, we will also refer to $\Psi_n$ when we really mean the restriction of $\Psi_n$ to the subgroup $\mathbb{F}^{[n \colon \Omega]}$.

Our aim is to define a replacement for the above block-encoding, more specifically a map
$$\phi_n \colon \mathcal{C}(A) \to \mathcal{C}(A^{[n]}).$$
Given $\xi \in \Omega$, we first set $\phi_n(\xi,1):=1 \in \mathbb{F}^{[n]}$ and proceed to define $\phi_n(\xi,\alpha) \in \mathbb{F}^{[n]}$ for $1 \ne \alpha \in \xi$. Writing $\alpha=s_m \cdots s_1$ and $B_k=((s_k \cdots s_1).\xi)^n$ for $k \le m$ so that $B_0=\xi^n$ and $B_m=(\alpha.\xi)^n$, we then set
$$\phi_n(\xi,\alpha):=[B_m \xleftarrow{s_m} B_{m-1}] \cdot [B_{m-1} \xleftarrow{s_{m-1}} B_{m-2}] \cdots [B_1 \xleftarrow{s_1} B_0],$$
allowing us to define $\phi_n$ by
$$\phi_n(\xi):=\{\phi_n(\xi,\alpha) \mid \alpha \in \xi\}.$$
It is clear from the construction that $\phi_n(\xi)$ is a right-convex subset of $\mathbb{F}^{[n]}$ containing $1$, so that $\phi_n(\xi) \in \mathcal{C}(A^{[n]})$. Observe that if $\xi \in \Omega$, then $\phi_n(\xi,\alpha) \in \mathbb{F}^{[n \colon \Omega]}$ for all $\alpha \in \xi$, so that $\phi_n$ restricts to a map $\Omega \to \mathcal{C}(A^{[n \colon \Omega]})$. The \textit{$n$-ball subshift} $\theta^{[n]}$ of $\theta$ is then simply the restricted action of $\mathbb{F}^{[n \colon \Omega]}$ on the image $\Omega^{[n]}:=\phi_n(\Omega)$. We observe that $\phi_n$ has a $\Psi_n$-equivariant inverse $\psi_n \colon \Omega^{[n]} \to \Omega$ given by
$$\psi_n(\eta) :=\{\Psi_n(\beta) \mid \beta \in \eta\},$$
and as both $\phi_n$ and $\psi_n$ are obviously continuous, they are in fact homeomorphisms of $\Omega$ and $\Omega^{[n]}$. In particular, it follows that $\Omega^{[n]}$ is compact and invariant under the action of $\mathbb{F}^{[n \colon \Omega]}$, hence a convex subshift.

Using the same notation as above, we also define a map
$$\tilde{\Psi}_n \colon \{\beta \in \eta \mid \eta \in \mathcal{C}^{[n]}\}=\{\phi_n(\xi,\alpha) \mid \xi \in \mathcal{C},\alpha \in \xi\} \to \mathcal{C}$$
by
$$\tilde{\Psi}_n\big([B_m \xleftarrow{s_m} B_{m-1}] \cdots [B_1 \xleftarrow{s_1} B_0] \big) :=\bigcup_{k=0}^m (s_k \cdots s_1)^{-1}.B_k$$
and note that $\tilde{\Psi}_n(\phi_n(\xi,\alpha)) \subset \xi$ for all $\alpha \in \xi$. Consequently,
$$\psi_n(\eta)=\bigcup_{\beta \in \eta} \tilde{\Psi}_n(\beta)$$
for any $\eta \in \mathcal{C}^{[n]}$.
\end{construction}

In the following we shall see that passing to higher ball shift does indeed allow one to recode any finite type convex shift into a $1$-step convex shift. First though, we have to deal with the higher ball shifts of the full convex shift.

\begin{lemma}\label{lem:NBallOfFull}
The $n$-ball subshift $\mathbb{F}^{[n]} \act \mathcal{C}^{[n]}$ of the full convex shift is $1$-step.
\end{lemma}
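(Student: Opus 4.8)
The plan is to present $\mathcal{C}^{[n]}$ as obtained by forbidding a \emph{finite} set of $1$-balls. Since $A^{[n]}$ is finite, $\mathcal{B}_1(\mathcal{C}^{[n]})$ is finite, so I would set $\mathcal{F}:=\mathcal{B}_1(\mathcal{C}(A^{[n]})) \setminus \mathcal{B}_1(\mathcal{C}^{[n]})$ and aim to show $\mathcal{C}^{[n]}=\Omega^{\mathcal{F}}$, which is $1$-step by definition. The inclusion $\mathcal{C}^{[n]} \subset \Omega^{\mathcal{F}}$ is immediate from invariance. Unwinding $\Omega^{\mathcal{F}}$, the reverse inclusion is exactly the following assertion, which carries all the content: if $\eta \in \mathcal{C}(A^{[n]})$ has the property that $(\beta.\eta)^1 \in \mathcal{B}_1(\mathcal{C}^{[n]})$ for every $\beta \in \eta$, then $\eta \in \mathcal{C}^{[n]}$. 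I would prove this by reconstructing a configuration $\xi \in \mathcal{C}(A)$ with $\phi_n(\xi)=\eta$.

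The natural candidate is $\xi:=\{\Psi_n(\beta) \mid \beta \in \eta\}=\psi_n(\eta)$, and the role of the radius-$1$ hypothesis is that near each $\beta$ the incident symbols are exactly those of some genuine local configuration. First I would extract two consequences: (i) distinct edges at a vertex carry distinct letters $\Psi_n(\tau) \in A \sqcup A^{-1}$, and (ii) all symbols incident to $\beta$ share one source ball, which I will call the \emph{ball} $b(\beta) \in \mathcal{B}_n(\mathcal{C}(A))$ at $\beta$, with the edge joining a vertex to its parent (resp.\ child) having source/range balls matching the balls at its two endpoints. From (i), $\Psi_n$ sends reduced words to reduced words without cancellation and is injective on the tree $\eta$, so $\Psi_n \colon \eta \to \xi$ is a length-preserving bijection and $\xi$ is right-convex and contains $1$, i.e.\ $\xi \in \mathcal{C}(A)$.

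The crux, and what I expect to be the main obstacle, is the identity $b(\beta)=(\Psi_n(\beta).\xi)^n$: granting it, the target equalities $\phi_n(\xi,\Psi_n(\beta))=\beta$, and hence $\phi_n(\xi)=\eta$, follow by a direct induction on word length using (ii). By shift-equivariance of the ball labelling and of $\xi$, applying the statement to the shifted configuration $\beta.\eta$ reduces this identity to the root case $b(1)=\xi^n$. To establish the root case I would induct along a geodesic $1,\delta_1,\dots,\delta_j$ in $\eta$, showing that the radius-$(n-j)$ ball of $b(\delta_j)$ equals $(\Psi_n(\delta_j).b(1))^{n-j}$. This is precisely where the design of the construction is used: each letter of $A^{[n]}$ remembers a full radius-$n$ ball, so adjacent balls cohere up to radius $n-1$, and this coherence degrades by only one layer per step; the induction therefore survives for all $j \le n$, exactly the range needed to recover $b(1)=\xi^n$. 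I expect the bookkeeping of the degrading radii — verifying that shifting a ball and truncating to radius $n-j$ depends only on its radius-$(n-j+1)$ part — to be the most delicate point, though it becomes routine once the degradation is organized correctly.
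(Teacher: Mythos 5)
Your proposal is correct and is essentially the paper's own proof: the paper likewise characterizes the allowed $1$-balls, takes an $\eta$ all of whose local $1$-balls are allowed, sets $\xi=\psi_n(\eta)$, proves non-cancellation (so $\Psi_n$ is length-preserving on $\eta$) and the root-ball identity $\xi^n=B$ by induction, and then transports this identity to every $\beta\in\eta$ via shift-equivariance to conclude $\eta=\phi_n(\xi)$. The only cosmetic difference is bookkeeping: where you run a single degrading-radius induction $(b(\delta_j))^{n-j}=\bigl(\Psi_n(\delta_j).b(1)\bigr)^{n-j}$ along geodesics, the paper establishes the two containments of $\xi^n=B$ separately (membership of $\Psi_n(\eta^n)$ in $B$, and realization of each element of $B$), each by a short induction of the same kind.
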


\begin{proof}
It should be clear from the above construction that the $1$-balls in $\mathcal{C}^{[n]}$ are exactly the sets of the form
$$\{[B(s)\xleftarrow{s} B]\}_{s \in B} \cup \{1\}$$
for some $B,B(s) \in \mathcal{B}_n(\mathcal{C})$ such that $[B(s)\xleftarrow{s} B] \in A^{[n]} \cup (A^{[n]})^{-1}$ for all $s \in B$, and we claim that these sets in fact generate $\mathcal{C}^{[n]}$ as a $1$-step convex subshift of $\mathbb{F}^{[n]} \act \mathcal{C}(A^{[n]})$. To this end, we let $\eta \in \mathcal{C}(A^{[n]})$ and assume that $(\beta.\eta)^1$ is of this form for any $\beta \in \eta$. Now set
$$\xi:= \{\Psi_n(\beta) \mid \beta \in \eta\}$$
and observe that $\xi \in \mathcal{C}$: we ultimately wish to show that $\eta = \phi_n(\xi) \in \mathcal{C}^{[n]}$. Letting $B \in \mathcal{B}_n(\mathcal{C})$ denote the $n$-ball as above corresponding to $\eta^1$, we will first show that $\xi^n=B$. Observe that if
$$[B_m' \xleftarrow{s_m} B_{m-1}][B_{m-1}' \xleftarrow{s_{m-1}} B_{m-2}] \cdots [B_2' \xleftarrow{s_2} B_1][B_1' \xleftarrow{s_1} B] \in \eta,$$
then our assumption implies that $B_k=B_k'$ and $s_k \ne s_{k+1}^{-1}$ for all $1 \le k \le m-1$. In particular, $\vert \Psi_n(\beta) \vert = \vert \beta \vert$ for all $\beta \in \eta$, so that
$$\xi^n=\{\Psi_n(\beta) \mid \beta \in \eta^n\}.$$
Now if
$$\beta=[B_m \xleftarrow{s_m} B_{m-1}][B_{m-1} \xleftarrow{s_{m-1}} B_{m-2}] \cdots [B_2 \xleftarrow{s_2} B_1][B_1 \xleftarrow{s_1} B] \in \eta^n,$$
then of course $\Psi_n(\beta)=s_m \cdots s_1 \in B$, and if, conversely, $s_m \cdots s_1 \in B$ for $m \le n$, then an inductive application of our standing assumption implies the existence of some $\beta$ as above, hence $\xi^n = B$ as desired. Finally, applying our observation to $\beta.\eta$ for some arbitrary
$$\beta=[B_m \xleftarrow{s_m} B_{m-1}][B_{m-1} \xleftarrow{s_{m-1}} B_{m-2}] \cdots [B_2 \xleftarrow{s_2} B_1][B_1 \xleftarrow{s_1} B] \in \eta,$$
we see that $(\Psi_n(\beta).\xi)^n=B_m$, and so $\eta=\phi_n(\xi)$ from the way that $\phi_n$ is defined.

\end{proof}

The next lemma shows that if $n < R$, then we can recover the $R$-ball $\xi^R$ from the $(R-n)$-ball $\phi_n(\xi)^{R-n}$.
\begin{lemma}\label{lem:PsiLemma}
If $\xi \in \mathcal{C}$ and $n < R$, then
$$\xi^R = \bigcup_{\beta \in \phi_n(\xi)^{R-n}}\tilde{\Psi}_n(\beta).$$
\end{lemma}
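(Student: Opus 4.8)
The plan is to unwind both sides into explicit subsets of $\mathbb{F}$ and then match them by the triangle inequality. The first ingredient I would establish is the length identity $\lvert \phi_n(\xi,\alpha)\rvert = \lvert\alpha\rvert$. Writing $\alpha = s_m\cdots s_1$ in reduced form, the letters of $\phi_n(\xi,\alpha)$ are the symbols $[B_k \xleftarrow{s_k} B_{k-1}]$, and since $[B_k \xleftarrow{s_k} B_{k-1}]^{-1} = [B_{k-1}\xleftarrow{s_k^{-1}} B_k]$, a cancellation between consecutive letters would force $s_{k-1} = s_k^{-1}$, which is precluded by the reducedness of $\alpha$ (this is the same observation used in the proof of Lemma~\ref{lem:NBallOfFull}). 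Thus $\phi_n(\xi,\alpha)$ is reduced of length $\lvert\alpha\rvert$, and consequently
$$\phi_n(\xi)^{R-n} = \{\phi_n(\xi,\alpha) \mid \alpha \in \xi,\ \lvert\alpha\rvert \le R-n\}.$$

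Next I would compute $\tilde{\Psi}_n(\phi_n(\xi,\alpha))$ explicitly. Writing $\gamma_k := s_k\cdots s_1$ for the prefixes of $\alpha$ (so $\gamma_0 = 1$ and $\gamma_m = \alpha$ with $m=\lvert\alpha\rvert$) and reading the formal action as $(s_k\cdots s_1)^{-1}.B_k = B_k\gamma_k$, the identity $B_k = (\gamma_k.\xi)^n = (\xi\gamma_k^{-1})^n$ gives $B_k\gamma_k = \{\epsilon \in \xi \mid \lvert\epsilon\gamma_k^{-1}\rvert \le n\} =: N_n(\gamma_k)$, the radius-$n$ neighbourhood of $\gamma_k$ inside $\xi$. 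Hence $\tilde{\Psi}_n(\phi_n(\xi,\alpha)) = \bigcup_{k=0}^{\lvert\alpha\rvert} N_n(\gamma_k)$ is a union over the prefixes of $\alpha$. Combining this with the description of $\phi_n(\xi)^{R-n}$ above, I would then reindex: as $\alpha$ runs over the elements of $\xi$ of length $\le R-n$ and $\gamma_k$ over the prefixes of each such $\alpha$, the resulting group elements are exactly the members of $\xi^{R-n}$ --- every prefix of a length-$\le (R-n)$ element of $\xi$ again lies in $\xi^{R-n}$ by right-convexity and the length bound, and every $\gamma \in \xi^{R-n}$ is its own longest prefix. The right-hand side therefore collapses to $\bigcup_{\gamma \in \xi^{R-n}} N_n(\gamma)$.

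It then remains to verify $\bigcup_{\gamma \in \xi^{R-n}} N_n(\gamma) = \xi^R$. For the inclusion $\subseteq$, if $\epsilon \in N_n(\gamma)$ with $\gamma \in \xi^{R-n}$, then $\lvert\epsilon\rvert \le \lvert\gamma\rvert + \lvert\epsilon\gamma^{-1}\rvert \le (R-n) + n = R$, so $\epsilon \in \xi^R$. For $\supseteq$, given $\epsilon = t_\ell\cdots t_1 \in \xi^R$ in reduced form with $\ell \le R$, I would take the prefix $\gamma := t_j\cdots t_1$ with $j := \max(0,\ell-n)$; then $\gamma \in \xi$ by right-convexity with $\lvert\gamma\rvert = j \le R-n$ (using $\ell\le R$), so $\gamma \in \xi^{R-n}$, while $\lvert\epsilon\gamma^{-1}\rvert = \ell - j = \min(\ell,n) \le n$, so $\epsilon \in N_n(\gamma)$. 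This yields the desired equality. The proof is essentially bookkeeping; the only genuinely delicate points are the correct reading of the formal translate $(s_k\cdots s_1)^{-1}.B_k$ as the neighbourhood $N_n(\gamma_k)$ and the reindexing of the double union over prefixes, whereas the final matching is just the triangle inequality together with right-convexity.
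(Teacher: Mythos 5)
Your proposal is correct and follows essentially the same route as the paper's proof: both unwind $\tilde{\Psi}_n(\phi_n(\xi,\alpha))$ into right-translates of the balls $B_k$, obtain the inclusion $\supseteq$ of $\xi^R$ from the triangle inequality ($B_k\gamma_k \subseteq \xi^R$ since $k \le R-n$), and obtain $\subseteq$ by truncating an element of $\xi^R$ to a prefix in $\xi^{R-n}$ so that the remainder has length at most $n$. The only difference is expository: you make explicit the length identity $\lvert\phi_n(\xi,\alpha)\rvert=\lvert\alpha\rvert$ and the reindexing over prefixes, which the paper's shorter argument uses implicitly.
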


\begin{proof}
We apply the notation of the above construction with $m \le R-n$. For the inclusion $\supset$, it is enough to check that
$$(s_k \cdots s_1)^{-1}.B_k \subset \xi^R$$
for all $k \le m$, and this holds simply because $B_k \subset (s_k \cdots s_1).\xi^R$. For the reverse inclusion, take $\gamma \in \xi^R$. If $\vert \gamma \vert \le R-n$, then $\gamma \in \Psi_n(\phi_n(\xi)^{R-n}) \subset \bigcup_{\beta \in \phi_n(\xi)^{R-n}}\tilde{\Psi}_n(\beta)$, so we may assume that $\vert \gamma \vert > R-n$ and write $\gamma=\mu\alpha$ with $\vert \alpha \vert = R-n$. But then $\mu \in (\alpha.\xi)^n$, so
$$\gamma =\mu\alpha\in \alpha^{-1}.(\alpha.\xi)^n \subset \bigcup_{\beta \in \phi_n(\xi)^{R-n}}\tilde{\Psi}_n(\beta).$$
\end{proof}

\begin{definition}
Let $n < R$ and $B \in \mathcal{B}_R(\mathcal{C})$; we then define
$$B^{[n]}:=\phi_n(B)^{R-n}=\{\phi_n(B,\alpha) \mid \alpha \in B, \vert \alpha \vert \le R-n\} \in \mathcal{B}_{R-n}(\mathcal{C}^{[n]}).$$
\end{definition}

\begin{lemma}\label{lem:RadiusReduction}
If $n < R$, $B \in \mathcal{B}_R(\mathcal{C})$ and $\xi \in \mathcal{C}$, then
$$\xi^R=B \text{ if and only if } \phi_n(\xi)^{R-n}=B^{[n]}.$$
\end{lemma}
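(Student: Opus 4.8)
The plan is to prove the two implications separately. The backward direction will follow almost immediately from Lemma~\ref{lem:PsiLemma}, while the forward direction requires comparing the encodings $\phi_n(\xi,\cdot)$ and $\phi_n(B,\cdot)$ directly. Throughout I will use the length-preservation property of $\phi_n$ observed in the proof of Lemma~\ref{lem:NBallOfFull}, namely that $\vert \phi_n(\zeta,\alpha)\vert = \vert \alpha\vert$ for every $\zeta \in \mathcal{C}$ and $\alpha \in \zeta$: writing $\alpha = s_m \cdots s_1$, reducedness gives $s_{k+1}\ne s_k^{-1}$, so no two adjacent generators $[B_{k+1}\xleftarrow{s_{k+1}} B_k]$ and $[B_k \xleftarrow{s_k} B_{k-1}]$ of $\phi_n(\zeta,\alpha)$ cancel. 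Consequently $\phi_n(\zeta)^{R-n}=\{\phi_n(\zeta,\alpha)\mid \alpha\in\zeta,\ \vert\alpha\vert\le R-n\}$, a description I will use on both sides.

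For the backward implication, I would first note that $B\in\mathcal{B}_R(\mathcal{C})$ gives $B=B^R$ and $\phi_n(B)^{R-n}=B^{[n]}$ by definition. Applying Lemma~\ref{lem:PsiLemma} to $\xi$ and to $B$ yields $\xi^R=\bigcup_{\beta\in\phi_n(\xi)^{R-n}}\tilde{\Psi}_n(\beta)$ and $B=\bigcup_{\beta\in B^{[n]}}\tilde{\Psi}_n(\beta)$, so the hypothesis $\phi_n(\xi)^{R-n}=B^{[n]}$ forces $\xi^R=B$ at once.

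The hard part will be the forward implication. Assuming $\xi^R=B$, the goal is to show $\phi_n(\xi,\alpha)=\phi_n(B,\alpha)$ for every $\alpha$ with $\vert\alpha\vert\le R-n$; combined with the description of $\phi_n(\cdot)^{R-n}$ above and the fact that $\{\alpha\in\xi:\vert\alpha\vert\le R-n\}=\{\alpha\in B:\vert\alpha\vert\le R-n\}$ (since $R-n\le R$ and $B=\xi^R$), this gives $\phi_n(\xi)^{R-n}=\phi_n(B)^{R-n}=B^{[n]}$. Since $\phi_n(\xi,\alpha)$ is assembled from the intermediate balls $((s_k\cdots s_1).\xi)^n$ for $\alpha=s_m\cdots s_1$, it suffices to verify $(\beta.\xi)^n=(\beta.B)^n$ for every $\beta$ with $\vert\beta\vert\le R-n$; the suffixes $s_k\cdots s_1$ all have length $k\le m\le R-n$, so this covers every intermediate ball. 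The crux — and essentially the only computation needing care — is the estimate $\vert\gamma\beta\vert\le\vert\gamma\vert+\vert\beta\vert\le n+(R-n)=R$ for $\vert\gamma\vert\le n$: it ensures that $\gamma\beta\in\xi$ if and only if $\gamma\beta\in\xi^R=B$, which is exactly the equality $(\beta.\xi)^n=(\beta.B)^n$ upon unwinding $(\beta.\xi)^n=\{\gamma:\vert\gamma\vert\le n,\ \gamma\beta\in\xi\}$. This completes the forward direction.
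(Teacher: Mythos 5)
Your proof is correct and follows essentially the same route as the paper: the backward direction is the identical application of Lemma~\ref{lem:PsiLemma} (to $\xi$ and to $B$), and your forward direction is a careful elaboration of the chain of equalities the paper treats as immediate, namely the length preservation $\vert\phi_n(\xi,\alpha)\vert=\vert\alpha\vert$ and the identity $\phi_n(\xi,\alpha)=\phi_n(B,\alpha)$ for $\vert\alpha\vert\le R-n$. The estimate $\vert\gamma\beta\vert\le n+(R-n)=R$ giving $(\beta.\xi)^n=(\beta.B)^n$ is exactly the justification the paper leaves implicit.
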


\begin{proof}
Assuming $\xi^R=B$, we immediately see that
\begin{align*}
\phi_n(\xi)^{R-n} &=\{\phi_n(\xi,\alpha) \mid \alpha \in \xi\}^{R-n}=\{\phi_n(\xi,\alpha) \mid \alpha \in \xi, \vert \alpha \vert \le R-n\} \\
&= \{ \phi_n(B,\alpha) \mid \alpha \in B, \vert \alpha \vert \le R-n\}=B^{[n]}.
\end{align*}
On the other hand, if $\phi_n(\xi)^{R-n}=B^{[n]}$, then
$$\xi^R =  \bigcup_{\beta \in \phi_n(\xi)^{R-n}}\tilde{\Psi}_n(\beta)= \bigcup_{\beta \in \phi_n(B)^{R-n}}\tilde{\Psi}_n(\beta)=B^R=B$$
by Lemma~\ref{lem:PsiLemma}.
\end{proof}

\begin{corollary}\label{cor:StepReduction}
The allowed $R$-balls of the $n$-ball shift of a convex subshift $\theta \colon \mathbb{F} \act \Omega$ is given by
$$\mathcal{B}_R(\Omega^{[n]})=\{B^{[n]} \mid B \in \mathcal{B}_{R+n}(\Omega)\}.$$
Moreover, if $\Omega$ is $R$-step and $n < R$, then $\Omega^{[n]}$ is $(R-n)$-step. In particular, $\Omega^{[R-1]}$ is $1$-step.
\end{corollary}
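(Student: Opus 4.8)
The plan is to derive both assertions from Lemma~\ref{lem:RadiusReduction}, using the $1$-step-ness of the full $n$-ball shift (Lemma~\ref{lem:NBallOfFull}) as the one genuinely new ingredient for the step-reduction claim; throughout recall that $n \ge 1$ by the standing assumption.

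For the ball formula, I would argue as follows. Every element of $\Omega^{[n]}$ has the form $\phi_n(\xi)$ with $\xi \in \Omega$, so a typical allowed $R$-ball of $\Omega^{[n]}$ is $\phi_n(\xi)^R$. Setting $B:=\xi^{R+n} \in \mathcal{B}_{R+n}(\Omega)$ and applying Lemma~\ref{lem:RadiusReduction} with radius $R+n$ (legitimate since $n < R+n$) gives $\phi_n(\xi)^R = \phi_n(\xi)^{(R+n)-n} = B^{[n]}$, yielding $\mathcal{B}_R(\Omega^{[n]}) \subseteq \{B^{[n]} \mid B \in \mathcal{B}_{R+n}(\Omega)\}$. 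Conversely, any $B \in \mathcal{B}_{R+n}(\Omega)$ equals $\xi^{R+n}$ for some $\xi \in \Omega$, and the same instance of Lemma~\ref{lem:RadiusReduction} gives $B^{[n]} = \phi_n(\xi)^R \in \mathcal{B}_R(\Omega^{[n]})$, proving the reverse inclusion.

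For the step reduction, fix $n<R$ (so $R-n\ge 1$) and assume $\Omega$ is $R$-step; by Definition~\ref{def:FiniteType} this means $\xi\in\Omega$ iff $(\alpha.\xi)^R\in\mathcal B_R(\Omega)$ for all $\alpha\in\xi$. I want the analogous characterisation of $\Omega^{[n]}$ inside its ambient shift $\mathcal C(A^{[n\colon\Omega]})$ at radius $R-n$: namely that $\eta\in\Omega^{[n]}$ iff $(\beta.\eta)^{R-n}\in\mathcal B_{R-n}(\Omega^{[n]})$ for every $\beta\in\eta$. The forward implication is immediate from invariance. For the converse, suppose $\eta$ satisfies this local condition. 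The first --- and I expect only nontrivial --- point is to see that $\eta$ actually lies in the image $\mathcal C^{[n]}=\phi_n(\mathcal C)$, not merely in the larger space $\mathcal C(A^{[n\colon\Omega]})$. This is where Lemma~\ref{lem:NBallOfFull} enters: since $\mathcal C^{[n]}$ is $1$-step, and since for each $\beta\in\eta$ the hypothesis gives $(\beta.\eta)^1=((\beta.\eta)^{R-n})^1\in\mathcal B_1(\mathcal C^{[n]})$ (using $\mathcal B_{R-n}(\Omega^{[n]})\subseteq\mathcal B_{R-n}(\mathcal C^{[n]})$), the $1$-step generation forces $\eta\in\mathcal C^{[n]}$.

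Having placed $\eta$ in $\mathcal C^{[n]}$, write $\eta=\phi_n(\xi)$ with $\xi=\psi_n(\eta)\in\mathcal C$. The elements of $\eta$ are exactly the $\beta=\phi_n(\xi,\alpha)$ for $\alpha\in\xi$, with $\Psi_n(\beta)=\alpha$, and the $\Psi_n$-equivariance of $\psi_n$ gives $\beta.\eta=\phi_n(\alpha.\xi)$. Applying Lemma~\ref{lem:RadiusReduction} to $\alpha.\xi$ at radius $R$ converts $(\alpha.\xi)^R=B$ into $(\beta.\eta)^{R-n}=B^{[n]}$; combined with the ball formula already proved (with $R$ replaced by $R-n$, so that $\{B^{[n]}\mid B\in\mathcal B_R(\Omega)\}=\mathcal B_{R-n}(\Omega^{[n]})$) this shows $(\alpha.\xi)^R\in\mathcal B_R(\Omega)$ iff $(\beta.\eta)^{R-n}\in\mathcal B_{R-n}(\Omega^{[n]})$. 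Running this equivalence over the bijection $\alpha\leftrightarrow\beta$ turns the local condition on $\eta$ into the statement that $(\alpha.\xi)^R\in\mathcal B_R(\Omega)$ for all $\alpha\in\xi$, which by $R$-step-ness of $\Omega$ says precisely $\xi\in\Omega$, whence $\eta=\phi_n(\xi)\in\Omega^{[n]}$. This establishes that $\Omega^{[n]}$ is generated by $\mathcal B_{R-n}(\Omega^{[n]})$, i.e. is $(R-n)$-step, and the final assertion is the special case $n=R-1$.
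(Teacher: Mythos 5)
Your proof is correct and follows essentially the same route as the paper: the paper's own proof is a two-line sketch deriving the ball formula immediately from Lemma~\ref{lem:RadiusReduction} and the step reduction from Lemma~\ref{lem:RadiusReduction} together with Lemma~\ref{lem:NBallOfFull}, which is precisely the argument you have spelled out (Lemma~\ref{lem:RadiusReduction} at radius $R+n$ for the ball formula; Lemma~\ref{lem:NBallOfFull} to place $\eta$ in $\mathcal{C}^{[n]}$, then Lemma~\ref{lem:RadiusReduction} and the ball formula to transport the generation property). Your expansion fills in the details faithfully, including the correct use of the $\Psi_n$-equivariance of $\psi_n$ and the bijection $\alpha \leftrightarrow \phi_n(\xi,\alpha)$.
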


\begin{proof}
The first claim follows immediately from Lemma~\ref{lem:RadiusReduction}. For the second one, we must also refer to Lemma~\ref{lem:NBallOfFull}.
\end{proof}

We can now prove that every finite type convex subshift can be recoded into a $1$-step convex subshift.

\begin{proposition}\label{prop:QCto1Step}
If $\theta \colon \mathbb{F} \act \Omega$ is a convex subshift and $n \ge 1$, then $\theta^{[n]} \xrightarrow{\approx} \theta$.
In particular, any finite type convex subshift is directly dynamically equivalent to a $1$-step convex subshift.
\end{proposition}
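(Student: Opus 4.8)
The plan is to establish the direct dynamical equivalence $\theta^{[n]} \xrightarrow{\approx} \theta$ by verifying the three conditions (a), (b), (c) of the definition of direct dynamical equivalence, using the pair $(\psi_n, \Psi_n)$ where $\psi_n \colon \Omega^{[n]} \to \Omega$ is the $\Psi_n$-equivariant homeomorphism already constructed, and $\Psi_n \colon \mathbb{F}^{[n \colon \Omega]} \to \mathbb{F}$ is the group homomorphism sending $[B \xleftarrow{a} B']$ to $a$. Condition (a) is immediate, since the Construction establishes that $\psi_n$ (with inverse $\phi_n$) is a homeomorphism. The bulk of the work is to check the compatibility of the domains, namely (b) that distinct generators with the same $\Psi_n$-image have disjoint domains, and (c) that each domain $\Omega_a$ on the $\theta$-side is exactly the union $\bigcup_{\Psi_n(\beta)=a} \psi_n(\Omega^{[n]}_\beta)$ over all preimages.

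First I would unwind the domains explicitly. For a generator $\beta = [B(a) \xleftarrow{a} B] \in A^{[n \colon \Omega]} \cup (A^{[n \colon \Omega]})^{-1}$, the domain $\Omega^{[n]}_\beta = \{\eta \in \Omega^{[n]} \mid \beta^{-1} \in \eta\}$. Since $\eta = \phi_n(\xi)$ for a unique $\xi = \psi_n(\eta) \in \Omega$, the condition $\beta^{-1} \in \phi_n(\xi)$ translates, via the defining formula for $\phi_n(\xi, \cdot)$, into the statement that $a^{-1} \in \xi$ together with the matching of $n$-balls: $\xi^n = B(a)$ and $(a^{-1}.\xi)^n = B$ (reading $\beta^{-1} = [B \xleftarrow{a^{-1}} B(a)]$). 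Thus $\psi_n(\Omega^{[n]}_\beta) = \{\xi \in \Omega \mid a^{-1} \in \xi,\ \xi^n = B(a)\}$, which is a subset of $\Omega_{a^{-1}}$ cut out by fixing the starting $n$-ball to be $B(a)$. This is the key computation, and it makes both (b) and (c) transparent. For (b): if two generators $\beta = [B(a) \xleftarrow{a} B]$ and $\beta' = [B'(a) \xleftarrow{a} B']$ both map to $a$ under $\Psi_n$ but are distinct, then since they share the same letter $a$ they must differ in $B$ or $B(a)$; the domains $\psi_n(\Omega^{[n]}_\beta)$ and $\psi_n(\Omega^{[n]}_{\beta'})$ force incompatible $n$-balls at $\xi$ or at $a^{-1}.\xi$, hence are disjoint, and so are $\Omega^{[n]}_\beta, \Omega^{[n]}_{\beta'}$ since $\psi_n$ is a bijection. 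For (c): given $a \in A$, the preimages $\beta$ with $\Psi_n(\beta) = a$ range precisely over all admissible $[B(a) \xleftarrow{a} B]$, i.e. over all choices of starting $n$-ball $B(a)$ (with $B$ then determined as $(a^{-1}.\xi)^n$ for $\xi$ with $\xi^n = B(a)$); as $\xi$ ranges over all of $\Omega_{a^{-1}}$, its $n$-ball $\xi^n$ takes every value in $\mathcal{B}_n(\Omega)$ that is compatible with $a^{-1} \in \xi$, so the union $\bigcup_{\Psi_n(\beta)=a} \psi_n(\Omega^{[n]}_\beta)$ partitions exactly $\Omega_{a^{-1}}$. Here I must be careful with the direction convention: a generator $\beta$ acts with domain governed by $\beta^{-1} \in \eta$, so the target domain on the $\theta$-side is $\Omega_{\Psi_n(\beta)^{-1}}$, and one checks the union equals $\Omega_h$ for each $h = a^{\pm 1}$ uniformly by the same analysis applied to inverse generators.

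I expect the main obstacle to be bookkeeping around the two-sided nature of the domains and the precise translation between membership $\beta^{-1} \in \phi_n(\xi)$ and the $n$-ball conditions on $\xi$ and $a^{-1}.\xi$; in particular one must confirm that $\phi_n(\xi, \alpha)$ records the \emph{entire} sequence of intermediate $n$-balls along $\alpha$, so that the first generator read (the rightmost factor $[B_1 \xleftarrow{s_1} B_0]$ with $B_0 = \xi^n$) correctly encodes $\xi^n$, and that the equivariance $\phi_n(\alpha.\xi) = \phi_n(\xi, \alpha).\phi_n(\xi)$ holds so that the condition on $\beta \in \eta$ versus $\beta^{-1} \in \eta$ is handled consistently across all generators. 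Once the explicit description $\psi_n(\Omega^{[n]}_\beta) = \{\xi \in \Omega_{a^{-1}} \mid \xi^n = B(a)\}$ is in hand, conditions (b) and (c) reduce to the elementary observation that the sets $\{\xi \in \Omega_{a^{-1}} \mid \xi^n = B\}$, as $B$ ranges over $\mathcal{B}_n(\Omega)$, form a partition of $\Omega_{a^{-1}}$. Finally, the ``in particular'' assertion follows by combining the first part with Corollary~\ref{cor:StepReduction}: if $\Omega$ is $R$-step, then $\Omega^{[R-1]}$ is $1$-step, and $\theta^{[R-1]} \xrightarrow{\approx} \theta$, exhibiting $\theta$ as directly dynamically equivalent to the $1$-step convex subshift $\theta^{[R-1]}$.
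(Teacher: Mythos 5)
Your overall strategy --- using the pair $(\psi_n,\Psi_n)$ and verifying conditions (a), (b), (c) of a direct dynamical equivalence --- is exactly the paper's, and your unwinding of the domain of a generator is the right computation. However, as written the verification has a genuine gap: conditions (b) and (c) in the definition of direct dynamical equivalence quantify over \emph{all} group elements, not just generators. Condition (b) demands $\Omega^{[n]}_{\beta}\cap\Omega^{[n]}_{\beta'}=\emptyset$ for \emph{every} pair of distinct $\beta,\beta'\in\mathbb{F}^{[n\colon\Omega]}$ with $\Psi_n(\beta)=\Psi_n(\beta')$ (including, say, words whose $\Psi_n$-image involves cancellation, such as $\beta'=\gamma'^{-1}\gamma\beta$ with $\Psi_n(\gamma)=\Psi_n(\gamma')$), and condition (c) demands $\Omega_h=\bigcup_{\Psi_n(\gamma)=h}\psi_n\bigl(\Omega^{[n]}_\gamma\bigr)$ for \emph{every} $h\in\mathbb{F}$, where $\Psi_n^{-1}(h)$ contains many non-generator words. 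For a general partial action, checking generators does not suffice; to reduce to generators here you would need to invoke that both actions are semi-saturated (domains of reduced words decompose through generator domains, a consequence of right-convexity) and then induct on word length --- none of which appears in your proposal. The paper instead treats arbitrary elements directly, and this is actually no harder: for (b), two distinct elements with nonempty domains and equal $\Psi_n$-images are necessarily ball-coherent words recording sequences $(B_k)$ and $(B_k')$ over the same letter sequence $s_m\cdots s_1$, so $B_k\ne B_k'$ for some $k$, and then $\bigl((s_k\cdots s_1).\psi_n(\eta)\bigr)^n=B_k\ne B_k'=\bigl((s_k\cdots s_1).\psi_n(\eta')\bigr)^n$ for any $\eta,\eta'$ in the respective domains; for (c), given $\alpha\in\mathbb{F}$ and $\xi\in\Omega_\alpha$, the element $\gamma$ with $\gamma^{-1}=\phi_n(\xi,\alpha^{-1})$ satisfies $\Psi_n(\gamma)=\alpha$ and $\xi=\psi_n(\phi_n(\xi))\in\psi_n\bigl(\Omega^{[n]}_\gamma\bigr)$.

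There is also a secondary slip that you should repair even at the generator level: your displayed identity $\psi_n(\Omega^{[n]}_\beta)=\{\xi\in\Omega\mid a^{-1}\in\xi,\ \xi^n=B(a)\}$ drops the second ball condition $(a^{-1}.\xi)^n=B$, which you had correctly derived in the preceding sentence. Taken at face value this formula would make condition (b) \emph{false}: the distinct generators $[B(a)\xleftarrow{a}B]$ and $[B(a)\xleftarrow{a}B']$ with $B\ne B'$ (same letter, same target ball) would then have identical images under $\psi_n$, hence non-disjoint domains. Your disjointness argument implicitly uses the correct two-ball description (``incompatible $n$-balls at $\xi$ or at $a^{-1}.\xi$''), but your final reduction of (b) and (c) to the statement that the sets $\{\xi\mid a^{-1}\in\xi,\ \xi^n=B\}$ partition $\{\xi\mid a^{-1}\in\xi\}$ rests on the flawed formula; the correct partition is the finer one indexed by \emph{pairs} of balls. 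The concluding step for the ``in particular'' assertion, via Proposition~\ref{prop:DirQCImpliesQC} and Corollary~\ref{cor:StepReduction}, is fine and agrees with the paper.
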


\begin{proof}
We have already established that the map $\psi_n$ is a $\Psi_n$-equivariant homeomorphism $\Omega^{[n]} \to \Omega$. Now assume that
$$\beta=[B_m \xleftarrow{s_m} B_{m-1}] \cdot [B_{m-1} \xleftarrow{s_{m-1}} B_{m-2}] \cdots [B_1 \xleftarrow{s_1} B_0]$$
and
$$\beta'=[B_m' \xleftarrow{s_m} B_{m-1}'] \cdot [B_{m-1}' \xleftarrow{s_{m-1}} B_{m-2}'] \cdots [B_1' \xleftarrow{s_1} B_0']$$
are distinct elements of $\mathbb{F}^{[n \colon \Omega]}$ satisfying $\Psi_n(\beta)=\Psi_n(\beta')$. Then $B_k \ne B_k'$ for some $k$, and
$$\big((s_k \cdots s_1).\psi_n(\eta) \big)^n=B_k \ne B_k'= \big((s_k \cdots s_1).\psi_n(\eta') \big)^n$$
for all $\eta \in \Omega^{[n]}_{\beta}$, $\eta' \in
\Omega^{[n]}_{\beta'}$, hence $\Omega^{[n]}_{\beta} \cap
\Omega^{[n]}_{\beta'} = \emptyset$ as desired. Finally, given any
$\alpha \in \mathbb{F}$ and $\xi \in \Omega_\alpha$, we can consider
$\phi_n(\xi) \in \Omega^{[n]}$. Then $\beta := \phi_n(\xi,\alpha)
\in \phi_n(\xi)$ satisfies $\Psi_n(\beta)=\alpha$ and
$$\xi=\psi_n(\phi_n(\xi)) \in \psi_n(\Omega_\beta^{[n]}),$$
hence $\Omega_\alpha = \bigcup_{\Psi_n(\gamma)=\alpha}
\psi_n (\Omega^{[n]}_\gamma )$. We conclude that $\theta^{[n]}
\xrightarrow{\approx} \theta$. The second part of the claim now
follows directly from Proposition~\ref{prop:DirQCImpliesQC} and
Corollary~\ref{cor:StepReduction}.
\end{proof}

One issue we have not yet dealt with is the identification of the $n$-ball shift and the $n$-fold $1$-ball shift of a given convex subshift.

\begin{proposition}\label{prop:Nfold1isN}
Consider any convex subshift $\theta \colon \mathbb{F} \act \Omega$. Then the $n$-ball convex subshift $\theta^{[n]}$ is conjugate to the $n$-fold $1$-ball convex subshift $\theta^{[1] \cdots [1]}$.
\end{proposition}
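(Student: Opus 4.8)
The plan is to induct on $n$, the case $n=1$ being the identity. The engine of the induction is the single \emph{fusion identity}
$$\big(\theta^{[n]}\big)^{[1]} \;\cong\; \theta^{[n+1]} \qquad \text{(conjugacy)},$$
together with the fact that the $1$-ball construction sends conjugate convex subshifts to conjugate convex subshifts. Granting both, and writing $\theta^{[1]\cdots[1]}$ for the $k$-fold iterate with the number of factors clear from context, the inductive step reads
$$\theta^{[n+1]} \;\cong\; \big(\theta^{[n]}\big)^{[1]} \;\cong\; \big(\theta^{[1]\cdots[1]}\big)^{[1]} \;=\; \theta^{[1]\cdots[1]},$$
where the first conjugacy is the fusion identity, the second is the induction hypothesis $\theta^{[n]}\cong\theta^{[1]\cdots[1]}$ ($n$ factors) transported through one more $1$-ball shift, and the final equality merely records that an $n$-fold iterate followed by one more $1$-ball shift is the $(n{+}1)$-fold iterate.

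For the fusion identity I would first build an alphabet bijection $\Theta\colon (A^{[n\colon\Omega]})^{[1\colon\Omega^{[n]}]} \to A^{[n+1\colon\Omega]}$. By Lemma~\ref{lem:RadiusReduction} with $R=n+1$ and Corollary~\ref{cor:StepReduction}, the assignment $B \mapsto B^{[n]}$ is a bijection from $\mathcal{B}_{n+1}(\Omega)$ onto $\mathcal{B}_1(\Omega^{[n]})$. Hence a generator $[(\sigma.\eta)^1 \xleftarrow{\sigma} \eta^1]$ of the $1$-ball shift of $\theta^{[n]}$ — whose data are two $1$-balls $\eta^1,(\sigma.\eta)^1$ of $\Omega^{[n]}$ and a letter $\sigma=[(a.\xi)^n \xleftarrow{a} \xi^n]\in A^{[n\colon\Omega]}$ recording the underlying $a\in A$ — is exactly the data of a generator $[B_1 \xleftarrow{a} B_0]\in A^{[n+1\colon\Omega]}$, where $B_0,B_1\in\mathcal{B}_{n+1}(\Omega)$ are the unique $(n{+}1)$-balls with $B_0^{[n]}=\eta^1$ and $B_1^{[n]}=(\sigma.\eta)^1$. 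I let $\Psi\colon \mathbb{F}^{[n][1]}\to \mathbb{F}^{[n+1\colon\Omega]}$ denote the group isomorphism induced by $\Theta$; a check on generators gives the compatibility $\Psi_{n+1}\circ\Psi = \Psi_n\circ\Psi_1$ (with $\Psi_1$ the projection of the $1$-ball shift of $\Omega^{[n]}$).

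The crux is then the purely combinatorial identity
$$\Psi\big(\phi_1(\phi_n(\xi))\big) \;=\; \phi_{n+1}(\xi), \qquad \xi\in\Omega,$$
where $\Psi$ is applied elementwise to subsets of $\mathbb{F}^{[n][1]}$ and $\phi_1$ is the $1$-ball map of $\Omega^{[n]}$. I would verify this on each element $\phi_1(\phi_n(\xi),\mu)$ with $\mu=\phi_n(\xi,\alpha)=\sigma_m\cdots\sigma_1$ and $\alpha=s_m\cdots s_1$: expanding $\phi_1(\phi_n(\xi),\mu)=[C_m \xleftarrow{\sigma_m} C_{m-1}]\cdots[C_1 \xleftarrow{\sigma_1} C_0]$ and using the $\Psi_n$-equivariance $\big(\sigma_k\cdots\sigma_1\big).\phi_n(\xi)=\phi_n\big((s_k\cdots s_1).\xi\big)$ to identify each intermediate $1$-ball $C_k=\big((\sigma_k\cdots\sigma_1).\phi_n(\xi)\big)^1$ with $\big((s_k\cdots s_1).\xi\big)^{n+1}$ under $B\mapsto B^{[n]}$, the definition of $\Theta$ collapses the product to $\phi_{n+1}(\xi,\alpha)$. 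Once this identity is in hand, the conjugacy is essentially free: setting $\varphi:=\Psi|_{\Omega^{[n][1]}}$ (elementwise), the identity shows $\varphi$ is a homeomorphism of $\Omega^{[n][1]}=\phi_1(\phi_n(\Omega))$ onto $\Omega^{[n+1]}=\phi_{n+1}(\Omega)$, and since $\varphi$ is just the restriction of an elementwise group isomorphism, the domain conditions $\beta^{-1}\in\zeta \iff \Psi(\beta)^{-1}\in\varphi(\zeta)$ and the equivariance $\varphi(\beta.\zeta)=\varphi(\zeta)\cdot\Psi(\beta)^{-1}=\Psi(\beta).\varphi(\zeta)$ are immediate. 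Thus $(\varphi,\Psi)$ is a conjugacy in the sense of Definition~\ref{def:DynConj}.

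It remains to record the functoriality needed in the inductive step. Every conjugacy arising above is \emph{generator-preserving}: its group isomorphism carries the alphabet (with signs) bijectively onto the target alphabet (with signs), respecting inverses and hence preserving word length. One checks that such a conjugacy $(\chi,\Xi)\colon \vartheta\to\vartheta'$ is automatically implemented elementwise, $\chi(\xi)=\Xi(\xi)$, so that $\chi$ sends $1$-balls to $1$-balls via $\Xi$; relabelling balls and letters through $\Xi$ then produces a generator-preserving conjugacy $\vartheta^{[1]}\to\vartheta'^{[1]}$, allowing the induction to proceed. The main obstacle is the elementwise identity $\Psi(\phi_1(\phi_n(\xi)))=\phi_{n+1}(\xi)$: it is the single point where the three ball-encodings $\phi_1,\phi_n,\phi_{n+1}$ must be reconciled, and it is precisely what upgrades the abstract alphabet bijection $\Theta$ from a relabelling of symbols to a genuine conjugacy of dynamical systems.
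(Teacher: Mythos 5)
Your proposal is correct, but it runs the induction in the opposite direction from the paper, and this is a genuine structural difference. Your fusion identity peels off the \emph{outermost} ball shift, $\theta^{[n+1]} \cong \bigl(\theta^{[n]}\bigr)^{[1]}$, and your induction keeps $\theta$ fixed; this forces you to transport the induction hypothesis through one more $1$-ball shift, i.e.\ to prove a functoriality lemma (conjugate convex subshifts have conjugate $1$-ball shifts), which you justify via the observation that a generator-preserving conjugacy is implemented elementwise by its group isomorphism. The paper instead peels off the \emph{innermost} ball shift, proving $\theta^{[n+1]} \cong \theta^{[1][n]} = \bigl(\theta^{[1]}\bigr)^{[n]}$ via the alphabet bijection $\Phi([B_2 \xleftarrow{s} B_1]) = \bigl[\phi_1(B_2)^n \xleftarrow{\phi_1(B_1,s)} \phi_1(B_1)^n\bigr]$ and the identity $\Phi(\phi_{n+1}(B,\alpha)) = \phi_n(\phi_1(B),\phi_1(B,\alpha))$; since the proposition is quantified over \emph{all} convex subshifts, the inductive hypothesis is then simply applied to the subshift $\theta^{[1]}$ rather than to $\theta$, and no functoriality is needed at all. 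Both routes hinge on an entirely analogous combinatorial reconciliation of the encodings (your $\Psi(\phi_1(\phi_n(\xi))) = \phi_{n+1}(\xi)$ versus the paper's identity above), so the computational core is the same; the paper's bookkeeping is leaner, while your approach yields reusable byproducts — in particular the fact that any conjugacy of convex subshifts acts elementwise (recover $\xi$ from its domains: $\alpha \in \xi \iff \xi \in \Omega_{\alpha^{-1}}$, and use that $\chi(\Omega_g) = \Omega'_{\Xi(g)}$), and hence that the ball-shift construction is a conjugacy invariant, statements of independent interest that the paper never isolates.
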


\begin{proof}
We will show that $\theta^{[n+1]} \cong \theta^{[1][n]}$ for all $n \ge 1$ from which the claim follows inductively. We first define a map $\Phi \colon A^{[n+1\colon \Omega]} \to A^{[1\colon \Omega][n \colon \Omega^{[1]}]}$ by
$$\Phi([B_2 \xleftarrow{s} B_1]) := \big[ B_2^{[1]} \xleftarrow{B_2^1 \xleftarrow{s} B_1^1} B_1^{[1]} \big]=\big[\phi_1(B_2)^n \xleftarrow{\phi_1(B_1,s)} \phi_1(B_1)^n \big],$$
which is easily seen to be a bijection by Lemma~\ref{cor:StepReduction}, so we obtain an induced isomorphism $\Phi \colon \mathbb{F}^{[n+1\colon \Omega]} \to \mathbb{F}^{[1\colon \Omega][n\colon \Omega^{[1]}]}$. The pair $(\varphi,\Phi)$, where $\varphi \colon \mathcal{C}(A^{[n+1 \colon \Omega]}) \to \mathcal{C}(A^{[1\colon \Omega][n\colon \Omega^{[1]}]})$ is given by
$$\varphi(\eta):=\{\Phi(\beta) \mid \beta \in \eta\},$$
then defines a conjugacy. In order to see that it restricts to a conjugacy $\Omega^{[n+1]} \to \Omega^{[1][n]}$, it suffices to check that
$$\varphi(B^{[n+1]})=B^{[1][n]}$$
for all $R \ge 1$ and $B \in \mathcal{B}_{R+n+1}(\Omega)$ due to Corollary~\ref{cor:StepReduction}. Recalling that
$$B^{[n+1]} = \phi_{n+1}(B)^R=\{\phi_{n+1}(B,\alpha) \mid \alpha \in B, \vert \alpha \vert \le R\}$$
and
$$B^{[1][n]}=\phi_n(\phi_1(B)^{R+n})^R = \{\phi_n(\phi_1(B),\phi_1(B,\alpha)) \mid \alpha \in B, \vert \alpha \vert \le R\},$$
in fact we only have to verify that
$$\Phi(\phi_{n+1}(B,\alpha))=\phi_n(\phi_1(B),\phi_1(B,\alpha))$$
for all $\alpha \in B$ with $\vert \alpha \vert \le R$. Writing $\alpha = s_m \cdots s_1$ and
$$t_k:=\big[(s_k \cdots s_1.B)^1 \xleftarrow{s_k} (s_{k-1} \cdots s_1.B)^1\big]=\phi_1(s_{k-1} \cdots s_1.B,s_k)$$
so that
$$\phi_1(s_k \cdots s_1.B)=t_k \cdots t_1.\phi_1(B) \andspace \phi_1(B,\alpha)=t_m \cdots t_1,$$
we then see that
\begin{align*}
&\Phi(\phi_{n+1}(B,\alpha)) = \Phi\Big( \big[(s_m \cdots s_1.B)^{n+1} \xleftarrow{s_m} (s_{m-1} \cdots s_1.B)^{n+1}\big] \cdots \big[(s_1.B)^{n+1} \xleftarrow{s_1} B^{n+1} \big] \Big) \\
&=\big[\phi_1(s_m \cdots s_1.B)^n \xleftarrow{\phi_1(s_{m-1} \cdots s_1.B,s_m)} \phi_1(s_{m-1} \cdots s_1.B)^n  \big] \cdots \big[\phi_1(s_1.B)^n \xleftarrow{\phi_1(B,s_1)} \phi_1(B)^n \big] \\
&= \big[ (t_m \cdots t_1.\phi_1(B))^n \xleftarrow{t_m} (t_{m-1} \cdots t_1.\phi_1(B))^n\big] \cdots \big[(t_1.\phi_1(B))^n \xleftarrow{t_1} \phi_1(B)^n \big] \\
&=\phi_n(\phi_1(B),t_m \cdots t_1)=\phi_n(\phi_1(B),\phi_1(B,\alpha))
\end{align*}
as desired.
\end{proof}

The following theorem explains the precise relationship between the partial actions $\theta^{(E,C)}$ and $\theta^{(E_n,C^n)}$, adding a dynamical dimension to \cite[Theorem 8.3]{AE}. Note that while \cite[Theorem 5.7]{AE} implies
$$\Lab(E_n,C^n) \cong \Lab(E,C) \andspace \mathcal{O}(E_n,C^n) \cong \mathcal{O}(E,C)$$
for any finite bipartite graph and $n \ge 1$, the corresponding result for $\mathcal{O}^r$ is not immediate.

\begin{theorem}\label{thm:NBallIsNGraph}
If $(E,C)$ is a finite bipartite graph and $n \ge 1$, then $\theta^{(E_n,C^n)}$ is conjugate to the $n$-ball convex subshift $(\theta^{(E,C)})^{[n]}$. In particular, there are base-preserving isomorphisms
$$\Lab(E_n,C^n) \cong \Lab(E,C), \quad \mathcal{O}(E_n,C^n) \cong \mathcal{O}(E,C) \andspace \mathcal{O}^r(E_n,C^n) \cong \mathcal{O}^r(E,C)$$
induced from the direct dynamical equivalence 
$$\theta^{(E_n,C^n)} \cong (\theta^{(E,C)})^{[n]} \xrightarrow{\approx} \theta^{(E,C)}.$$
The isomorphisms on $L_K^{\textup{ab}}$ and $\mathcal{O}$ coincides with the ones induced by $\Phi_n$ as defined in \cite{AE}. Moreover, there is a canonical bijective correspondence 
$$E_n^0 \ni v \mapsto B(v) \in \mathcal{B}_n(\Omega(E,C)),$$
and the homeomorphism $\Omega(E_n,C^n) \to \Omega(E,C)^{[n]} \to \Omega(E,C)$ restricts to a homeomorphism
$$\Omega(E_n,C^n)_v \to \{ \xi \in \Omega(E,C) \mid \xi^n=B(v)\}$$
for all $v \in E_n^0$. 
\end{theorem}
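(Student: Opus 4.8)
The plan is to reduce the statement to the case $n=1$ and bootstrap. All conjugacies produced below are induced by bijections of the underlying alphabets (possibly inverting letters), and for such conjugacies the $1$-ball shift construction is manifestly functorial, since $A^{[1\colon\Omega]}$, $\phi_1$ and $\Psi_1$ are built from $1$-balls and the partial action alone; thus a conjugacy $\theta\cong\theta'$ of this type induces one $\theta^{[1]}\cong\theta'^{[1]}$. Granting the base case $\theta^{(E_1,C^1)}\cong(\theta^{(E,C)})^{[1]}$ for every finite bipartite $(E,C)$, and recalling from Definition~\ref{def:Finftyandothers} that $(E_{n+1},C^{n+1})$ is the $1$-graph of $(E_n,C^n)$, an induction gives $\theta^{(E_{n+1},C^{n+1})}\cong(\theta^{(E_n,C^n)})^{[1]}\cong(\theta^{(E,C)})^{[1]\cdots[1]}$ with $n{+}1$ factors; Proposition~\ref{prop:Nfold1isN} then identifies this iterated $1$-ball shift with $(\theta^{(E,C)})^{[n+1]}$. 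Composing with the direct dynamical equivalence $(\theta^{(E,C)})^{[n]}\xrightarrow{\approx}\theta^{(E,C)}$ of Proposition~\ref{prop:QCto1Step} produces the displayed chain $\theta^{(E_n,C^n)}\cong(\theta^{(E,C)})^{[n]}\xrightarrow{\approx}\theta^{(E,C)}$.

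For the base case I would exhibit the conjugacy explicitly as an isomorphism of the generating combinatorial data. On vertices, by conditions (c1)--(c2) of Definition~\ref{def:DynamicalPicture} every $1$-ball of $\Omega(E,C)$ is either a \emph{source-type} ball $\{1\}\cup s^{-1}(w)$ with $w\in E^{0,1}=E_1^{0,0}$, or a \emph{range-type} ball $\{1\}\cup\{x_1^{-1},\dots,x_{k_u}^{-1}\}$ with $u\in E^{0,0}$ and $x_j\in X_j^u$, which I attach respectively to $w$ and to $v(x_1,\dots,x_{k_u})\in E_1^{0,1}$; this is the asserted bijection $E_1^0\ni v\mapsto B(v)\in\mathcal{B}_1(\Omega(E,C))$. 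On edges I would match $\alpha^{x_i}(x_1,\dots,\widehat{x_i},\dots,x_{k_u})$ with the letter $[(x_i.\xi)^1\xleftarrow{x_i}\xi^1]$, where $\xi$ is any configuration with $\xi^1=B(s(x_i))$ and $(x_i.\xi)^1=B(v(x_1,\dots,x_{k_u}))$, checking that such a $\xi$ exists and that the symbol is independent of the choice. The induced group isomorphism $\Psi\colon\mathbb{F}(E_1^1)\to\mathbb{F}^{[1\colon\Omega(E,C)]}$ must send $\alpha^{x_i}(\dots)$ to the \emph{inverse} of this letter; this inversion is forced by the bipartite convention, under which the source layer $E_1^{0,1}$ of $E_1$ consists of range-type balls, so that forward $E_1$-edges and positive letters act in opposite directions. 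Decoding the source, range and separation of Definition~\ref{def:Finftyandothers} against the source-/range-type dichotomy then shows that $\Psi$ carries $s_1,r_1$ and the partition $X(x_i)$ onto the corresponding data of $(\theta^{(E,C)})^{[1]}$, and one checks that $\Psi_1\circ\Psi$ sends $\alpha^{x_i}(\dots)$ to $x_i^{-1}$. Defining $\varphi\colon\Omega(E_1,C^1)\to\Omega(E,C)^{[1]}$ by $\Psi$-transport of local configurations gives the conjugacy.

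The algebra isomorphisms then follow formally: a conjugacy induces base-preserving isomorphisms of all three algebras, and so does a direct dynamical equivalence, since by Proposition~\ref{prop:DirQCImpliesQC} it is in particular a dynamical equivalence, whence the transformation groupoids are isomorphic and Remark~\ref{rem:GroupoidAlgebras} yields isomorphic full \emph{and reduced} crossed products (and Steinberg algebras); this is exactly what supplies the $\mathcal{O}^r$ statement, with the explicit formulas of Remark~\ref{rem:DirDynEq}. Agreement with the maps induced by $\Phi_n$ of \cite{AE} on $\Lab$ and $\mathcal{O}$ is verified by evaluating both isomorphisms on the canonical generators $\{v,e\}$. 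For the last assertion, I would invoke the correspondence established in the proof of Lemma~\ref{lem:NBallOfFull} between the $1$-ball $\eta^1$ of $\eta=\phi_n(\xi)\in\mathcal{C}^{[n]}$ and the $n$-ball $\xi^n$: one has $\psi_n(\eta)^n=B$ precisely when $\eta^1$ is the $1$-ball associated with $B$. Under the conjugacy, $\Omega(E_n,C^n)_v$ maps onto the set of $\eta\in\Omega(E,C)^{[n]}$ whose $1$-ball is the one associated with $B(v)$, and applying $\psi_n$ sends this set onto $\{\xi\in\Omega(E,C)\mid \xi^n=B(v)\}$, as claimed.

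I expect the main obstacle to be the bookkeeping in the base case: verifying that the intricate data $\alpha^{x_i}(x_1,\dots,\widehat{x_i},\dots,x_{k_u})$, $v(x_1,\dots,x_{k_u})$ and the partition $C^1$ of Definition~\ref{def:Finftyandothers} correspond cleanly, under the source-/range-type dichotomy, to the transition letters, $1$-balls and separation of the $1$-ball shift, and in particular that $\Psi$ is genuinely equivariant once the direction reversal forced by the bipartite convention is accounted for. Once this single combinatorial matching is pinned down, the induction, the passage to crossed products, and the description of $\Omega(E_n,C^n)_v$ are all formal consequences of the results already established in this section.
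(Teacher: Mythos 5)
Your proposal follows essentially the same route as the paper's proof: reduce to $n=1$ via Proposition~\ref{prop:Nfold1isN}, then exhibit the base-case conjugacy by matching $E_1^0$ with the source-/range-type $1$-balls and matching each edge $\alpha^{x_i}(x_1,\dots,\widehat{x_i},\dots,x_{k_u})$ with the \emph{inverse} of the transition letter (the paper's map $\Phi$ is exactly your $\Psi^{-1}$), after which the algebra isomorphisms and the description of $\Omega(E_n,C^n)_v$ follow from Propositions~\ref{prop:DirQCImpliesQC}, Remarks~\ref{rem:GroupoidAlgebras} and \ref{rem:DirDynEq}, and the ball correspondence of Lemma~\ref{lem:NBallOfFull}, just as in the paper. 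Your explicit remark on functoriality of the $1$-ball construction under alphabet-induced conjugacies makes precise a step the paper leaves implicit in its reduction to $n=1$, but it is the same argument.
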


\begin{proof}
Set $A:=E^1$ and $\Omega:=\Omega(E,C)$ for notational simplicity; by Proposition~\ref{prop:Nfold1isN}, it suffices to verify our claims for $n=1$. Observe first that $\mathcal{B}_1(\Omega)=\mathcal{B}_1^0(\Omega) \sqcup \mathcal{B}_1^1(\Omega)$, where
$$\mathcal{B}_1^0(\Omega) := \big\{\{1\} \sqcup \{x_1,\ldots,x_{k_u}\}^{-1} \mid u \in E^{0,0}, x_j \in X_j^u \big\} \andspace \mathcal{B}_1^1(\Omega):=\{\{1\} \sqcup s^{-1}(v) \mid v \in E^{0,1}\},$$
using the standard notation. The alphabet $A^{[1\colon \Omega]}$ is therefore given by
$$A^{[1\colon \Omega]}=\big\{ \big[\{x_1,\ldots,x_{k_u}\}^{-1} \xleftarrow{x_i} s^{-1}(s(x_i)) \big] \mid u \in E^{0,0}, x_j \in X_j^u, i=1,\ldots,k_u \big\}$$
and we can define an isomorphism $\Phi \colon \mathbb{F}^{[1\colon \Omega]} \to \mathbb{F}(E_1^1)$ by
$$\big[\{x_1,\ldots,x_{k_u}\}^{-1} \xleftarrow{x_i} s^{-1}(s(x_i)) \big] \mapsto \alpha^{x_i}(x_1,\ldots,\widehat{x_i},\ldots,x_{k_u})^{-1}.$$
Since both convex shifts are $1$-step, we simply have to check that $\Phi$ induces a bijection
$$\mathcal{B}_1(\Omega^{[1]}) \to \mathcal{B}_1(\Omega(E_1,C^1)), \quad B \mapsto \Phi(B).$$
We first build some notation: Given any $u \in E^{0,0}$ and $(x_1,\ldots,x_{k_u}) \in \prod_{j=1}^{k_u}X_j^u$, we define
$$B_1^0(x_1,\ldots,x_{k_u}):=\Big\{\big[\{1\} \sqcup s^{-1}(s(x_i)) \xleftarrow{x_i^{-1}}\{1\} \sqcup\{x_1,\ldots,x_{k_u}\}^{-1} \big] \colon i=1,\ldots,k_u \Big\}.$$
Also, for all $x_i \in X_i^u$, we set
$$Z(x_i):=\Big\{\{x_1,\ldots,\widehat{x_i},\ldots,x_{k_u}\}^{-1} \mid x_j \in X_j^u, j \ne i  \Big\}$$
and define
$$B_1^1(v,\{z(e)\}_{e \in s^{-1}(v)}):=\Big\{\big[\{1\} \sqcup z(e) \xleftarrow{e} \{1\} \sqcup s^{-1}(v)\big] \colon e \in s^{-1}(v) \Big\}$$
for any $v \in E^{0,1}$ and $z(e) \in Z(e)$. It is easily checked that $\mathcal{B}_1(\Omega^{[1]}) = \mathcal{B}_1^0(\Omega^{[1]}) \sqcup \mathcal{B}_1^1(\Omega^{[1]})$, where
$$\mathcal{B}_1^0(\Omega^{[1]}) := \Big\{ B_1^0(x_1,\ldots,x_{k_u}) \mid u \in E^{0,0}, (x_1,\ldots,x_{k_u}) \in \prod_{j=1}^{k_u} X_j^u \Big\}$$
and
$$\mathcal{B}_1^1(\Omega^{[1]}):=\Big\{B_1^1(v,\{z(e)\}_{e \in s^{-1}(v)}) \mid v \in E^{0,1}, z(e) \in Z(e) \Big\}.$$
Now observe that
\begin{align*}
\Phi(B_1^0(x_1,\ldots,x_{k_u})) &=\{1\} \sqcup \big\{\alpha^{x_i}(x_1,\ldots,\widehat{x_i},\ldots,x_{k_u}) \mid i = 1,\ldots,k_u \big\} \\
&=\{1\} \sqcup s_1^{-1}(v(x_1,\ldots,x_{k_u}))
\end{align*}
hence $\Phi$ maps $B_1^0(\Omega^{[1]})$ onto the $1$-balls of type
(c1) (cf. Definition \ref{def:DynamicalPicture}). Likewise, we see that
$$\Phi(B_1^1(v,\{z(e)\}_{e \in s^{-1}(v)}))=\{1\} \sqcup \big\{ \alpha^e(z(e))^{-1} \mid e \in s^{-1}(v) \big\},$$
so $\Phi$ maps $B_1^1(\Omega^{[1]})$ onto the $1$-balls of type (c2). We conclude that the pair $(\varphi,\Phi)$ with $\varphi \colon \Omega^{[1]} \to \Omega(E_1,C^1)$ given by
$$\varphi(\xi):=\{\Phi(\alpha) \mid \alpha \in \xi\}$$
is a conjugacy of $\mathbb{F}^{[1\colon \Omega]} \act \Omega^{[1]}$ and $\mathbb{F}(E_1^1) \act \Omega(E_1,C^1)$.
Consequently, there is a direct dynamical equivalence $\theta^{(E_n,C^n)} \cong (\theta^{(E,C)})^{[n]} \xrightarrow{\approx} \theta^{(E,C)}$, which induces 
base-preserving isomorphisms by Remark \ref{rem:GroupoidAlgebras}. It follows from Remark~\ref{rem:DirDynEq} that the isomorphisms on $\Lab$ and $\mathcal{O}$ are exactly the ones of \cite{AE}. \medskip \\
We now turn to the second part of the claim and set
$$B(v):=\left\{\begin{array}{cl}
\{1\} \sqcup s^{-1}(v) & \If v \in E_1^{0,0}=E^{0,1} \\
\{1\} \sqcup \{x_1,\ldots,x_{k_u}\}^{-1} & \If v=v(x_1,\ldots,x_{k_u}) \in E_1^{0,1}
\end{array}\right.$$
for every $v \in E_1^0$; this clearly defines a bijective
correspondence between $E_1^0$ and $\mathcal{B}_1(\Omega(E,C))$.
Note that the homeomorphism $\Omega(E_1,C^1) \to \Omega(E,C)$ is
induced by the group homomorphism $\mathbb{F}(E_1^1) \to
\mathbb{F}(E^1)$ given by $\alpha^{x_i}(x_1,\ldots,\widehat{x_i},\ldots,x_{k_u}) \mapsto x_i^{-1}$, and it is easily checked that it
maps $\Omega(E_1,C^1)_v$ onto $\{\xi \in \Omega(E,C) \mid
\xi^1=B(v)\}$.
\end{proof}

\begin{remark}\label{rem:Omegav}
Theorem~\ref{thm:NBallIsNGraph} shows that for $v \in E_n^0$ with $n \ge 1$, the subspace $\Omega(E,C)_v$ may be described as $\Omega(E,C)_v = \{ \xi \in \Omega(E,C) \mid \xi^n=B(v)\}$.
\end{remark}

Any partial action on a topological space may be viewed as the restriction of a global action \cite[Theorem 2.5]{Abadie}. The globalisation is not Hausdorff in general \cite[Proposition 2.10]{Abadie}, but whenever it is, one may consider the relationship between the $C^*$-algebras of the partial action and its globalisation. However, it is also of natural interest to study restrictions of \textit{partial actions}, in particular in cases where there is no Hausdorff globalisation, and they play a natural role in our main theorem about convex subshifts.

\begin{definition}[{See also \cite[Definition 3.1]{CRS}, \cite[Definition 2.17]{Li2}}]
If $\theta \colon G \act \Omega$ is a partial action on a topological space and $U \subset \Omega$ is an open subset, then we denote by $\theta\vert_U$ the restricted partial action $G \act U$ with domains
$$U_g:=\theta_g(U \cap \Omega_{g^{-1}}) \cap U,$$
and $U$ is called $G$\textit{-full} if
$$X=\{g.x \mid g \in G, x \in U \cap X_{g^{-1}}\}.$$
Finally, two partial actions $\theta \colon G \act \Omega$ and $\gamma \colon H \act \Omega'$ are called \textit{Kakutani-equivalent} if there exist clopen subspaces $K \subset \Omega$ and $K' \subset \Omega'$, resp. $G$- and $H$-full, such that $\theta\vert_K \approx \gamma\vert_{K'}$.
\end{definition}

If $\theta \colon G \act \Omega$ and $\theta' \colon G \act \Omega'$ are Kakutani equivalent partial actions on totally disconnected, locally compact spaces, then the groupoids $\mathcal{G}_\theta$ and $\mathcal{G}_{\theta'}$ are Kakutani equivalent in the sense of \cite[Definition 3.1]{CRS} and hence groupoid equivalent by \cite[Theorem 3.2]{CRS}. It follows that there are Morita-equivalences
$$C_K(\Omega) \rtimes_\theta G \sim_M C_K(\Omega) \rtimes_{\theta'} H \andspace C_0(\Omega) \rtimes_{\theta,(r)} \sim_M C_0(\Omega') \rtimes_{\theta',(r)} H,$$
see for instance \cite[Theorem 2.8]{MRW}, \cite[Theorem 13]{SW} and \cite[Theorm 5.1]{CS}.

We can now state and prove the second main theorem about convex subshifts.

\begin{theorem}\label{thm:GraphRep}
If $\theta \colon \mathbb{F} \act \Omega$ is a convex subshift of finite type, then there is a finite bipartite separated graph $(E,C)$ such that $\theta^{(E,C)}$ and $\theta$ are Kakutani equivalent.
\end{theorem}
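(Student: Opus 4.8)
The plan is to first reduce to the case of a $1$-step convex subshift and then to realise any $1$-step convex subshift, up to Kakutani equivalence, as the partial action of an explicitly constructed finite bipartite separated graph. The graph will be obtained by a \emph{subdivision} of the tree-shift $\Omega$: I insert a new vertex in the middle of every edge of every tree $\xi \in \Omega$, which separates at each node the forward (positive) directions from the backward (negative) ones and so produces the strictly alternating all-positive/all-negative local structure of the configurations (c1)/(c2) in Definition~\ref{def:DynamicalPicture}. The reduction step is cheap: since $\theta$ is of finite type, say $R$-step, Proposition~\ref{prop:QCto1Step} gives $\theta^{[R-1]} \xrightarrow{\approx} \theta$ with $\theta^{[R-1]}$ being $1$-step by Corollary~\ref{cor:StepReduction}, and by Proposition~\ref{prop:DirQCImpliesQC} this is a dynamical equivalence. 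A dynamical equivalence is exactly the special case of a Kakutani equivalence in which both full clopen sets are the whole spaces, and Kakutani equivalence is transitive; hence it suffices to treat a $1$-step convex subshift. From now on I assume $\Omega$ is $1$-step over the finite alphabet $A$, so that $\Omega$ is determined by its finite set $\mathcal{B}_1(\Omega)$ of allowed balls $B=\{1\}\sqcup S_B$ with $S_B \subset A \sqcup A^{-1}$ (discarding the degenerate balls with $S_B=\emptyset$, which correspond to isolated fixed points).

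Next I would build the subdivision graph. Let $T$ be the set of transitions $(B,\sigma,B')$ with $B,B'\in\mathcal{B}_1(\Omega)$, $\sigma\in S_B$ and $\sigma^{-1}\in S_{B'}$ that actually occur in some point of $\Omega$ (for a $1$-step subshift this is a purely combinatorial condition on $\mathcal{B}_1(\Omega)$), and let $\mathcal{E}$ be the set of \emph{geometric edges}, i.e.\ the orbits of $T$ under the involution $(B,\sigma,B')\mapsto(B',\sigma^{-1},B)$. I then define $(E,C)$ by setting $E^{0,0}:=\{w_B \mid B\in\mathcal{B}_1(\Omega)\}$ and $E^{0,1}:=\{v_\epsilon\mid\epsilon\in\mathcal{E}\}$, and for each geometric edge $\epsilon$ joining endpoints $B,B'$ I introduce two edges $e_{\epsilon,B},e_{\epsilon,B'}\in E^1$ with $s(e_{\epsilon,B})=s(e_{\epsilon,B'})=v_\epsilon$, $r(e_{\epsilon,B})=w_B$ and $r(e_{\epsilon,B'})=w_{B'}$. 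Finally I equip each $w_B$ with the separation $C_{w_B}:=\{X^B_\sigma \mid \sigma \in S_B\}$, where $X^B_\sigma$ collects all edges $e_{\epsilon,B}$ with $\epsilon$ a transition out of $B$ in direction $\sigma$. This is a finite bipartite separated graph, and its dynamics match $\Omega$ in the right way: a type (c2) configuration at $w_B$ selects exactly one representative per class, i.e.\ one geometric edge (hence one target ball) in each direction $\sigma\in S_B$, while a type (c1) configuration at $v_\epsilon$ is forced to contain both $e_{\epsilon,B}$ and $e_{\epsilon,B'}$.

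With the graph in hand I would define the subdivision map $\xi\mapsto\tilde\xi$, sending a point $\xi\in\Omega$ to the point of $\Omega(E,C)$ rooted at $w_{\xi^1}$ obtained by inserting a midpoint $v_\epsilon$ on each tree edge of $\xi$; a one-letter move by $\sigma$ in $\xi$ corresponds to the two-step move $e_{\epsilon,B'}e_{\epsilon,B}^{-1}\in\mathbb{F}(E^1)$. The axiom (c2) — one representative per class — reproduces exactly the (possibly non-deterministic) choice of a neighbouring ball in each direction, while the edge along which one arrives at a ball vertex forces the representative of the reverse class, so that $\xi\mapsto\tilde\xi$ is a homeomorphism of $\Omega$ onto the clopen set $K:=\bigsqcup_{B}\Omega(E,C)_{w_B}$. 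Since every $v_\epsilon$ is the source of edges into ball vertices, every point of $\Omega(E,C)$ can be translated into $K$, so $K$ is $\mathbb{F}(E^1)$-full. Transporting the $\mathbb{F}(A)$-action along the subdivision map produces a cocycle $a$ with values in $\mathbb{F}(E^1)$ and an inverse cocycle $b$; checking that $(a,b)$ satisfy (1)--(5) of Definition~\ref{def:DynConj} exhibits an isomorphism of transformation groupoids and hence $\theta\approx\theta^{(E,C)}\vert_K$. Taking the whole space as the full clopen subset of $\Omega$ then yields the desired Kakutani equivalence between $\theta$ and $\theta^{(E,C)}$.

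I expect the main obstacle to be the bookkeeping in this last step, not the reduction. The combinatorial heart is getting the separation $C_{w_B}$ exactly right so that ``one representative per class'' encodes precisely ``one neighbour ball per direction'', and then verifying that the forced back-pointer at each ball vertex makes $\xi\mapsto\tilde\xi$ a genuine bijection onto $K$ rather than merely an injection. Writing down the two cocycles explicitly and confirming the groupoid identities of Definition~\ref{def:DynConj} is routine once the correspondence is set up, but it is where all the indices have to be tracked carefully; the reason one cannot hope for more than Kakutani equivalence is precisely that $\theta^{(E,C)}$ also contains the midpoint-rooted configurations $\Omega(E,C)_{v_\epsilon}$, which have no counterpart as roots in $\Omega$ and are removed exactly by passing to the full clopen set $K$.
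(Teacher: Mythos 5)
Your construction is, up to relabelling, exactly the one in the paper's proof: after the same reduction to the $1$-step case via Proposition~\ref{prop:QCto1Step} and Corollary~\ref{cor:StepReduction}, your vertices $w_B$ form the paper's $E^{0,0}=\mathcal{B}_1(\Omega)$, your geometric edges $v_\epsilon$ are in canonical bijection with the paper's $E^{0,1}=A^{[1\colon\Omega]}$ (each unordered transition has a unique representative labelled by a positive letter), your two edges $e_{\epsilon,B},e_{\epsilon,B'}$ are the paper's $[B\xleftarrow{a}B']_\pm$, and your separation $C_{w_B}=\{X^B_\sigma \mid \sigma\in S_B\}$ is precisely the paper's $C_B=\{X_B(s)\mid 1\ne s\in B\}$. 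Your subdivision map is the paper's map $\eta\mapsto\mathrm{conv}\{\Sigma(\beta)\mid\beta\in\eta\}$ with $\Sigma([B\xleftarrow{a}B'])=[B\xleftarrow{a}B']_+[B\xleftarrow{a}B']_-^{-1}$, and your clopen full set $K$ is the same. The only packaging difference is that the paper organizes the verification as a direct dynamical equivalence $(\varphi,\Phi)$ through the $1$-ball shift, composing $\theta^{(E,C)}\vert_K\xrightarrow{\approx}\theta^{[1]}\xrightarrow{\approx}\theta$, whereas you propose to check the cocycle identities of Definition~\ref{def:DynConj} by hand; these amount to the same computation.

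There is, however, one genuine (though easily repaired) defect: you discard the degenerate ball $B=\{1\}$. A finite type convex subshift may contain the trivial configuration $\{1\}$ --- the full convex shift of Example~\ref{ex:FullConvexShift} does --- and in that case $\{1\}$ is an isolated point whose orbit is itself, so it belongs to \emph{every} $\mathbb{F}$-full subset of $\Omega$. In the transformation groupoid, the only arrow based at this point is the unit, whereas after discarding degenerate balls every point of your $K$ admits a non-unit arrow: a configuration at $w_B$ with $S_B\ne\emptyset$ is moved inside $K$ by some two-step element $e_{\epsilon,B'}e_{\epsilon,B}^{-1}$. Hence no isomorphism of restricted groupoids, and so no Kakutani equivalence with $\theta$, can exist in that case. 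The repair is simply to keep $\{1\}$ as an isolated vertex of $E^{0,0}$, whose configuration space is the singleton trivial configuration; this is what the paper's construction does automatically. A second, minor point: you appeal to transitivity of Kakutani equivalence, which the paper never establishes (it would require the groupoid-equivalence machinery of \cite{CRS}); what you actually need is only transitivity of dynamical equivalence $\approx$, i.e.~of isomorphism of transformation groupoids, applied as $\theta^{(E,C)}\vert_K\approx\theta^{[R-1]}\approx\theta$, after which the Kakutani equivalence of $\theta^{(E,C)}$ and $\theta$ is immediate from the definition with full clopen sets $K$ and $\Omega$.
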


\begin{proof}
By Proposition~\ref{prop:QCto1Step}, we may assume that $\Omega$ is $1$-step. Now let $A^{[1\colon \Omega]}_+$ and $A^{[1\colon \Omega]}_-$ denote disjoint copies of the alphabet $A^{[1\colon \Omega]}$ with subscripts $+$ and $-$, and define a finite bipartite separated graph $(E,C)$ by
\begin{itemize}
\item $E^{0,0}:=\mathcal{B}_1(\Omega)$ and $E^{0,1}=A^{[1\colon \Omega]}$,
\item $E^1:=A^{[1\colon \Omega]}_+ \sqcup A^{[1\colon \Omega]}_-$,
\item $r([B \xleftarrow{a} B']_+):=B$ and $r([B \xleftarrow{a} B']_-):=B'$,
\item $s([B \xleftarrow{a} B']_+):=s([B \xleftarrow{a} B']_-)=[B \xleftarrow{a} B']$,
\item $C_B:=\{X_B(s) \mid 1 \ne s \in B\}$ for all $B \in E^{0,0}$, where for $a \in A$
\begin{align*}
X_B(a^{-1})& :=\{[B \xleftarrow{a} B']_+ \mid B' \in E^{0,0} \text{ such that } B \xleftarrow{a} B'\}, \\
X_B(a)& :=\{[B' \xleftarrow{a} B]_- \mid B' \in E^{0,0} \text{ such that } B' \xleftarrow{a} B\}.
\end{align*}
\end{itemize}
Then consider the group homomorphism $\Phi \colon  \mathbb{F}(E^1) \to \mathbb{F}^{[1\colon \Omega]}$ given by
$$\Phi([B \xleftarrow{a} B']_+):=[B \xleftarrow{a} B'] \andspace \Phi([B \xleftarrow{a} B']_-):=1$$
as well as the compact open $\mathbb{F}(E^1)$-full subspace
$$K:=\bigsqcup_{u \in E^{0,0}} \Omega(E,C)_u.$$
Equipping $K$ with the restricted partial action $\theta^{(E,C)} \vert_K \colon \mathbb{F}(E^1) \act K$, we claim that $\varphi \colon K \to \Omega^{[1]}$ given by
$$\varphi(\xi):=\{\Phi(\alpha) \mid \alpha \in \xi\}$$
defines a $\Phi$-equivariant homeomorphism, making the pair $(\varphi,\Phi)$ into a direct dynamical equivalence. First, let us check that $\varphi$ even maps into $\Omega^{[1]}$, so take any $\xi \in K$. Observe that any length two admissible path $\alpha \in \xi$ is of the form
$$[B \xleftarrow{a} B']_+ [B \xleftarrow{a} B']_-^{-1} \quad \text{or} \quad [B \xleftarrow{a} B']_- [B \xleftarrow{a} B']_+^{-1},$$
and these are mapped to $[B \xleftarrow{a} B']$ and $[B \xleftarrow{a} B']^{-1}$, respectively. It follows that any $\alpha \in \xi$ of length four is mapped to a length two word, hence any $\alpha \in \xi$ of length $2n$ is mapped to a word of length $n$. Note also that if $\alpha \in \xi$ has odd length, then
$$\alpha=[B \xleftarrow{a} B']_+^{-1}\beta \quad \text{or} \quad \alpha=[B \xleftarrow{a} B']_-^{-1}\beta$$
for some $[B \xleftarrow{a} B'] \in A^{[1\colon \Omega]}$, and in the latter case, $\Phi(\alpha)=\Phi(\beta)$. In the former case, there is a unique extension of length $\vert \alpha \vert + 1$ inside $\xi$, namely $[B \xleftarrow{a} B']_-[B \xleftarrow{a} B']_+^{-1}\beta \in \xi$, and $\Phi(\alpha)=\Phi([B \xleftarrow{a} B']_-\alpha)$, so in conclusion
$$\varphi(\xi)=\{\Phi(\alpha) \colon \alpha \in \xi, \vert \alpha \vert \text{ is even}\}.$$
In particular, $\varphi$ is continuous and $\varphi(\xi)^1=\Phi(\xi^2)$, so we only need to check that $\Phi(\xi^2) \in \mathcal{B}_1(\Omega^{[1]})$ for any $\xi \in K$. Assuming that $\xi \in \Omega(E,C)_B$, for every $1 \ne s \in B$ there is $B(s) \in \mathcal{B}_1(\Omega)$ such that
\begin{align*}
\Phi(\xi^2)&=\{1\} \sqcup \{[B \xleftarrow{a} B(a^{-1})]^{-1} \mid a \in A \cap B^{-1}\} \sqcup \{[B(a) \xleftarrow{a} B] \mid a \in A \cap B\} \\
&=\{1\} \sqcup \{[B(s) \xleftarrow{s} B] \mid 1 \ne s \in B\},
\end{align*}
and this is exactly an element of $\mathcal{B}_1(\Omega^{[1]})$. At this point we have verified that $\varphi$ is a well-defined continuous $\Phi$-equivariant map, and we now turn to the construction of an inverse. Define a group homomorphism $\Sigma \colon \mathbb{F}^{[1\colon \Omega]} \to \mathbb{F}(E^1)$ and a continuous $\Sigma$-equivariant map $\sigma \colon \Omega^{[1]} \to K$ by
$$\Sigma([B \xleftarrow{a} B']):=[B \xleftarrow{a} B']_+[B \xleftarrow{a} B']_-^{-1} \andspace \sigma(\eta):=\text{conv}\{\Sigma(\beta) \mid \beta \in \eta\},$$
where $\text{conv}(H)$ for a set $H \subset \mathbb{F}(E^1)$ denotes the convex closure. Observing that $\Phi$ is a one-sided inverse of $\Sigma$, it follows that $\Sigma$ is injective and hence that $\sigma$ is a continuous $\Sigma$-equivariant map into $\mathcal{C}(E^1)$; but, we still have to verify that $\sigma$ maps into $K$. Since $\sigma(\eta)^2=\text{conv}(\Sigma(\eta^1))$ for all $\eta \in \Omega^{[1]}$, 
it suffices to check that $\text{conv}(\Sigma(\eta^1)) \in \mathcal{B}_2(\Omega(E,C))$. By construction, $\eta^1$ is of the form
\begin{align*}
\eta^1 &=\{1\} \sqcup \{[B(s) \xleftarrow{s} B] \mid 1 \ne s \in B\} \\
&=\{1\} \sqcup \{[B \xleftarrow{a} B(a^{-1})]^{-1} \mid a \in A \cap B^{-1}\} \sqcup \{[B(a) \xleftarrow{a} B] \mid a \in A \cap B\}
\end{align*}
for some $B,B(s) \in \mathcal{B}_1(\Omega)$, so
\begin{align*}
\Sigma(\eta^1)&=\{1\} \sqcup \{[B \xleftarrow{a} B(a^{-1})]_-[B \xleftarrow{a} B(a^{-1})]_+^{-1} \mid a \in A \cap B^{-1}\} \\
&{} \quad \sqcup \{[B(a) \xleftarrow{a} B]_+[B(a) \xleftarrow{a} B]_-^{-1} \mid a \in A \cap B\}.
\end{align*}
Taking the convex closure of this, we clearly obtain a $2$-ball of $\Omega(E,C)$, hence
$$\sigma(\eta) \in \Omega(E,C)_B \subset K$$
as desired. We now claim that $\varphi$ and $\sigma$ are in fact mutual inverses. Noting that $\Sigma(\Phi(\alpha))=\alpha$ for $\alpha \in \xi$ of even length, we indeed have
$$\sigma(\varphi(\xi))=\text{conv}\{\Sigma(\Phi(\alpha)) \mid \alpha \in \xi, \vert \alpha \vert \text{ is even}\}=\text{conv}\{\alpha \mid \alpha \in \xi, \vert \alpha \vert \text{ is even}\} = \xi$$
and
$$\varphi(\sigma(\eta))=\varphi(\text{conv}\{\Sigma(\beta) \mid \beta \in \eta\})=\{\Phi(\Sigma(\beta)) \mid \beta \in \eta\}=\eta.$$
Letting $F:=\text{Im}(\Sigma) \le \mathbb{F}(E^1)$ so that $\Sigma=\Phi\vert_F^{-1}$, we conclude that the partial actions $F \act K$ and $\mathbb{F}^{[1\colon \Omega]} \act \Omega^{[1]}$ are conjugate. Finally observing that, by the above observations, $K_\alpha = \emptyset$ for all $\alpha \in \mathbb{F}(E^1) \setminus F$, we conclude that $(\varphi,\Phi)$ is indeed a direct dynamical equivalence $\theta^{(E,C)}\vert_K \xrightarrow{\approx} \theta^{[1]}$, from which we obtain the desired direct dynamical equivalence as the composition $\theta^{(E,C)}\vert_K \xrightarrow{\approx} \theta^{[1]} \xrightarrow{\approx} \theta$.
\end{proof}

In view of the above theorem, the study of convex subshifts of finite type boils down to the study of dynamical systems associated with finite bipartite graphs, at least up to Katutani equivalence. In the following sections, we shall see how one can extract information about the open/closed invariant subspaces from the graph, illustrating the usefulness of having a graph representation.

We end this section with an application of Theorem~\ref{thm:GraphRep} to a pair of concrete examples.

\begin{example}\label{ex:FullConvexShift}
Given a finite alphabet $A$, there is of course a finite bipartite separated graph $(E,C)$ corresponding to the full convex shift on $A$ as in Theorem~\ref{thm:GraphRep}. However, one can check that $\vert E^0 \vert= 4( \vert A \vert^4 + \vert A \vert^2)$ and $\vert E^1 \vert = 8 \vert A \vert^4$, so even when $\vert A \vert=2$ this is a fairly sizable graph. We shall therefore refrain from drawing it here.
\end{example}

\begin{example}
Consider the alphabet $A=\{a,b\}$ and the $1$-step subshift $\mathbb{F}_2 \act \Omega$ with $\mathcal{B}_1(\Omega)=\{u,v\}$, where $u=\{1,a^{\pm 1},b^{\pm 1}\}$ and $v=\{1,a^{\pm 1},b\}$ as illustrated just below.
\begin{figure}[htb]
\begin{center}\begin{tikzpicture}[scale=0.5]
 \SetUpEdge[lw         = 1.5pt,
            labelcolor = white]

  \tikzset{VertexStyle/.style = {}}
  \Vertex[x=-4,y=0,L=${u=}$]{u}
  \tikzset{VertexStyle/.style = { shape = circle,fill = black, minimum size=4pt, inner sep=0pt,outer sep=2pt}}

  \SetVertexNoLabel

  \Vertex[x=0,y=3]{t}
  \Vertex[x=0,y=-3]{b}  
  \Vertex[x=-3,y=0]{l}
  \Vertex[x=3,y=0]{r}  
  
  \tikzset{VertexStyle/.style = { shape = circle,fill = \nicered, minimum size=4pt, inner sep=0pt,outer sep=2pt}}  
  \Vertex[x=0,y=0]{c}  

  \tikzset{EdgeStyle/.style = {->,}}
  \Edge[label=$a$](l)(c)
  \Edge[label=$b$](b)(c)
  \Edge[label=$a$](c)(r)  
  \Edge[label=$b$](c)(t)  

\end{tikzpicture} \hspace{1cm}
\begin{tikzpicture}[scale=0.5]
 \SetUpEdge[lw         = 1.5pt,
            labelcolor = white]
  \tikzset{VertexStyle/.style = {}}
  \Vertex[x=-4,y=0,L=${v=}$]{v}
  
  \tikzset{VertexStyle/.style = { shape = circle,fill = black, minimum size=4pt, inner sep=0pt,outer sep=2pt}}
  \SetVertexNoLabel 
  \Vertex[x=0,y=3]{t}
  \Vertex[x=-3,y=0]{l}
  \Vertex[x=3,y=0]{r} 

  \tikzset{VertexStyle/.style = { shape = circle,fill = \nicered, minimum size=4pt, inner sep=0pt,outer sep=2pt}}  
  \Vertex[x=0,y=0]{c}    
     
  \tikzset{VertexStyle/.style = { shape = circle,fill = white, minimum size=5pt, inner sep=0pt,outer sep=3pt}}
  \Vertex[x=0,y=-3]{b}  

  \tikzset{EdgeStyle/.style = {->,}}
  \Edge[label=$a$](l)(c)
  \Edge[label=$a$](c)(r)  
  \Edge[label=$b$](c)(t)  

\end{tikzpicture}
\end{center}
\end{figure}
We will then describe the separated graph $(E,C)$ of Theorem~\ref{thm:GraphRep}. We have
$$E^{0,1}=A^{[1\colon \Omega]}=\{[u \xleftarrow{a} u],[u \xleftarrow{b} u],[v \xleftarrow{a} u],[u \xleftarrow{a} v],[u \xleftarrow{b} v],[v \xleftarrow{a} v]\}$$
and $E^1=A^{[1\colon \Omega]}_+ \sqcup A^{[1\colon \Omega]}_-$ with
\begin{align*}
r^{-1}(u)&=\{[u \xleftarrow{a} u]_\pm,[u \xleftarrow{b} u]_\pm,[v \xleftarrow{a} u]_-,[u \xleftarrow{a} v]_+,[u \xleftarrow{b} v]_+\}, \\
r^{-1}(v)&=\{[v \xleftarrow{a} u]_+,[u \xleftarrow{a} v]_-,[u \xleftarrow{b} v]_-,[v \xleftarrow{a} v]_\pm\},
\end{align*}
and the source map is simply the projection $E^1=A^{[1\colon \Omega]}_+ \sqcup A^{[1\colon \Omega]}_- \to A^{[1\colon \Omega]}=E^{0,1}$. The separation is given by
$$C_u=\{X_u(a),X_u(a^{-1}),X_u(b),X_u(b^{-1})\} \andspace C_v=\{X_v(a),X_v(a^{-1}),X_v(b)\},$$
where
\begin{align*}
X_u(a)&=\{[u \xleftarrow{a} u]_-,[v \xleftarrow{a} u]_-\},
&X_u(a^{-1})=\{[u \xleftarrow{a} u]_+,[u \xleftarrow{a} v]_+\}, \\
X_u(b)&=\{[u \xleftarrow{b} u]_-\},
&X_u(b^{-1})=\{[u \xleftarrow{b} u]_+,[u \xleftarrow{b} v]_+\}, \\
X_v(a)&=\{[u \xleftarrow{a} v]_-,[v \xleftarrow{a} v]_-\},
&X_v(a^{-1})=\{[v \xleftarrow{a} u]_+,[v \xleftarrow{a} v]_+\}, \\
X_v(b)&=\{[u \xleftarrow{b} v]_- \}\ .
\end{align*}
We can therefore picture $(E,C)$ as follows:
\begin{center}\begin{tikzpicture}[scale=0.8]
 \SetUpEdge[lw         = 1.5pt,
            labelcolor = white]
  \tikzset{VertexStyle/.style = { shape = rectangle,fill = white, minimum size=15pt, inner sep=3pt,outer sep=0pt}}

  \Vertex[x=5,y=3]{u}
  \Vertex[x=10,y=3]{v}
  \Vertex[L=${[u \xleftarrow{a} u]}$,x=0,y=0]{w1}
  \Vertex[L=${[u \xleftarrow{b} u]}$,x=3,y=0]{w2}
  \Vertex[L=${[v \xleftarrow{a} u]}$,x=6,y=0]{w3}
  \Vertex[L=${[u \xleftarrow{a} v]}$,x=9,y=0]{w4}
  \Vertex[L=${[u \xleftarrow{b} v]}$,x=12,y=0]{w5}
  \Vertex[L=${[v \xleftarrow{a} v]}$,x=15,y=0]{w6}

  \tikzset{EdgeStyle/.style = {->,bend left=10,color={\nicegreen}}}
  \Edge[](w1)(u)
  \Edge[](w6)(v)

  \tikzset{EdgeStyle/.style = {->,bend left=10,color={magenta}}}
  \Edge[](w2)(u)

  \tikzset{EdgeStyle/.style = {->,color={\nicegreen}}}
  \Edge[](w4)(u)
  \Edge[](w3)(v)

  \tikzset{EdgeStyle/.style = {->,color={\niceblue}}}
  \Edge[](w3)(u)
  \Edge[](w4)(v)

  \tikzset{EdgeStyle/.style = {->,color={cyan}}}
  \Edge[](w5)(v)

  \tikzset{EdgeStyle/.style = {->,color={magenta}}}
  \Edge[](w5)(u)
  \tikzset{EdgeStyle/.style = {->,bend right=5,color={\niceblue}}}
  \Edge[](w1)(u)
  \Edge[](w6)(v)
  \tikzset{EdgeStyle/.style = {->,bend right=5,color={cyan}}}
  \Edge[](w2)(u)
\end{tikzpicture}
\end{center}\vspace{-0.5cm}
\end{example}

\section{The lattice of induced ideals}
\label{sec:structure-ideals}

In this section, we describe the lattice of induced ideals of the
algebras $\Lab (E,C)$, $\mathcal O (E,C)$ and $\mathcal O ^r (E,C)$
for $(E,C)$ finite and bipartite in terms of graph-theoretic data,
specifically certain sets of vertices in the infinite layer graph
$(F_\infty,D^\infty)$ (see Definition \ref{def:Finftyandothers}).

We first settle the meaning of the various types of ideals that we shall encounter. When dealing with C*-algebras, we will consider only closed ideals, so that the word \textit{ideal} will 
mean \textit{closed ideal} in this case. For a general ring $R$, an ideal $I \ideal R$ is called a \textit{trace ideal} if it is generated by the entries of some set of idempotent 
matrices over $R$, and we denote the lattice of trace ideals by $\Tr(R)$. The lattice of idempotent-generated ideals $\Idem(R)$ then sits as a sublattice of $\Tr(R)$. Given a crossed product (algebraic or $C^*$-algebraic, 
reduced or universal) $\mathcal{O}=A \rtimes_{(r)} G$, we say that an ideal $J \ideal \mathcal{O}$ is \textit{induced} if $J=(J \cap A) \rtimes_{(r)} G$, and we denote 
by $\text{Ind}(\mathcal{O})$ the 
lattice of induced ideals. Finally, if $M$ is an abelian monoid, then a submonoid $I \subset M$ is called an \textit{order-ideal} if $x+y \in I$ implies $x,y \in I$. The lattice of order ideals of $M$ will be denoted by $\mathcal{L}(M)$.

The basic tools in our analysis are the following results.

\begin{theorem} \cite[Proposition 10.10]{AG}
\label{thm:ideals-alg}
For any ring $R$, there is a lattice isomorphism
$$ \mathcal L (\mathcal V (R))) \cong \Tr (R).$$
Moreover, if $\mathcal V (R)$ is generated by the classes $[e]$ of idempotents of $R$, then $\Tr(R)=\Idem (R)$.
\end{theorem}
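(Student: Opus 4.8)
The plan is to exhibit explicit, mutually inverse, order-preserving maps between $\calL(\calV(R))$ and $\Tr(R)$ by means of trace ideals. For a class $a=[P]\in \calV(R)$, where $P$ is a finitely generated projective right $R$-module realised as $eR^n$ for an idempotent matrix $e$, I would write $\tau(a)\ideal R$ for its \emph{trace ideal} $\sum_{f\in \mathrm{Hom}(P,R)}f(P)$, which for $P=eR^n$ is exactly the ideal generated by the entries of $e$; this is a trace ideal in the sense of $\Tr(R)$ and depends only on the isomorphism class, so $\tau$ is well defined on $\calV(R)$. The two basic properties to record first are additivity, $\tau(a+b)=\tau(a)+\tau(b)$, and the identity $P\cdot\tau(P)=P$ coming from a dual basis. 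Additivity immediately yields monotonicity for the algebraic preorder: if $a\le b$ (meaning $a+c=b$ for some $c$), then $\tau(a)\subseteq\tau(b)$.

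Next I would define $\Phi\colon \calL(\calV(R))\to\Tr(R)$ by $\Phi(I):=\sum_{a\in I}\tau(a)$ and $\Psi\colon\Tr(R)\to\calL(\calV(R))$ by $\Psi(J):=\{a\in\calV(R)\mid \tau(a)\subseteq J\}$. That $\Phi(I)$ is again a trace ideal is immediate from the definition of $\Tr(R)$ as sums of entry-ideals of idempotent matrices, and that $\Psi(J)$ is an order ideal follows formally from additivity, since $\tau(a+b)=\tau(a)+\tau(b)\subseteq J$ holds precisely when both $\tau(a),\tau(b)\subseteq J$. Both maps are visibly inclusion-preserving, so it remains only to prove that they are mutually inverse; as an order isomorphism of lattices is automatically a lattice isomorphism, this finishes the first assertion.

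The heart of the matter is the identity $\Psi(\Phi(I))=I$. The reverse composite $\Phi(\Psi(J))=J$ is easy, because $J$, being a trace ideal, is a sum of ideals $\tau([e_\alpha])$ with $[e_\alpha]\in\Psi(J)$. For $\Psi(\Phi(I))=I$ the inclusion $\supseteq$ is trivial, and for $\subseteq$ I would argue as follows: if $\tau(a)\subseteq\Phi(I)=\sum_{b\in I}\tau(b)$, then since $\tau(a)$ is finitely generated as a two-sided ideal it already lies in a finite partial sum $\tau(b_1)+\dots+\tau(b_m)=\tau(b)$ with $b:=b_1+\dots+b_m\in I$ (as $I$ is a submonoid). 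Everything thus reduces to the key lemma: \emph{if $\tau(a)\subseteq\tau(b)$ with $a=[P]$ and $b=[Q]$, then $[P]\le n[Q]$ for some $n$}. Granting this, $[P]+c=n[Q]\in I$, and the order-ideal property gives $[P]\in I$.

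I expect the key lemma to be the main obstacle, and the plan for it is homological. From $P=P\tau(P)\subseteq P\tau(Q)\subseteq P$ one gets $P\tau(Q)=P$; then I would identify $P\tau(Q)$ with the $Q$-trace submodule $\sum_{\varphi\in\mathrm{Hom}(Q,P)}\varphi(Q)$ of $P$ (an elementary dual-basis computation valid for finitely generated projective $Q$). Hence $P$ is generated by images of maps $Q\to P$, and since $P$ is finitely generated, finitely many such maps assemble into a surjection $Q^n\twoheadrightarrow P$; projectivity of $P$ splits it, so $P$ is a direct summand of $Q^n$, i.e. $[P]\le n[Q]$. Finally, for the ``moreover'' clause, one always has $\Idem(R)\subseteq\Tr(R)$ since $\tau([e])=ReR$ for a genuine idempotent $e\in R$. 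Under the hypothesis that $\calV(R)$ is generated by classes of idempotents of $R$, I would write each $a\in I$ as $[e_1]+\dots+[e_k]$ with $e_i\in R$ idempotent; then $[e_i]\le a\in I$ forces $[e_i]\in I$ and $\tau(a)=\sum_i Re_iR$, so summing over $a\in I$ shows $\Phi(I)=\sum_{[e]\in I,\ e^2=e\in R}ReR$ is idempotent-generated. Since $\Phi$ is onto $\Tr(R)$, this gives $\Tr(R)=\Idem(R)$.
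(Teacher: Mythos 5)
Your proposal is correct: the well-definedness and additivity of the trace-ideal map $\tau$, the mutually inverse maps $I \mapsto \sum_{a\in I}\tau(a)$ and $J \mapsto \{a \in \mathcal V(R) \mid \tau(a)\subseteq J\}$, the finite-sum reduction using that $\tau(a)$ is finitely generated, the key lemma that $\tau(P)\subseteq\tau(Q)$ forces $P$ to be a direct summand of $Q^n$ (via $P=P\tau(Q)=\sum_{\varphi\colon Q\to P}\varphi(Q)$ and splitting a surjection $Q^n\twoheadrightarrow P$), and the idempotent-generation argument for the ``moreover'' clause all check out. The paper itself gives no proof of this statement --- it is quoted from \cite[Proposition 10.10]{AG} --- and your argument is essentially the standard trace-ideal correspondence underlying that cited result, so there is nothing to correct.
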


\begin{theorem}[\cite{Exel}]
 \label{thm:ideals-reducedcrossed}
Let $G \act A$ be a partial action of a discrete group $G$ on a $C^*$-algebra. Then the map $J \mapsto J \cap A$ defines a bijective correspondence between $\textup{Ind}(A \rtimes_{(r)} G)$ and the lattice of invariant ideals of $A$. Moreover, if $I$ is an invariant ideal of $A$, then
$$(A \rtimes G)/(I \rtimes G) \cong (A/I) \rtimes G,$$
and if $G$ is exact, then $(A \rtimes_r G)/(I \rtimes_r G) \cong (A/I) \rtimes_r G$ as well.
 \end{theorem}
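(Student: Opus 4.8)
The plan is to realise both crossed products as cross-sectional $C^*$-algebras of the semidirect-product Fell bundle $\mathcal{B}=(B_g)_{g\in G}$ with $B_g=D_g\delta_g$, where $D_g\ideal A$ is the domain of $\theta_g$ and $B_e=A$, and to exploit two standard features of this bundle: the unit fibre $A$ embeds isometrically into both $A\rtimes G$ and $A\rtimes_r G$, and the reduced crossed product carries a \emph{faithful} conditional expectation $E\colon A\rtimes_r G\to A$ onto $A$. I would first establish the bijective correspondence, then the full quotient via a universal property, and finally the reduced quotient, which is where exactness genuinely enters.

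For the correspondence, I would show that for \emph{any} ideal $J$ the intersection $J\cap A$ is invariant: given $a\in J\cap A\cap D_{g^{-1}}$ and an approximate unit $(u_\lambda)$ of $D_g$, the elements $(u_\lambda\delta_g)(a\delta_e)(u_\lambda\delta_g)^*$ lie in $J$ and converge to $\theta_g(a)\delta_e$, so $\theta_g(a)\in J\cap A$. Conversely, for an invariant ideal $I$ the fibres $(I\cap D_g)\delta_g$ form a sub-Fell-bundle of $\mathcal{B}$, whose (reduced or full) cross-sectional algebra is precisely $I\rtimes_{(r)}G$, an ideal of $A\rtimes_{(r)}G$. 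That the two assignments are mutually inverse splits into two checks: $J=(J\cap A)\rtimes_{(r)}G$ is exactly the definition of \emph{induced}, while $(I\rtimes_{(r)}G)\cap A=I$ follows for the reduced algebra from faithfulness of $E$ (if $a\in (I\rtimes_r G)\cap A$ then $a=E(a)\in E(I\rtimes_r G)\subseteq I$), and for the full algebra from the quotient computation below. Monotonicity of both maps together with $J=(J\cap A)\rtimes G$ then upgrades the bijection to a lattice isomorphism.

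For the full quotient I would argue through the universal property. A covariant representation of the quotient partial action $\bar\theta\colon G\act A/I$ is the same datum as a covariant representation of $\theta\colon G\act A$ annihilating $I$; hence the canonical surjection $A\rtimes G\to (A/I)\rtimes G$ is universal among quotients of $A\rtimes G$ killing $I$, and its kernel is the ideal generated by $I$. Using invariance of $I$ one identifies this generated ideal with $I\rtimes G$, giving $(A\rtimes G)/(I\rtimes G)\cong (A/I)\rtimes G$; restricting the surjection to the unit fibre $A$ simultaneously yields $(I\rtimes G)\cap A=I$, completing the full case of the correspondence.

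The reduced quotient is the main obstacle, since in general the kernel of the quotient map $q\colon A\rtimes_r G\to (A/I)\rtimes_r G$ is strictly larger than $I\rtimes_r G$. The faithful expectations $E$ and $\bar E\colon (A/I)\rtimes_r G\to A/I$ are compatible with $q$ in the sense that $\bar E\circ q=\kappa\circ E$ for the quotient map $\kappa\colon A\to A/I$ (both sides agree on finitely supported sections), which shows $I\rtimes_r G\subseteq\ker q$; the reverse inclusion is exactly what fails without a hypothesis on $G$. Here I would invoke exactness: passing to a globalisation $G\act A^e$ of the partial action, the invariant ideal $I$ globalises to a $G$-invariant ideal $I^e\ideal A^e$, and exactness of $G$ makes the sequence $0\to I^e\rtimes_r G\to A^e\rtimes_r G\to (A^e/I^e)\rtimes_r G\to 0$ exact; the Morita equivalence between the reduced crossed product of a partial action and that of its globalisation then transports this back to $0\to I\rtimes_r G\to A\rtimes_r G\to (A/I)\rtimes_r G\to 0$. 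The delicate points are the compatibility of the globalisation with the ideal lattice and the transfer of exactness across the Morita equivalence, both of which I would import from Exel's treatment of Fell bundles.
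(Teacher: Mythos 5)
Your arguments for the lattice correspondence and for the full quotient are essentially correct, and they are genuinely different from the paper's own proof, which consists entirely of citations to Exel's book (\cite[Proposition 23.11]{Exel} for invariance of $J \cap A$, \cite[Propositions 21.12 and 21.15]{Exel} for the ideal structure and the full quotient, and \cite[Theorem 21.18]{Exel} for the reduced quotient): your approximate-unit computation showing $\theta_g(a) \in J$ and your universal-property identification of $(A \rtimes G)/\langle I\rangle$ with $(A/I)\rtimes G$ are honest proofs of facts the paper merely quotes. One point you elide: in the full case, asserting that the cross-sectional algebra of the sub-bundle $\bigl((I\cap D_g)\delta_g\bigr)_{g \in G}$ ``is precisely $I \rtimes G$, an ideal of $A \rtimes G$'' presupposes that the canonical map $I \rtimes G \to A \rtimes G$ is injective; unlike the reduced case, this does not come from a faithful expectation, but from extending covariant representations of the ideal $I$ to $A$ on the essential subspace.

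The genuine gap is in the reduced quotient, which is the only place where exactness of $G$ enters and the only hard part of the theorem. You propose ``passing to a globalisation $G \act A^e$ of the partial action.'' Such a globalisation (enveloping action) does not exist for general $C^*$-algebraic partial actions: Abadie's existence theorem concerns partial actions on topological spaces, and the enveloping space is typically non-Hausdorff --- this is exactly \cite[Proposition 2.10]{Abadie}, quoted in Section~\ref{sect:ConvexSubshifts} of this paper --- so that already for simple partial actions of $\Z$ on $C([0,1])$ there is no enveloping $C^*$-action at all. Hence $A^e$, the ideal $I^e$, and the sequence $0 \to I^e \rtimes_r G \to A^e \rtimes_r G \to (A^e/I^e) \rtimes_r G \to 0$ to which you wish to apply exactness need not exist. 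The correct substitute is Abadie's \emph{Morita} enveloping action, which always exists, or equivalently Exel's stabilisation theorem realising the semidirect-product Fell bundle of $\theta$ as Morita equivalent to the bundle of a global action; one must then transport the exact sequence back through that Morita equivalence, matching invariant ideals of $A$ with invariant ideals of the enveloping algebra via the Rieffel correspondence. That transfer is precisely the substance of \cite[Theorem 21.18]{Exel}, which the paper cites and to which your outline also ultimately defers; but as written, your reduction to the global case invokes an object that need not exist, so the reduced half of the statement is not established.
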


 \begin{proof}
If $J$ is an ideal of $A\rtimes_{(r)} G$, then $J \cap A$ is a $G$-invariant ideal of $A$ by \cite[Proposition 23.11]{Exel}, and if $I$ is an invariant ideal of $A$, then $I \rtimes_{(r)} G$ is an ideal of $A \rtimes_{(r)} G$ by \cite[Proposition 21.12 and 21.15]{Exel}. In particular, we have the above mentioned bijective correspondence. Moreover, $(A \rtimes G)/(I \rtimes G) \cong (A/I) \rtimes G$ by \cite[Proposition 21.15]{Exel}, and in case $G$ is exact, then $(A \rtimes_r G)/(I \rtimes_r G) \cong (A/I) \rtimes_r G$ as well by \cite[Theorem 21.18]{Exel}.
 \end{proof}

\begin{remark}\label{rem:AlgCrPrIndId}
We note that it is straightforward to prove a result completely analogous to Theorem~\ref{thm:ideals-reducedcrossed} for partial actions on $*$-algebras.
\end{remark}

We now recall some definitions from \cite{AG}, adapted to our choice of conventions. These notions generalize the classical notions of hereditary and saturated subsets of vertices, cf. \cite[Chapter 4]{Raeburn}, to the separated setting.

\begin{definition} \cite[Definition 6.3]{AG}
Let $(E,C)$ be a finitely separated graph. A subset $H$ of $E^0$ is said to be {\it hereditary} if for any $e\in E^1$, we have
$r(e)\in H$ implies $s(e) \in H$, and $H$ is said to be {\it $C$-saturated} if for any $v\in E^0$ and $X\in C_v$, $s(X)\subset H$ implies $v\in H$. We denote by $\mathcal H (E,C)$ the lattice of hereditary $C$-saturated subsets of $E^0$.
\end{definition}

\begin{theorem}
 \label{thm:VidealsLab}
 Let $(E,C)$ be a finite bipartite separated graph. Then there are lattice isomorphisms
 $$\Idem (\Lab_K (E,C)) \cong \mathcal L (M(F_{\infty}, D^{\infty})) \cong \mathcal H  (F_{\infty}, D^{\infty}) . $$
If $H\in \mathcal H (F_{\infty}, D^{\infty})$, the ideal $I(H)$ of $\Lab _K(E,C)$ associated to $H$ through this isomorphism is the ideal generated by all the projections
$\pi_{n,\infty} (v)$, where $v\in H\cap E_n^0$, and $\pi_{n,\infty}\colon L(E_n,C^n)\to \Lab_K (E,C)$ is the natural quotient map.
 \end{theorem}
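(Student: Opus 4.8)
The plan is to prove the two isomorphisms in turn and then unwind the correspondence to read off $I(H)$. \emph{For the first isomorphism}, I would apply Theorem~\ref{thm:ideals-alg} to the ring $R=\Lab_K(E,C)$, obtaining $\mathcal{L}(\mathcal{V}(\Lab_K(E,C)))\cong \Tr(\Lab_K(E,C))$. Theorem~\ref{thm:K-theoryBrat}(2) identifies $\mathcal{V}(\Lab_K(E,C))$ with $M(F_\infty,D^\infty)$, sending the generator $a_v$ (for $v\in E_n^0$) to the idempotent class $[\pi_{n,\infty}(v)]$. Since $M(F_\infty,D^\infty)$ is generated by the $a_v$, the monoid $\mathcal{V}(\Lab_K(E,C))$ is generated by classes of idempotents, so the ``moreover'' clause of Theorem~\ref{thm:ideals-alg} yields $\Tr(\Lab_K(E,C))=\Idem(\Lab_K(E,C))$. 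Chaining these gives $\Idem(\Lab_K(E,C))\cong \mathcal{L}(M(F_\infty,D^\infty))$.

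\emph{For the second isomorphism}, abbreviate $M:=M(F_\infty,D^\infty)$ and $(F,D):=(F_\infty,D^\infty)$, and define $\Phi\colon \mathcal{L}(M)\to\mathcal{H}(F,D)$ by $\Phi(I):=\{v\in F^0\mid a_v\in I\}$ and $\Psi\colon \mathcal{H}(F,D)\to\mathcal{L}(M)$ by letting $\Psi(H)$ be the order-ideal generated by $\{a_v\mid v\in H\}$; both are visibly inclusion-preserving. Using the defining relations $a_v=\sum_{e\in X}a_{s(e)}$, the order-ideal axioms immediately force $\Phi(I)$ to be hereditary (a summand $a_{s(e)}$ of $a_v\in I$ lies in $I$) and $D$-saturated (if all $a_{s(e)}\in I$ for $e\in X$, then their sum $a_v\in I$). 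For $\Psi\circ\Phi=\mathrm{id}$ I would exploit that every element of $M$ is a finite sum $\sum_i a_{v_i}$ of generators; as an order-ideal $I$ is closed under summands, each such $a_{v_i}\in I$, so $I$ coincides with the submonoid generated by $\{a_v\mid v\in\Phi(I)\}$, whence $\Psi(\Phi(I))=I$.

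The essential point is $\Phi\circ\Psi=\mathrm{id}$, i.e.\ that $a_w\in\Psi(H)$ implies $w\in H$, and I expect this to be the main obstacle since it is where both hereditariness and $D$-saturation are genuinely used. The plan is to build the quotient separated graph $(F/H,D/H)$ with vertex set $F^0\setminus H$, edge set $\{e\mid r(e),s(e)\notin H\}$, and separation $\{X\cap(F/H)^1\mid X\in D_v\}$; here $D$-saturation guarantees each such class is non-empty, so $(F/H,D/H)$ is again a finitely separated graph. Sending $a_v\mapsto a_v$ for $v\notin H$ and $a_v\mapsto 0$ for $v\in H$ then respects the relations of $M$ (hereditariness kills the relation at each $v\in H$), defining a monoid homomorphism $q\colon M\to M(F/H,D/H)$. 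As graph monoids are conical, $q^{-1}(0)$ is an order-ideal containing $\{a_v\mid v\in H\}$ and therefore $\Psi(H)$; and since the generators of a finitely separated graph monoid are nonzero (the vertices of $(F/H,D/H)$ are nonzero idempotents in its Leavitt path algebra), $q(a_w)\ne 0$ for $w\notin H$. Hence $a_w\in\Psi(H)$ forces $w\in H$, so $\Phi$ and $\Psi$ are mutually inverse lattice isomorphisms, completing the second isomorphism.

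\emph{Finally}, to identify $I(H)$, I would note that under $\Phi,\Psi$ the set $H$ corresponds to $\Psi(H)$, which by the above is the submonoid generated by $\{a_v\mid v\in H\}$, and that under Theorem~\ref{thm:ideals-alg} an order-ideal $I$ corresponds to the trace ideal generated by the idempotents $e$ with $[e]\in I$. For such $e$ one has $[e]=\sum_i a_{v_i}=\sum_i[\pi_{n_i,\infty}(v_i)]$ with $v_i\in H$, so $e$ is equivalent to the diagonal idempotent $\bigoplus_i\pi_{n_i,\infty}(v_i)$; since equivalent idempotents have the same trace ideal, the trace ideal of $e$ is contained in the ordinary ideal generated by $\{\pi_{n,\infty}(v)\mid v\in H\}$. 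As these vertex projections are themselves among the generating idempotents, I conclude that $I(H)$ is precisely the ideal generated by $\pi_{n,\infty}(v)$, $v\in H\cap E_n^0$, as asserted.
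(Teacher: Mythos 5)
Your proposal is correct, and its first half follows the paper's proof exactly: both arguments chain Theorem~\ref{thm:ideals-alg} applied to $R=\Lab_K(E,C)$ with the identification $\mathcal V(\Lab_K(E,C))\cong M(F_\infty,D^\infty)$ (the paper quotes \cite[Corollary 5.9]{AE} where you invoke Theorem~\ref{thm:K-theoryBrat}(2) --- the same fact), and both use the ``moreover'' clause of Theorem~\ref{thm:ideals-alg} to get $\Tr=\Idem$. The divergence is in the remaining two steps. For the isomorphism $\mathcal L(M(F_\infty,D^\infty))\cong\mathcal H(F_\infty,D^\infty)$ the paper simply cites \cite[Corollary 6.10]{AG}, whereas you reprove it from scratch: your maps $\Phi,\Psi$, the quotient separated graph construction, and the observation that $q^{-1}(0)$ is an order-ideal missing every $a_w$ with $w\notin H$ (by conicality and nonvanishing of vertex classes) are precisely the ingredients of the cited result; note that the quotient graph and the induced monoid map also appear elsewhere in the paper (the $E/H$ construction before Theorem~\ref{thm:quotientalg}, and Proposition~\ref{prop:refinement} via \cite[Construction 6.8]{AG2}), so your argument is consistent with the machinery the paper uses later. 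Incidentally, conicality and $a_w\ne 0$ can be seen even more cheaply by mapping every generator to $1$ in the monoid $\{0,1\}$ with $1+1=1$, avoiding any appeal to Leavitt path algebras. Second, the paper's proof never verifies the closing description of $I(H)$ as the ideal generated by the projections $\pi_{n,\infty}(v)$, $v\in H\cap E_n^0$ --- it is left implicit in the definitions of the isomorphisms --- while you trace it through explicitly, using that Murray--von Neumann equivalent idempotent matrices generate the same ideal; this is a genuine (if routine) addition, and the two-sided inclusion you give is complete. In short: the paper's route buys brevity by outsourcing the combinatorial core and the identification of $I(H)$; yours buys self-containedness and an explicit form of the correspondence, which is exactly what is exploited later in Lemma~\ref{lem:hersatfromE} and Theorem~\ref{thm:quotientalg}.
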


\begin{proof}
 By \cite[Corollary 5.9]{AE}, we have an isomorphism $\mathcal V (\Lab (E,C))\cong M(F_{\infty}, D^{\infty})$. In particular,
 $\mathcal V (\Lab (E,C))$ is generated by the classes of the idempotents in $\Lab (E,C)$ corresponding to the vertices in
 $F_{\infty}$. By Theorem \ref{thm:ideals-alg}, we obtain
 $$\mathcal L (M(F_{\infty}, D^{\infty})) \cong \mathcal L (\mathcal V (\Lab (E,C))) \cong \Tr (\Lab (E,C)) = \Idem(\Lab (E,C)).$$
 On the other hand, by \cite[Corollary 6.10]{AG}, we have $\mathcal L (M(F_{\infty}, D^{\infty}))\cong \mathcal H (F_{\infty}, D^{\infty})$, so that we finally obtain
 a lattice  isomorphism $\Idem (\Lab (E,C)) \cong \mathcal H (F_{\infty}, D^{\infty})$.
 \end{proof}

Let $\Omega$ be a zero-dimensional metrizable locally compact Hausdorff
space, and let $\mathbb K=\mathbb K (\Omega)$ be the subalgebra of
$\mathcal P (\Omega)$ consisting of all the compact open subsets of $\Omega$.
Let $\theta \colon G \act \Omega$ be a partial action of a discrete group
$G$ by continuous transformations on $\Omega$ such that $\Omega_g\in \mathbb
K$ for all $g\in G$. Observe that $\mathbb K$ is then automatically
$G$-invariant. The (relative) type semigroup $S(\Omega,G,\mathbb K)$ has
been defined in \cite[Definition 7.1]{AE}, see also \cite{KN} and
\cite{RS}. The semigroup $S(\Omega,G,\mathbb K)$ is indeed a conical
refinement monoid, and we obtain the following description of its
lattice $\mathcal L (S(\Omega,G,\mathbb K))$ of order ideals.

\begin{lemma}
 \label{lem:order-ideals}
 Let $\theta \colon G \act \Omega$ be a partial action of a discrete group $G$ by
continuous transformations on $\Omega$ such that $\Omega_g\in \mathbb K$ for
all $g\in G$. Then there are mutually inverse, order-preserving maps
$$\varphi \colon \mathcal L (S(\Omega,G, \mathbb K)) \to \mathbb O ^G(\Omega),\qquad \psi \colon \mathbb O ^G (\Omega)\to \mathcal L (S(\Omega,G, \mathbb K)),$$
$$\varphi (I)= \bigcup \{ K\in \mathbb K\mid [K]\in I \}, \qquad  \psi (U)= \langle [K] \mid K\in \mathbb K, K\subseteq U \rangle, $$
where $\mathbb O ^G(\Omega)$ is the lattice of $G$-invariant open subsets of $\Omega$, and, for $T\subseteq S(\Omega,G, \mathbb K)$, $\langle T \rangle$ stands for the order ideal
of $S(\Omega,G, \mathbb K)$ generated by $T$.
 \end{lemma}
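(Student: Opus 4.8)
The plan is to show that $\varphi$ and $\psi$ are well-defined, order-preserving, and mutually inverse. First I would verify that $\varphi$ lands in $\mathbb{O}^G(\Omega)$: given an order ideal $I \subset S(\Omega,G,\mathbb{K})$, the set $\varphi(I) = \bigcup\{K \in \mathbb{K} \mid [K] \in I\}$ is clearly open as a union of open sets. For $G$-invariance, note that if $[K] \in I$ and $g \in G$, then $K \cap \Omega_{g^{-1}}$ is compact open with $[K \cap \Omega_{g^{-1}}] \le [K]$, so $[K \cap \Omega_{g^{-1}}] \in I$ since $I$ is an order ideal; then $\theta_g(K \cap \Omega_{g^{-1}})$ is compact open and represents the same class in the type semigroup (as the partial action provides an equidecomposition via a single group element), hence lies in $\varphi(I)$. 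This shows $g.\varphi(I) \subseteq \varphi(I)$, giving invariance. Conversely, $\psi(U) = \langle [K] \mid K \in \mathbb{K},\, K \subseteq U\rangle$ is an order ideal by construction.

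\textbf{Order-preservation and the easy composite.}

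Both maps are manifestly order-preserving: a larger order ideal contains more classes, hence a larger union, and a larger open set contains more compact open subsets. Next I would check $\varphi(\psi(U)) = U$. The inclusion $\varphi(\psi(U)) \subseteq U$ is immediate since every generating $K \subseteq U$ contributes a subset of $U$, and membership in the generated order ideal only adds classes dominated by sums of such $[K]$'s, which by a refinement argument are again represented by compact open subsets of $U$. The reverse inclusion $U \subseteq \varphi(\psi(U))$ uses that $\Omega$ is zero-dimensional and locally compact: every point of the open set $U$ has a compact open neighborhood contained in $U$, so $U$ is the union of the compact open $K \subseteq U$, each of which satisfies $[K] \in \psi(U)$ and hence $K \subseteq \varphi(\psi(U))$.

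\textbf{The harder composite and the main obstacle.}

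The crux is $\psi(\varphi(I)) = I$. The inclusion $\psi(\varphi(I)) \subseteq I$ requires showing that every compact open $K \subseteq \varphi(I)$ has $[K] \in I$: by compactness, $K$ is covered by finitely many compact open sets $K_1,\dots,K_n$ with $[K_i] \in I$, so $K \subseteq \bigcup K_i$ and thus $[K] \le \sum [K_i] \in I$, whence $[K] \in I$ because $I$ is an order ideal closed under passing to smaller elements; since $\psi(\varphi(I))$ is generated by such classes, it is contained in $I$. The reverse inclusion $I \subseteq \psi(\varphi(I))$ is where I expect the real difficulty: I must show every class $[K] \in I$ has $K \subseteq \varphi(I)$. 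This is where the definition of the type semigroup via equidecomposability must be used carefully—$K$ itself need not be a subset of $\varphi(I)$ a priori, but I would argue that $[K] \in I$ forces $K$ to be $G$-equidecomposable with a compact open subset of $\varphi(I)$, and then exploit that the type semigroup identifies $G$-equidecomposable sets so that one may replace $K$ by its translate inside $\varphi(I)$. The main obstacle is managing this replacement cleanly: one must track how a witnessing equidecomposition of $K$ into pieces moved by group elements lands each piece inside some compact open set already known to be in $I$, using the refinement property of $S(\Omega,G,\mathbb{K})$ to reassemble. Once this is established, the two composites are identities, and since both maps are order-preserving bijections between lattices, they are automatically lattice isomorphisms, completing the proof.
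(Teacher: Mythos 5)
Your proposal has the right overall outline but misplaces the real difficulty, and at the one point where genuine work is required you declare the step ``immediate''. The inclusion $\varphi(\psi(U)) \subseteq U$ is the crux of the lemma, not a formality: one must show that \emph{every} compact open $L$ with $[L] \in \psi(U)$ is itself contained in $U$, and your assertion that such classes ``are again represented by compact open subsets of $U$'' is not sufficient --- equidecomposability of $L$ with a subset of $U$ does not place $L$ itself inside $U$ unless one uses that $U$ is $G$-invariant. The needed argument (which is the paper's key step) runs as follows: from $[L] \le [K_1]+\cdots+[K_r]$ with $K_j \subseteq U$, the refinement property of $S(\Omega,G,\mathbb{K})$ and the definition of the type semigroup yield a decomposition $L = \sqcup_{i=1}^n L_i'$ into compact open pieces together with $g_1,\dots,g_n \in G$ such that $L_i' \subseteq \Omega_{g_i^{-1}}$ and $\theta_{g_i}(L_i') \subseteq K_{j(i)} \subseteq U$; then the $G$-invariance of $U$ gives $L_i' \subseteq \theta_{g_i^{-1}}(U \cap \Omega_{g_i}) \subseteq U$, whence $L \subseteq U$. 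Your proposal never invokes the invariance of $U$ in either composite, so this step is genuinely missing from it.

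Conversely, the direction you single out as ``the real difficulty'', namely $I \subseteq \psi(\varphi(I))$, is trivial and needs none of the equidecomposition bookkeeping you sketch: if $[K] \in I$, then $K$ is, by the very definition of $\varphi(I)$ as the union of \emph{all} compact open sets whose class lies in $I$, one of the sets making up $\varphi(I)$, so $K \subseteq \varphi(I)$ and hence $[K] \in \psi(\varphi(I))$; a general element of $I$ is a finite sum of classes $[K_i]$ with each $[K_i] \in I$ (order ideals are hereditary), and $\psi(\varphi(I))$ is closed under sums. This is why the paper dismisses that inclusion in one line. Your remaining steps --- the $G$-invariance of $\varphi(I)$, order-preservation of both maps, $U \subseteq \varphi(\psi(U))$ from the existence of a basis of compact open sets, and $\psi(\varphi(I)) \subseteq I$ via the compactness covering argument --- do agree with the paper's proof; the gap is precisely that the refinement-plus-invariance argument you gesture at is attached to the wrong composite and is never actually carried out where it is needed.
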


\begin{proof}
Write $S:=S(\Omega,G, \mathbb K)$ and take $I\in \mathcal L (S)$. Clearly
$U:=\varphi (I)$ is an open subset of $\Omega$. If $x\in \Omega_{g^{-1}}\cap U$
for some $g\in G$, then there is $K\in \mathbb K$ with $[K]\in I$
such that $x\in X_{g^{-1}}\cap K$. But now we have $\theta_g(x) \in
\theta_g(\Omega_{g^{-1}}\cap K)$ with
$$ [\theta_g(\Omega_{g^{-1}}\cap K)] = [\Omega_{g^{-1}} \cap K] \le [K] \in I ,$$
and so $[\theta_g(\Omega_{g^{-1}}\cap K)]\in I$ because $I$ is an order ideal of $S$. It follows that $U$ is $G$-invariant.

Let $U$ be an invariant open subset of $\Omega$. Then $\psi (U)\in \mathcal L (S)$ by definition of $\psi$. It is clear that $\varphi$ and $\psi$ are order-preserving maps. We have to show that $(\varphi \circ \psi)(U)= U$ and $(\psi \circ \varphi ) (I)=I$ for $U\in \mathbb O ^G(X)$ and $I\in \mathcal L (S)$. For $U\in \mathbb O^G(\Omega)$,  let $K\in \mathbb K$ be such that $[K]\in \psi (U)$. Then there are $K_1,\dots , K_r\in \mathbb K$ such that $K_j\subseteq U$ for $j=1,\dots , r$ and
$$ [K]\le [K_1]+[K_2]+\cdots + [K_r].$$
Using the refinement property of $S$ and the definition of the type semigroup, one obtains a decomposition $K=\sqcup _{i=1}^n K_i'$ such that $K_i'\in \mathbb K$ for each $i$, and $g_1,\dots , g_n\in G$ such that, for each $i$, $K_i'\subseteq \Omega_{g_i^{-1}}$ and $\theta _{g_i}(K_i') \subseteq K_j$ for some $j=1, \dots , r$. It follows that
$$K_i'\subseteq \theta_{g_i^{-1}}(K_j\cap \Omega_{g_i}) \subseteq   \theta_{g_i^{-1}}(U\cap \Omega_{g_i})\subseteq U,$$
where the last containment follows from the fact that $U$ is $G$-invariant. We deduce that $K\subseteq U$, and so $\varphi (\psi (U))\subseteq U$. The other containment $U\subseteq \varphi (\psi (U))$ follows from the fact that $\Omega$ has a basis of compact open subsets.

Finally, let $I\in \mathcal L (S)$. It is clear that $I\subseteq \psi (\varphi (I))$. To show the reverse inclusion, it is enough to check that, if $K\in \mathbb K$ and $K\subseteq \varphi (I)$, then $[K]\in I$. By compactness of $K$, there are $K_1,\dots , K_r\in \mathbb  K$ such that $[K_i]\in I$ and $K\subseteq \cup_{i=1}^r K_i$, and thus
$$[K]\le [K_1]+\dots +[K_r] \in I.$$
Since $I$ is an order ideal of $S$, we see that $[K]\in I$, as desired.
\end{proof}

We can now obtain a description of the lattice of induced ideals of tame graph algebras.

\begin{theorem}
 \label{thm:idealsOr}
 Let $(E,C)$ be a finite bipartite separated graph. Then there is a lattice isomorphism
$$\textup{Ind}(\Lab(E,C)) \cong \textup{Ind}(\mathcal{O}^{(r)}(E,C)) \cong \mathcal L (M(F_{\infty}, D^{\infty}))\cong \mathcal H (F_{\infty}, D^{\infty}).$$
Moreover for $H \in \mathcal H (F_{\infty}, D^{\infty})$, we have
$$\Lab(E,C)/I(H) \cong C_K(Z) \rtimes_{\theta\vert_Z^*} \mathbb{F} \andspace \mathcal O^{(r)}(E,C)/I(H) \cong C(Z) \rtimes _{(r), \theta|_{Z}^*}\mathbb F,$$
where $Z:= \Omega (E,C)\setminus U$ with $U:= \bigcup _{v\in H} \Omega(E,C)_v$.
\end{theorem}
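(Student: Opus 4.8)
The plan is to pass to the crossed‑product pictures of all three algebras, identify every $\mathrm{Ind}$‑lattice with the lattice of $\mathbb{F}$‑invariant open subsets of $\Omega:=\Omega(E,C)$, and then route this through the type semigroup to reach $M(F_\infty,D^\infty)$ and $\mathcal{H}(F_\infty,D^\infty)$; the quotient statements should then fall out of the quotient half of Theorem~\ref{thm:ideals-reducedcrossed}.

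First I would treat the three algebras uniformly. Writing $\Lab(E,C)\cong C_K(\Omega)\rtimes\mathbb{F}$, $\mathcal{O}(E,C)\cong C(\Omega)\rtimes\mathbb{F}$ and $\mathcal{O}^r(E,C)\cong C(\Omega)\rtimes_r\mathbb{F}$, Theorem~\ref{thm:ideals-reducedcrossed} (and Remark~\ref{rem:AlgCrPrIndId} in the algebraic case) identifies each $\mathrm{Ind}(-)$ with the lattice of $\mathbb{F}$‑invariant ideals of the commutative coefficient algebra $A$ via $J\mapsto J\cap A$. Since ideals of $C_K(\Omega)$ and closed ideals of $C(\Omega)$ both correspond to open subsets of $\Omega$, in every case $\mathrm{Ind}(-)$ is naturally isomorphic to the lattice $\mathbb{O}^{\mathbb{F}}(\Omega)$ of $\mathbb{F}$‑invariant open sets. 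This already yields $\mathrm{Ind}(\Lab(E,C))\cong\mathrm{Ind}(\mathcal{O}^{(r)}(E,C))$, as all three are the same lattice.

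Next, Lemma~\ref{lem:order-ideals} supplies $\mathbb{O}^{\mathbb{F}}(\Omega)\cong\mathcal{L}(S(\Omega,\mathbb{F},\mathbb{K}))$ (the domains $\Omega_\alpha$ are compact open, so its hypothesis is met). I would then invoke the identification of the type semigroup $S(\Omega,\mathbb{F},\mathbb{K})$ with the graph monoid $M(F_\infty,D^\infty)\cong\mathcal{V}(\Lab(E,C))$ from \cite{AE}, giving $\mathcal{L}(S(\Omega,\mathbb{F},\mathbb{K}))\cong\mathcal{L}(M(F_\infty,D^\infty))$; the last isomorphism $\mathcal{L}(M(F_\infty,D^\infty))\cong\mathcal{H}(F_\infty,D^\infty)$ is \cite[Corollary 6.10]{AG}, exactly as in Theorem~\ref{thm:VidealsLab}. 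The delicate point is to check that the composite isomorphism sends a hereditary $D^\infty$‑saturated set $H$ to $U:=\bigcup_{v\in H}\Omega(E,C)_v$. Here I would verify that the idempotents generating $\mathcal{V}(\Lab(E,C))$ are the vertex projections, which in the crossed‑product picture are the characteristic functions $1_{\Omega_v}$ of the compact open sets $\Omega_v$, so that the ideal $I(H)$ of Theorem~\ref{thm:VidealsLab} (generated by the $\pi_{n,\infty}(v)$, $v\in H$) coincides with the induced ideal cut out by $U$, and that $U$ is $\mathbb{F}$‑invariant precisely because $H$ is hereditary and $D^\infty$‑saturated.

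Finally, for the quotient descriptions I set $Z:=\Omega\setminus U$, a compact $\mathbb{F}$‑invariant subset, and apply the quotient part of Theorem~\ref{thm:ideals-reducedcrossed} to the invariant ideal corresponding to $U$. Since $C(\Omega)/C_0(U)\cong C(Z)$ (respectively $C_K(\Omega)/C_K(U)\cong C_K(Z)$), this gives $\mathcal{O}(E,C)/I(H)\cong C(Z)\rtimes_{\theta\vert_Z^*}\mathbb{F}$ and $\Lab(E,C)/I(H)\cong C_K(Z)\rtimes_{\theta\vert_Z^*}\mathbb{F}$; the reduced statement $\mathcal{O}^r(E,C)/I(H)\cong C(Z)\rtimes_{r,\theta\vert_Z^*}\mathbb{F}$ uses that the free group $\mathbb{F}$ is exact, the extra hypothesis required by the reduced half of Theorem~\ref{thm:ideals-reducedcrossed}. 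The main obstacle I anticipate is not a hard estimate but the bookkeeping of the previous paragraph: reconciling the algebraic parametrization of the ideal lattice (order ideals of $M(F_\infty,D^\infty)$, equivalently trace ideals of $\Lab(E,C)$) with the dynamical one (invariant open subsets of $\Omega$), and thereby matching the two a priori different descriptions of $I(H)$.
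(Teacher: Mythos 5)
Your proposal is correct and follows essentially the same route as the paper's proof: both pass to the crossed-product pictures, identify all three lattices of induced ideals with $\mathbb{O}^{\mathbb{F}}(\Omega(E,C))$ via Theorem~\ref{thm:ideals-reducedcrossed} (and Remark~\ref{rem:AlgCrPrIndId}), then chain Lemma~\ref{lem:order-ideals}, the identification $S(\Omega(E,C),\mathbb{F},\mathbb{K})\cong M(F_\infty,D^\infty)$ of \cite[Theorem 7.4]{AE}, and \cite[Corollary 6.10]{AG}, deducing the quotient statements from the quotient half of Theorem~\ref{thm:ideals-reducedcrossed}. Your explicit attention to the bookkeeping (matching $I(H)$ with the induced ideal cut out by $U$) and to exactness of $\mathbb{F}$ in the reduced case merely spells out what the paper compresses into ``the definitions of the lattice isomorphisms.''
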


\begin{proof}
 It follow from \cite[Theorem 7.4]{AE} that there is a natural isomorphism
 $$S:= S(\Omega (E,C), \mathbb F, \mathbb K) \cong M(F_{\infty}, D^{\infty}).$$
Combining this with Theorem \ref{thm:ideals-reducedcrossed} (or Remark~\ref{rem:AlgCrPrIndId}), \cite[Corollary 6.10]{AG} and Lemma \ref{lem:order-ideals}, we obtain
$$\textup{Ind}(\Lab(E,C)) \cong \textup{Ind}(\mathcal{O}^{(r)}(E,C)) \cong \mathbb O^{\mathbb F}(\Omega (E,C)) \cong \mathcal L (S) \cong \mathcal L (M(F_{\infty}, D^{\infty}))
\cong \mathcal H (F_{\infty}, D^{\infty}).$$
The last part follows from Theorem \ref{thm:ideals-reducedcrossed} and the definitions of the lattice isomorphisms.
\end{proof}

\begin{remark}
 \label{rem:algebraic-case}
We believe it is likely that Theorem \ref{thm:VidealsLab} generalizes to the setting of tame graph C*-algebras, at least for the reduced ones. This would mean that we have a lattice isomorphism
$$\text{Proj} (\mathcal O^r (E,C)) \cong \mathcal L (M(F_{\infty}, D^{\infty})) \cong \mathcal H  (F_{\infty}, D^{\infty}) , $$
where $\text{Proj} (\mathcal O^r (E,C))$ denotes the lattice of
ideals of $\mathcal O ^r(E,C)$ which are generated by their
projections. By Theorem \ref{thm:idealsOr}, this is equivalent to
saying that every ideal generated by projections is induced. In Section~\ref{sect:GeneralIdeals}, we will prove this for ideals $I \ideal \mathcal{O}^r(E,C)$ of \textit{finite type}.
\end{remark}

\section{The ideals associated to hereditary $C$-saturated subsets of $(E,C)$}
\label{sec:ideals-(E,C)}

In this section, we will analyze the induced ideals of $\Lab (E,C)$, $\mathcal O (E,C)$ and $\mathcal O ^r (E,C)$ arising
from hereditary $C$-saturated subsets of $E^0$, as opposed to the general study of ideals corresponding to
hereditary $D^{\infty}$-saturated subsets of $(F_{\infty}, D^{\infty})$. By Theorem~\ref{thm:NBallIsNGraph}, $(E,C)$ and $(E_n,C^n)$ give
rise to the same algebras for all $n\ge 0$, so we can apply the corresponding results to any hereditary $C^n$-saturated
subset of the separated graph $(E_n, C^n)$.

First, we shall give a concrete description of the hereditary and $D^\infty$-saturated closure of a subset $H \in \mathcal{H}(E,C)$ inside $(F_{\infty},D^\infty)$.

\begin{lemma}
 \label{lem:hersatfromE1}
 Let $(E,C)$ be a finite bipartite separated graph and take $H\in \mathcal H(E,C)$. Let
 $$H_1 := \{ s_1(X(x)) \mid x\in E^1 \, \, \,  {\rm and } \, \, \, s(x) \in H \} \cup (H \cap E_1^{0,0}).$$
Then $H_1 \in \mathcal H _{(E_1, C^1)}$ and $H\cup H_1 \in \mathcal
H_{(F_1, D^1)}$ is the hereditary closure of $H$ inside $(F_1,D^1)$.
\end{lemma}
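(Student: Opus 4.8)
The plan is to verify the defining conditions directly, handling the two relevant layers of $(F_1,D^1)$ separately and then obtaining the statement for $(E_1,C^1)$ by restriction. Recall that $E_1^{0,0}=E^{0,1}$, set $L:=\bigcup\{s_1(X(x))\mid x\in E^1,\ s(x)\in H\}\subseteq E_1^{0,1}$, so that $H_1=(H\cap E^{0,1})\sqcup L$ and $H\cup H_1$ meets $E^{0,0}$, $E^{0,1}$ and $E_1^{0,1}$ in $H\cap E^{0,0}$, $H\cap E^{0,1}$ and $L$, respectively. The computation rests on one preliminary observation: since a vertex $v(x_1,\ldots,x_{k_u})$ lies in $s_1(X(x'))$ only if $x'$ equals one of its entries $x_l$, and since $s_1(X(x_l))\subseteq L$ precisely when $s(x_l)\in H$, we get the characterization
$$v(x_1,\ldots,x_{k_u})\in L\iff s(x_l)\in H\text{ for some }l.$$

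First I would check that $H\cup H_1$ is hereditary in $(F_1,D^1)$, edge by edge. For $e\in E^1$ one has $r(e)\in E^{0,0}$, so $r(e)\in H\cup H_1$ forces $r(e)\in H$ and hence $s(e)\in H$ by hereditariness of $H$ in $(E,C)$. For $e=\alpha^{x_i}(x_1,\ldots,\widehat{x_i},\ldots,x_{k_u})\in E_1^1$ one has $r_1(e)=s(x_i)\in E^{0,1}$, so $r_1(e)\in H\cup H_1$ means $s(x_i)\in H$, and then $s_1(e)=v(x_1,\ldots,x_{k_u})\in s_1(X(x_i))\subseteq L$. The same computation shows that $L$ is exactly the set of sources of edges into $H\cap E^{0,1}$, namely $s_1(r_1^{-1}(w))$ for $w\in H\cap E^{0,1}$; as the vertices of $E_1^{0,1}$ receive no edges, nothing further can be forced, so $H\cup H_1$ is the \emph{hereditary closure} of $H$.

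The crux is $D^1$-saturation. At $u\in E^{0,0}$ with $X\in C_u=D^1_u$, the hypothesis $s(X)\subseteq H\cup H_1$ gives $s(X)\subseteq H$ (as $s(X)\subseteq E^{0,1}$), whence $u\in H$ by $C$-saturation of $H$. The essential case is a class $X(x_i)\in C^1_w$ at $w=s(x_i)\in E^{0,1}=E_1^{0,0}$: assuming $s_1(X(x_i))\subseteq L$, I argue $s(x_i)\in H$ by contradiction. If $s(x_i)\notin H$, then $u:=r(x_i)\notin H$ by the contrapositive of hereditariness, and $C$-saturation of $H$ at $u$ then yields, for every $j$, an edge $x_j\in X_j^u$ with $s(x_j)\notin H$. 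Taking the $i$-th entry to be the given $x_i$, we obtain a tuple $(x_1,\ldots,x_{k_u})$ with $s(x_l)\notin H$ for all $l$; by the characterization above $v(x_1,\ldots,x_{k_u})\notin L$, yet this vertex lies in $s_1(X(x_i))$, contradicting $s_1(X(x_i))\subseteq L$. Hence $w=s(x_i)\in H\subseteq H\cup H_1$, so $H\cup H_1$ is $D^1$-saturated and therefore lies in $\mathcal H_{(F_1,D^1)}$.

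Finally, $H_1\in\mathcal H_{(E_1,C^1)}$ follows by restriction: hereditariness and $C^1$-saturation of $H_1$ involve only the edges $E_1^1$ and the partition $C^1$, and on these they read exactly as the conditions just verified for $H\cup H_1$ (using $(H\cup H_1)\cap E_1^0=H_1$). I expect the saturation step for the new classes $X(x_i)$ to be the main obstacle, since it is the only place where hereditariness and $C$-saturation of $H$ in $(E,C)$ must be used together — the former to pass from $s(x_i)\notin H$ to $r(x_i)\notin H$, the latter to produce, in each partition class at $r(x_i)$, an edge whose source avoids $H$.
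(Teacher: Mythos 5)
Your proposal is correct and follows essentially the same route as the paper's proof: the crux in both is the saturation step at a class $X(x_i)\in C^1_w$, where hereditariness of $H$ gives $r(x_i)\notin H$, and $C$-saturation of $H$ then supplies in each $X_j^u$ an edge whose source avoids $H$, producing a tuple $v(x_1,\ldots,x_{k_u})\in s_1(X(x_i))$ that cannot lie in $H_1$. The only differences are organizational: you verify hereditariness and saturation for $H\cup H_1$ in $(F_1,D^1)$ and deduce $H_1\in\mathcal H(E_1,C^1)$ by restriction (the paper argues in $(E_1,C^1)$ first and then notes the $(F_1,D^1)$ statement), and you run the saturation step as a single contradiction where the paper splits into cases ($k_u=1$; some class $X_j^u$ with $s(X_j^u)\subseteq H$; otherwise) — the underlying combinatorics is identical.
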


\begin{proof}
$H_1$ is clearly hereditary; if $r_1(e) \in H \cap E_1^{0,0}=H \cap
E^{0,1}$, then $e \in X(x)$ for some $x \in E^1$ with $s(x)=r_1(e)$,
hence $s_1(e) \in s_1(X(x)) \subseteq H_1$. Next, we show
$C^1$-saturation. Suppose that $s(X(x))\subseteq H_1$ for some $x
\in E^1$, and write $w:=s(x)$. Also, set $x=x_i \in X_i^u$ with
$u:=r(x)$ and $C_u= \{ X_1^u, \dots , X_i^u,\dots , X_{k_u}^u \}$.
If $k_u= 1$, then necessarily $w\in H$ by the definition of $H_1$,
so suppose that $k_u>1$. If for some $j\ne i$, we have $s(x_j)\in H$
for all $x_j\in X_j^u$, then $u\in H$ by $C$-saturation, and so
$s(x) = s(x_i)\in H$ because $H$ is hereditary. Thus, we may assume
that, for all $j\ne i$, there exists $x_j\in X_j^u$ such that
 $s(x_j)\notin H$. Now, consider the vertex
 $$v:= v(x_1,\dots ,  x_{i-1}, x_i,x_{i+1}, \dots , x_{k_u})\in E_1^{0,1}.$$
Then $v\in H_1$ because $v= s( \alpha ^{x_i} (x_1, \dots ,  x_{i-1}, x_{i+1}, \dots , x_{k_u})) \in s(X(x_i)) =s(X(x))\subseteq H_1$. But by the definition of $H_1$,
 there must be $k\in \{ 1, \dots , k_{u}\}$ such that $s(x_k) \in H$.
 Hence $k=i$ and $w=s(x_i) \in H$, as desired. It is now clear that $H \cup H_1 \in \mathcal{H}(F_1,D^1)$,
 and $H \cup H_1$ is obviously nothing but the hereditary closure of $H$ inside $(F_1,D^1)$.
 \end{proof}

\begin{notation}
 \label{not:Hinfty}
Given $H\in \mathcal H (E,C) $, we define a sequence $H_n\in \mathcal H (E_n, C^n)$ in an inductive way, so that
 $$H_n := \{ s(X(x)) \mid x\in E_{n-1}^1 \, \, \,  {\rm and } \, \, \, s(x) \in H_{n-1} \} \cup (H_{n-1} \cap E_n^{0,0}).$$
 Then, by Lemma~\ref{lem:hersatfromE1}, $H_{\infty}: = \bigcup_{n=0}^{\infty} H_n \in \mathcal{H}(F_\infty,D^\infty)$ is the hereditary closure of $H$ inside $(F_\infty,D^\infty)$.
 \end{notation}

 We have thus showed the following lemma:

 \begin{lemma}
 \label{lem:hersatfromE}
 Let $(E,C)$ be a finite bipartite separated graph. Then there is an injective order-preserving map
 $$\mathcal H (E,C) \to \mathcal H (F_{\infty}, D^{\infty})$$
sending $H\in \mathcal H (E,C) $ to $H_{\infty}\in \mathcal H (F_{\infty}, D^{\infty}) $.
 Moreover, the ideal $I(H)$ of $\mathcal O^r (E,C)$ generated by $H$ is precisely the ideal $I(H_{\infty})$, and similar statements hold for $\mathcal O (E,C)$ and for $\Lab _K(E,C)$.
 \end{lemma}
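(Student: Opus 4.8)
The assignment $H \mapsto H_\infty$ is already well-defined by Notation~\ref{not:Hinfty}, since Lemma~\ref{lem:hersatfromE1} supplies the only delicate point, namely that each $H_n$ (and hence $H_\infty$) is $D^\infty$-saturated. Thus for the first assertion I only need injectivity and order-preservation. Order-preservation is a one-line induction: both clauses in the recursion $H_n = \{s(X(x)) \mid x \in E_{n-1}^1,\ s(x) \in H_{n-1}\} \cup (H_{n-1} \cap E_n^{0,0})$ are monotone in $H_{n-1}$, so $H \subseteq H'$ propagates to $H_n \subseteq H'_n$ for all $n$, whence $H_\infty \subseteq H'_\infty$. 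For injectivity I would produce the explicit left inverse $K \mapsto K \cap E^0$ by proving $H_\infty \cap E^0 = H$.

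The identity $H_\infty \cap E^0 = H$ is a bookkeeping computation with the layers of the separated Bratteli diagram. Recall $E^0 = E^{0,0} \sqcup E^{0,1}$ with $E_j^{0,1} = E_{j+1}^{0,0}$ for every $j$, so that $H_n \subseteq E_n^0 = E_{n-1}^{0,1} \sqcup E_n^{0,1}$ meets $E^0$ only for $n \le 1$. Since $H_0 = H \subseteq E^0$ and the new vertices $s_1(X(x)) \subseteq E_1^{0,1}$ of $H_1$ lie outside $E^0$, one gets $H_1 \cap E^0 = H \cap E_1^{0,0} = H \cap E^{0,1}$, and therefore $H_\infty \cap E^0 = H \cup (H \cap E^{0,1}) = H$. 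This gives injectivity at once.

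For the ideal statement I would argue uniformly for all three algebras, using Theorem~\ref{thm:NBallIsNGraph} so that the vertices and edges of every $(E_n,C^n)$ have canonical images satisfying relations (V),(E),(SCK1),(SCK2) in each of $\Lab_K(E,C)$, $\mathcal{O}(E,C)$ and $\mathcal{O}^r(E,C)$. The inclusion $I(H) \subseteq I(H_\infty)$ is trivial from $H \subseteq H_\infty$. For the reverse inclusion I would show, by induction on $n$, that the projection $v$ lies in $I(H)$ for every $v \in H_n$, assuming inductively that this holds for all of $H_{n-1}$. A vertex $v \in H_n$ either lies in $H_{n-1} \cap E_n^{0,0} \subseteq H_{n-1}$, covered by induction, or has the form $v = s_n(e)$ for some $e \in X(x) \in C^n$ with range $r_n(e) = s(x) \in H_{n-1}$. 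In the latter case relations (SCK1) and (E) give
$$v = s_n(e) = e^*e = e^*\, r_n(e)\, e = e^*\, s(x)\, e,$$
and since $s(x) \in H_{n-1}$ has its projection in $I(H)$ by hypothesis, the two-sided ideal property forces $v \in I(H)$. Hence $H_\infty \subseteq I(H)$ vertex by vertex, so $I(H_\infty) \subseteq I(H)$ and the two ideals coincide.

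The crux of the whole argument is this reverse inclusion, which is exactly where the saturation and heredity encoded in $H_\infty$ must be made visible at the level of the generating projections; the single identity $v = e^*\, r_n(e)\, e$, valid verbatim in all three algebras, is what makes this work and what lets me treat the C*-algebraic cases in parallel with the algebraic one. For $\Lab_K(E,C)$ alone one could instead invoke Theorem~\ref{thm:VidealsLab} together with \cite[Corollary 6.10]{AG}: the order ideal of $M(F_\infty,D^\infty)$ generated by $\{[v] \mid v \in H\}$ corresponds to the hereditary $D^\infty$-saturated closure of $H$, i.e.\ to $H_\infty$; but the direct computation is preferable since it is insensitive to the universal/reduced distinction and avoids the still-delicate question of whether every projection-generated ideal of $\mathcal{O}^r(E,C)$ is induced, cf.\ Remark~\ref{rem:algebraic-case}.
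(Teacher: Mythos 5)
Your proposal is correct. For the first assertion it is essentially the paper's own (implicit) argument: the paper states this lemma with no separate proof because Lemma~\ref{lem:hersatfromE1} and Notation~\ref{not:Hinfty} already contain the substance, namely that $H_\infty$ is the hereditary and $D^\infty$-saturated closure of $H$ in $(F_\infty,D^\infty)$. Your monotonicity induction for order-preservation and the retraction $K \mapsto K \cap E^0$ (via the correct layer computation $H_\infty \cap E^0 = H$) simply make explicit what the paper leaves to the reader.

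For the ideal statement you take a genuinely different and more self-contained route. The natural reading of the paper is that $I(H)=I(H_\infty)$ comes from the correspondences of Section~\ref{sec:structure-ideals}: by Theorem~\ref{thm:VidealsLab} (and Theorem~\ref{thm:idealsOr}) together with \cite[Corollary 6.10]{AG}, the ideal generated by the projections $\{v \mid v \in H\}$ corresponds to the order ideal of $M(F_\infty,D^\infty)$ generated by $H$, hence to the hereditary $D^\infty$-saturated closure $H_\infty$. Your relation-chasing induction, $v = s_n(e)=e^*e = e^*\,r_n(e)\,e$ with $r_n(e)=s(x) \in H_{n-1}$, replaces that machinery by a computation valid verbatim in $\Lab_K(E,C)$, $\mathcal{O}(E,C)$ and $\mathcal{O}^r(E,C)$, and it rightly exploits that only the heredity step needs an algebraic witness (saturation being automatic by Lemma~\ref{lem:hersatfromE1}), so only (SCK1) and (E) are used. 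One step is left implicit in the $C^*$-cases, and you should add it: there $I(H_\infty)$ means, via Theorem~\ref{thm:idealsOr}, the \emph{induced} ideal $C_0(U_\infty) \rtimes_{(r)} \mathbb{F}$ with $U_\infty = \bigcup_{v \in H_\infty} \Omega(E,C)_v$, so to pass from ``every vertex projection of $H_\infty$ lies in $I(H)$'' to $I(H_\infty) \subseteq I(H)$ you must check that this induced ideal is generated, as a closed ideal, by the projections $1_{\Omega(E,C)_v}$, $v \in H_\infty$. This is routine---any compact open subset of $U_\infty$ is covered by finitely many $\Omega(E,C)_v$, indicators of compact open sets span a dense subspace of $C_0(U_\infty)$, and $C_0(U_\infty)\rtimes_{(r)}\mathbb{F}$ is the closed ideal generated by $C_0(U_\infty)$---but it is precisely where the topology enters and cannot be omitted; for $\Lab_K(E,C)$ nothing is missing, since Theorem~\ref{thm:VidealsLab} defines $I(H_\infty)$ as the projection-generated ideal. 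With that addition your proof is complete, and your closing observation is apt: it verifies, for this particular ideal, the ``projection-generated implies induced'' statement of Remark~\ref{rem:algebraic-case} without circularity.
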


We also mention the following easy description of the open, invariant subspace associated with $H \in \mathcal{H}(E,C)$ in terms of configuration spaces. Recall that if $\xi \in \Omega(E,C)_v$, then $1 \in \xi$ is regarded as the trivial path $v$, so that $r(1)=v$ in this situation.

\begin{lemma}\label{lem:OpenInvFromHS}
Let $(E,C)$ be a finite bipartite graph and let $H \in \mathcal{H}(E,C)$. Then
$$\bigcup_{v \in H_\infty} \Omega(E,C)_v =\{ \xi \in \Omega(E,C) \mid r(\alpha) \in H \text{ for some } \alpha \in \xi\}.$$
\end{lemma}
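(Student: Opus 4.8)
The plan is to prove the set equality by showing both inclusions, exploiting the dynamical description of $\Omega(E,C)_v$ for $v \in E_n^0$ furnished by Remark~\ref{rem:Omegav}, together with the inductive construction of $H_\infty = \bigcup_n H_n$ from Notation~\ref{not:Hinfty}. The key observation is that, by Theorem~\ref{thm:NBallIsNGraph} and Remark~\ref{rem:Omegav}, a configuration $\xi$ lies in $\Omega(E,C)_v$ for $v \in E_n^0$ precisely when $\xi^n = B(v)$, i.e. the $n$-ball of $\xi$ is the distinguished ball $B(v)$ attached to the vertex $v$. So membership of $\xi$ in $\bigcup_{v \in H_\infty}\Omega(E,C)_v$ amounts to the existence of some $n \ge 0$ and some $v \in H_n$ with $\xi^n = B(v)$.

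For the inclusion $\subseteq$, I would start with $\xi \in \Omega(E,C)_v$ for some $v \in H_\infty$, say $v \in H_n$. I would argue by induction on $n$ that such a $\xi$ contains an element $\alpha$ with $r(\alpha) \in H$. The base case $n=0$ is immediate: if $v \in H_0 = H \subset E^0$, then $\xi \in \Omega(E,C)_v$ means $1 \in \xi$ is regarded as the trivial path $v$, so $\alpha = 1$ works with $r(1)=v \in H$. For the inductive step, one uses the defining recursion $H_n = \{s(X(x)) \mid x \in E_{n-1}^1,\ s(x) \in H_{n-1}\} \cup (H_{n-1}\cap E_n^{0,0})$: a vertex $v \in H_n$ either already lies in $H_{n-1}$, or is of the form $s_{n-1}(X(x))$ for some edge $x$ of $E_{n-1}$ with $s(x) \in H_{n-1}$. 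In the first case the inductive hypothesis applies after noting the compatibility $\Omega(E,C)_v \subset$ (a union of level $(n-1)$ cylinders); in the second case, the relation $v = s(\alpha^{x}(\cdots))$ in the successor graph corresponds dynamically to the ability to extend $\xi$ by the edge $x$, placing a translate of $\xi$ inside $\Omega(E,C)_{s(x)}$ with $s(x) \in H_{n-1}$, and the inductive hypothesis then produces the required $\alpha \in \xi$ with $r(\alpha) \in H$.

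For the reverse inclusion $\supseteq$, suppose $\xi \in \Omega(E,C)$ and $\alpha \in \xi$ satisfies $r(\alpha) \in H$. Using right-convexity of $\xi$ and the translation $\theta_\alpha$, I would reduce to the case $\alpha = 1$, i.e. $\xi \in \Omega(E,C)_w$ with $w := r(1) \in H$; indeed $\theta_\alpha(\xi) \in \Omega(E,C)_{r(\alpha)}$ and membership of the left-hand set is invariant under the partial action since $\bigcup_{v\in H_\infty}\Omega(E,C)_v$ is exactly the open invariant set $U$ attached to the invariant ideal, so it suffices to handle the based-at-identity case. Then $w \in H = H_0 \subset H_\infty$ gives $\xi \in \Omega(E,C)_w \subset \bigcup_{v \in H_\infty}\Omega(E,C)_v$ directly.

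The main obstacle I expect is the bookkeeping in the inductive step of the forward inclusion: one must translate the purely graph-theoretic recursion defining $H_n$ (in terms of the successor edges $\alpha^{x}(\cdots)$ and the sets $X(x)$) into a precise statement about how the cylinder sets $\Omega(E,C)_v$ for level-$n$ vertices sit inside those for level-$(n-1)$ vertices, via the identification $\Omega(E,C)_v = \{\xi \mid \xi^n = B(v)\}$. Getting the indices and the direction of the edge $x$ right — in particular checking that $v = s_{n-1}(X(x))$ forces $\xi$, viewed at level $n-1$, to admit an extension by $x$ landing in $\Omega(E,C)_{s(x)}$ — is where the care is needed, but it is a direct unwinding of Definition~\ref{def:Finftyandothers} and Remark~\ref{rem:Omegav} rather than a genuinely hard argument.
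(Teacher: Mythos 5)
Your proposal is correct, but it takes a genuinely different route from the paper. The paper's own proof is two lines: by Lemma~\ref{lem:hersatfromE} the ideal generated by $H$ equals $I(H_\infty)$, so under the dictionary of Theorem~\ref{thm:idealsOr} between induced ideals and open invariant sets, $U=\bigcup_{v\in H_\infty}\Omega(E,C)_v$ must coincide with the $\mathbb{F}$-saturation $\theta_{\mathbb{F}}\big(\bigcup_{v\in H}\Omega(E,C)_v\big)$, and that saturation is visibly the right-hand side, since $\alpha\in\xi$ exactly means $\theta_\alpha(\xi)$ is defined with $\theta_\alpha(\xi)\in\Omega(E,C)_{r(\alpha)}$. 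You never invoke Lemma~\ref{lem:hersatfromE}: instead you prove the nontrivial inclusion $\subseteq$ by hand, inducting along the recursion defining $H_n$ in Notation~\ref{not:Hinfty} and using the ball dictionary $v\leftrightarrow B(v)$ of Theorem~\ref{thm:NBallIsNGraph} and Remark~\ref{rem:Omegav}; in effect this is a dynamical re-proof of the content the paper imports from the graph-theoretic Lemma~\ref{lem:hersatfromE1}. For the inclusion $\supseteq$ both arguments rest on the same fact, namely that $U$ is invariant (part of Theorem~\ref{thm:idealsOr}, which precedes this lemma), so your citation is not circular. The one step you should write out in the induction is the composition: the inductive hypothesis produces $\alpha'$ in the translate $\theta_g(\xi)\in\Omega(E,C)_{s(x)}$, where $g$ is the image of $x^{-1}$ under the group homomorphism of Theorem~\ref{thm:NBallIsNGraph}, and one must pass back to $\xi$ via $\alpha:=\alpha' g\in\xi$, noting that the range of $\alpha'g$ computed in $\xi$ equals the range of $\alpha'$ computed in $\theta_g(\xi)$, hence lies in $H$; you flag exactly this bookkeeping, and it is indeed routine. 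The trade-off is clear: the paper's argument is shorter because it reuses established results, while yours is more self-contained and makes the combinatorial mechanism explicit, at the price of redoing work already done in Lemma~\ref{lem:hersatfromE1}.
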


\begin{proof}
Since $I(H)=I(H_\infty)$, we have
$$U =\theta_\mathbb{F}\Big(\bigcup_{v \in H} \Omega(E,C)_v \Big)=\{ \xi \in \Omega(E,C) \mid r(\alpha) \in H \text{ for some } \alpha \in \xi\}.$$
\end{proof}

Given a finitely separated graph $(E,C)$ and a hereditary $C$-saturated subset $H$ of $E^0$, we denote by $E/H$ the subgraph of $E$ with
$(E/H)^0:= E^0\setminus H$ and $(E/H)^1:= \{ e\in E^1\mid s(e) \notin H \}$.
Similarly, for any subset $X\subseteq E^1$, define
$$X/H := X\cap s^{-1}(E^0\setminus H).$$
For $v\in (E/H)^0$,  we set
$$(C/H)_v := \{ X/H \mid X\in C_v  \},$$
which is a partition of $r_{E/H}^{-1}(v)$, and
$C/H := \bigsqcup_{v\in E^0\setminus H } (C/H)_v$. Observe
that $X/H\ne \emptyset $ for all $X\in C_v$ with $v\in E^0\setminus
H$, because $H$ is $C$-saturated.

 \begin{theorem}
 \label{thm:quotientalg}
 Let $(E,C)$ be a finite bipartite separated graph and let $H \in \mathcal{H}(E,C)$. Then there is a natural $*$-algebra isomorphism
$$\Lab _K (E,C)/I(H) \cong \Lab _K (E/H, C/H).$$
 Likewise, there are natural C*-algebra isomorphisms
 $$\mathcal O (E,C)/I(H) \cong \mathcal O (E/H, C/H) \andspace \mathcal O^r (E,C)/I(H) \cong \mathcal O^r (E/H, C/H).$$
 \end{theorem}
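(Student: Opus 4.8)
The plan is to derive all three isomorphisms at once from the crossed-product descriptions of the quotients furnished by Theorem~\ref{thm:idealsOr}, thereby avoiding three separate universal-property arguments (which in any case would fail for the reduced algebra, lacking a universal property). Set $\mathbb{F}:=\mathbb{F}(E^1)$ and let $\mathbb{F}':=\mathbb{F}((E/H)^1)\le \mathbb{F}$ be the subgroup generated by $(E/H)^1=\{e\in E^1\mid s(e)\notin H\}$. By Lemma~\ref{lem:hersatfromE} the ideal generated by $H$ is $I(H_\infty)$, so Theorem~\ref{thm:idealsOr} together with Lemma~\ref{lem:OpenInvFromHS} gives
$$\Lab_K(E,C)/I(H)\cong C_K(Z)\rtimes \mathbb{F}, \qquad \mathcal{O}^{(r)}(E,C)/I(H)\cong C(Z)\rtimes_{(r)} \mathbb{F},$$
where $Z=\{\xi\in\Omega(E,C)\mid r(\alpha)\notin H \text{ for all }\alpha\in\xi\}$. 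On the other side, $(E/H,C/H)$ is again finite bipartite separated: $C$-saturation forces $X/H\ne\emptyset$ for every $X\in C_v$ with $v\notin H$, whence $r((E/H)^1)=(E/H)^{0,0}$, and $s((E/H)^1)=(E/H)^{0,1}$ is clear. Consequently $\Lab_K(E/H,C/H)\cong C_K(\Omega(E/H,C/H))\rtimes\mathbb{F}'$ and $\mathcal{O}^{(r)}(E/H,C/H)\cong C(\Omega(E/H,C/H))\rtimes_{(r)}\mathbb{F}'$, so it suffices to compare the two partial actions.

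The crux is the set-theoretic identity $Z=\Omega(E/H,C/H)$ inside $\mathcal{P}(\mathbb{F})$, using $\mathbb{F}'\le\mathbb{F}$. First I would show that every letter occurring in a configuration $\xi\in Z$ is $e^{\pm1}$ with $e\in(E/H)^1$: by right-convexity such a letter arises as an extension $\sigma\alpha\in\xi$ with $\alpha\in\xi$, and inspecting the local configuration $\xi_\alpha$ shows that a type-(c1) extension $e\alpha$ has $s(e)=r(\alpha)\notin H$ while a type-(c2) extension $e_X^{-1}\alpha$ has $s(e_X)=r(e_X^{-1}\alpha)\notin H$; in both cases hereditariness of $H$ is exactly what makes the source avoid $H$. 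Thus $\xi\subseteq\mathbb{F}'$, and a direct translation of conditions (c1)/(c2) --- using $s_{E/H}^{-1}(v)=s_E^{-1}(v)$ for $v\notin H$ and the bijection $C_v\to(C/H)_v$, $X\mapsto X/H$ supplied by $C$-saturation --- shows $\xi\in\Omega(E/H,C/H)$. Conversely, any $\eta\in\Omega(E/H,C/H)$ satisfies the same conditions (c1)/(c2) read in $E$, and since the range of an admissible path is computed identically in $E$ and $E/H$, every $r(\alpha)$ lies in $(E/H)^0=E^0\setminus H$; hence $\eta\in Z$.

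It remains to promote this equality to a comparison of partial actions. For the restricted action $\theta^{(E,C)}\vert_Z$ one has $Z_h=\Omega(E,C)_h\cap Z$ since $Z$ is closed invariant, and the preceding paragraph shows $Z_h=\emptyset$ whenever $h\notin\mathbb{F}'$ (as $h^{-1}\in\xi\in Z$ would force $h\in\mathbb{F}'$), while for $h\in\mathbb{F}'$ the domain and the formula $\theta_h(\xi)=\xi h^{-1}$ agree verbatim with those of $\theta^{(E/H,C/H)}$. Therefore the pair $(\mathrm{id}_Z,\iota)$, with $\iota\colon\mathbb{F}'\hookrightarrow\mathbb{F}$, satisfies the three conditions of a direct dynamical equivalence $\theta^{(E/H,C/H)}\xrightarrow{\approx}\theta^{(E,C)}\vert_Z$ --- condition (b) being vacuous by injectivity of $\iota$ and condition (c) being precisely the domain computation. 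By Remark~\ref{rem:GroupoidAlgebras} (cf.\ Remark~\ref{rem:DirDynEq}) this induces base-preserving $*$-isomorphisms $C_K(Z)\rtimes\mathbb{F}'\cong C_K(Z)\rtimes\mathbb{F}$ and $C(Z)\rtimes_{(r)}\mathbb{F}'\cong C(Z)\rtimes_{(r)}\mathbb{F}$, which chained with the two displays above yield the claimed isomorphisms. I expect the main obstacle to be the identification $Z=\Omega(E/H,C/H)$: the bookkeeping of which edges and local configurations persist in the quotient graph is where the hypotheses genuinely enter, hereditariness governing the forward inclusion and $C$-saturation the reverse one.
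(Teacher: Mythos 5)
Your proposal is correct and is essentially the paper's own argument: the paper proves the $\mathcal{O}^r$ case in exactly this way (Theorem~\ref{thm:idealsOr} to realise the quotient as $C(Z)\rtimes_r\mathbb{F}$, the observation that letters with source in $H$ act with empty domains so that only $\mathbb{F}'=\mathbb{F}((E/H)^1)$ survives, and the identification $Z=\Omega(E/H,C/H)$), and it then remarks that the same dynamical argument gives a proof for $\Lab_K$ and $\mathcal{O}$ as well, which is precisely your unified treatment (the paper's first proof for those two, via universal properties and the identity $(E_1/H_1,C^1/H_1)=((E/H)_1,(C/H)^1)$ plus direct limits, is an alternative you simply bypass). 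The only slip is cosmetic: in the inclusion $Z\subseteq\Omega(E/H,C/H)$ it is the definition of $Z$ (all ranges of paths in $\xi$ avoid $H$), not hereditariness, that forces the sources $s(e)=r(\alpha)$ and $s(e_X)=r(e_X^{-1}\alpha)$ to avoid $H$; the hypotheses genuinely enter in the opposite inclusion $\Omega(E/H,C/H)\subseteq Z$ (and in the well-definedness of $(E/H,C/H)$ itself), where $C$-saturation supplies the bijection $C_v\to(C/H)_v$ needed to translate condition (c2).
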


\begin{proof}
As observed in the proof of \cite[Corollary 3.12]{AG}, it is easy to show using
universal properties that the map sending $v+I(H) \mapsto v$ for $v\in (E/H)^0$ and $e+I(H)\mapsto e$ for $e\in (E/H)^1$
extends to a $*$-isomorphism
$$C^*(E,C)/I(H)\longrightarrow C^*(E/H, C/H).$$
Likewise, we obtain a $*$-isomorphism $ L _K(E,C)/I(H) \cong L_K(E/H, C/H)$ for any field with involution $K$. It is straightforward to check that
$$\big( E_1/H_1,C^1/H_1) = \big((E/H)_1,(C/H)^1 \big) .$$
Indeed we have that $(E_1/H_1)^{0,0}= (E/H)^{0,1}= ((E/H)_1)^{0,0}$ and that $(E_1/H_1)^{0,1}$ is the set of all the vertices
$v(x_1,x_2,\dots , x_k)$ such that $x_i\in X_i/H$ for all $i$, where $C_v= \{ X_1,X_2,\dots , X_k \}$ for some
$v\in E^{0,0}\setminus H = (E/H)^{0,0}$. This shows that $(E_1/H_1)^0= ((E/H)_1)^0$, and similarly
$(E_1/H_1)^1= ((E/H)_1)^1$ and $C^1/H_1= (C/H)^1$. We thus  obtain the following commutative diagram
\begin{center}
\begin{tikzpicture}[>=angle 90]
\matrix(a)[matrix of math nodes,
row sep=2em, column sep=2.5em,
text height=1.5ex, text depth=0.25ex]
{L(E,C)/I(H) & & L(E/H,C/H) \\
L(E_1,C^1)/I(H_1) & L(E_1/H_1,C^1/H_1) & L((E/H)_1,(C/H)^1) \\};
\path[->](a-1-1) edge node[above]{$\cong$} (a-1-3);
\path[->](a-2-1) edge node[above]{$\cong$} (a-2-2);
\path[->](a-2-2) edge node[above]{$\cong$} (a-2-3);
\path[->>](a-1-1) edge node[left]{} (a-2-1);
\path[->>](a-1-3) edge node[right]{} (a-2-3);
\end{tikzpicture},
\end{center}
where all the maps are the canonical ones. Applying this observation inductively gives
identifications $L(E_n,C^n)/I(H_n) \cong L((E/H)_n,(C/H)_n)$ commuting with the connecting homomorphisms, and hence we obtain an isomorphism
$$\Lab(E,C)/I(H) = L^{\text{ab}}(E,C)/I(H_\infty) \cong L^{\text{ab}}(E/H,C/H)$$
of the limits. The same proof applies to $\mathcal{O}$.

We now give a proof for $\mathcal O ^r$, which uses the dynamical interpretation of these algebras.
Let $\mathbb{F}'$ denote the free group on $(E/H)^1$, which we can regard as a subgroup of $\mathbb{F}$.
Let $U := \bigcup _{v\in H_{\infty}} \Omega (E,C)_v$  be the open invariant subset of $\Omega (E,C)$ associated to $H_{\infty}$, and set
$Z:= \Omega (E,C) \setminus U$. By Theorem \ref{thm:idealsOr}, we have
$$\mathcal{O}^r(E,C)/I(H) = \mathcal O ^r (E,C)/I(H_{\infty}) \cong C(Z) \rtimes  _{r,\theta|_{Z}^*} \mathbb F.$$
 Let us denote by $\theta '$ the restriction of $\theta $ to $Z$. If $x\in E^1$ and $s(x) \in H$, then the domain and codomain of $\theta'_x$ is empty, so we have that
 $$\mathcal O ^r (E,C)/I(H) \cong C(Z) \rtimes _{r,(\theta')^*} \mathbb F' ,$$
and we only have to show that the action $\theta '$ of $\mathbb F '$ on $Z$ is conjugate to $\theta^{(E/H,C/H)}$. Observe that by Lemma~\ref{lem:OpenInvFromHS},
$$Z=\{ \xi \in \Omega(E,C) \mid r(\alpha) \notin H \text{ for all } \alpha \in \xi\}.$$
We now claim that in fact $Z=\Omega(E/H,C/H)$, so let $\xi \in \Omega(E/H,C/H)$. Since every $\alpha \in \xi$ satisfies $r(\alpha) \in (E/H)^0=E^0 \setminus H$, we simply need to verify that the local configurations $\xi_\alpha$ are either of type (c1) or (c2) with respect to $(E,C)$. Assume first that $\xi$ is of type (c1) with respect to $(E/H,C/H)$, i.e. that $\xi_\alpha=s^{-1}_{E/H}(r(\alpha))$. Now since $H$ is hereditary, we must have
$$\xi_\alpha=s^{-1}_{E/H}(r(\alpha))=s^{-1}(r(\alpha)),$$
so $\xi_\alpha$ is indeed of type (c1) with respect to $(E,C)$. Next, assume that $\xi_\alpha$ is of type (c2) with respect to $(E/H,C/H)$, i.e. that $\xi_\alpha=\{e_{X/H}^{-1} \mid X/H \in (C/H)_{r(\alpha)}\}$ for some $e_{X/H} \in X/H$. From $H$ being $C$-saturated, we have $(C/H)_{r(\alpha)}=\{X/H \mid X \in C_{r(\alpha)}\}$, so setting $e_X:=e_{X/H}$ for all $X \in C_{r(\alpha)}$,
$$\xi_\alpha = \{e_{X/H}^{-1} \mid X/H \in (C/H)_{r(\alpha)}\} = \{e_X^{-1} \mid X \in C_{r(\alpha)}\}$$
is of type (c2) with respect to $(E,C)$. The converse inclusion is completely straightforward and does not use (explicitly) the assumptions about $H$ being
hereditary and $C$-saturated. Finally, since the dynamics is completely determined by the configurations, and the configuration spaces agree, we conclude that the two partial action are in
fact conjugate.

Observe that, by Theorem \ref{thm:idealsOr} and Remark \ref{rem:algebraic-case}(1), the same proof applies to
$\mathcal O$ and $\Lab$ respectively, so we obtain a second proof for those.
\end{proof}

\section{The ideals associated to hereditary $D^{\infty}$-saturated subsets of $(F_{\infty},D^{\infty})$}
\label{sec:InducedIdeals}

Recall from Section \ref{sec:structure-ideals} that every induced ideal of a tame graph algebra corresponds to a set $H \in \mathcal{H}(F_{\infty}, D^{\infty})$. In this section, we shall describe the induced ideal, or rather its quotient, in terms of the intersections $H \cap E_n^0$. We shall also consider a number of examples.

The proof of the following lemma is straightforward.

\begin{lemma}
 \label{lem:H_n}
 Let $H\in \mathcal H (F_{\infty}, D^{\infty})$, and, for each $n\ge 0$, set $H^{(n)} := H\cap E_n^0$.
 Then $H^{(n)}$ is a hereditary $C^n$-saturated subset of $E_n^0$,
 and $H= \cup_{n=0}^{\infty} H^{(n)}$.
\end{lemma}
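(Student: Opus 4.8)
The plan is to exploit the layered structure of $(F_\infty,D^\infty)=\bigcup_{n\ge 0}(E_n,C^n)$. Recall that each $(E_{n+1},C^{n+1})$ is the $1$-graph of $(E_n,C^n)$, so that $E_{n+1}^{0,0}=E_n^{0,1}$ and the vertex set decomposes as $F_\infty^0=\bigcup_{n\ge 0}E_n^0$. Granting this, the identity $H=\bigcup_{n=0}^\infty H^{(n)}$ is immediate: intersecting $H\subseteq F_\infty^0$ with $F_\infty^0=\bigcup_n E_n^0$ and distributing the intersection over the union gives $H=\bigcup_n(H\cap E_n^0)=\bigcup_n H^{(n)}$.

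The content of the lemma is therefore the verification that each $H^{(n)}$ is hereditary and $C^n$-saturated in $(E_n,C^n)$. The key preliminary observation I would record is that each finite graph $(E_n,C^n)$ sits inside $(F_\infty,D^\infty)$ as the induced subgraph on the two consecutive layers carrying $E_n^{0,0}$ and $E_n^{0,1}$: concretely, for $v\in E_n^{0,0}$ every edge of $F_\infty$ with range $v$ already lies in $E_n^1$, so that $r^{-1}(v)$ computed in $F_\infty$ equals $r_n^{-1}(v)$ computed in $E_n$, and hence $D^\infty_v=C^n_v$; whereas for $v\in E_n^{0,1}$ the vertex $v$ is a source of $E_n$, so $C^n_v=\emptyset$.

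With this in hand, both closure properties transfer directly from $(F_\infty,D^\infty)$. For heredity, take $e\in E_n^1\subseteq F_\infty^1$ with $r(e)\in H^{(n)}\subseteq H$; since $H$ is hereditary in $F_\infty$ we get $s(e)\in H$, and as $s(e)\in E_n^{0,1}\subseteq E_n^0$ this yields $s(e)\in H\cap E_n^0=H^{(n)}$. For $C^n$-saturation, take $v\in E_n^0$ and $X\in C^n_v$ with $s(X)\subseteq H^{(n)}$; since $C^n_v\ne\emptyset$ forces $v\in E_n^{0,0}$, the observation above gives $X\in C^n_v=D^\infty_v$, and $s(X)\subseteq H$ together with $D^\infty$-saturation of $H$ yields $v\in H$, hence $v\in H\cap E_n^0=H^{(n)}$.

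I do not expect a genuine obstacle here, which is why the paper can call the proof straightforward. The only point requiring care is the bookkeeping created by the overlap $E_n^{0,1}=E_{n+1}^{0,0}$, which might otherwise obscure which separation governs a given vertex; the identification $D^\infty_v=C^n_v$ for $v\in E_n^{0,0}$ (together with the emptiness of $C^n_v$ for $v\in E_n^{0,1}$) settles this cleanly, after which the two conditions are purely local and follow at once.
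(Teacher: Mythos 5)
Your proof is correct, and it coincides with what the paper intends: the paper offers no written argument at all (it declares the lemma straightforward), and your verification is precisely the routine check being implicitly invoked. The bookkeeping observation that $r^{-1}(v)$ computed in $F_\infty$ agrees with $r_n^{-1}(v)$ for $v\in E_n^{0,0}$, so that $D^\infty_v=C^n_v$ there while $C^n_v=\emptyset$ for $v\in E_n^{0,1}$, is exactly the point that makes heredity and $C^n$-saturation of $H^{(n)}$ transfer directly from the corresponding properties of $H$ in $(F_\infty,D^\infty)$.
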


We now describe the hereditary and $D^\infty$-saturated closure of $H^{(n)}$ inside $(F_\infty,D^\infty)$: Following Notation \ref{not:Hinfty}, we will denote by $H^{(n)}_{\infty}$ the hereditary closure of $H^{(n)}$ in $F_{\infty}^0$, that is,
$H^{(n)}_{\infty}= \cup _{m\ge n} H^{(n)}_m$, where
$$H^{(n)}_m= \{ s(X(x)) \mid x\in E_{m-1}^1 \, \, \,  {\rm and } \, \, \, s(x) \in H^{(n)}_{m-1} \} \cup (H^{(n)}_{m-1} \cap E_{m-1}^{0,1}).$$
Observe that $H^{(n)}_{\infty}$ is just the hereditary closure of $H^{(n)}$ in $F_{\infty}^0$, but we proved in Lemma \ref{lem:hersatfromE1} that it is
$(D^{\ge n})$-saturated, that is, if $v\in F^{0,m}$ with $m\ge n$ and $X\in C^m_v$ are such that $s(X)\subseteq H^{(n)}_{\infty}$, then $v\in H^{(n)}_{\infty}$. Now define 
$$H^n := (H \cap F_{n-1}^0) \cup H_\infty^{(n)}= (\bigcup_{i=0}^{n-1} H^{(i)}) \cup H_\infty^{(n)}$$ and observe that $H^n$ is exactly the hereditary and $D^\infty$-saturated closure of $H^{(n)}$ inside $(F_\infty,D^\infty)$.
Note also that $F_{\infty}\setminus H$ has the structure of a separated Bratteli
diagram, which we denote by $(F_{\infty}/H, D^{\infty}/H)$. Here
$$(F_{\infty}/H)^{0,n}=F^{0,n}\setminus H, \qquad D^{\infty}/H =
\bigsqcup_{n=0}^{\infty} C^n/H^{(n)} .$$

\begin{proposition}\label{prop:LimitOfOs}
Let $(E,C)$ denote a finite bipartite separated graph, let $H \in
\mathcal{H}(F_\infty,D^\infty)$ and apply the above notation. Also,
let $U_n$ denote the open invariant subspace of $\Omega(E,C)$
corresponding to $H^n$, and set $Z_n:=\Omega(E,C) \setminus U_n$ as
well as $Z := \bigcap_{n=0}^{\infty} Z_n$. Then
$$\mathcal{O}^r(E,C)/I(H) \cong C(Z) \rtimes_r \mathbb{F} \cong \varinjlim_n C(Z_n) \rtimes_r \mathbb{F} \cong \varinjlim_n \mathcal{O}^r(E_n/H^{(n)},C^n/H^{(n)}),$$
where the connecting homomorphisms are simply induced from restriction of functions. The same statement holds with $\mathcal O $ or $\Lab $ in place of $\mathcal O^r$.
\end{proposition}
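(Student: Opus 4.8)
The plan is to prove the three displayed isomorphisms separately. The first, $\mathcal{O}^r(E,C)/I(H) \cong C(Z) \rtimes_r \mathbb{F}$, is essentially Theorem~\ref{thm:idealsOr}, which gives $\mathcal{O}^r(E,C)/I(H) \cong C(\Omega(E,C) \setminus U) \rtimes_r \mathbb{F}$ with $U = \bigcup_{v \in H} \Omega(E,C)_v$; so it suffices to identify $Z = \bigcap_n Z_n$ with $\Omega(E,C) \setminus U$. Since $U_n = \bigcup_{v \in H^n} \Omega(E,C)_v$, this reduces to the two graph-theoretic claims $H^n \subseteq H^{n+1}$ (so the $Z_n$ decrease and the restriction maps $C(Z_n) \to C(Z_{n+1})$ are defined) and $\bigcup_n H^n = H$ (so that $\bigcup_n U_n = U$). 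The union equality is immediate from $H^{(n)} \subseteq H^n \subseteq H$ together with $\bigcup_n H^{(n)} = H$ (Lemma~\ref{lem:H_n}). For monotonicity, by minimality of the closure $H^n$ it suffices to show $H^{(n)} \subseteq H^{n+1}$: writing $E_n^0 = F^{0,n} \sqcup F^{0,n+1}$, the part $H \cap F^{0,n+1}$ lies in $H^{(n+1)} \subseteq H^{n+1}$, while for $v \in H \cap F^{0,n}$ heredity of $H$ gives $s(X) \subseteq H \cap F^{0,n+1} \subseteq H^{n+1}$ for every $X \in C^n_v$, and then $D^\infty$-saturation of $H^{n+1}$ forces $v \in H^{n+1}$.

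For the middle isomorphism $C(Z) \rtimes_r \mathbb{F} \cong \varinjlim_n C(Z_n) \rtimes_r \mathbb{F}$, I would first observe that $\varinjlim_n C(Z_n) \cong C(Z)$ along the restriction maps: the $Z_n$ form a decreasing sequence of compact sets with $\bigcap_n Z_n = Z$, the natural map into $C(Z)$ has dense range by Stone--Weierstrass, and it is isometric in the limit since $\sup_{Z_n} \vert f \vert \downarrow \sup_Z \vert f \vert$. As the connecting maps are $\mathbb{F}$-equivariant (the $Z_n$ being closed invariant) and the reduced crossed product is continuous for equivariant inductive limits of the coefficient algebra, one gets $\varinjlim_n (C(Z_n) \rtimes_r \mathbb{F}) \cong (\varinjlim_n C(Z_n)) \rtimes_r \mathbb{F} \cong C(Z) \rtimes_r \mathbb{F}$.

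The last isomorphism is where the separated-graph input enters. Since $H^{(n)} \in \mathcal{H}(E_n, C^n)$ by Lemma~\ref{lem:H_n}, Theorem~\ref{thm:quotientalg} applied to $(E_n, C^n)$ yields $\mathcal{O}^r(E_n, C^n)/I(H^{(n)}) \cong \mathcal{O}^r(E_n/H^{(n)}, C^n/H^{(n)})$. Next I would transport this along the base-preserving isomorphism $\mathcal{O}^r(E_n, C^n) \cong \mathcal{O}^r(E,C)$ of Theorem~\ref{thm:NBallIsNGraph}, under which $\Omega(E_n, C^n)_v$ is identified with $\Omega(E,C)_v$ for every $v \in E_n^0$ (Remark~\ref{rem:Omegav}). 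Consequently the ideal $I(H^{(n)})$, being generated by the vertex projections $\{v \mid v \in H^{(n)}\}$, is carried to the ideal of $\mathcal{O}^r(E,C)$ generated by the same projections, and this is exactly $I(H^n)$, because $H^n$ is the hereditary $D^\infty$-saturated closure of $H^{(n)}$ and, by Theorem~\ref{thm:idealsOr} (in the form of Lemma~\ref{lem:hersatfromE}), the induced ideal generated by a set of vertices corresponds to its closure. Combining with Theorem~\ref{thm:idealsOr} once more gives $C(Z_n) \rtimes_r \mathbb{F} \cong \mathcal{O}^r(E,C)/I(H^n) \cong \mathcal{O}^r(E_n/H^{(n)}, C^n/H^{(n)})$. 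Finally I would verify that these identifications intertwine the restriction maps $C(Z_n) \to C(Z_{n+1})$ with the natural quotient connecting maps on the separated-graph side (using $I(H^n) \subseteq I(H^{n+1})$ and the naturality of Theorems~\ref{thm:NBallIsNGraph} and~\ref{thm:quotientalg}), so that the three inductive limits agree. The cases of $\mathcal{O}$ and $\Lab$ are identical, the full crossed product and Steinberg-algebra limits being handled even more easily by universal properties, while Theorems~\ref{thm:idealsOr} and~\ref{thm:quotientalg} apply verbatim.

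I expect the main obstacle to be this last isomorphism: one must be careful that the open invariant set $U_n$, defined through the full closure $H^n$ inside $(F_\infty, D^\infty)$ (which contains the lower-layer vertices $\bigcup_{i<n} H^{(i)}$), really coincides with the invariant open set cut out by $H^{(n)}$ in the shifted Bratteli diagram of $(E_n, C^n)$, and that every isomorphism in sight respects the connecting maps, so that the claimed equality of inductive limits is genuine rather than merely levelwise.
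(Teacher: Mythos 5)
Your proposal is correct and follows essentially the same route as the paper's proof: both rest on the identity $I(H^{(n)})=I(H^n)$ (coming from $H^n$ being the hereditary $D^\infty$-saturated closure of $H^{(n)}$), the chain of isomorphisms through Theorems~\ref{thm:NBallIsNGraph}, \ref{thm:quotientalg} and \ref{thm:idealsOr}, and the identity $H=\bigcup_n H^n$, which gives $U=\bigcup_n U_n$ and $Z=\Omega(E,C)\setminus U$. The only difference is that you spell out what the paper compresses into ``by construction'' and ``clearly'' --- namely the monotonicity $H^n\subseteq H^{n+1}$ and the inductive-limit identification $C(Z)\rtimes_r\mathbb{F}\cong\varinjlim_n C(Z_n)\rtimes_r\mathbb{F}$, where your appeal to continuity of the reduced crossed product under equivariant limits with surjective connecting maps should be backed by exactness of the free group $\mathbb{F}$, exactly as in Theorem~\ref{thm:ideals-reducedcrossed}.
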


\begin{proof}
Since $H^n$ is the hereditary and $D^\infty$-saturated closure of $H^{(n)}$, we have $I(H^n)=I(H^{(n)})$ as ideals of $\mathcal{O}^r(E,C)$. It follows that
$$\mathcal{O}^r(E_n/H^{(n)},C^n/H^{(n)}) \cong \mathcal{O}^r(E_n,C^n)/I(H^{(n)}) \cong \mathcal{O}^r(E,C)/I(H^n) \cong C(Z_n) \rtimes_r \mathbb{F}$$
from Theorem~\ref{thm:NBallIsNGraph} and Theorem~\ref{thm:quotientalg}. Now let $U$ denote the open invariant set corresponding to $H$. We have $H = \bigcup_{n=0}^{\infty} H^n$ by construction, so $U=\bigcup_{n=0}^\infty U_n$ and $Z=\Omega(E,C) \setminus U$. Thus, we obtain the identification
$$\mathcal{O}^r(E,C)/I(H) \cong C(Z) \rtimes_r \mathbb{F},$$
and clearly $C(Z) \rtimes_r \mathbb{F} \cong \varinjlim_n C(Z_n) \rtimes_r \mathbb{F}$. The same proof applies to $\mathcal{O}$ and $\Lab$.
\end{proof}

\begin{remark}
Recall from Theorem~\ref{thm:NBallIsNGraph} that every $v \in E_n^0$ for $n \ge 1$ corresponds to an $n$-ball $B(v) \in \mathcal{B}_n(\Omega(E,C))$. The open set $U_n$ of Proposition~\ref{prop:LimitOfOs} may therefore be described as the set of configurations $\xi \in \Omega(E,C)$ satisfying the following: There is some $v \in H^{(n)}$ and $\alpha \in \xi$ for which $\theta_\alpha(\xi)^n=B(v)$. In other words, we can describe the set $Z_n$ as
$$Z_n = \{ \xi \in \Omega(E,C) \mid \xi \not\equiv B(v) \text{ for all } v \in H^{(n)}\}.$$
Thus, the descending filtration $(Z_n)_n$ exactly removes the configurations with forbidden $n$-balls at the $n$'th step. In particular, we see that the restricted action $\theta\vert_Z \colon \mathbb{F} \act Z$ is of finite type in the sense of Definition~\ref{def:FiniteType} if and only if $H=H^n$ for some $n \ge 0$.

As another consequence, we see that \textit{every} convex shift $\theta \colon \mathbb{F} \act \Omega$ can be represented, up to Kakutani equivalence, by a separated Bratteli diagram: If $(E,C)$ is the graph representing the full convex shift (see Example~\ref{ex:FullConvexShift}), then $\theta$ is Kakutani equivalent to the restriction of $\theta^{(E,C)}$ to some closed invariant subspace $Z$. It follows that $\theta$ may 
be described as above by the separated Bratteli diagram $(F_\infty/H,D^\infty/H)$, where $H \in \mathcal{H}(F_\infty,D^\infty)$ corresponds to $U:=\Omega(E,C) \setminus Z$.
\end{remark}

We now describe $K_0$ of these quotient algebras in terms of their
associated separated Bratteli diagrams.

\begin{theorem}
\label{thm:K-theorytamequotients} Let $(E,C)$ be a finite bipartite
separated graph, let $H$ be a proper hereditary
$D^{\infty}$-saturated subset of $F_{\infty}$, and let
$(F_{\infty}/H, D^{\infty}/H)$ be its associated separated Bratteli
diagram.
\begin{enumerate}
\item[\textup{(1)}]
There is a natural group isomorphism
$$K_0(\mathcal O (E,C)/I(H))\cong K_0(\mathcal O ^r(E,C)/I(H)) \cong
G(F_{\infty}/H, D^{\infty}/H).$$
\item[\textup{(2)}] There is a natural isomorphism $\mathcal V (\Lab _K(E,C)/I(H))\cong
M(F_{\infty}/H,D^{\infty}/H)$, and so an isomorphism of pre-ordered
abelian groups
$$K_0(\Lab _K(E,C)/I(H)) \cong G(F_{\infty}/H, D^{\infty}/H)$$
for any field with involution $K$.
\end{enumerate}
\end{theorem}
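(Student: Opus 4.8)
The plan is to prove the monoid statement (2) first, since it carries the finest information, and then deduce the group-level statement (1) for all three algebras simultaneously by a direct-limit argument that reduces everything to the finite-graph case handled by Theorem~\ref{thm:K-theoryBrat}. Throughout I will use that $K_0(R)=G(\mathcal{V}(R))$ for the rings in question, so that (2) implies the $K_0$-computation for $\Lab$ by applying the Grothendieck functor.

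For (2), I would begin from the isomorphism $\mathcal{V}(\Lab_K(E,C))\cong M(F_\infty,D^\infty)$ of Theorem~\ref{thm:K-theoryBrat}(2) and from Theorem~\ref{thm:VidealsLab}, which tells us that $I(H)$ is the trace ideal generated by the idempotents $\pi_{n,\infty}(v)$, $v\in H$, corresponding under $\Tr(\Lab_K(E,C))\cong\mathcal{L}(M(F_\infty,D^\infty))$ to the order ideal $S:=\langle a_v\mid v\in H\rangle$. The \emph{first} key step is the quotient formula
$$\mathcal{V}(\Lab_K(E,C)/I(H))\cong \mathcal{V}(\Lab_K(E,C))/S\cong M(F_\infty,D^\infty)/S,$$
i.e. that the quotient map induces on $\mathcal{V}$ the corresponding monoid quotient; this is where the idempotent-lifting and refinement properties of the separated Leavitt path algebras from \cite{AG} are needed. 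The \emph{second} key step is the purely combinatorial identification $M(F_\infty,D^\infty)/S\cong M(F_\infty/H,D^\infty/H)$: the assignment $a_v\mapsto a_v$ for $v\notin H$ and $a_v\mapsto 0$ for $v\in H$ respects the defining relations precisely because $H$ is hereditary and $D^\infty$-saturated, since each relation $a_v=\sum_{e\in X}a_{s(e)}$ with $v\notin H$ becomes $a_v=\sum_{e\in X/H}a_{s(e)}$ modulo $S$, and one checks its kernel congruence is exactly the one cut out by $S$. This is the monoid counterpart of Theorem~\ref{thm:quotientalg} and follows the lines of \cite[Corollary 6.10]{AG}. Composing the two isomorphisms yields $\mathcal{V}(\Lab_K(E,C)/I(H))\cong M(F_\infty/H,D^\infty/H)$, and, equipping Grothendieck groups with the positive cone given by the image of the monoid, the isomorphism of pre-ordered abelian groups $K_0(\Lab_K(E,C)/I(H))\cong G(F_\infty/H,D^\infty/H)$ follows, settling (2).

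For (1), I would invoke Proposition~\ref{prop:LimitOfOs} to present each of the three quotients as an inductive limit over $n$ of the tame algebras of the finite bipartite separated graphs $(E_n/H^{(n)},C^n/H^{(n)})$, the connecting maps being induced by restriction of functions; these three systems share the same index $n$ and the same underlying graph maps. For each fixed $n$, Theorem~\ref{thm:K-theoryBrat} supplies \emph{natural} isomorphisms
$$K_0(\mathcal{O}(E_n/H^{(n)},C^n/H^{(n)}))\cong K_0(\mathcal{O}^r(E_n/H^{(n)},C^n/H^{(n)}))\cong K_0(\Lab_K(E_n/H^{(n)},C^n/H^{(n)})),$$
all identified with $G$ of the separated Bratteli diagram of that finite graph. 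By naturality these identifications intertwine the three systems of connecting maps, so passing to the limit (and using that $K_0$ commutes with inductive limits) forces $K_0(\mathcal{O}(E,C)/I(H))\cong K_0(\mathcal{O}^r(E,C)/I(H))\cong K_0(\Lab_K(E,C)/I(H))$. Combining this common value with the computation of the last group from (2) gives $G(F_\infty/H,D^\infty/H)$, completing (1).

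The step I expect to be the main obstacle is the first key step of (2), namely the quotient formula $\mathcal{V}(\Lab_K(E,C)/I(H))\cong \mathcal{V}(\Lab_K(E,C))/S$: one must verify that idempotents lift along $\Lab_K(E,C)\to\Lab_K(E,C)/I(H)$ and that the resulting surjection of refinement monoids has kernel (in the order-ideal sense) exactly $S$, which relies on the structural results of \cite{AG}. A secondary point requiring care is the naturality invoked in (1): one must check that the isomorphisms of Theorem~\ref{thm:K-theoryBrat} genuinely commute with the connecting homomorphisms produced by Proposition~\ref{prop:LimitOfOs}, so that the three limit groups may legitimately be identified with one another.
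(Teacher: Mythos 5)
Your part (1) is essentially the paper's own proof: Proposition~\ref{prop:LimitOfOs} applied levelwise together with Theorem~\ref{thm:K-theoryBrat}, a check that the resulting isomorphisms commute with the connecting maps, and continuity of $K_0$ under inductive limits. The paper completes this step with the observation that the separated Bratteli diagram of the finite graph $(E_n/H^{(n)},C^n/H^{(n)})$ has the same monoid and Grothendieck group as $(F_\infty/H^n,D^\infty/H^n)$, so that $\varinjlim_n G(F_\infty/H^n,D^\infty/H^n)=G(F_\infty/H,D^\infty/H)$ because $H=\bigcup_n H^n$. You instead defer the identification of the limit to part (2) — and that is where the problem lies.

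The genuine gap is your ``first key step'' of (2): the quotient formula $\mathcal{V}(\Lab_K(E,C)/I(H))\cong \mathcal{V}(\Lab_K(E,C))/S$. You correctly flag it as the main obstacle, but the justification you propose (idempotent lifting plus structural results of \cite{AG}) is not available. First, there is no general theorem computing $\mathcal{V}(R/I)$ for a trace ideal $I$ of a ring $R$: Theorem~\ref{thm:ideals-alg} only gives the lattice isomorphism $\Tr(R)\cong\mathcal{L}(\mathcal{V}(R))$, and a quotient formula requires lifting idempotents and equivalences along $R\to R/I$, which holds for exchange rings but $\Lab_K(E,C)$ is generally \emph{not} an exchange ring (this is precisely the theme of \cite{Lolk1}). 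Second, the graph-theoretic quotient results of \cite{AG}, \cite{AG2} (and Theorem~\ref{thm:quotientalg} of this paper) apply to quotients by hereditary $C$-saturated subsets of a separated graph; when $H\subset F_\infty^0$ is of \emph{infinite} type, the quotient $\Lab_K(E,C)/I(H)$ is not a separated graph algebra at all, as the paper emphasizes, so none of these results can be invoked directly. The repair is exactly the tool you already deploy in (1): by Proposition~\ref{prop:LimitOfOs} one has $\Lab_K(E,C)/I(H)\cong\varinjlim_n \Lab_K(E_n/H^{(n)},C^n/H^{(n)})$, and since $\mathcal{V}$ commutes with direct limits, Theorem~\ref{thm:K-theoryBrat}(2) applied levelwise gives
$$\mathcal{V}\big(\Lab_K(E,C)/I(H)\big)\cong\varinjlim_n M(F_\infty/H^n,D^\infty/H^n)\cong M(F_\infty/H,D^\infty/H),$$
which is the paper's argument. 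Your quotient formula is then a \emph{consequence} of the theorem combined with Proposition~\ref{prop:refinement} (your ``second key step'', which the paper indeed proves by citing \cite[Construction 6.8]{AG2}), rather than a legitimate input to it; as structured, both your (2) and the final identification in your (1) rest on this unproved step.
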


\begin{proof} We adopt the above notation.
It is straightforward to check that, for $n\ge 1$ we have
$$ M(F_{\ge n}/H^{(n)}_{\infty}, D^{\ge n}/H^{(n)}_{\infty}) \cong
M(F_{\infty}/H^n, D^{\infty}/H^n)$$ and $$ G(F_{\ge
n}/H^{(n)}_{\infty}, D^{\ge n}/H^{(n)}_{\infty}) \cong
G(F_{\infty}/H^n, D^{\infty}/H^n) .$$ Therefore, it follows from
Theorem \ref{thm:K-theoryBrat} and an easy calculation that there
are commutative diagrams
$$\begin{CD} K_0(\mathcal O (E_n/H^{(n)},
C^n/H^{(n)})) @>\cong >>
G(F_{\infty}/H^n, D^{\infty}/H^n) \\
@VVV  @VVV \\
K_0(\mathcal O (E_{n+1}/H^{(n+1)}, C^{n+1}/H^{(n+1)})) @>\cong >>
G(F_{\infty}/H^{n+1}, D^{\infty}/H^{n+1})
\end{CD}$$
for all $n\ge 1$. Using Proposition \ref{prop:LimitOfOs}, we obtain
\begin{align*}
K_0( \mathcal O (E,C)/I(H)) & \cong \varinjlim_n K_0 (\mathcal O
(E_n/H^{(n)}, C^n/H^{(n)})) \\
& \cong \varinjlim_n G(F_{\infty}/H^n, D^{\infty}/H^n)\\
& = G(F_{\infty}/H, D^{\infty}/H).
\end{align*}
 This gives (1) for $\mathcal O$.
The same arguments give, using Theorem \ref{thm:K-theoryBrat}, the
result for $\mathcal O ^r$ and for $\Lab$.
\end{proof}

We record some useful properties of the monoid $M(F_{\infty}/H,D^{\infty}/H)$.

\begin{proposition}
 \label{prop:refinement}
Let $(E,C)$ be a finite bipartite
separated graph, let $H$ be a proper hereditary
$D^{\infty}$-saturated subset of $F_{\infty}$, and let
$(F_{\infty}/H, D^{\infty}/H)$ be its associated separated Bratteli
diagram. Then we have a natural monoid isomorphism
$$M(F_{\infty}/H, D^{\infty}/H)\cong M(F_{\infty},D^{\infty})/M(H),$$
where $M(H)$ is the order-ideal of $M(F_{\infty},D^{\infty})$ generated by $H$.
In particular, $M(F_{\infty}/H, D^{\infty}/H)$ is a refinement monoid.
  \end{proposition}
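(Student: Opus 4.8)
The plan is to reduce the statement to a purely monoid-theoretic computation using the generators-and-relations presentations of $M(F_{\infty},D^{\infty})$ and $M(F_{\infty}/H,D^{\infty}/H)$ from Definition~\ref{def:graphmonoid-sepgraph}, together with the standard description of the quotient of an abelian monoid by an order-ideal: for an order-ideal $I$ of an abelian monoid $M$, the quotient $M/I$ is $M$ modulo the congruence $x \sim y \iff x+a=y+b$ for some $a,b \in I$, and it enjoys the universal property that any monoid homomorphism annihilating $I$ factors uniquely through $M/I$. The strategy is to construct a natural surjection $M(F_{\infty},D^{\infty}) \to M(F_{\infty}/H,D^{\infty}/H)$, show that it descends to the quotient by $M(H)$, and exhibit an explicit inverse on generators.

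First I would define $\pi \colon M(F_{\infty},D^{\infty}) \to M(F_{\infty}/H,D^{\infty}/H)$ on generators by $a_v \mapsto a_v$ if $v \notin H$ and $a_v \mapsto 0$ if $v \in H$. To see that this respects the defining relations $a_v=\sum_{e \in X} a_{s(e)}$ for $X \in D^{\infty}_v$, I consider two cases. If $v \in H$, then every $e \in X$ has $r(e)=v \in H$, hence $s(e) \in H$ by heredity, so both sides map to $0$. If $v \notin H$, then the terms with $s(e) \in H$ vanish and the surviving terms are exactly those with $e \in X/H$, matching the relation $a_v=\sum_{e \in X/H} a_{s(e)}$ in $M(F_{\infty}/H,D^{\infty}/H)$; here $C$-saturation guarantees $X/H \neq \emptyset$, so the quotient graph is a genuine separated graph. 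Thus $\pi$ is a well-defined, clearly surjective monoid homomorphism. Since graph monoids are conical and $\pi$ annihilates each generator $a_v$ with $v \in H$, conicality of the target forces $\pi$ to annihilate the entire order-ideal $M(H)$; by the universal property above, $\pi$ factors as $\bar\pi \colon M(F_{\infty},D^{\infty})/M(H) \to M(F_{\infty}/H,D^{\infty}/H)$.

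For the inverse, I would define $\iota \colon M(F_{\infty}/H,D^{\infty}/H) \to M(F_{\infty},D^{\infty})/M(H)$ by $a_v \mapsto [a_v]$ for $v \notin H$. This respects the relations because in $M(F_{\infty},D^{\infty})/M(H)$ one has $[a_v]=[\sum_{e \in X} a_{s(e)}]=[\sum_{e \in X/H} a_{s(e)}]$, the discarded terms $\sum_{e \in X,\, s(e) \in H} a_{s(e)}$ lying in $M(H)$. Checking that $\bar\pi$ and $\iota$ are mutually inverse is then immediate on generators: $\bar\pi\iota(a_v)=a_v$ for $v \notin H$, while $\iota\bar\pi[a_v]=[a_v]$ for $v \notin H$ and $\iota\bar\pi[a_v]=[0]=[a_v]$ for $v \in H$, using that $a_v \in M(H)$ forces $[a_v]=0$. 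This yields the desired isomorphism.

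For the final assertion, I would invoke that $M(F_{\infty},D^{\infty}) \cong S(\Omega(E,C),\mathbb{F},\mathbb{K})$ is a conical refinement monoid (\cite[Theorem 7.4]{AE} and the discussion preceding Lemma~\ref{lem:order-ideals}), together with the standard fact that the quotient of a refinement monoid by an order-ideal is again a refinement monoid. I expect the only genuinely delicate points to be the bookkeeping showing $\pi$ kills all of $M(H)$ rather than merely its generators --- which is exactly where conicality is essential --- and citing the correct form of the order-ideal quotient construction and its behaviour under refinement; everything else is a routine matching of presentations.
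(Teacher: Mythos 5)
Your proof is correct, and in substance it is a self-contained expansion of what the paper handles by citation. The paper's entire argument for the isomorphism is the single line that it ``follows from \cite[Construction 6.8]{AG2}'', i.e.\ it outsources the generators-and-relations computation to Ara--Goodearl's earlier work, and then obtains the refinement statement exactly as you do, from the fact that $M(F_{\infty},D^{\infty})$ is a refinement monoid (via the type semigroup identification, as discussed before Lemma~\ref{lem:order-ideals}) together with the fact that a quotient of a refinement monoid by an order-ideal again has refinement, which is your ``standard fact'' and is the paper's citation \cite[Lemma 4.3]{AGOP}. What you do differently is carry out the computation that the citation hides: the definition of $\pi$ on generators, the case analysis in which heredity of $H$ handles $v \in H$ and $C$-saturation guarantees $X/H \neq \emptyset$ (so that the quotient is a genuine separated graph and its monoid is conical), the conicality argument showing that $\pi$ annihilates all of $M(H)$ rather than only its generators, and the explicit inverse $\iota$ together with the observation that the discarded terms $\sum_{e \in X,\, s(e) \in H} a_{s(e)}$ lie in $M(H)$. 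These are precisely the delicate points, and you identified and resolved each one correctly. What the paper's route buys is brevity and reliance on an established construction, stated in \cite{AG2} for arbitrary finitely separated graphs and hereditary $C$-saturated subsets and hence applying verbatim to the infinite graph $(F_{\infty},D^{\infty})$; what your route buys is a transparent in-line argument that makes visible exactly where each hypothesis on $H$ enters.
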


 \begin{proof}
  The isomorphism follows from \cite[Construction 6.8]{AG2}.
  Since $M(F_{\infty}, D^{\infty})$ is a refinement monoid, the quotient monoid
  $M(F_{\infty}, D^{\infty})/M(H)$ is also a refinement monoid, by \cite[Lemma 4.3]{AGOP}.
   \end{proof}

We now consider a number of examples.

\begin{example}
\label{exam:m,ndyn-system}
For integers $1\le m\le n$, define the separated graph
$(E(m,n),C(m,n))$, where
\begin{enumerate}
\item $E(m,n)^0 := \{v,w\}$ (with $v\ne w$).
\item $E(m,n)^1 :=\{\alpha_1,\dots , \alpha_n,\beta_1,\dots ,\beta_m\}$ (with $n+m$ distinct edges).
\item $s(\alpha_i)=s(\beta _j) =v$ and $r(\alpha _i)=r(\beta _j)=w$
for all $i$, $j$.
\item $C(m,n)= C(m,n)_v := \{X,Y\}$, where $X:= \{\alpha_1,\dots ,\alpha_n\}$ and $Y:=  \{\beta _1,\dots, \beta _m \}$.
\end{enumerate}
See Figure \ref{fig:m,nsepargraph} just below for a picture in the case $m=2$,
$n=3$. We refer the reader to \cite{AEK} and \cite[Example 9.3]{AE} for more information on this example.

\begin{figure}[H]
\begin{tikzpicture}[scale=0.8]
 \SetUpEdge[lw         = 1.5pt,
            labelcolor = white]
  \tikzset{VertexStyle/.style = {draw, shape = circle,fill = white, minimum size=15pt, inner sep=2pt,outer sep=1pt}}

  \Vertex[x=0,y=0]{u}
  \Vertex[x=0,y=3]{v}

  \tikzset{EdgeStyle/.style = {->,bend left=20,color={\nicered}}}  
  \Edge[](u)(v)
  \tikzset{EdgeStyle/.style = {->,bend left=55,color={\nicered}}}  
  \Edge[](u)(v)
  \tikzset{EdgeStyle/.style = {->,bend left=90,color={\nicered}}}  
  \Edge[](u)(v)    
  \tikzset{EdgeStyle/.style = {->,bend right=20,color={\niceblue}}}  
  \Edge[](u)(v) 
  \tikzset{EdgeStyle/.style = {->,bend right=50,color={\niceblue}}}  
  \Edge[](u)(v) 
  
\end{tikzpicture}
\caption{The separated graph $(E(2,3),C(2,3))$}
\label{fig:m,nsepargraph}
\end{figure}
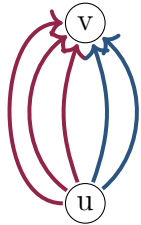

The C*-algebras $\mathcal O _{m,n}$ and $\mathcal O _{m,n}^r$
studied in \cite{AEK} are precisely the C*-algebras $\mathcal O
(E(m,n), C(m,n))$ and $\mathcal O ^r( E(m,n), C(m,n))$, respectively, in the
notation of the present paper. It was shown in \cite{AEK} that these two
C*-algebras are not isomorphic, and we will now show that the C*-algebra $\mathcal O^r_{m,n}$ is not simple if $m\ge 2$.
Let $(E,C) := (E(m,n), C(m,n))$. It is clear that $\mathcal H (E,C)$ only contains the trivial sets $\emptyset, E^0$. 

Adopting the notation of \cite{AEK}, we write
$$\Omega^u := \Omega (E, C) = X^u\sqcup Y^u,$$
where $X^u = \sqcup _{i=1}^n H^u_i = \sqcup _{j=1}^m V^u_j$, and $Y^u$ is homeomorphic to each of the sets $H^u_i$, $V_j^u$ by
homeomorphisms $h^u_i\colon Y^u\to H^u_i$ and $v^h_j\colon Y^u\to V^u_j$.
The maps $h^u_i,v^u_j$ are the universal maps defining the universal $(m,n)$-dynamical system. Indeed, we have
$h^u_i =\theta _{\alpha_i}$ and $v^u_j= \theta_{\beta_j}$ for all $i,j$.

Let us now describe the separated graph $(E_1,C^1)$. We have $E_1^{0,0}= \{ w \}$ and 
$$E_1^{0,1}= \{w_{ij} : 1\le i\le n , 1\le j \le m \}.$$
Now there are $n+m$ elements in $C^1_w$, namely $X_i:= X(\alpha_i)$ and $Y_j : = X(\beta_j)$ for $i=1,\dots , n$ and $j=1,\dots , m$.
Note that $|X_i|= m$ and $|Y_j| = n$. Moreover, $s(X_i) = \{ w_{ij} : j=1,\dots , m \}$ and $s(Y_j) = \{ w_{ij} : i= 1,\dots , n \}$.

If $m=n\ge 2$, then set $H:= \{ w_{ij} : i\ne j \}$ . Then $H$ is a maximal hereditary and $C^1$-saturated subset of $E_1^0$. Moreover, $(E_1/H, C^1/H)$ consists of $n$ cycles based at the vertex $w$, so that
$$\mathcal O^r_{n,n}/I(H) \cong \mathcal O ^r(E_1/H, C^1/H) \cong M_{n+1} ( C^*_{{\rm red}} (\mathbb F_n)), $$
which is a simple C*-algebra. However $\mathcal O_{n,n}/I(H) \cong C^*( \mathbb F_n)$ is not simple. (Incidentally, note that this gives another proof that
$\mathcal O^r_{n,n}\ne \mathcal O _{n,n}$ for $n\ge 2$.)

If $2\le m <n$, then set
$$H:= \{ w_{ij} : i\ne j \text{ and } 1\le i\le n, 1\le j \le m-1 \}\cup \{w_{im} : 1\le i\le m-1 \}.$$
Then $H$ is again a maximal hereditary $C^1$-saturated subset of $E_1^0$.
However, in this case the quotient C*-algebra $\mathcal O ^r_{m,n}/I(H)$ is not even $\mathcal V$-simple, as we will show in Section \ref{sec:Vsimplicity}.
It is therefore clear that the universal $(E_1/H,C^1/H)$-system is not equivalent to the $(m,n)$-system  $(X,Y)$ considered in the proof of
\cite[Proposition 3.9]{AEK}. Indeed, observe that $v_i^{-1}\circ h_i= {\text id}_Y$ for $i=1,\dots , m-1$ in that example, and this is not necessarily true in a $(E_1/H,C^1/H)$-system.
The $(m,n)$-dynamical system just mentioned shows that the algebra $M_{m+1}(\mathcal O _{n-m+1})\cong M_{n+1} (\mathcal O _{n-m+1})$ is a simple quotient of
$\mathcal O _{m,n}$, where $\mathcal O_k$ denotes the usual Cuntz algebra. Indeed, we can define a surjective $*$-homomorphism $\mathcal{O}_{m,n} \to M_{m+1}(\mathcal{O}_{n-m+1})$ by
$$
\alpha_i \mapsto \left\{\begin{array}{cl}
1 \otimes e_{i+1,1} & \If i =1,\ldots,m-1 \\
s_{i-m+1} \otimes e_{m+1,1} & \If i=m,\ldots,n
\end{array} \right. \quad , \quad \beta_j \mapsto 1 \otimes e_{j+1,1} \text{ for } j=1,\ldots,m,
$$
$w \mapsto 1 \otimes e_{1,1}$ and $v \mapsto \sum_{j=2}^{m+1} 1
\otimes e_{j,j}$. However, it is not clear to the authors whether
the same algebra $M_{m+1}(\mathcal O _{n-m+1})$ appears as a simple
quotient of the {\it reduced} tame C*-algebra $\mathcal O^r _{m,n}$.
\end{example}

We now present an example relating our theory with classical symbolic dynamics; hopefully, it can also serve as an exemplification of the general theory of the previous sections. We only consider the case where the alphabet is $\{ 0,1 \}$, but similar statements can be made for an arbitrary finite alphabet, considering a corresponding variation of the separated graph considered below.

\begin{example}
 \label{ex:lamplighter}
  Let $(E,C)$ be the separated graph
described in Figure \ref{fig:lampgroup}, with $C_v=\{ X, Y\}$ and
$X=\{ \alpha _0,\alpha_1\}$ and $Y=\{ \beta _0,\beta _1 \}$.
\begin{center}{
\begin{figure}[H]
\begin{tikzpicture}[scale=0.8]
 \SetUpEdge[lw         = 1.5pt,
            labelcolor = white]
\tikzset{VertexStyle/.style = {draw, shape = circle,fill = white, minimum size=15pt, inner sep=2pt,outer sep=1pt}}

  \Vertex[x=0,y=0]{0}
  \Vertex[x=6,y=0]{1}
  \Vertex[x=3,y=3]{v}  

  \tikzset{EdgeStyle/.style = {->,bend left=35,color={\nicered}}}  
  \Edge[label=$\beta_0$, style={circle,inner sep=0pt }](0)(v)
  \tikzset{EdgeStyle/.style = {->,bend right=35,color={\nicered}}}  
  \Edge[label=$\beta_1$, style={circle,inner sep=0pt }](1)(v)  
  \tikzset{EdgeStyle/.style = {->,bend right=35,color={\niceblue}}}  
  \Edge[label=$\alpha_0$, style={circle,inner sep=0pt }](0)(v) 
  \tikzset{EdgeStyle/.style = {->,bend left=35,color={\niceblue}}}  
  \Edge[label=$\alpha_1$, style={circle,inner sep=0pt }](1)(v) 
\end{tikzpicture}
\caption{The separated graph underlying the lamplighter group}
\label{fig:lampgroup}
\end{figure}
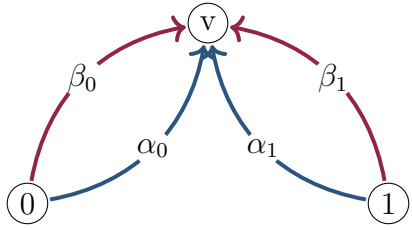}
\end{center}
\vspace{-0.7cm}
This example has been considered in \cite[Example 9.7]{AE}, where it is observed that
$$v\mathcal O (F, D)v\cong C(\mathcal X) \rtimes_{\sigma} \Z  , \qquad v\Lab _K(F,D)v \cong C_K(\mathcal X)\rtimes _{\sigma} \Z , $$
where $\mathcal X =\{ 0, 1 \}^{\Z}$ and $\sigma $ is the usual shift
homeomorphism on $\mathcal X$. Note that $\mathcal O (E,C) =
\mathcal O ^r(E,C)$ in this case. Indeed it can be easily seen that
$(E,C)$-dynamical systems are in one-to-one correspondence with
usual dynamical systems $(\mathcal Y, \varphi)$, where $\varphi$ is
a homeomorphism of the compact Hausdorff space $\mathcal Y$, with
the additional information of a partition $\mathcal Y=\mathcal
Y_0\sqcup \mathcal Y_1$ of $\mathcal Y$ into clopen subsets
$\mathcal Y_0,\mathcal Y_1$. To obtain the corresponding
$(E,C)$-system, take $\Omega := \mathcal Y'\sqcup \mathcal Y$, where
$\mathcal Y'=\mathcal Y_0'\sqcup \mathcal Y_1'$ is a disjoint copy
of $\mathcal Y$, and where $\Omega _v:=\mathcal Y$, $\mathcal Y_i'$
correspond to the vertices labeled by $i$, the maps $\alpha_i$
correspond to the identification of elements of $\mathcal Y'_i$ with
elements of $\mathcal Y_i$, and the maps $\beta_i$ are induced by
the homeomorphism $\varphi$. It is easily checked that $v(C(\Omega)
\rtimes \mathbb F) v\cong C(\mathcal Y)\rtimes_{\varphi} \Z$ in this
situation. The unique equivariant continuous map $\Omega \to \Omega
(E,C)$ predicted by \cite[Corollary 6.11]{AE}, restricted to
$\mathcal Y$, is the fundamental map in symbolic dynamics (see e.g.
\cite[\S 6.5]{LM}) sending each element $x$ in $\mathcal Y$ to the
sequence $(a_n)_{n\in \Z}$ recording to which of the sets $\mathcal
Y_0$ or $\mathcal Y_1$ belongs $\varphi ^n(x)$, that is, $a_n= i
\iff \varphi^n(x)\in \mathcal Y_i$, for $n\in \Z$.

 We here give a dynamical interpretation in terms of the associated canonical sequence $\{(E_n, C^n)\}$ of bipartite separated graphs. Let $\mathcal X=\{ 0, 1 \}^{\Z}$ as above.  For a finite word $a_1a_2\cdots a_n\in \{ 0,1 \}^n$, we will write
 $$ [a_1a_2 \cdots a_{i-1}\ul{a_i} a_{i+1}\cdots a_n] := \{ x\in \mathcal X : x_{j-i}= a_j \text{ for } j=1,2,\dots , n \}.$$

 The space $\Omega (E,C)$ is of the form $\Omega (E,C)= \mathcal X'\sqcup \mathcal X$, where $\mathcal X'\cong \mathcal X= \{0,1\}^{\Z}$. By a slight abuse of language,
 we will identify $\mathcal X'$ with $\mathcal X$ notationally, so that we will obtain
 two partitions of the space $\{ 0,1 \}^{\Z}$ into clopen subsets for each separated graph $(E_n,C^n)$, corresponding to the sets of vertices $E_n^{0,0}$ and $E_n^{0,1}$ respectively, and maps
 between these clopen subsets corresponding to the edges $E_n^1$.

 The first layer of the sequence $\{ (E_n, C^n) \}_{n\ge 0 }$ corresponds to a trivial decomposition $\mathcal X^0= \mathcal X$ and to the decomposition
 $$\mathcal X^1_0= [\underline{0}], \qquad  \mathcal X^1_1= [\ul{1}].$$
 The maps corresponding to the edges are the maps
 $\alpha _i \colon \mathcal X^1_i \to \mathcal X$ and $\beta _i \colon \mathcal X^1_i  \to \mathcal X$ defined by
 $\alpha _i = \text{id}|_{\mathcal X_i^1}$ and $\beta _i= \sigma|_{\mathcal X_i^1}$.
 We describe now the clopen sets corresponding to the separated graph $(E_n, C^n)$ for any $n\ge 1$. Let $w = a_1a_2\cdots a_n\in \{0,1 \}^n$ be a word of length $n$.
 If $n=2m$ is even, define
 $$\mathcal X^n_w := [a_1\cdots a_m\ul{a_{m+1}}a_{m+2}\cdots a_{2m}].$$
 If $n=2m+1$ is odd, define
 $$\mathcal X^n_w := [a_1\cdots a_{m}\ul{a_{m+1}}a_{m+2}\cdots a_{2m+1}] .$$
 The set $F_n^0= E_n^{0,0}$ has exactly $2^n$ elements, and thus $F_{n+1}^0= E_n^{0,1}$ has exactly $2^{n+1}$ elements. The vertices in $E_n^{0,0}$ correspond to a decomposition
 $$\mathcal X= \bigsqcup _{w\in \{ 0,1 \}^n } \mathcal X^n_w. $$

 Let $n= 2m$ and take $w\in \{ 0,1 \}^n$. Then $C^n_w= \{ X^w_1 , X^w_2 \}$, where $X^w_1= \{ \alpha^{n+1}_{w0}, \alpha^{n+1}_{w1} \}$ and $X^w_2 = \{ \beta^{n+1}_{0w} , \beta^{n+1}_{1w} \}$.
 The edges $\alpha^{n+1}_{wi}$ correspond to the maps, denoted in the same way,  $\alpha^{n+1}_{wi} \colon \mathcal X^{n+1}_{wi} \to \mathcal X^n_w$, where each is simply the identity on the respective domain.
 Similarly, the edges $\beta^{n+1}_{iw}$ correspond to the maps  $\beta^{n+1}_{iw} \colon \mathcal X^{n+1}_{iw} \to \mathcal X^n_w$ given by the restriction of $\sigma$ to the respective domains.

 Now let $n= 2m+1$ and take $w\in \{ 0,1 \}^n$. In this case, we have  $C^n_w= \{ X^w_1 , X^w_2 \}$,
 where $X^w_1= \{ \alpha^{n+1}_{0w}, \alpha^{n+1}_{1w} \}$ and $X^w_2 = \{ \beta^{n+1}_{w0} , \beta^{n+1}_{w1} \}$.
The edges $\alpha^{n+1}_{iw}$ again correspond to the maps, denoted in the
same way,  $\alpha^{n+1}_{iw} \colon \mathcal X^{n+1}_{iw} \to
\mathcal X^n_w$, acting as the identity on the respective domains, while the edges $\beta^{n+1}_{wi}$ correspond to the maps  $\beta^{n+1}_{wi} \colon \mathcal X^{n+1}_{wi} \to \mathcal X^n_w$ given by the restriction of $\sigma^{-1}$ to the respective domains.

Now it is quite easy to observe that $\Omega (E,C)_v \cong \mathcal
X$ canonically. Indeed, by \cite[p. 3008]{AE2}, we have $\Omega
(E,C)_v \cong \varprojlim (F_{\infty}^{0,2j}, r_{2j})$, and in this
case, the maps $r_{2j}\colon F_{\infty}^{0,2j} \to
F_{\infty}^{0,2j-2}$ are  the maps sending $awb$ to $w$, where $w\in
\{0,1 \}^{2j-2}$ and $a,b\in \{ 0,1 \}$.

 \end{example}

We can now relate the ideals of $\mathcal O (E,C)$ with some sets of
words, and the quotients of $\mathcal O (E,C)$ with the subshifts of
the shift $(\mathcal X,\sigma )$. Recall from \cite[Chapter 1]{LM}
that a {\it subshift} of $\mathcal X= \{ 0,1 \}^{\Z}$ is a subspace
$\mathcal X_{\mathcal F}$ which can be described as the set of all
elements $x$ in $\mathcal X$ not containing any block from a fixed
family $\mathcal F$ of finite words in the alphabet $\{ 0,1\}$. (A
block of $x$ is a finite subsequence of consecutive terms in $x$.)
The family $\mathcal F$ is called the family of {\it forbidden
words} of the subshift. By \cite[Theorem 6.1.21]{LM}, the subshifts
of $\mathcal X$ are exactly the invariant closed subsets of
$\mathcal X$. A subshift $Z$ is said to be {\it of finite type} if
there exists a finite set $\mathcal F$ such that $Z=\mathcal
X_{\mathcal F}$.

\begin{proposition}
 \label{prop:subshifts}
 Let $(E,C)$ be the separated graph described in Example \ref{ex:lamplighter}, and adopt the notation used there.
 \begin{enumerate}
  \item[\textup{(1)}] For each subset $\mathcal F$ of words, there is a unique hereditary $D^{\infty}$-saturated subset $H_{\mathcal F}$ of the separated graph $(F_{\infty}, D^{\infty})$ such that
  $\mathcal F$ and $H_{\mathcal F}$ generate the same subshift, that is $\mathcal X_{\mathcal F} = \mathcal X_{H_{\mathcal F}}$.
  \item[\textup{(2)}] $\mathcal X_{\mathcal F}$ is a shift of finite type if and only if there is some $n <\infty$ such that $H_{\mathcal F}$ is generated by $H:=H_{\mathcal F}\cap E_n^0$. In this case, $\theta^{(E_n/H,C^n/H)}$ and $\sigma$ are Kakutani equivalent, so in particular there are Morita-equivalences
$$C_K(X_\mathcal{F}) \rtimes_\sigma \mathbb{Z} \sim L_K^{\textup{ab}}(E_n/H,C^n/H) \andspace C(\mathcal X_{\mathcal F}) \times_{\sigma} \Z  \sim \mathcal O (E_n/H, C^n/H).$$
 \end{enumerate}
\end{proposition}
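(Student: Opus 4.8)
The plan is to push everything through the corner at the vertex $v$, where Example~\ref{ex:lamplighter} supplies the identifications $\Omega(E,C)_v \cong \mathcal{X}$ and $v\,\mathcal{O}(E,C)\,v \cong C(\mathcal{X}) \rtimes_\sigma \mathbb{Z}$ (recall $\mathcal{O}(E,C)=\mathcal{O}^r(E,C)$ here, and the same for the quotients, since the corner dynamics is an amenable $\mathbb{Z}$-action). The clopen set $\Omega(E,C)_v$ is $\mathbb{F}$-full, because the edges $\alpha_0,\alpha_1$ identify the two clopen halves of $\mathcal{X}'$ with clopen subsets of $\mathcal{X}$, so every $\mathbb{F}$-orbit meets $\mathcal{X}$; moreover the return dynamics of $\mathbb{F}$ to $\mathcal{X}$ is exactly $\sigma$. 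It follows that $U \mapsto U \cap \mathcal{X}$ is a lattice isomorphism from $\mathbb{O}^{\mathbb{F}}(\Omega(E,C))$ onto the open $\sigma$-invariant subsets of $\mathcal{X}$, with inverse $W \mapsto \theta_{\mathbb{F}}(W)$. Combined with Theorem~\ref{thm:idealsOr}, taking complements identifies $\mathcal{H}(F_\infty,D^\infty)$ with the subshifts of $\mathcal{X}$.

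For part (1), the set $\mathcal{X}_\mathcal{F}$ is a subshift by \cite[Theorem 6.1.21]{LM}, so the above bijection yields a unique $H_\mathcal{F} \in \mathcal{H}(F_\infty,D^\infty)$ with $\mathcal{X}_{H_\mathcal{F}} := \mathcal{X} \setminus (U \cap \mathcal{X}) = \mathcal{X}_\mathcal{F}$, where $U = \bigcup_{w \in H_\mathcal{F}} \Omega(E,C)_w$; uniqueness is immediate from injectivity. The computation I would record next, for use in part (2), describes $H_\mathcal{F}$ in terms of forbidden words: using the explicit layer description of Example~\ref{ex:lamplighter} (and $\Omega(E,C)_v \cong \varprojlim_j(F^{0,2j}_\infty, r_{2j})$), each vertex $w$ at a level refining the corner corresponds to a cylinder $\mathcal{X}^{|w|}_w \subseteq \mathcal{X}$, a window of width $|w|$ about the origin. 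Since $w \in H_\mathcal{F}$ exactly when $\Omega(E,C)_w \subseteq U$, i.e.\ when $\mathcal{X}^{|w|}_w \cap \mathcal{X}_\mathcal{F} = \emptyset$, and since $\mathcal{X}_\mathcal{F}$ is $\sigma$-invariant, membership $w \in H_\mathcal{F}$ is equivalent to the word $w$ never occurring as a block of a point of $\mathcal{X}_\mathcal{F}$; thus $H_\mathcal{F} \cap E_n^0$ records precisely the forbidden blocks of the lengths carried by layers $n$ and $n+1$.

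For part (2) I use the remark following Proposition~\ref{prop:LimitOfOs}: the open set $U_n$ attached to the hereditary $D^\infty$-saturated closure $H^n$ of $H := H_\mathcal{F} \cap E_n^0$ satisfies $U_n \cap \mathcal{X} = \{ x \in \mathcal{X} \mid \text{some shift of } x \text{ has a forbidden window recorded by } H\}$. Hence $H_\mathcal{F}$ is generated by $H$ (that is, $H_\mathcal{F} = H^n$) if and only if $U = U_n$, if and only if $\mathcal{X}_\mathcal{F}$ consists exactly of the points avoiding all forbidden blocks of those bounded lengths, which is the definition of $\mathcal{X}_\mathcal{F}$ being a shift of finite type (every SFT can be recoded to forbid words of a single bounded length). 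Assuming now $H_\mathcal{F} = H^n$ with $H = H_\mathcal{F} \cap E_n^0$, we have $I(H)=I(H_\mathcal{F})$, so Theorem~\ref{thm:NBallIsNGraph} and Theorem~\ref{thm:quotientalg} give
$$\mathcal{O}(E_n/H, C^n/H) \cong \mathcal{O}(E_n,C^n)/I(H) \cong \mathcal{O}(E,C)/I(H_\mathcal{F}),$$
whose underlying partial action is $\theta^{(E,C)}$ restricted to $Z := \Omega(E,C)\setminus U$. The clopen subset $Z \cap \Omega(E,C)_v = \mathcal{X}_\mathcal{F}$ is $\mathbb{F}$-full in $Z$, and by the corner identification the restricted action there is dynamically equivalent to $\sigma$ on $\mathcal{X}_\mathcal{F}$; this is precisely a Kakutani equivalence $\theta^{(E_n/H, C^n/H)} \sim \sigma$. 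The Morita equivalences with $C(\mathcal{X}_\mathcal{F}) \rtimes_\sigma \mathbb{Z}$ and $C_K(\mathcal{X}_\mathcal{F}) \rtimes_\sigma \mathbb{Z}$ then follow from the principle recorded after the definition of Kakutani equivalence.

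I expect the main obstacle to be the displayed description of $U_n \cap \mathcal{X}$ in part (2): verifying that it is honestly the set of points some shift of which has a forbidden window, and nothing more. This is where the hereditary $D^\infty$-saturation of $(F_\infty,D^\infty)$ must be matched, through the cylinder dictionary of Example~\ref{ex:lamplighter}, with the combinatorial closure ``declare $x$ forbidden as soon as one of its bounded-width windows is forbidden''; one must check that saturation creates no additional forbidden configurations. The even/odd conventions for the windows $\mathcal{X}^n_w$ and the bookkeeping separating the $\mathcal{X}$- and $\mathcal{X}'$-sides of $\Omega(E,C)$ demand care, but given the explicit layer description already available in the example, the remaining ingredients (fullness, the corner isomorphism, and the cited Kakutani-to-Morita passage) are routine.
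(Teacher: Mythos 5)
Your proposal is correct and is essentially the paper's own argument: both rest on the word/cylinder dictionary of Example~\ref{ex:lamplighter}, on Theorem~\ref{thm:idealsOr} for existence and uniqueness of $H_{\mathcal F}$, and on exhibiting $\sigma$ on $\mathcal X_{\mathcal F}$ as the restriction of the relevant partial action to a full clopen subset, from which the Kakutani and Morita equivalences follow. The only differences are organizational: the paper builds $H_{\mathcal F}$ bottom-up as the $D^{\infty}$-saturation of the hereditary closure of $\mathcal F$ (the words containing a forbidden block) and realizes the direct dynamical equivalence by an explicit homomorphism $\mathbb F((E_n/H)^1)\to\Z$ inside the quotient graph, whereas you characterize $H_{\mathcal F}$ top-down as the set of words not occurring in $\mathcal X_{\mathcal F}$ and route the dynamical equivalence through Theorems~\ref{thm:NBallIsNGraph} and~\ref{thm:quotientalg} and the corner of $(E,C)$ restricted to $Z$ --- both of which are sound.
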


\begin{proof}
(1):  By Example \ref{ex:lamplighter}, the set of vertices of the graph $F_{\infty}$
 can be identified with the set of all finite words in the alphabet $\{ 0,1 \}$.

 Given a set $\mathcal F$ of words, the space $\mathcal X_{\mathcal F}$ is a closed invariant subset of $\mathcal X$. One can easily show that
 $$ \mathcal X\setminus \mathcal X_{\mathcal F} = \bigcup_{n=0}^{\infty} \bigcup _{w\in W_n} \bigcup_{j\in \Z} \sigma ^j (\mathcal X_w^n) ,$$
where for each $n\ge 0$, the set $W_n$ is the set of words of length
$n$ containing a block coming from $\mathcal F$, and $\mathcal
X_w^n$ are the subsets of $\mathcal X$ defined in Example
\ref{ex:lamplighter}. Observe that the set $W:=
\bigcup_{n=0}^{\infty} W_n$ is precisely the hereditary closure of
$\mathcal F$ in the graph $F_{\infty}$. The set $H_{\mathcal F}$ is
the $D^{\infty}$-saturation of $W$, and it generates the same open
invariant subset as $\mathcal F$ and as $W$. We therefore have
$\mathcal X_{\mathcal F} = \mathcal X_{H_{\mathcal F}}$. The
uniqueness of $H_{\mathcal F}$ comes from Theorem
\ref{thm:idealsOr}.

(2): The first statement follows immediately from (1). Using the above identification of $\theta^{(E_n,C^n)}$, we see that $\bigsqcup_{v \in E^{0,0} \cap H} \Omega(E_n,C^n)_v=X_\mathcal{F}$, so that the restriction of $\theta^{(E_n/H,C^n/H)}$ to the full, clopen subspace 
$$\bigsqcup_{v \in E_n^{0,0} \setminus H} \Omega(E_n/H,C^n/H)_v$$
is directly dynamically equivalent to $\sigma$ via the group homomorphism $\mathbb{F}((E_n/H)^1) \to \mathbb{Z}$ given by 
\begin{itemize}
\item $\alpha_{wi}^{n+1} \mapsto 0$ and $\beta_{iw}^{n+1} \mapsto 1$ if $n$ is even,
\item $\alpha_{iw}^{n+1} \mapsto 0$ and $\beta_{wi}^{n+1} \mapsto -1$ if $n$ is odd.
\end{itemize}
In particular, $\theta^{(E_n/H,C^n/H)}$ and $\sigma$ are Kakutani equivalent, so we obtain Morita equivalences as above. This concludes the proof.
\end{proof}

 \begin{example}
  \label{exam:cycles}
  The cycles in the different graphs $(E_n, C^n)$ are determined by the complements of some hereditary $C^n$-saturated subsets of $E_n^0$.
  They correspond to periodic orbits in the shift $(\mathcal X,\sigma )$.
  For instance, consider the word $w= 0110$, which is primitive, that is, it is not a square. The successive rotations of $w$ are the words
  $$w_1=w, \quad w_2= 1100, \quad w_3 = 1001, \quad w_4= 0011 .$$
  Consider the separated graph $(E_3,C^3)$, and the $C^3$-saturation $H$ of the set
  $$E^{0,1}_3\setminus \{w_1,w_2,w_3,w_4\}.$$
  It is easily seen that $E_3^{0,1} \setminus H = \{w_1,w_2,w_3,w_4 \}$, and $E_3^{0,0}\setminus H = \{v_1,v_2,v_3,v_4 \}$, where
  $$v_1= 110, \quad v_2 = 100 , \quad v_3= 001, \quad v_4= 011 .$$
  The separated graph $(E_3/H, C^3/H)$ is essentially given by a cycle of length $4$. We have
  $$\mathcal O  (E,C) /I(H_{\infty}) \cong \mathcal O  (E_3/H,C^3/H) \cong M_8 ( C(\mathbb T)) .$$

  Note that if we start with a word which is not primitive, for instance $w= 0101$, then there is a non-trivial $C^2$-saturation $H_2$ induced by $H$ in $(E_2,C^2)$,  and there is a non-trivial $C^1$-saturation $H_1$ induced by $H_2$ in $(E_1, C^1)$, which is such that $E_1^{0,1} \setminus H_1 = \{ 10, 01 \}$ (that is, $H_1= \{00, 11 \}$),
  and everything is reduced to the $2$-cycle generated by $01$.
  \end{example}

  The periodic orbits give trivial examples of minimal subshifts. More interesting examples are provided by the minimal Cantor subshifts, whose underlying
  space is a Cantor set. 
  By \cite{sugisaki}, every strong orbit equivalence class of minimal Cantor systems contains a minimal Cantor subshift. By combining this with \cite[Theorem 2.1]{GPS}, 
  we see that, for every 
  $*$-isomorphism class of $C^*$-algebra crossed products $C(\Omega) \rtimes \Z$ of minimal actions on the Cantor set $\Omega$, there is a representative coming from a minimal subshift. 
  These $C^*$-algebras are classified by their ordered Grothendieck groups with distinguished order-units, which may be any simple dimension group with order-unit
  (see \cite[Theorems 1.12 and 1.14]{GPS}). See also \cite{DM} and \cite{hoynes} for further information about 
  the conjugacy classes of minimal subshifts.
  We remark that these examples imply that, in spite of the results in \cite{Lolk1}, the theory of
  separated graph C*-algebras leads to non-trivial examples (i.e.
  not coming from ordinary directed graphs)
  of simple nuclear C*-algebras with stable rank one and real rank zero,
  since it is well-known that the C*-algebras associated to minimal
  Cantor systems enjoy these properties (see \cite{Put90} and \cite[p. 184]{Elliott93}). By the results of the second-named author (\cite{Lolk1})
  this is not possible for the tame C*-algebra of a separated graph, but we see now
  that, factoring out a suitable maximal ideal generated by
  projections of the algebra $\mathcal O (E,C)$ appearing in Example
  \ref{prop:subshifts}, we may
  obtain such examples.

  The following example appears for instance in \cite{DE}.

 \begin{example}[The even shift]
  \label{exam:ruy}
  Consider the subshift $\mathcal Y$ of $(\mathcal X, \sigma)$ defined by taking as a set of forbidden words
  $\mathcal F = \{01^{2n+1}0 \mid n \ge 0\}$. That is, in a word of $\mathcal Y$ there is always an even number of consecutive 1's between two 0's.
  In this case, we have $H^{(2)}= \{ 010 \}$, $H^{(4)} = \{ 01110 \} \cup H_4'$, where $H_4'$ is the family of words of length
  $4$ or $5$ containing as subwords the word $010$. In general
  $$H^{(2i)}= \{ 0 1^{2i-1}0 \} \cup H_{2i}',$$
  where $H_{2i}'$ is the set of words of length $2i$ or $2i+1$ containing some subword of the form $01^{2j-1}0$ with $j<i$.
 This gives rise to a subshift which is not finite.
 \end{example}

\section{A complete description of the ideals of finite type}\label{sect:GeneralIdeals}

In this section, we completely determine the structure of the \textit{finite type} ideals of $\mathcal{O}^r(E,C)$:
\begin{definition}
Let $(E,C)$ denote a finite bipartite separated graph and let $(F_\infty,D^\infty)$ be the separated Bratteli diagram of $(E,C)$. Given an arbitrary ideal $J \ideal \mathcal{O}^r(E,C)$, there is some $H_J \in \mathcal{H}(F_\infty,D^\infty)$ for which $I(H_J)=(J \cap C(\Omega(E,C))) \rtimes_r \mathbb{F}$. We will say that any $H \in \mathcal{H}(F_\infty,D^\infty)$ is of finite type if $H=H^n$ for some $n \ge 0$, and an ideal $J$ is of finite type if $H_J$ is so. Finally, the lattices of finite type vertex sets and ideals will be denoted by $\mathcal{H}_\textup{fin}(F_\infty,D^\infty)$ and $\mathcal{I}_{\textup{fin}}(\mathcal{O}^r(E,C))$, respectively.
\end{definition}
Given any partial action $\theta \colon G \act \Omega$ and a point $x \in \Omega$, the stabiliser of $x$ is the subgroup
$$\text{Stab}(x):=\{g \in G \mid x \in \Omega_{g^{-1}} \text{ and } \theta_g(x)=x\}.$$
Recall that $\theta$ is \textit{called topologically free} if, given any open subspace $U$ and any $1\ne g \in G$, there exists $x \in U$ with $g \notin \text{Stab}(x)$. If $\theta$ is topologically free, then $C_0(\Omega) \rtimes_r G$ has the \textit{intersection property} by \cite[Theorem 29.5]{Exel}, i.e.~any non-zero 
ideal $J \ideal C_0(\Omega) \rtimes_r G$ has non-trivial intersection $C_0(U)=J \cap C_0(\Omega)\ne \{ 0\}$. It follows that $J$ contains the non-zero induced ideal $I:=C_0(U) \rtimes_r G$, but it does not tell us anything about the quotient $J/I$. The problem arises when 
the restriction of $\theta$ to $Z:=\Omega \setminus U$ is not topologically free, and a partial action is said to be \textit{essentially free} if all such restrictions are 
topologically free. However, while topological freeness of $\theta^{(E,C)}$ is quite frequent (we recall the characterisation given in \cite{AE} just below), essential freeness 
is extremely rare as shown in \cite{Lolk1}. In this section, we will introduce a weakening of topological freeness that still allows one to obtain information about the ideals, 
and show that it is always enjoyed by $\theta^{(E,C)}$. In particular, the restriction $\theta^{(E,C)}\vert_Z$ to any closed invariant subspace $Z$ of finite type will also have this property. 
From these observations, we can completely characterize the structure of finite type ideals of $\mathcal{O}^r(E,C)$. It is also worth noting that our methods, when applied to non-separated graphs, yield a complete description of the ideals of the graph $C^*$-algebra. 

We first recall \textit{Condition} (\textit{L}) of \cite{AE}, using a slightly different terminology: 

\begin{definition}
Consider any finite bipartite separated graph $(E,C)$. A vertex $v \in E^0$ is said to \textit{admit a choice} if there exists an admissible path $\beta=e\alpha$ with $s(\beta)=v$, and an element $X_e \ne X \in C_{r(e)}$ with $\vert X \vert \ge 2$. 
The graph is said to satisfy Condition (L) if for every 
simple cycle $\sigma$, the base vertex $s(\sigma)$ admits a choice.
\end{definition}

\begin{theorem}[{\cite[Theorem 10.5]{AE}}]
Let $(E,C)$ denote a finite, bipartite separated graph. Then $\theta^{(E,C)}$ is topologically free if and only if $(E,C)$ satisfies Condition \textup{(}L\textup{)}.
\end{theorem}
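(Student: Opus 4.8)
The plan is to recast topological freeness in terms of the fixed-point sets of $\theta = \theta^{(E,C)}$ and to match their having non-empty interior with the failure of Condition (L). By definition $\theta$ is topologically free exactly when, for every $1 \ne g \in \mathbb{F}$, the set
\[
\mathrm{Fix}(g) := \{\xi \in \Omega(E,C)_{g^{-1}} : \theta_g(\xi) = \xi\}
\]
has empty interior. So I would first unwind what being fixed means: since $\theta_g(\xi) = \xi \cdot g^{-1}$, one has $g \in \text{Stab}(\xi)$ iff $g \in \xi$ and $\xi g^{-1} = \xi$. Together with $1 \in \xi$ and right-convexity this forces $\langle g \rangle \subseteq \xi$, and since every element of a configuration is an admissible path, all powers $g^n$ are admissible; hence $g$ is a closed path based at the vertex $v$ with $\xi \in \Omega(E,C)_v$ for which $g\cdot g$ is admissible, i.e. $g$ is a cycle based at $v$. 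Restricting attention to a simple sub-cycle (along which the rigidity forced below is inherited), I reduce the whole question to simple cycles; I would record this passage as routine bookkeeping.

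The heart of the argument is then the following equivalence, for a simple cycle $\sigma$ based at $v = s(\sigma)$:
\[
\mathrm{Fix}(\sigma) \text{ has non-empty interior} \iff v \text{ does not admit a choice}.
\]
For the implication $(\Leftarrow)$ I would observe that if $v$ admits no choice, then along every admissible path issuing from $v$ all partition classes other than the one containing the final edge are singletons, so there is no branching: the local configurations of type (c2) are completely forced. Consequently a configuration based at $v$ is rigid, there is a unique $\sigma$-periodic $\xi_0$ built by repeating the pattern of $\sigma$, and for $R$ large enough the single condition $\xi^R = \xi_0^R$ already forces $\xi$ to be $\sigma$-periodic, hence $\sigma$-fixed. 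This exhibits a basic clopen neighborhood $\{\xi : \xi^R = \xi_0^R\}$ inside $\mathrm{Fix}(\sigma)$.

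For the converse $(\Rightarrow)$ — which I expect to be the main obstacle — I would suppose toward a contradiction that $v$ admits a choice while $\{\xi : \xi^R = B\} \subseteq \mathrm{Fix}(\sigma)$ for some allowed $R$-ball $B$. Admitting a choice supplies an admissible path $e\alpha$ from $v$ together with a class $X \ne X_e$ in $C_{r(e)}$ with $\vert X \vert \ge 2$; the two distinct representatives of $X$ let me build two configurations that agree on the ball $B$ (so both lie in the prescribed clopen set) but whose continuations, after travelling once around $\sigma$, differ. Since a $\sigma$-fixed configuration is determined by a single fundamental period, at least one of the two must fail to be $\sigma$-periodic, contradicting $\{\xi : \xi^R = B\} \subseteq \mathrm{Fix}(\sigma)$. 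The delicate points are ensuring the branch can be inserted without leaving the allowed ball $B$ (using that the choice occurs on a path genuinely reachable from the base) and tracking how the choice propagates through the concatenation $\sigma^n$ to break periodicity; this is precisely the separated-graph analogue of the classical ``every cycle has an exit'' computation for graph $C^*$-algebras, and the bookkeeping of local configurations along the double $\widehat{E}$ is where the real work lies. Assembling the two implications across all simple cycles, together with the reduction of an arbitrary fixing element to a simple cycle, yields that $\theta^{(E,C)}$ is topologically free iff every simple cycle's base admits a choice, i.e. iff $(E,C)$ satisfies Condition (L).
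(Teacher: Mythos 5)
Your high-level strategy (recast topological freeness via the sets $\mathrm{Fix}(g)$, show that stabilising elements are cycles, and trade non-empty interior of $\mathrm{Fix}$ against the absence of choices) is sound, and note that the paper itself gives no proof of this statement — it imports it from \cite[Theorem 10.5]{AE} — so your argument has to stand entirely on its own. The first genuine gap is your reduction to simple cycles, which is not ``routine bookkeeping''. Failure of topological freeness hands you an arbitrary $1 \ne g$ with $\mathrm{Fix}(g)$ of non-empty interior; such $g$ is a cycle but need not be simple, and in a separated graph you cannot simply pass to a closed subpath: for a subpath $\tau$ of $g$ that returns to a vertex $u$, the word $\tau\tau$ is admissible only if the class condition $X_e \ne X_h$ holds at the junction, not merely $e \ne h$, and this fails exactly when $g$ leaves and re-enters $u$ through two distinct edges of a single class with $\vert X \vert \ge 2$. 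Since Condition (L) quantifies only over simple cycles, this bridge is essential, and the natural way to build it is the reverse of your order: run the rigidity/perturbation argument for the \emph{arbitrary} cycle $g$ first, conclude that every class at every vertex visited by $g$ is a singleton, and only then extract a simple sub-cycle — at that point the junction obstruction vanishes precisely because the relevant classes are singletons. Done upfront, as you propose, the reduction cannot be justified.

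The second gap is in the heart of your converse direction. The choice path $e\alpha$ may have length $\le R$, so swapping the two representatives of $X$ at its end changes the $R$-ball, and the two configurations you build do \emph{not} both lie in $\{\xi : \xi^R = B\}$; and you cannot repair this by ``travelling once around $\sigma$'' and inserting the branch there, because the concatenation $e\alpha\sigma^k$ need not be admissible — its junction is subject to the same class condition and can fail. What closes the argument is different: (i) show by induction on the length of the choice path that every configuration $\eta$ at a choice-admitting vertex already contains a branching position \emph{internally}, i.e.\ some $\gamma_0 \in \eta$ and a class $X$ at $r(\gamma_0)$ with $\vert X \vert \ge 2$ distinct from the incoming class; (ii) use that any $\eta \in \Omega_B \subseteq \mathrm{Fix}(\sigma)$ satisfies $\eta\sigma^k = \eta$, so the translates $\gamma_0\sigma^k \in \eta$ are again branching positions, of length $> R$ for large $\vert k \vert$ — it is this invariance, not the raw choice path, that places a usable branching site outside the ball; and (iii) regraft the branch at $\gamma_0\sigma^k$ and verify, via the tree structure of $\eta$ (the modified branch contains no other translate $\gamma_0\sigma^m$ and no point of the spine $\langle\sigma\rangle$), that the new configuration still has $R$-ball $B$ yet is no longer $\sigma$-invariant. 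Your appeal to ``a $\sigma$-fixed configuration is determined by a single fundamental period'' cannot substitute for (iii): one must actually show a one-site modification is aperiodic. Finally, a smaller gap of the same flavour sits in your easy direction: ``$v$ admits no choice'' does not by itself force rigidity at the root when $v \in E^{0,0}$ carries a class of size $\ge 2$ (there exist such graphs with $\vert \Omega(E,C)_v \vert \ge 2$); for the base of a cycle this is rescued by observing that the cycle and its inverse end in edges of \emph{distinct} classes, so a fat class at $v$ would itself yield a choice — but that observation has to be made.
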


We now introduce the appropriate weakening of topological freeness.

\begin{definition}
Let $\theta \colon G \act \Omega$ be any partial action of a discrete group on a locally compact Hausdorff space. For any $x \in \Omega$, we shall say that $\theta$ is
\begin{itemize}
\item  \textit{topologically free in} $x$ if, given any $1 \ne g \in \text{Stab}(x)$ and an open neighbourhood $U$ of $x$, there exists $y \in U$ with $g \notin \text{Stab}(y)$.
\item \textit{strongly topologically free in} $x$ if, given any $1 \ne g_1,\ldots,g_n \in \text{Stab}(x)$ and any open neighbourhood $U$ of $x$, there exists $y \in U$ with $g_1,\ldots,g_n \notin \text{Stab}(y)$.
\end{itemize}
We will denote by $\Omega^{\textup{TF}}$ the set of points $x \in \Omega$ in which $\theta$ is topologically free. If $\theta$ is strongly topologically free in every $x \in \Omega^{\textup{TF}}$, it is said to be \textit{relatively strongly topologically free}.
\end{definition}

Observe that if $x$ is 
an interior point of $\Omega^{\textup{TF}}$, then $\theta$ is automatically strongly topologically free in $x$. In particular, topologically free partial actions are strongly topologically free in all points.

\begin{remark}\label{rem:IsolatedPoints}
We will now expand a bit on the situation for $\theta^{(E,C)}$, and we first borrow a bit of graph theory from \cite[Section 3]{Lolk1}. If $v \in E^0$ does not admit a choice, 
every closed path $\alpha$ based at $v$ has a unique word decomposition $\alpha=\gamma^{-1}\beta\gamma$ for a (possibly trivial) admissible path $\gamma$ and a cycle $\beta$, and we will 
say that $\alpha$ is a \textit{simple closed path} if $\gamma$ does not repeat a vertex, and $\beta$ is a simple cycle. The set  
$$\mathbb{F}_v:=\{\text{closed paths based at }v\} \cup \{1\}$$
defines a subgroup $\mathbb{F}_v \le \mathbb{F}$, and every closed path based at $v$ is a reduced product of simple closed paths. It follows that such $v$ admits a 
unique simple closed path (up to inversion) if and only if $\mathbb{F}_v \cong \mathbb{Z}$. Moreover, $\Omega(E,C)_v=\{\xi\}$ is a one-point set and $\text{Stab}(\xi)=\mathbb{F}_v$, 
so $\xi \notin \Omega(E,C)^{\textup{TF}}$ if and only if $v$ admits a closed path. It follows from the proof of \cite[Theorem 10.5]{AE} that every $\eta \in \Omega(E,C) \setminus \Omega(E,C)^{\textup{TF}}$ is of the form $\eta=\theta_\gamma(\xi)$ 
for some $\gamma \in \xi$ with $\xi$ as above -- in particular, the complement $\Omega(E,C) \setminus \Omega(E,C)^{\textup{TF}}$ is discrete.
\end{remark}

Before we progress any further, let us consider a 
somewhat trivial, but not uninteresting, example.

\begin{example}\label{ex:Integers}
If $\theta \colon G \act \Omega$ is any partial action, and $x \in \Omega^{\textup{TF}}$ satisfies $\text{Stab}(x) \cong \mathbb{Z}$, then $\theta$ is automatically strongly topologically free in $x$. Indeed, given $1 \ne g_1,\ldots,g_n \in \text{Stab}(x)$, we may write $g_i=g^{k_i}$ with $g$ a generator of $\text{Stab}(x)$, and we can safely assume that $k_i > 0$ for all $i$. For any open neighbourhood $U$ of $x$, we consider $h:=g^{k_1 \cdots k_n} \in G$ and pick $y \in U$ with $h \notin \text{Stab}(y)$ using topological freeness in $x$. Then we obviously have $g_i \notin \text{Stab}(y)$ as well, so $\theta$ is indeed strongly topologically free in $x$.
\end{example}

The main result of \cite{Lolk2} is a characterisation of nuclearity and exactness of $\mathcal{O}^{(r)}(E,C)$ in terms of a graph theoretic \textit{Condition} (\textit{N}). Another equivalent condition is that every stabiliser of $\theta^{(E,C)}$ is either trivial or isomorphic to $\mathbb{Z}$, so from the above example we obtain the following proposition:

\begin{proposition}
If $(E,C)$ satisfies Condition \textup{(}N\textup{)}, then the restriction of $\theta^{(E,C)}$ to any closed invariant subspace is relatively strongly topologically free.
\end{proposition}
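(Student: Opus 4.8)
The plan is to reduce the entire statement to the single-stabiliser computation already recorded in Example~\ref{ex:Integers}. The only external input I would invoke is the equivalent reformulation of Condition (N) from \cite{Lolk2} mentioned just above the statement: $(E,C)$ satisfies Condition (N) precisely when every stabiliser subgroup $\text{Stab}(\xi)$ of $\theta^{(E,C)}$ is either trivial or isomorphic to $\mathbb{Z}$. Once this dichotomy is in hand, the proof becomes largely a matter of checking that passing to a closed invariant subspace does not disturb it.

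First I would fix an arbitrary closed invariant subspace $Z \subseteq \Omega(E,C)$ and write $\theta:=\theta^{(E,C)}$. The key observation is that restriction to $Z$ leaves stabilisers unchanged: for $x \in Z$ I claim $\text{Stab}_{\theta\vert_Z}(x)=\text{Stab}_{\theta}(x)$. Indeed, $g$ stabilises $x$ under $\theta\vert_Z$ iff $x$ lies in the domain $Z_{g^{-1}}=\theta_{g^{-1}}(Z \cap \Omega_{g}) \cap Z$ and $\theta_g(x)=x$. Since $Z_{g^{-1}} \subseteq \Omega_{g^{-1}}$, one inclusion is immediate; conversely, if $g \in \text{Stab}_\theta(x)$ then $x \in Z \cap \Omega_{g^{-1}}$, and invariance of $Z$ forces $\theta_g(x) \in Z$, whence $x=\theta_{g^{-1}}(\theta_g(x)) \in Z_{g^{-1}}$. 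The two stabilisers therefore coincide, and so by Condition (N) every stabiliser of $\theta\vert_Z$ is again trivial or isomorphic to $\mathbb{Z}$.

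Finally I would take any $x \in Z^{\textup{TF}}$, the set of points in which $\theta\vert_Z$ is topologically free, and show that $\theta\vert_Z$ is in fact strongly topologically free in $x$. If $\text{Stab}_{\theta\vert_Z}(x)$ is trivial there is nothing to prove, since strong topological freeness in $x$ then holds vacuously. If instead $\text{Stab}_{\theta\vert_Z}(x) \cong \mathbb{Z}$, I would apply Example~\ref{ex:Integers} directly to the partial action $\theta\vert_Z \colon \mathbb{F} \act Z$, whose topologically free locus contains $x$ by hypothesis; this yields strong topological freeness in $x$. As $x \in Z^{\textup{TF}}$ was arbitrary, $\theta\vert_Z$ is relatively strongly topologically free, as desired.

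The argument is essentially bookkeeping, and the step I would expect to require the most care — though it is a mild point rather than a genuine difficulty — is the stabiliser-preservation identity $\text{Stab}_{\theta\vert_Z}(x)=\text{Stab}_\theta(x)$, which is exactly where invariance of $Z$ is used. All the conceptual weight sits in the two facts we are entitled to assume: the equivalence of Condition (N) with the $\mathbb{Z}$-or-trivial stabiliser dichotomy, and the statement of Example~\ref{ex:Integers}.
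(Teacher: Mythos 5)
Your proposal is correct and follows essentially the same route as the paper: the paper deduces the proposition directly from the equivalence of Condition (N) with the trivial-or-$\mathbb{Z}$ stabiliser dichotomy together with Example~\ref{ex:Integers}, applied to the restricted action. The only difference is that you make explicit the bookkeeping the paper leaves implicit, namely that restriction to a closed invariant subspace preserves stabilisers and that the trivial-stabiliser case is vacuous, both of which you verify correctly.
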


For general separated graphs, we can only handle restrictions to closed invariant subspaces of finite type.

\begin{proposition}\label{prop:GraphActIsRSTF}
If $(E,C)$ is any finite bipartite separated graph, then $\theta^{(E,C)}$ is relatively strongly topologically free.
\end{proposition}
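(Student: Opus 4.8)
The plan is to verify strong topological freeness of $\theta^{(E,C)}$ directly at every point $\xi \in \Omega(E,C)^{\textup{TF}}$, by exhibiting inside any basic neighbourhood of $\xi$ a configuration whose stabiliser avoids a prescribed finite set of nontrivial elements of $\text{Stab}(\xi)$. So fix $\xi \in \Omega(E,C)^{\textup{TF}}$ together with nontrivial $g_1,\ldots,g_n \in \text{Stab}(\xi)$ (if $\text{Stab}(\xi)$ is trivial there is nothing to prove) and a basic clopen neighbourhood $U=\{\eta \mid \eta^N=\xi^N\}$ of $\xi$. Recalling that $\text{Stab}(\xi)=\{g \in \mathbb{F} \mid \xi g=\xi\}$, each $g_i$ encodes a periodicity of $\xi$; a direct computation with local configurations shows $\xi_{\alpha g_i}=(\xi g_i^{-1}\alpha^{-1})\cap(E^1 \sqcup (E^1)^{-1})=\xi_\alpha$ for every $\alpha \in \xi$, since $\xi g_i^{-1}=\xi$.

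First I would observe that $\xi$ cannot be \emph{rigid}, i.e.\ some vertex $r(\alpha)$ with $\alpha \in \xi$ must admit a choice. Indeed, if no vertex of $\xi$ admitted a choice, then every local configuration would be forced and $\xi$ would be the unique configuration $\Omega(E,C)_v$ attached to the choice-free vertex $v:=r(1)$; but then $\text{Stab}(\xi)=\mathbb{F}_v \ne \{1\}$ would force $v$ to admit a closed path, whence $\xi \notin \Omega(E,C)^{\textup{TF}}$ by Remark~\ref{rem:IsolatedPoints}, a contradiction. Fixing a branch vertex $r(\alpha_0)$ and invoking the periodicity relation $\xi_{\alpha_0 g_1^k}=\xi_{\alpha_0}$, I get branch vertices at all the elements $\alpha_0 g_1^k$; as these are distinct and the alphabet is finite, their lengths tend to infinity, so $\xi$ carries branch vertices at arbitrarily large distance from the root.

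The core of the argument is the following construction. I would select $n$ branch vertices $w_1,\ldots,w_n$ of $\xi$, each at distance greater than $N$ from the root and with pairwise disjoint forward subtrees, and define $\eta$ to agree with $\xi$ away from these subtrees — so in particular $\eta^N=\xi^N$ and $\eta \in U$ — while at each $w_i$ replacing $\xi$'s choice in some class $X \in C_{r(w_i)}$ with $\lvert X \rvert \ge 2$ by a different admissible continuation, extended arbitrarily to a valid configuration. Since $w_i$ lies beyond radius $N$ and $w_i g_i^{-1}$ lies outside the modified subtree, one obtains $\eta_{w_i}\ne \xi_{w_i}=\xi_{w_i g_i^{-1}}=\eta_{w_i g_i^{-1}}$; were $\eta g_i=\eta$, the relation $\eta_{\alpha g_i}=\eta_\alpha$ at $\alpha=w_i g_i^{-1}$ would give $\eta_{w_i}=\eta_{w_i g_i^{-1}}$, a contradiction, so $g_i \notin \text{Stab}(\eta)$. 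As the modifications are supported on disjoint subtrees, this holds simultaneously for all $i$, producing $\eta \in U$ with $g_1,\ldots,g_n \notin \text{Stab}(\eta)$ and hence establishing strong topological freeness at $\xi$, and thus relative strong topological freeness of $\theta^{(E,C)}$.

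The main obstacle is the bookkeeping in this last construction: one must choose the $w_i$ and their modified subtrees so that each relevant translate $w_i g_i^{-1}$ genuinely falls outside the modified region — which may require passing to a further $g_1$-translate of a branch vertex to gain room — and one must check that the altered local data still assembles into a genuine element of $\Omega(E,C)$, i.e.\ that right-convexity and conditions (c1)/(c2) survive the modification. An equivalent and arguably cleaner route, which I would keep in reserve, is to extend $\xi^N$ to a configuration $\eta \in U$ with \emph{trivial} stabiliser by making aperiodic choices at the recurring branch vertices, diagonalising against all of $\mathbb{F} \setminus \{1\}$; this automatically handles every $g_i$ at once and subsumes the cyclic case already treated in Example~\ref{ex:Integers}.
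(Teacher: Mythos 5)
Your overall strategy---flip a choice of $\xi$ far beyond the ball $\xi^N$ so as to break each periodicity $g_i$ while staying inside $U$---is the same one the paper uses, but the two items you defer as ``bookkeeping'' are exactly where the content of the proof lies, and your specific plan for them does not go through. First, a smaller but real mismatch: your rigidity argument produces a vertex $r(\alpha_0)$, $\alpha_0 \in \xi$, that \emph{admits a choice} in the paper's sense, i.e.\ some class with at least two elements is merely \emph{reachable} from $r(\alpha_0)$ by an admissible path; but your modification step needs a class $X \in C_{r(w_i)}$ with $\vert X \vert \ge 2$, distinct from the class of the terminal edge of $w_i$, sitting \emph{at the vertex itself}, so that $\xi$ makes a genuine non-forced choice at $w_i$ which can be swapped. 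These are different statements, and bridging them needs an extra argument (for instance: since $r(1)$ admits a choice, $\Omega(E,C)_v \neq \{\xi\}$, and a first-disagreement comparison of $\xi$ with another configuration exhibits an element of $\xi$ where a choice is actually made). Second, and more seriously, your supply of far-away branch points consists only of the $g_1$-translates $\alpha_0 g_1^k$, and you propose to break $g_i$ by comparing $\eta_{w_i}$ with $\eta_{w_i g_i^{-1}}$, one $g_i$-period away, after flipping at $n$ such translates with ``pairwise disjoint forward subtrees.'' This can fail in two ways: (a) if the branch direction at $\alpha_0$ points along the $g_1$-axis (which does happen), then the forward subtrees of the translates $\alpha_0 g_1^k$ are \emph{nested} rather than disjoint, so the innermost flip deletes all the other chosen branch points from $\eta$; (b) for $i \neq 1$ you have no control over where $w_i g_i^{-1} = \alpha_0 g_1^{k_i} g_i^{-1}$ lands relative to the modified subtrees---passing to a further $g_1$-translate, as you suggest, moves $w_i$ along the $g_1$-axis but does nothing to steer a $g_i$-translate clear of the modifications.

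The paper's proof is engineered precisely around these two difficulties. For each $i$ it writes $g_i = \mu_i^{-1}\sigma_i\mu_i$ with $\sigma_i$ a cycle and uses topological freeness at $\xi$ to produce a choice point adapted to $g_i$, namely $x_i^{-1}\beta_i\mu_i \in \xi$ with $\vert X_{x_i}\vert \ge 2$; it then pushes this point out along the $g_i$-axis by a \emph{large power}, flipping the choice only at $y_i^{-1}\beta_i\sigma_i^{l_i}\mu_i$ where $\vert \sigma_i^{l_i}\mu_i \vert \ge N$, and it detects the broken symmetry by comparing against the \emph{original} point $x_i^{-1}\beta_i\mu_i$, which lies inside the protected ball $B = \xi^N$ and is therefore automatically untouched by all $n$ modifications; killing $g_i^{l_i}$ then kills $g_i$ because stabilisers are subgroups. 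Interference between the flips for different $i$ is handled by first assuming (harmlessly, by the doubling trick of Example~\ref{ex:Integers}) that no pair $g_i, g_j$ generates a rank-one subgroup, and by taking the $l_i$ large. If you wish to salvage your scheme, you should compare $w_i$ against $w_i g_i^{-l}$ for large $l$ rather than against $w_i g_i^{-1}$, and either place the comparison point inside the ball, as the paper does, or prove---using that the axes of non-commensurable elements of $\mathbb{F}$ eventually diverge---that it escapes every modified subtree. Your reserve plan, extending $\xi^N$ to a configuration with trivial stabiliser, is a strictly stronger assertion that is stated rather than proved, and it does not follow from Example~\ref{ex:Integers}.
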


\begin{proof}
Whenever we write $\beta\alpha$ for admissible paths $\alpha$ and $\beta$ in the following, we shall mean the concatenated product, that is, we do not allow for cancellation.
Now consider any $\xi \in \Omega(E,C)^{\text{TF}}$ and $g_1,\ldots,g_n \in \text{Stab}(\xi)$, assuming, without loss of generality, that no pair $g_i,g_j$ of distinct elements generate a rank one subgroup. For every $i$, we may write $g_i= \mu_i^{-1}\sigma_i\mu_i$ for cycles $\sigma_i$.
Since the action is topologically free in $\xi$, there 
exist admissible paths $\beta_i$ and edges $x_i$ with $\vert X_{x_i} \vert \ge 2$, such that $x_i^{-1}\beta_i\mu_i \in \xi$. Either $x_i^{-1}\beta_i\sigma_i$ or $x_i^{-1}\beta_i\sigma_i^{-1}$ must be admissible, and without loss of generality, we can assume the former. Now given any open neighbourhood $U$ of $\xi$, we have 
$$\xi \in \Omega(E,C)_B:=\{\eta \in \Omega(E,C) \mid B \subset \eta\} \subset U$$
for a sufficiently big ball $B:=\xi^N$. We may of course assume that $N> |x_i^{-1}  \beta_i \mu_i |$ for all $i$. Picking $l_i \ge 1$ such that 
$\vert \sigma_i^{l_i}\mu_i \vert \ge N$ and $x_i \ne y_i \in X_{x_i}$,
we then consider the set 
$$\omega:=B \cup \{y_i^{-1}\beta_i\sigma_i^{l_i}\mu_i \mid i =1,\ldots,n\}.$$
It should be clear that there exists $\eta \in \Omega(E,C)$ with $\omega \subset \eta$, so in particular $\eta \in U$. Finally, observe that $g_i \notin \text{Stab}(\eta)$ by construction since $x_i^{-1}\beta_i\mu_i \in B \subset \eta$ and
$$x_i^{-1}\beta_i\mu_i \notin \eta \cdot \mu_i^{-1}\sigma_i^{-l_i}\mu_i=\theta_{g_i^{l_i}}(\eta),$$
so $\theta$ is indeed strongly topologically free in $\xi$.
\end{proof}

Now let us instead consider a non-example.

\begin{example}
For $n \in \mathbb{Z}$, we define $f_n^1,f_n^2 \in \{0,1\}^{\mathbb{Z}^2}$ by
$$f_n^1(a,b)=\left\{\begin{array}{cl}
0 & \If b > n \\
1 & \If b \le n
\end{array} \right. 
\andspace
f_n^2(a,b)=\left\{\begin{array}{cl}
0 & \If a > n \\
1 & \If a \le n
\end{array} \right. .
$$
Then the $\mathbb{Z}^2$-shift on $\{0,1\}$ restricts to an action $\theta \colon \mathbb{Z}^2 \act \Omega$ on the compact Hausdorff space $\Omega:=\{f_n^1,f_n^2,0,1 \mid n \in \mathbb{Z}\}$. Every $f_n^i$ is isolated with $f_n^i \to 0$ for $n \to -\infty$ and $f_n^i \to 1$ for $n \to \infty$. By construction, $\text{Stab}(f_n^1)=\mathbb{Z} \oplus 0$ and $\text{Stab}(f_n^2)= 0 \oplus \mathbb{Z}$ for all $n$, so $\Omega^{\textup{TF}}=\{0,1\}$. However, $\theta$ is not strongly topologically free in $0$ or $1$ since any $f_n^i$ is either fixed by $a$ or $b$. In conclusion, relative strong topological freeness is not automatic.
\end{example}

\begin{remark}
We have no examples of partial actions of free groups that are not relatively strongly topologically free, but we suspect that such examples exist. However, it is notable that whenever a partial action $\theta \colon \mathbb{F} \act \Omega$ of a free group is topologically free in $x$, and we consider only two elements $g_1,g_2 \in \text{Stab}(x)$, then given any open neighbourhood $U$ of $x$ we \textit{can} find $y \in U$ with $g_1,g_2 \notin \text{Stab}(y)$. Indeed, we may assume that $g_1$ and $g_2$ do not generate a free subgroup of rank one, so that the commutator $[g_1,g_2]=g_1^{-1}g_2^{-1}g_1g_2$ is non-trivial. Let $V_i$ be an open neighbourhood of $x$ so that $\theta_{g_i}(V_i) \subset U$ and consider $V:=U \cap V_1 \cap V_2$. Applying topological freeness with respect to $[g_1,g_2]$, we obtain $y \in V$ with $[g_1,g_2] \notin \text{Stab}(y)$. Obviously, we cannot have $g_1,g_2 \in \text{Stab}(y)$, and if $g_1,g_2  \notin \text{Stab}(y)$, then we are done. We may therefore assume that exactly one of $g_1$ and $g_2$ belongs to $\text{Stab}(y)$. If $g_1 \in \text{Stab}(y)$, we consider $y \ne y':=\theta_{g_2}(y) \in U$ instead. Observe that if $g_1 \in \text{Stab}(y')$, then 
$$\theta_{[g_1,g_2]}(y)=\theta_{g_1^{-1}g_2^{-1}g_1}(y')=\theta_{g_1^{-1}g_2^{-1}}(y')=\theta_{g_1^{-1}}(y)=y,$$
which contradicts our choice of $y$. We conclude that $g_1,g_2 \notin \text{Stab}(y')$ as desired. In case $g_2 \in \text{Stab}(y)$, we apply the exact same argument with $g_1$ and $g_2$ reversed: this is possible since $[g_2,g_1]=[g_1,g_2]^{-1} \notin \text{Stab}(y)$.
\end{remark}

It is evident from the definition that $\Omega \setminus \Omega^{\textup{TF}}$ is an open invariant subspace, so there is a corresponding ``obstruction ideal'':

\begin{definition}
Given a partial action $\theta \colon G \act \Omega$ of a discrete group on a locally compact Hausdorff space, we define an ideal $J^o=J_\theta^o:=C_0(\Omega \setminus \Omega^{\textup{TF}}) \rtimes_r G$ of $C_0(\Omega) \rtimes_r G$.
\end{definition}

We now put relative strong topological freeness to work. The proof of the following theorem is modelled over that of \cite[Theorem 29.5]{Exel}, but the statement is somewhat more general.

\begin{theorem}\label{thm:GeneralIdeals}
Let $\theta \colon G \act \Omega$ denote a partial action of a discrete group $G$ on a locally compact Hausdorff space. Suppose that $G$ is exact and that $\theta$ is relatively strongly topologically free. If $J \cap C_0(\Omega)=\{0\}$ for an ideal $J \ideal C_0(\Omega) \rtimes_r G$, then $J \subset J^o$.
\end{theorem}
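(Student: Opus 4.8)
The plan is to run the argument for the intersection property (\cite[Theorem 29.5]{Exel}), but to measure everything \emph{relative} to the closed invariant set $\Omega^{\textup{TF}}$. Since $\Omega \setminus \Omega^{\textup{TF}}$ is open and invariant, $J^o$ is a genuine induced ideal, and as $G$ is exact, Theorem~\ref{thm:ideals-reducedcrossed} identifies the quotient as
$$\big(C_0(\Omega) \rtimes_r G\big)/J^o \cong C_0(\Omega^{\textup{TF}}) \rtimes_r G,$$
the reduced crossed product of $\theta\vert_{\Omega^{\textup{TF}}}$. Writing $q$ for this quotient map, $E \colon C_0(\Omega) \rtimes_r G \to C_0(\Omega)$ and $\bar E \colon C_0(\Omega^{\textup{TF}}) \rtimes_r G \to C_0(\Omega^{\textup{TF}})$ for the canonical (faithful) conditional expectations, and $\rho \colon C_0(\Omega) \to C_0(\Omega^{\textup{TF}})$ for restriction, I would first record the compatibility $\bar E \circ q = \rho \circ E$ (immediate from the formula for $q$ on finitely supported elements). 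Faithfulness of $\bar E$ then reduces $J \subset J^o = \ker q$ to a single claim: $E(b^{*}b)$ \emph{vanishes on} $\Omega^{\textup{TF}}$ for every $b \in J$, since $q(b)=0$ iff $\bar E(q(b^{*}b)) = \rho(E(b^{*}b)) = 0$.

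So I would fix a positive $c = b^{*}b \in J$ and suppose, for contradiction, that $E(c)(x_0) > 0$ for some $x_0 \in \Omega^{\textup{TF}}$. Approximating $c$ within $\epsilon$ by a finitely supported $c_0 = \sum_{g \in S} a_g \delta_g$ with $e \in S$ and $a_e = E(c_0)$, the aim is to build a positive norm-one $h \in C_0(\Omega)$, supported in a small open set, with $h\,(a_g\delta_g)\,h = 0$ for every $g \in S \setminus \{e\}$ while $\|hE(c)h\| \ge E(c)(x_0)/2$. Granting such an $h$, one has $hc_0h = ha_eh = E(hc_0h)$, so using that $E$ is a $C_0(\Omega)$-bimodule map yields $E(hch) = hE(c)h$ and $\|hch - E(hch)\| \le 2\|c-c_0\| < 2\epsilon$. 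As $hch \in J$ and $J \cap C_0(\Omega) = \{0\}$ forces the quotient map onto $(C_0(\Omega)\rtimes_r G)/J$ to be isometric on $C_0(\Omega)$, I would conclude
$$\tfrac{1}{2}E(c)(x_0) \le \|hE(c)h\| = \|E(hch)\| \le \|E(hch) - hch\| < 2\epsilon,$$
which is absurd once $\epsilon < E(c)(x_0)/4$. Hence $E(c)$ vanishes on $\Omega^{\textup{TF}}$, and the theorem follows.

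The construction of $h$ is the crux, and it is exactly here that relative strong topological freeness enters, which I expect to be the main obstacle. Recalling that $\|h(a_g\delta_g)h\|$ equals $\sup_{x \in \Omega_g}|h(x)a_g(x)h(\theta_{g^{-1}}(x))|$, the term vanishes once $h$ is supported in an open $W$ with $\theta_{g^{-1}}(W \cap \Omega_g) \cap W = \emptyset$. For $g \in S\setminus\{e\}$ that do \emph{not} fix $x_0$ this is arranged by continuity on a small neighbourhood of $x_0$; but for the finitely many $g \in S \cap \text{Stab}(x_0)$ no neighbourhood of $x_0$ can be displaced from itself. The point is to displace all of these fixing elements \emph{simultaneously}: I would first shrink to a neighbourhood $V_0 \ni x_0$ on which $E(c) > E(c)(x_0)/2$ and on which every non-fixing $g \in S$ already displaces $V_0$, and then invoke strong topological freeness of $\theta$ in $x_0$ (available since $x_0 \in \Omega^{\textup{TF}}$ and $\theta$ is relatively strongly topologically free) to find a single $y_0 \in V_0$ moved by every $g \in S \cap \text{Stab}(x_0)\setminus\{e\}$ at once. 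Choosing $W \subset V_0$ a neighbourhood of $y_0$ displaced by all these (finitely many) $g$ via the Hausdorff property, and taking $h$ peaked at $y_0$ with support in $W$, makes all off-diagonal terms vanish together while continuity of $E(c)$ keeps $\|hE(c)h\| \ge E(c)(x_0)/2$. Ordinary topological freeness would only handle one fixing element at a time, so the strengthening to the simultaneous version is precisely what is needed.
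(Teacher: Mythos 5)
Your proposal follows the same route as the paper's own proof: reduce, via the square of conditional expectations and exactness of $G$ (which identifies $J^o$ as the kernel of the map onto $C_0(\Omega^{\textup{TF}}) \rtimes_r G$), to showing that $E(b^*b)$ vanishes on $\Omega^{\textup{TF}}$ for every $b \in J$, and then run a peak-function argument at a point $x_0 \in \Omega^{\textup{TF}}$ where it does not. The algebraic bookkeeping is fine ($E(hch)=hE(c)h$ by the bimodule property, and the quotient map is isometric on $C_0(\Omega)$ because $J \cap C_0(\Omega)=\{0\}$), and so is your treatment of the elements of $S \cap \text{Stab}(x_0)$: for those $g$ the point $x_0$ lies in the open set $\Omega_g \cap \Omega_{g^{-1}}$, so after shrinking $V_0$ into these finitely many domains, the point $y_0$ supplied by strong topological freeness is genuinely moved by each such $g$, and Hausdorff separation displaces a neighbourhood of $y_0$.

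The gap is the claim that every $g \in S \setminus \{e\}$ which does not fix $x_0$ "displaces" some neighbourhood $V_0$ of $x_0$, i.e.\ that $\theta_{g^{-1}}(V_0 \cap \Omega_g) \cap V_0 = \emptyset$ can be arranged by continuity. This is true when $x_0 \in \Omega_g$ (separate $x_0$ from $\theta_{g^{-1}}(x_0)$) or when $x_0 \notin \overline{\Omega_g}$, but it fails when $x_0$ lies on the boundary of $\Omega_g$. Concretely, take $\Omega=[0,1]$, $G=\mathbb{Z}=\langle g \rangle$, $\Omega_{g^n}=(0,1]$ and $\theta_{g^n}=\mathrm{id}_{(0,1]}$ for $n \neq 0$, and $x_0=0$: this partial action satisfies all hypotheses of the theorem (it is relatively strongly topologically free with $\Omega^{\textup{TF}}=\{0\}$, and $\mathbb{Z}$ is exact), $g$ does not fix $x_0$, yet every point of every punctured neighbourhood of $x_0$ is fixed by $g$ — so no displacement exists, and worse, your $y_0 \in V_0$ may itself be fixed by $g$, so that $h(a_g\delta_g)h$ cannot be made zero by \emph{any} $h$ peaked in $V_0$. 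The repair is to abandon exact vanishing for such $g$: since $a_g \in C_0(\Omega_g)$ and $x_0 \notin \Omega_g$, the extension of $a_g$ by zero is continuous on $\Omega$ and vanishes at $x_0$, so after further shrinking $V_0$ one gets $\sup_{V_0 \cap \Omega_g}\vert a_g \vert < \varepsilon'$, hence $\Vert h(a_g\delta_g)h \Vert < \varepsilon'$ for every $h$ supported in $V_0$; your estimates tolerate replacing $hc_0h=ha_eh$ by $\Vert hc_0h - ha_eh\Vert < \vert S \vert \varepsilon'$. This displacement-or-small-coefficient dichotomy is exactly the content of \cite[Lemma 29.4]{Exel}, which the paper invokes term by term; in fact the paper's own phrase "find $x_1$ with $T \cap \text{Stab}(x_1)=\emptyset$" glosses over the very same boundary phenomenon (in the example above, enlarged by a second generator whose unique fixed point is $0$, no such $x_1$ exists at all), and it is repaired in the same way.
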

\begin{proof}
Assuming that $J \not\subset J^o$, we take any $a \in J \setminus J^o$ and consider $w:=a^*a \in J \setminus J^o$. Now consider the commutative diagram
\begin{center}
\begin{tikzpicture}[>=angle 90]
\matrix(a)[matrix of math nodes,
row sep=2em, column sep=2.5em,
text height=1.5ex, text depth=0.25ex]
{C_0(\Omega) \rtimes_r G & C_0(\Omega^{\text{TF}}) \rtimes_r G \\
C_0(\Omega) & C_0(\Omega^{\text{TF}}) \\};
\path[->](a-1-1) edge node[above]{$p_*$} (a-1-2);
\path[->](a-2-1) edge node[below]{$p$} (a-2-2);
\path[->](a-1-1) edge node[left]{$E$} (a-2-1);
\path[->](a-1-2) edge node[right]{$F$} (a-2-2);
\end{tikzpicture},
\end{center}
where $E$ and $F$ are the canonical conditional expectations. By exactness of $G$, we have $\ker(p_*)=J^o$, hence $p_*(w) \ne 0$. Faithfulness of $F$ then implies
$$p(E(w))=F(p_*(w)) \ne 0,$$
so $f:=E(w)$ attains a non-zero value on $\Omega^{\text{TF}}$. Let $x_0 \in \Omega^{\text{TF}}$ be such that $\vert f(x_0) \vert = \sup_{x \in \Omega^{\text{TF}}} \vert f(x) \vert$ and take any $0 < \varepsilon < \frac{\vert f(x_0) \vert}{2}$. By Urysohn's Lemma (applied to the one point compactification of $\Omega$), there exists $u \in C_0(\Omega)$ with $0 \le u \le 1$, $u(x_0)=1$ and $u(x) = 0$ whenever $\vert f(x) \vert \ge \vert f(x_0) \vert+ \varepsilon/4$. 
Consider $z:=uw \in J \setminus J^o$ and note that $E(z)=uE(w)= uf$, hence $\vert f(x_0) \vert \le \Vert E(z) \Vert \le \vert f(x_0) \vert+ \varepsilon/4$. We claim that there exists a function $h \in C_0(\Omega)$ such that
\begin{enumerate}[(1)]
\item $0 \le h \le 1$,
\item $\Vert h E(z) h \Vert > \Vert E(z) \Vert - \varepsilon$,
\item $\Vert hzh-hE(z)h \Vert < \varepsilon$.
\end{enumerate}
To see this, we first pick $b \in C_0(\Omega) \rtimes_{\textup{alg}} G$ with $\Vert z - b \Vert < \varepsilon/4$ and write $b=b_1+\sum_{g \in T} b_g\delta_g$ for a finite set $T \subset G \setminus \{1\}$. If $T=\emptyset$, 
then the claim is easily verified, so we may assume that $T \ne \emptyset$. Since $\theta$ is strongly topologically free in $x_0$, we can find $x_1 \in \Omega$ with $\vert b_1(x_1) - b_1(x_0) \vert < \varepsilon/4$ and $T \cap \text{Stab}(x_1) = \emptyset$. We may apply \cite[Lemma 29.4]{Exel} for every $g \in T$ to obtain $h_g \in C_0(\Omega)$ with $0 \le h_g \le 1$, $h_g(x_1)=1$ and $\Vert h_g (b_g\delta_g) h_g \Vert < \frac{\varepsilon}{2 \vert T \vert}$. Setting $h:=\prod_{g \in T} h_g$, (2) then follows from the calculation
\begin{align*}
\Vert h E(z) h \Vert  &> \Vert h E(b)h \Vert- \varepsilon/4 = \Vert hb_1h \Vert - \varepsilon/4 \\
&\ge \vert h(x_1)b_1(x_1)h(x_1) \vert- \varepsilon/4 = \vert b_1(x_1) \vert- \varepsilon/4  \\
&> \vert b_1(x_0) \vert - \varepsilon/2 > \vert E(z)(x_0) \vert - 3\varepsilon/4 = \vert f(x_0) \vert - 3\varepsilon/4 \\
&\ge \Vert E(z) \Vert - \varepsilon.
\end{align*}
In order to check (3), we first observe that
\begin{align*}
\Vert hbh-hb_1h \Vert &= \Vert \sum_{g \in T} h(b_g\delta_g)h \Vert \le \sum_{g \in T} \Vert h_g(b_g\delta_g)h_g \Vert < \varepsilon/2,
\end{align*}
hence
\begin{align*}
\Vert hzh - hE(z)h \Vert \le \Vert hzh- hbh \Vert + \Vert hbh - hb_1h \Vert + \Vert hb_1h - hE(z)h \Vert < \varepsilon.
\end{align*}
Having verified the claim, we let $\pi$ denote the quotient map $C_0(\Omega) \rtimes_r G \to (C_0(\Omega) \rtimes_r G)/J$. Since $z \in J$ and $J \cap C_0(\Omega)=0$, we have
\begin{align*}
\Vert E(z) \Vert &< \Vert h E(z) h \Vert + \varepsilon = \Vert \pi(hE(z)h - hzh) \Vert + \varepsilon \\
&\le \Vert hE(z)h-hzh \Vert + \varepsilon< 2\varepsilon.
\end{align*}
But at the same time, $\Vert E(z) \Vert \ge \vert f(x_0) \vert > 2 \varepsilon$, a contradiction.
\end{proof}

We need to specialize the above theorem a bit before we can apply it to our setting. The following is an ever useful observation.

\begin{lemma}\label{lem:IsolatedPoints}
Let $\theta \colon G \act \Omega$ denote a partial action of a discrete group on a locally compact Hausdorff space. If $x \in \Omega$ is isolated, then
$$1_x (C_0(\Omega) \rtimes_{(r)} G)1_x \cong C_{(r)}^*(\textup{Stab}(x)),$$
where $1_x$ denotes the indicator function in $x$.
\end{lemma}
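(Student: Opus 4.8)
The plan is to work first at the purely algebraic level and then handle the two C*-completions separately. Throughout write $A := C_0(\Omega)\rtimes_{(r)} G$, $H := \textup{Stab}(x)$, and let $A_0 := \bigoplus_{g\in G} C_0(\Omega_g)\delta_g$ be the dense $*$-subalgebra underlying the crossed product. Since $x$ is isolated, $\{x\}$ is compact open, so $1_x \in C_0(\Omega)$ is a projection and the corner $B := 1_x A 1_x$ is a unital C*-algebra with unit $1_x$, equal to the closure of $1_x A_0 1_x$. The first step is to compute, for $a_g\delta_g \in A_0$,
$$1_x(a_g\delta_g)1_x = \begin{cases} a_g(x)\,1_x\delta_g & \text{if } g\in H,\\ 0 & \text{otherwise,}\end{cases}$$
which follows from the crossed-product multiplication rule together with $1_x c = c(x)1_x$ for $c \in C_0(\Omega)$ and the observation that $\theta_{g^{-1}}(1_x)\,1_x$ is nonzero -- and then equal to $1_x$ -- exactly when $g\in H$. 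Setting $v_g := 1_x\delta_g$ for $g\in H$, one checks $v_g v_h = v_{gh}$, $v_g^* = v_{g^{-1}}$ and $v_g v_g^* = v_g^* v_g = 1_x$, so each $v_g$ is a unitary of $B$ and $g\mapsto v_g$ is a unitary representation of $H$. Hence $1_x A_0 1_x = \textup{span}\{v_g \mid g\in H\}$ is a dense $*$-subalgebra of $B$ that the linear map $u_g \mapsto v_g$ identifies with the group algebra $\mathbb{C}[H]$.

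For the \emph{reduced} crossed product I would exploit the canonical faithful conditional expectation $E\colon A \to C_0(\Omega)$, $E(\sum a_g\delta_g)=a_e$. Being a $C_0(\Omega)$-bimodule map, it restricts to a faithful conditional expectation $E|_B\colon B \to 1_x C_0(\Omega)1_x = \mathbb{C}1_x$, i.e.\ a faithful state $\phi$ on $B$ with $\phi(v_g)=\delta_{g,e}$. Under $B_0\cong\mathbb{C}[H]$ this is precisely the canonical trace on $\mathbb{C}[H]$, whose GNS representation is the left regular representation $\lambda$ on $\ell^2(H)$, with closure $C^*_r(H)$. As $\phi$ is faithful, its GNS representation $\pi_\phi$ of $B$ is faithful, hence isometric; and since $\xi_\phi$ is cyclic for $B$ and $B_0$ is dense, $\pi_\phi$ restricted to $B_0$ is exactly $\lambda$. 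Therefore $\|b\|_B = \|\lambda(b)\|$ for all $b\in B_0$, i.e.\ the norm $B$ induces on $\mathbb{C}[H]$ is the reduced norm, and passing to closures yields $B\cong C^*_r(H)$.

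For the \emph{full} crossed product the universal properties of $C_0(\Omega)\rtimes G$ and of $C^*(H)$ supply a surjective $*$-homomorphism $\Phi\colon C^*(H)\to B$ with $\Phi(u_g)=v_g$; as $\Phi$ is automatically contractive, it suffices to prove $\|b\|_{C^*(H)}\le \|\Phi(b)\|_B$ for $b\in\mathbb{C}[H]$. To this end I would realise an arbitrary unitary representation $\pi\colon H\to U(\mathcal{K})$ inside a compression of the crossed product by an induced covariant representation. Let $T:=\{g\in G\mid x\in\Omega_{g^{-1}}\}$; the partial-action axioms show $TH=T$ and that $gH\mapsto \theta_g(x)$ is a bijection of the left cosets $T/H$ onto the orbit of $x$. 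On $\mathcal{H}:=\bigoplus_{c\in T/H}\mathcal{K}$, with chosen coset representatives $g_c$, set $\rho(a)(\xi\otimes\delta_c):=a(\theta_{g_c}(x))\,\xi\otimes\delta_c$ and
$$u_g(\xi\otimes\delta_c):=\begin{cases}\pi\big(g_{g\cdot c}^{-1}\,g\,g_c\big)\,\xi\otimes\delta_{g\cdot c} & \text{if } gg_c\in T,\\ 0 & \text{otherwise,}\end{cases}$$
where $g\cdot c:=(gg_c)H$. Then $(\rho,u)$ is a covariant representation of $(C_0(\Omega),G,\theta)$, so it integrates to a $*$-representation $\rho\rtimes u$ of the full crossed product, and one computes that $\rho(1_x)$ is the projection onto the summand indexed by the coset $H$ and that $(\rho\rtimes u)(v_g)$ acts there as $\pi(g)$ and as $0$ elsewhere. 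Consequently $\|\pi(b)\| = \|(\rho\rtimes u)(\Phi(b))\| \le \|\Phi(b)\|_B$ for $b\in\mathbb{C}[H]$; taking the supremum over all $\pi$ gives $\|b\|_{C^*(H)}\le\|\Phi(b)\|_B$, so $\Phi$ is isometric and $B\cong C^*(H)$.

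The routine parts are the bookkeeping in the multiplication rule and checking that $(\rho,u)$ is genuinely a covariant (partial) representation. The main obstacle is precisely this induced-representation construction in the full case: one must verify the partial-representation relations $u_g u_h u_{h^{-1}}=u_{gh}u_{h^{-1}}$ and the covariance identity $u_g\rho(a)u_g^*=\rho(\theta_g(a))$ in the presence of the partiality of the action (empty domains when $gg_c\notin T$), whereas the reduced case is comparatively soft, resting only on faithfulness of the canonical conditional expectation.
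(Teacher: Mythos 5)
Your proposal is correct, but it takes a genuinely different route from the paper. The paper's entire proof is a two-line citation: by \cite[Proposition 6.1 and Corollary 6.3]{AEK} there are embeddings $C^*_{(r)}(\textup{Stab}(x)) \cong C(\{x\}) \rtimes_{(r)} \textup{Stab}(x) \hookrightarrow C_0(\Omega) \rtimes_{(r)} \textup{Stab}(x) \hookrightarrow C_0(\Omega) \rtimes_{(r)} G$, and one observes that the composition maps onto the corner $1_x(C_0(\Omega) \rtimes_{(r)} G)1_x$. What you have done is re-prove, in exactly the special case needed, the content of those cited subgroup-embedding theorems: your identification of the algebraic corner with $\textup{span}\{1_x\delta_g \mid g \in \textup{Stab}(x)\} \cong \mathbb{C}[\textup{Stab}(x)]$ is the common core of both arguments; your reduced-case argument (faithfulness of the canonical conditional expectation plus GNS, identifying the induced norm on $\mathbb{C}[\textup{Stab}(x)]$ with the reduced group C*-norm) and your full-case argument (inducing an arbitrary unitary representation of $\textup{Stab}(x)$ over the coset space $T/\textup{Stab}(x)$, where $T = \{g \in G \mid x \in \Omega_{g^{-1}}\}$) are precisely the standard proofs behind the results the paper cites. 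The paper's approach buys brevity and reuse of general statements; yours buys self-containedness and makes the mechanism explicit. Your formulas do work: $TH=T$ follows from the partial-action axioms, $u_g^*=u_{g^{-1}}$, the relation $u_gu_hu_{h^{-1}}=u_{gh}u_{h^{-1}}$ holds, and $u_gu_g^*$ is the projection onto the essential subspace of $\rho\vert_{C_0(\Omega_g)}$, so the pair $(\rho,u)$ genuinely is covariant and integrates against the full crossed product. The one caveat is that these verifications, which you flag as the main obstacle and leave as sketched bookkeeping, are where all the real work of the full case lives; since the formulas are correct, this is an omission of routine detail rather than a gap.
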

\begin{proof}
By \cite[Proposition 6.1 and Corollary 6.3]{AEK}, we have embeddings
$$C_{(r)}^*(\text{Stab}(x)) \cong C(\{x\}) \rtimes_{(r)} \text{Stab}(x) \hookrightarrow C_0(\Omega) \rtimes_{(r)} \text{Stab}(x) \hookrightarrow C_0(\Omega) \rtimes_{(r)} G,$$
and the composition clearly maps onto the corner $1_x (C_0(\Omega) \rtimes_{(r)} G)1_x$.
\end{proof}

\begin{definition}\label{def:Setup}
Let $\theta \colon G \act \Omega$ denote a partial action on a locally compact Hausdorff space, and suppose that $\mathcal{U} \subset \mathbb{O}^G(\Omega)$ is a collection of open invariant subsets $U \subset \Omega$ for which the restriction $\theta\vert_{Z_U}$ to $Z_U:=\Omega \setminus U$ satisfies the following two conditions:
\begin{enumerate}
\item $\theta\vert_{Z_U}$ is relatively strongly topologically free,
\item the space $W_U:=Z_U \setminus Z_U^{\textup{TF}}$ is discrete.
\end{enumerate}
Observe that if $U \subset V$ for $U,V \in \mathcal{U}$, then $W_{U} \setminus V \subset W_V$. For any $U$, we may therefore choose a set of representatives $\Lambda_U$ for the orbit space $W_U/G$ such that $\Lambda_U \setminus V \subset \Lambda_V$ whenever $U \subset V$. We then introduce a set
$$\mathcal{I}_\mathcal{U}(\theta):=\Big\{\big(U,(I_U^x)_{x \in \Lambda_U}\big) \mid U \in \mathcal{U}, I_U^x \text{ is a proper ideal of } C_r^*(\text{Stab}(x)) \Big\}$$
and equip it with the partial order
$$\big(U,(I_U^x)_{x \in \Lambda_U}\big) \le \big(V,(I_V^x)_{x \in \Lambda_V}\big) \Leftrightarrow U \subset V \text{ and } I_U^x \subset I_V^x \text{ for all } x \in \Lambda_U \setminus V.$$
For notational simplicity, we will usually write $I_U^\bullet=(I_U^x)_{x \in \Lambda_U}$, and we finally write $\mathcal{I}_\mathcal{U}(C_0(\Omega) \rtimes_r G)$ for the collection of ideals $J \ideal C_0(\Omega) \rtimes_r G$ satisfying $J \cap C_0(\Omega)=C_0(U)$ for some $U \in \mathcal{U}$. 
\end{definition}

\begin{corollary}\label{cor:IdealLattice}
Let $\theta \colon G \act \Omega$ denote a partial action of an exact group on a locally compact Hausdorff space with the setup from Definition~\ref{def:Setup}. Then there is a canonical order isomorphism 
$$\mathcal{I}_\mathcal{U}(\theta) \to \mathcal{I}_\mathcal{U}(C_0(\Omega) \rtimes_r G), \quad (U,I_U^\bullet) \mapsto J(U,I_U^\bullet),$$
with the following properties:
\begin{enumerate}
\item[\textup{(1)}] $J(U,I_U^\bullet) \cap C_0(\Omega)=C_0(U)$.
\item[\textup{(2)}] The quotient $J(U,I_U^\bullet)/(C_0(U) \rtimes_r G)$ is canonically Morita equivalent to $\bigoplus_{x \in \Lambda_U} I_U^x$.
\end{enumerate}
\end{corollary}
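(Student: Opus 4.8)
The plan is to set up the correspondence by lifting ideals through the quotient by the induced ideal and then analysing the resulting ideals of the restricted crossed product via Theorem~\ref{thm:GeneralIdeals}. Given $J \in \mathcal{I}_\mathcal{U}(C_0(\Omega) \rtimes_r G)$ with $J \cap C_0(\Omega)=C_0(U)$, $U \in \mathcal{U}$, exactness of $G$ and Theorem~\ref{thm:ideals-reducedcrossed} give a quotient map $q_U \colon C_0(\Omega)\rtimes_r G \to C_0(Z_U) \rtimes_r G$ with kernel $C_0(U)\rtimes_r G \subset J$, and I would set $\bar J := q_U(J)$. Then $\bar J \cap C_0(Z_U)=q_U(J \cap C_0(\Omega))=0$, so since $\theta\vert_{Z_U}$ is relatively strongly topologically free (condition (1) of Definition~\ref{def:Setup}), Theorem~\ref{thm:GeneralIdeals} applied to $\theta\vert_{Z_U}$ forces $\bar J \subset J^o_{\theta\vert_{Z_U}}=C_0(W_U) \rtimes_r G$. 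This is the crucial use of relative strong topological freeness, and it pins every such ideal inside the obstruction ideal over the discrete set $W_U$.

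Next I would analyse the structure of $C_0(W_U) \rtimes_r G$. The set $\Omega \setminus \Omega^{\textup{TF}}$ is open and invariant, so $W_U=Z_U \setminus Z_U^{\textup{TF}}$ is open in $Z_U$; being also discrete (condition (2)), each $x \in W_U$ is in fact isolated in $Z_U$. Decomposing $W_U=\bigsqcup_{x \in \Lambda_U}\mathcal{O}_x$ into $G$-orbits yields clopen invariant pieces, hence a decomposition into ideals
$$C_0(W_U)\rtimes_r G \cong \bigoplus_{x \in \Lambda_U} C_0(\mathcal{O}_x)\rtimes_r G$$
of $C_0(Z_U)\rtimes_r G$. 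Since the point projections $1_{g.x}$ are generated by $1_x$ and form an approximate unit of $C_0(\mathcal{O}_x)\rtimes_r G$, the projection $1_x$ is full there, and Lemma~\ref{lem:IsolatedPoints} gives $1_x\big(C_0(Z_U)\rtimes_r G\big)1_x \cong C_r^*(\text{Stab}(x))$. Thus $C_0(\mathcal{O}_x)\rtimes_r G \sim_M C_r^*(\text{Stab}(x))$ through the full corner $1_x$, with ideals corresponding via $\bar J_x \mapsto 1_x \bar J_x 1_x$.

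With this in hand I would establish the bijection and properties (1)–(2). Using the central orbit multipliers $1_{\mathcal{O}_x}$, any ideal $\bar J \subset C_0(W_U)\rtimes_r G$ of the ambient algebra splits as $\bar J=\bigoplus_x \bar J_x$ with $\bar J_x=1_{\mathcal{O}_x}\bar J$, and I set $I_U^x:=1_x \bar J_x 1_x$. Because $C_0(\mathcal{O}_x)$ admits only the trivial invariant ideals and $1_x$ is full, the condition $\bar J \cap C_0(Z_U)=0$ is equivalent to each $I_U^x$ being a \emph{proper} ideal of $C_r^*(\text{Stab}(x))$; this is exactly the data of $\mathcal{I}_\mathcal{U}(\theta)$. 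Reversing the construction defines $J(U,I_U^\bullet)$ as $q_U^{-1}(\bar J)$, giving the bijection. Property (1) is immediate since $q_U(J \cap C_0(\Omega))=\bar J \cap C_0(Z_U)=0$, and property (2) follows because $J(U,I_U^\bullet)/(C_0(U)\rtimes_r G)=\bar J=\bigoplus_x \bar J_x$ is canonically Morita equivalent to $\bigoplus_{x}I_U^x$ through the corners $1_x$.

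The hard part will be verifying that the combinatorial order on $\mathcal{I}_\mathcal{U}(\theta)$ matches ideal inclusion, since the order on the former only compares components indexed by $x \in \Lambda_U \setminus V$. Suppose $J \subset J'$ with data over $U \subset V$; both contain $C_0(U)\rtimes_r G$, so I would compare their images in $C_0(Z_U)\rtimes_r G$. For a bad point $x \in W_U \cap V$ one has $1_x \in C_0(V\setminus U) \subset q_U(J')$, and fullness of $1_x$ forces the whole orbit piece into $q_U(J')$, so such $x$ impose no constraint; this is why only $x \in \Lambda_U \setminus V \subset \Lambda_V$ matter. The key subtlety is that for such $x$ the entire orbit $\mathcal{O}_x$ is disjoint from the invariant set $V$ (else $x \in V$), so $\mathcal{O}_x$ is unchanged in passing from $Z_U$ to $Z_V$ and the corner description of $I_V^x$ is compatible across the two quotients. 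Tracking these absorptions and the non-splitting of bad orbits is the main bookkeeping obstacle; once it is handled, $J \subset J'$ translates into $U \subset V$ together with $I_U^x \subset I_V^x$ for all $x \in \Lambda_U \setminus V$, and the argument is reversible, yielding the desired order isomorphism.
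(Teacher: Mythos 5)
Your proposal is correct and follows essentially the same route as the paper's proof: pass to the quotient $C_0(Z_U)\rtimes_r G$ via exactness, trap the image of the ideal inside the obstruction ideal $J^o_{\theta\vert_{Z_U}}$ using Theorem~\ref{thm:GeneralIdeals}, decompose that ideal over the clopen orbits of the discrete set $W_U$, and read off proper ideals of $C^*_r(\mathrm{Stab}(x))$ through the full corners of Lemma~\ref{lem:IsolatedPoints}, with the same bookkeeping (orbits meeting $V$ are absorbed, orbits missing $V$ pass unchanged) for the order comparison. The only difference is organizational: the paper defines $J(U,I_U^\bullet):=\pi_U^{-1}(\tilde{I}_U)$ first and proves surjectivity afterwards, whereas you start from an arbitrary ideal, extract the data, and then reverse the construction; the substance is identical.
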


\begin{proof}
Let us write $J_U^o:=J_{\theta\vert_{Z_U}}^o$. By Definition~\ref{def:Setup}(2), every orbit in $W_U$ is clopen, so there is a canonical identification
$$J_U^o \cong \bigoplus_{x \in \Lambda_U} C_0(G.x) \rtimes_r G.$$
From Lemma~\ref{lem:IsolatedPoints}, $C_r^*(\text{Stab}(x))$ sits as a full corner of $C_0(G.x) \rtimes_r G$ for all $x \in \Lambda_U$, and we let $\tilde{I}_U^x$ denote the ideal generated by $I_U^x$ inside $C_0(G.x) \rtimes_r G$. We may then define an ideal $\tilde{I}_U:=\bigoplus_{x \in \Lambda_U} \tilde{I}_U^x \ideal J_U^o$, and letting $\pi_U$ denote the quotient map $C_0(\Omega) \rtimes_r G \to C_0(Z_U) \rtimes_r G$, we finally set $J(U,I_U^\bullet):=\pi_U^{-1}(\tilde{I}_U)$. We proceed to verify the properties of the map $(U,I_U^\bullet) \mapsto J(U,I_U^\bullet)$, assuming that $U \subset V$. Observe that the quotient map
$$\pi_{U,V} \colon C_0(Z_U) \rtimes_r G \to C_0(Z_V) \rtimes_r G$$
maps $C_r^*(G.x) \rtimes_r G$ to $\{0\}$ if $x \in V$, and that it maps $C_0(G.x) \rtimes_r G \subset J_U^o$ identically to $C_0(G.x) \rtimes_r G \subset J_V^o$ otherwise. 
It follows that $(U,I_U^\bullet) \le (V,I_V^\bullet)$ if and only if $J(U,I_U^\bullet) \subset J(V,I_V^\bullet)$. Both (1) and (2) should be obvious from the above, and injectivity is 
immediate from these. To see that our map is surjective, take any ideal $J \ideal C_0(\Omega) \rtimes_r G$ with $J \cap C_0(\Omega)=C_0(U)$ for $U \in \mathcal{U}$, 
and consider $J_U:=J/(C_0(U) \rtimes_r G) \ideal C_0(Z_U) \rtimes_r G$. Since $\theta\vert_{Z_U}$ is assumed relatively strongly topologically free, we see 
that $J_U \subset J_U^o$ from Theorem~\ref{thm:GeneralIdeals}. In particular, we may write $J_U \cong \bigoplus_{x \in \Lambda_U} \tilde{I}_U^{x}$ for ideals $\tilde{I}_U^x \ideal C_0(G.x) \rtimes_r G$. 
Observe that $\tilde{I}_U^x$ must be proper,  for otherwise we would have $x \in U$. Setting $I_x^U:=1_x(\tilde{I}_U^x)1_x$, we finally have $J(U,I_U^\bullet)=J$.
\end{proof}

We are almost ready to put our results to use at this point, but we still need to introduce some bookkeeping.

\begin{construction}
Consider any finite bipartite separated graph $(E,C)$, and let $(F_\infty,D^\infty)$ be its Bratteli diagram. We denote by $\mathfrak{C}(E,C)$ the set of vertices $v \in E^0$ which admit no choices and a simple cycle $\alpha$ such that every closed path based at $v$ is a power of $\alpha$, modulo the relation
$$u \sim v \Leftrightarrow \text{$u$ and $v$ belong to the same cycle}.$$
It follows from Remark~\ref{rem:IsolatedPoints} that $\mathfrak{C}(E,C)$ is in canonical bijective correspondence with the orbit space $W/\mathbb{F}$ for 
$$W:=\{ \xi \in \Omega(E,C) \setminus \Omega(E,C)^{\textup{TF}} \mid \text{Stab}(\xi) \cong \mathbb{Z}\}.$$
Observe that there is a natural map $\mathfrak{C}(E,C) \to \mathfrak{C}(E_1,C^1)$ given by $[v] \mapsto [v]$ whenever $v \in E^{0,1}$, and that it is in fact a bijection. This can either be seen by direct arguments, by Theorem~\ref{thm:NBallIsNGraph}, or most easily by \cite[Lemma 5.2]{Lolk1}. Consequently, if $H \in \mathcal{H}_\textup{fin}(F_\infty,D^\infty)$ satisfies $H=H^n$, we may identify $\mathfrak{C}(E_n/H^{(n)},C^n/H^{(n)})$ with $\mathfrak{C}(E_m/H^{(m)},C^m/H^{(m)})$ whenever $m \ge n$; formally, we do this by setting
$$\mathfrak{C}(H):=\varinjlim_{m \ge n} \mathfrak{C}(E_m/H^{(m)},C^m/H^{(m)}).$$
Observe that whenever $H_1 \subset H_2$ for $H_1,H_2 \in \mathcal{H}_{\textup{fin}}(F_\infty,D^\infty)$, we have an inclusion 
$$\{\mathfrak{c} \in \mathfrak{C}(H_1) \mid \mathfrak{c} \not\subset H_2 \} \subset \mathfrak{C}(H_2).$$
Indeed, whenever $m \ge n$ for sufficiently large $n$, we have representations
$$\mathfrak{C}(H_1)=\mathfrak{C}(E_m/H_1^{(m)},C^m/H_1^{(m)}) \andspace \mathfrak{C}(H_1)=\mathfrak{C}(E_m/H_2^{(m)},C^m/H_2^{(m)}),$$
and $H_1^{(m)} \subset H_2^{(m)}$. Consequently, there in an inclusion
$$\{\mathfrak{c} \in \mathfrak{C}(E_m/H_1^{(m)},C^m/H_1^{(m)}) \mid \mathfrak{c} \not\subset H_2^{(m)}\} \subset \mathfrak{C}(E_m/H_2^{(m)},C^m/H_2^{(m)}),$$
and this does not depend on $m$. Letting $\mathbb{O}_p(\mathbb{T})$ denote the collection of proper open subsets of $\mathbb{T}$, we finally define a set
$$\mathcal{I}_{\textup{fin}}(E,C)=\big\{(H,T) \mid H \in \mathcal{H}_\textup{fin}(F_\infty,D^\infty), T \in \mathbb{O}_p(\mathbb{T})^{\mathfrak{C}(H)} \big\}$$
and equip it with the partial ordering
$$(H_1,T_1) \le (H_2,T_2) \Leftrightarrow H_1 \subset H_2 \text{ and } T_1(\mathfrak{c}) \subset T_2(\mathfrak{c}) \text{ for all } \mathfrak{c} \in \mathfrak{C}(H_1) \text{ with } \mathfrak{c} \not \subset H_2.$$
\end{construction}

\begin{theorem}
For any finite bipartite separated graph $(E,C)$, there is a canonical lattice isomorphism 
$$\mathcal{I}_{\textup{fin}}(E,C) \to \mathcal{I}_{\textup{fin}}(\mathcal{O}^r(E,C)) \quad , \quad (H,T) \mapsto I(H,T),$$
with the following properties:
\begin{enumerate}
\item[\textup{(1)}] $H_{I(H,T)}=H$.
\item[\textup{(2)}] The quotient $I(H,T)/I(H)$ is Morita equivalent to $\bigoplus_{\mathfrak{c} \in \mathfrak{C}(H)} C_0(T(\mathfrak{c}))$.
\end{enumerate}
In particular, a finite type ideal of $\mathcal{O}^r(E,C)$ is generated by its projections if and only if it is induced.
\end{theorem}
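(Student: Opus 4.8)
The plan is to deduce the whole theorem from the abstract machinery of Corollary~\ref{cor:IdealLattice}, applied to the canonical partial action $\theta=\theta^{(E,C)}\colon \mathbb{F}\act \Omega:=\Omega(E,C)$, whose reduced crossed product is $\mathcal{O}^r(E,C)=C(\Omega)\rtimes_r\mathbb{F}$. Since $\mathbb{F}$ is a free group it is exact, so the standing hypothesis on the group is satisfied. First I would take $\mathcal{U}\subset \mathbb{O}^{\mathbb{F}}(\Omega)$ to be the collection of open invariant sets $U=U_H:=\bigcup_{v\in H}\Omega(E,C)_v$ with $H\in \mathcal{H}_{\textup{fin}}(F_\infty,D^\infty)$. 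Under the correspondence of Theorem~\ref{thm:idealsOr}, an ideal $J$ satisfies $J\cap C(\Omega)=C_0(U)$ with $U\in\mathcal{U}$ precisely when $H_J$ is of finite type, so that, by definition, $\mathcal{I}_\mathcal{U}(C(\Omega)\rtimes_r\mathbb{F})=\mathcal{I}_{\textup{fin}}(\mathcal{O}^r(E,C))$.

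Next I would check that $\mathcal{U}$ meets the two requirements of Definition~\ref{def:Setup}. Writing $H=H^n$ and $Z_U:=\Omega\setminus U$, the combination of Theorem~\ref{thm:NBallIsNGraph} and Theorem~\ref{thm:quotientalg} (exactly as used in Proposition~\ref{prop:LimitOfOs}) identifies $\theta\vert_{Z_U}$ with the partial action $\theta^{(E_n/H^{(n)},C^n/H^{(n)})}$ of a finite bipartite separated graph. Hence $\theta\vert_{Z_U}$ is relatively strongly topologically free by Proposition~\ref{prop:GraphActIsRSTF}, and $W_U:=Z_U\setminus Z_U^{\textup{TF}}$ is discrete by Remark~\ref{rem:IsolatedPoints}; the nesting requirement $\Lambda_U\setminus V\subset\Lambda_V$ can be arranged by choosing orbit representatives compatibly, as is implicit in the direct-limit description of $\mathfrak{C}(H)$. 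Corollary~\ref{cor:IdealLattice} then supplies a canonical order isomorphism $\mathcal{I}_\mathcal{U}(\theta)\to \mathcal{I}_{\textup{fin}}(\mathcal{O}^r(E,C))$ together with the analogues of properties (1) and (2).

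The crucial bookkeeping step is to identify $\mathcal{I}_\mathcal{U}(\theta)$ with $\mathcal{I}_{\textup{fin}}(E,C)$. By Remark~\ref{rem:IsolatedPoints}, every stabiliser of a point of $W_U$ is a nontrivial finitely generated free group, so I would split $W_U/\mathbb{F}$ into the orbits with $\text{Stab}\cong\mathbb{Z}$ and those with $\text{Stab}\cong\mathbb{F}_k$, $k\ge 2$. For the latter, $C_r^*(\mathbb{F}_k)$ is simple, so its only proper ideal is $\{0\}$ and these orbits carry no data; for the former, $C_r^*(\mathbb{Z})\cong C(\mathbb{T})$, whose proper ideals are exactly the $C_0(T)$ with $T\in \mathbb{O}_p(\mathbb{T})$. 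Since the $\mathbb{Z}$-orbits of $W_U/\mathbb{F}$ are precisely $\mathfrak{C}(H)$ by the construction preceding the theorem, the forgetful assignment $(U,(I_U^x)_x)\mapsto (H,T)$, with $T(\mathfrak{c})$ the open set matching $I_U^{x_\mathfrak{c}}$, is an order isomorphism $\mathcal{I}_\mathcal{U}(\theta)\xrightarrow{\cong}\mathcal{I}_{\textup{fin}}(E,C)$ respecting the two partial orders (the forced zero ideals at higher-rank orbits impose no constraints). Composing the two isomorphisms yields the desired lattice isomorphism, while transporting Corollary~\ref{cor:IdealLattice}(1)--(2) gives (1) and (2), using $I(H)=C_0(U)\rtimes_r\mathbb{F}$ and the fact that the higher-rank summands drop out of the direct sum in (2).

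For the closing claim I would show that $I(H,T)$ is induced precisely when $T\equiv\emptyset$, i.e.\ when $I(H,T)=I(H)$. An induced ideal equals $C_0(U)\rtimes_r\mathbb{F}$ with $U$ open in the zero-dimensional space $\Omega$, hence is generated by the projections $1_K$, $K\subset U$ compact open. Conversely, if $I(H,T)$ is generated by its projections, then so is its image $J_U$ in $\mathcal{O}^r(E,C)/I(H)\cong C(Z_U)\rtimes_r\mathbb{F}$; by Theorem~\ref{thm:GeneralIdeals} this image lies in the obstruction ideal and decomposes as $J_U=\bigoplus_x \tilde{I}_U^x$, where each higher-rank summand vanishes and each $\mathbb{Z}$-orbit summand is, via Lemma~\ref{lem:IsolatedPoints} and transitivity of the orbit, isomorphic to $C_0(T(\mathfrak{c}))\otimes\mathbb{K}$. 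Such an algebra has no nonzero projections: every connected component of the proper open set $T(\mathfrak{c})\subset\mathbb{T}$ is an arc homeomorphic to $\mathbb{R}$, so a projection would be a norm-continuous field of finite-rank projections of locally constant rank vanishing at the two ends of each arc, forcing rank $0$. Thus $J_U$ has no nonzero projection, and being generated by projections it must be zero, giving $T\equiv\emptyset$. The main obstacle is exactly this final paragraph: \emph{absence of nonzero projections is not a Morita invariant}, so property (2) alone is insufficient and one genuinely needs the concrete model $C_0(T(\mathfrak{c}))\otimes\mathbb{K}$ with arc-shaped spectrum to run the rank argument. A secondary delicate point is the reduction of the stabiliser data in the third paragraph, which rests on the simplicity of $C_r^*(\mathbb{F}_k)$ for $k\ge 2$ to eliminate the higher-rank orbits.
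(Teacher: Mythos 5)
Your proposal is correct and takes essentially the same route as the paper: the paper's proof likewise applies Corollary~\ref{cor:IdealLattice} to $\mathcal{U}=\{\Omega(E,C)^H \mid H \in \mathcal{H}_{\textup{fin}}(F_\infty,D^\infty)\}$ (justified by Remark~\ref{rem:IsolatedPoints}, Proposition~\ref{prop:GraphActIsRSTF} and Theorem~\ref{thm:GeneralIdeals}), eliminates the higher-rank stabiliser data via Powers' $C^*$-simplicity of $\mathbb{F}_n$, $n\ge 2$, and identifies the $\mathbb{Z}$-stabiliser orbits of $W_U$ with $\mathfrak{C}(H)$. Your closing paragraph on the projection statement, including the locally-constant-rank argument showing the stabilisation of $C_0(T(\mathfrak{c}))$ has no nonzero projections, merely makes explicit what the paper leaves implicit when it asserts that the whole statement "is exactly the conclusion that can be drawn from Corollary~\ref{cor:IdealLattice}".
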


\begin{proof}
Let $\mathcal{U}:=\{\Omega(E,C)^H \mid H \in \mathcal{H}_\textup{fin}(F_\infty,D^\infty)\}$ and observe that Corollary~\ref{cor:IdealLattice} may be applied thanks to Remark~\ref{rem:IsolatedPoints}, Proposition~\ref{prop:GraphActIsRSTF} and Theorem~\ref{thm:GeneralIdeals}. Since $\mathbb{F}_n$ is $C^*$-simple for all $n \ge 2$ \cite{Powers}, we see that $I_U^x=0$ whenever $\text{Stab}(x) \not\cong \mathbb{Z}$, and proper ideals of $C_r^*(\mathbb{Z}) \cong C(\mathbb{T})$ correspond to proper open subsets $T \subset \mathbb{T}$. Finally, if $H=H^n$ and $U=\Omega(E,C)^H$, then the orbits of points $\xi \in W_U$ with stabiliser $\mathbb{Z}$ correspond canonically to the elements of $\mathfrak{C}(H)$. The above statement is therefore exactly the conclusion that can be drawn from Corollary~\ref{cor:IdealLattice}.
\end{proof}

\begin{remark}
We have no hope of achieving a similar result for arbitrary ideals of $\mathcal{O}^r(E,C)$ except in special cases. Indeed, we suspect that for an infinite type subspace $Z$, the restriction $\theta^{(E,C)}\vert_Z \colon \mathbb{F} \act Z$ need not be relatively strongly topologically free, and one can easily 
find examples where the space $Z \setminus Z^{\textup{TF}}$ is not discrete.
\end{remark}

We finally apply our results to classical graph $C^*$-algebras to provide a new proof for the description of the ideal lattice first obtained by Hong and Szyma{\'n}ski in \cite{HS}. We first recall a bit of terminology and a few results.

\begin{definition}[\cite{BHRSByRuy}]
Let $E$ denote any directed graph and denote the set of hereditary and saturated subsets by $\mathcal{H}(E)$. For any $H \in \mathcal{H}(E)$, there is a set of \textit{breaking vertices} for $H$ given by
$$H_{\infty}^{\textup{fin}}:=\{ v \in E^0 \setminus H \colon \vert r^{-1}(v) \vert = \infty \text{ and } 0 < \vert r^{-1}(v) \cap s^{-1}(E^0 \setminus H) \vert < \infty\},$$
and pairs $(H,B)$ with $B \subset H_\infty^\textup{fin}$ are called \textit{admissible}. For any such pair, one defines a quotient graph $E/(H,B)$ as
\begin{align*}
(E/(H,B))^0 &:=(E^0 \setminus H) \cup \{\beta(v) \mid v \in H_\infty^{\textup{fin}} \setminus B\}, \\
(E/(H,B))^1&:= s^{-1}(E^0 \setminus H) \cup \{\beta(e) \mid e \in E^1, s(e) \in H_\infty^{\textup{fin}} \setminus B\}
\end{align*}
with $r,s$ extended by $r(\beta(e)):=r(e)$ and $s(\beta(e)):=\beta(s(e))$. We finally order the admissible pairs $(H,B)$ by
$$(H_1,B_1) \le (H_2,B_2) \Leftrightarrow H_1 \subset H_2 \text{ and } 
B_1 \setminus B_2 \subset H_2.$$
\end{definition}

\begin{theorem}\label{thm:BPS}
Let $E$ denote any directed graph. There exists a partial action $\theta^E \colon \mathbb{F} \act \partial E$ of $\mathbb{F}:=\mathbb{F}(E^1)$ on a totally disconnected, locally compact Hausdorff space $\partial E$ with the following properties:
\begin{enumerate}
\item[(\textup{1)}] $C^*(E) \cong C_0(\partial E) \rtimes \mathbb{F}(E^1)$ canonically.
\item[\textup{(2)}] There is a canonical lattice isomorphism
$$\{(H,B) \mid H \in \mathcal{H}(E) \text{ and } B \subset H_\infty^\textup{fin}\} \to \mathbb{O}^\mathbb{F}(\partial E), \quad (H,B) \mapsto U(H,B),$$
and $\theta^E\vert_{\partial E \setminus U(H,B)} \approx \theta^{E/(H,B)}$ for every admissible pair $(H,B)$. In particular, 
$$C^*(E)/I(H,B) \cong C^*(E/(H,B)),$$
where $I(H,B)$ is the ideal induced from $U(H,B)$.
\end{enumerate}
\end{theorem}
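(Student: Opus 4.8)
The plan is to carry out the whole argument on the dynamical side, reading off the ideal statements from the crossed product presentation in (1) together with the correspondence of Theorem~\ref{thm:ideals-reducedcrossed}. For (1) I would take $\partial E$ to be the usual boundary path space of $E$ — all infinite paths together with the finite paths terminating at a sink or at a vertex $v$ with $\vert r^{-1}(v) \vert = \infty$ — topologised as a locally compact, totally disconnected space with the cylinder sets as a basis, and equipped with the partial $\mathbb{F}(E^1)$-action that deletes or prepends an edge exactly as in the one-sided subshift picture recalled in the introduction. The identification $C^*(E) \cong C_0(\partial E) \rtimes \mathbb{F}(E^1)$ is then the standard partial crossed product presentation of a graph $C^*$-algebra, obtained by matching the generators $v, e$ and the relations (V), (E), (SCK1), (SCK2) with the covariance relations of the partial action; for arbitrary $E$ I would simply invoke \cite[Theorem 37.8]{Exel} and \cite{GR}.

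Given (1), Theorem~\ref{thm:ideals-reducedcrossed} identifies the induced-ideal lattice with the lattice $\mathbb{O}^{\mathbb{F}}(\partial E)$ of invariant open subsets, so the substance of (2) is a purely dynamical classification of these subsets. I would send an admissible pair $(H,B)$ to the invariant open set $U(H,B)$ consisting of the boundary paths that eventually visit a vertex of $H$, together with the finite terminal paths ending at the breaking vertices lying in $B$. For the inverse, given an invariant open $U$ I would set $H := \{v \in E^0 \mid \partial E_v \subseteq U\}$, where $\partial E_v$ is the clopen cylinder of boundary paths based at $v$, and recover $B$ as the set of breaking vertices $v \in H_\infty^{\textup{fin}}$ whose terminal path already lies in $U$. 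The checks that $H$ is hereditary and saturated, that $B \subseteq H_\infty^{\textup{fin}}$, and that the two assignments are mutually inverse and order preserving are elementary but need care: invariance of $U$ forces hereditariness, the behaviour at vertices with $\vert r^{-1}(v) \vert < \infty$ forces saturation, and the breaking set $B$ is precisely the extra datum needed to pin down $U$ at vertices with $\vert r^{-1}(v)\vert = \infty$.

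For the remaining assertions I would show that the complement $Z := \partial E \setminus U(H,B)$, carrying the restricted partial action $\theta^E\vert_Z$, is dynamically equivalent to $\partial(E/(H,B))$ with $\theta^{E/(H,B)}$. A boundary path lies in $Z$ exactly when it never visits $H$ and every breaking vertex at which it terminates belongs to $H_\infty^{\textup{fin}} \setminus B$; under the natural identification these terminal configurations correspond to the paths of $E/(H,B)$ ending at the new vertices $\beta(v)$, while the relabelling $e \mapsto \beta(e)$ built into the quotient graph intertwines the two actions. Concretely this is realised by the group homomorphism $\mathbb{F}(E^1) \to \mathbb{F}((E/(H,B))^1)$ that fixes the surviving edges, relabels $e \mapsto \beta(e)$ at the unbroken breaking vertices, and sends the vanishing edges to $1$ (these act with empty domain on $Z$); together with the homeomorphism above this is a direct dynamical equivalence in the sense of Definition~\ref{def:DynConj}, so $\theta^E\vert_Z \approx \theta^{E/(H,B)}$. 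The isomorphism $C^*(E)/I(H,B) \cong C^*(E/(H,B))$ now follows immediately: the quotient by the induced ideal $I(H,B)$ is $C_0(Z) \rtimes \mathbb{F}(E^1)$ by Theorem~\ref{thm:ideals-reducedcrossed}, and dynamically equivalent partial actions have base-preservingly isomorphic crossed products by Remark~\ref{rem:GroupoidAlgebras}.

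The main obstacle is the bookkeeping around the infinite-emitter vertices. For row-finite $E$ everything is transparent — there $H_\infty^{\textup{fin}} = \emptyset$, the set $B$ disappears, and $Z$ is simply the boundary path space of the subgraph obtained by deleting $H$ — but in general one must verify that each breaking vertex $v$ contributes exactly one extra terminal configuration to $\partial E$, that membership of this configuration in $U$ is an independent binary choice recorded by $B$, and that capping $v$ with the vertex $\beta(v)$ in $E/(H,B)$ reproduces the restricted dynamics on the nose. Reconciling the finiteness conditions in the definition of $H_\infty^{\textup{fin}}$ with the fine structure of $\partial E$ is the delicate point, and is exactly where the present dynamical approach must recover, by hand, the combinatorics underlying the Hong–Szyma\'nski description.
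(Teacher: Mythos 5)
Your route is genuinely different from the paper's: the paper disposes of this theorem in two lines, citing \cite[Chapter 37]{Exel} for (1) and the Bates--Hong--Raeburn--Szyma{\'n}ski classification of gauge-invariant ideals \cite{BHRSByRuy} for (2), merely remarking that a direct proof of (2) is possible. You actually carry out that direct proof. Your identification of $U(H,B)$ (paths that visit $H$, together with finite paths terminating at vertices of $B$), your inverse assignment $U \mapsto (H,B)$ with $H=\{v \mid \partial E_v \subseteq U\}$ and $B$ the breaking vertices whose terminal configuration lies in $U$, and the case analysis at regular vertices, vertices receiving no edges, and infinite receivers are exactly what is needed; the checks you flag as ``elementary but needing care'' do go through, the key point being that invariance plus openness of $U$ at an infinite receiver forces all but finitely many of its incoming cylinders into $U$, which is precisely where the condition $0<\vert r^{-1}(v)\cap s^{-1}(E^0\setminus H)\vert<\infty$ in the definition of $H_\infty^{\textup{fin}}$ enters. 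What your approach buys is a self-contained argument and the explicit dynamical statement $\theta^E\vert_Z \approx \theta^{E/(H,B)}$, which the citation route does not literally provide.

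There is, however, one concrete error: the intertwining group homomorphism you propose, $\mathbb{F}(E^1)\to\mathbb{F}((E/(H,B))^1)$, does not exist in that direction. Take an unbroken breaking vertex $v\in H_\infty^{\textup{fin}}\setminus B$ and an edge $e$ with $s(e)=v$. In the quotient graph both $e$ and $\beta(e)$ are edges ($e$ survives since $s(e)\notin H$), and $v$ is a \emph{regular} vertex there (it receives only the finitely many surviving edges plus finitely many $\beta$-edges), while $\beta(v)$ receives nothing. Consequently, on $Z$ the single generator $e$ acts both on the terminal configuration at $v$ (the empty path at $v$, which your homeomorphism must send to the empty path at $\beta(v)$, since no boundary path of $E/(H,B)$ ends at the regular vertex $v$) and on the nonempty paths based at $v$; under the homeomorphism the first piece of this action is implemented by $\beta(e)$ and the second by $e$. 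No single value of a homomorphism on the generator $e$ can satisfy equivariance on both pieces, so the pair you describe is not equivariant, whichever of the assignments $e\mapsto e$ or $e\mapsto\beta(e)$ you choose. The fix is to reverse the direction, exactly as in the proof of Theorem~\ref{thm:GraphRep}: the homomorphism $\Psi\colon\mathbb{F}((E/(H,B))^1)\to\mathbb{F}(E^1)$ with $\Psi(e)=e$ and $\Psi(\beta(e))=e$ is well defined, the domains of $e$ and $\beta(e)$ in $\partial(E/(H,B))$ are disjoint (paths based at $v$ versus at $\beta(v)$), and their images under the homeomorphism exhaust the domain of $e$ on $Z$; this yields a direct dynamical equivalence $\theta^{E/(H,B)}\xrightarrow{\approx}\theta^E\vert_Z$, whence $\theta^E\vert_Z\approx\theta^{E/(H,B)}$ by Proposition~\ref{prop:DirQCImpliesQC} and symmetry of $\approx$. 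The rest of your argument (Theorem~\ref{thm:ideals-reducedcrossed} plus Remark~\ref{rem:GroupoidAlgebras}) is unaffected by this correction.
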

\begin{proof}
(1) is proven in \cite[Chapter 37]{Exel}, and (2) follows from the description of gauge-invariant ideals in \cite{BHRSByRuy} (one can also prove (2) directly with fairly little work).
\end{proof}

As for the separated graphs, we have to introduce some bookkeeping:

\begin{definition}
For any directed graph $E$, we let $\mathfrak{C}(E)$ denote the set of vertices $v \in E^0$ which admit a cycle without any entries, modulo the relation
$$u \sim v \Leftrightarrow \text{$u$ and $v$ belong to the same cycle}.$$
Observe that if $(H,B)$ is an admissible pair, then $\mathfrak{C}(E/(H,B)) = \mathfrak{C}(E/H)$ since the additional vertices in $E/H$ are all sources. Given any $H \in \mathcal{H}(E)$, we set $\mathfrak{C}(H):=\mathfrak{C}(E/H)$ and observe that if $H_1 \subset H_2$, then we have an inclusion
$$\{\mathfrak{c} \in \mathfrak{C}(H_1) \mid \mathfrak{c} \not\subset H_2\} \subset \mathfrak{C}(H_2).$$ 
We may in turn define a lattice
$$\mathcal{I}(E):=\{(H,B,T) \mid H \in \mathcal{H}, B \subset H_\infty^{\textup{fin}}, T \in \mathbb{O}_p(\mathbb{T})^{\mathfrak{C}(H)} \}$$
ordered by
\begin{align*}
(H_1,B_1,T_1) \le (H_2,B_2,T_2) \Leftrightarrow &(H_1,B_1) \le (H_2,B_2) \text{ and } T_1(\mathfrak{c}) \subset T_2(\mathfrak{c}) \\ &\text{ for all } \mathfrak{c} \in \mathfrak{C}(H_1) \text{ with } \mathfrak{c} \not\subset H_2.
\end{align*}
Finally, we denote the ideal lattice of $C^*(E)$ by $\mathcal{I}(C^*(E))$, and given any $J \in \mathcal{I}(C^*(E))$, we write $(H_J,B_J)$ for the admissible pair satisfying $J \cap C_0(\partial E) = C_0(U(H_J,B_J))$.
\end{definition}

\begin{theorem}
For any directed graph $E$, there is a canonical lattice isomorphism
$$\mathcal{I}(E) \to \mathcal{I}(C^*(E)), \quad (H,B,T) \mapsto I(H,B,T),$$
with the following properties:
\begin{enumerate}
\item[\textup{(1)}] $H_{I(H,B,T)}=H$ and $B_{I(H,B,T)}=B$.
\item[\textup{(2)}] The quotient $I(H,B,T)/I(H,B)$ is Morita equivalent to $\bigoplus_{\mathfrak{c} \in \mathfrak{C}(H,B)} C_0(T(\mathfrak{c}))$.
\end{enumerate}
\end{theorem}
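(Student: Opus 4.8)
The plan is to deduce this theorem from Corollary~\ref{cor:IdealLattice} applied to the boundary path partial action $\theta^E \colon \mathbb{F} \act \partial E$ of Theorem~\ref{thm:BPS}, in complete analogy with the preceding result for $\mathcal{O}^r(E,C)$. First I would take $\mathcal{U}:=\mathbb{O}^{\mathbb{F}}(\partial E)$ to be the \emph{full} collection of $\mathbb{F}$-invariant open subsets; by Theorem~\ref{thm:BPS}(2) these are precisely the sets $U(H,B)$ indexed by admissible pairs $(H,B)$, so that $\mathcal{I}_{\mathcal{U}}(C_0(\partial E) \rtimes_r \mathbb{F})$ is the entire ideal lattice $\mathcal{I}(C^*(E))$. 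Since the boundary path groupoid $\mathcal{G}_{\theta^E}$ is amenable, the full and reduced crossed products agree, so $C^*(E) \cong C_0(\partial E) \rtimes_r \mathbb{F}$ and Corollary~\ref{cor:IdealLattice} is indeed applicable once its hypotheses are checked.

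The bulk of the work is verifying the setup of Definition~\ref{def:Setup} for every $U = U(H,B) \in \mathcal{U}$. The free group $\mathbb{F}$ is exact, so it remains to analyse the restriction $\theta^E\vert_{Z_U}$ with $Z_U = \partial E \setminus U(H,B)$, which by Theorem~\ref{thm:BPS}(2) is conjugate to $\theta^{E/(H,B)}$. The key structural fact for ordinary graphs is that every stabiliser of $\theta^{E/(H,B)}$ is either trivial or isomorphic to $\mathbb{Z}$: a boundary path has non-trivial stabiliser exactly when it is eventually periodic, in which case the stabiliser is generated by the corresponding return word. Combined with Example~\ref{ex:Integers}, this immediately yields relative strong topological freeness of $\theta^E\vert_{Z_U}$, giving condition (1). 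For condition (2), I would observe that a boundary path fails to lie in $Z_U^{\textup{TF}}$ precisely when it eventually traverses a cycle without entries; such paths are isolated, since any nearby path agrees with it on a long prefix and is then forced to follow the entryless cycle, whence $W_U = Z_U \setminus Z_U^{\textup{TF}}$ is discrete.

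With the hypotheses in place, Corollary~\ref{cor:IdealLattice} supplies an order isomorphism $\mathcal{I}_{\mathcal{U}}(\theta^E) \cong \mathcal{I}(C^*(E))$, and the remaining task is to match the parametrising data with $\mathcal{I}(E)$. Every point of $W_U$ has stabiliser $\mathbb{Z}$, so the orbit space $W_U/\mathbb{F}$ is in canonical bijection with the set of cycles without entries in $E/(H,B)$; since the breaking vertices adjoined in passing from $E/H$ to $E/(H,B)$ are sources, this set equals $\mathfrak{C}(E/H) = \mathfrak{C}(H)$. Moreover, the proper ideals of $C_r^*(\mathbb{Z}) \cong C(\mathbb{T})$ are exactly the ideals $C_0(T)$ for $T \in \mathbb{O}_p(\mathbb{T})$, so a choice of $(I_U^x)_{x \in \Lambda_U}$ is the same as an element $T \in \mathbb{O}_p(\mathbb{T})^{\mathfrak{C}(H)}$. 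The coherent choice of orbit representatives $\Lambda_U$ demanded by Definition~\ref{def:Setup} is exactly encoded by the inclusions $\{\mathfrak{c} \in \mathfrak{C}(H_1) \mid \mathfrak{c} \not\subset H_2\} \subset \mathfrak{C}(H_2)$ set up in the construction preceding the theorem, so the order relation on $\mathcal{I}_{\mathcal{U}}(\theta^E)$ transports to the relation defining $\mathcal{I}(E)$. Identifying the resulting assignment with $(H,B,T) \mapsto I(H,B,T)$, properties (1) and (2) are then immediate translations of parts (1) and (2) of Corollary~\ref{cor:IdealLattice}, using $J(U,I_U^\bullet) \cap C_0(\partial E) = C_0(U(H,B))$ for (1) and the Morita equivalence of the quotient with $\bigoplus_{x \in \Lambda_U} I_U^x$ for (2).

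The main obstacle I anticipate is precisely the verification of the two conditions of Definition~\ref{def:Setup}, i.e.\ confirming that the boundary path dynamics of an ordinary graph fits the abstract framework: establishing that all stabilisers are trivial or $\mathbb{Z}$ and that the non-topologically-free locus is discrete. Both are standard features of graph groupoids, but they must be stated carefully for the quotient graphs $E/(H,B)$ (in particular handling the breaking vertices) so that the identification $W_U/\mathbb{F} \cong \mathfrak{C}(H)$ is genuinely canonical and compatible across the lattice. Once this is secured, everything else is a formal consequence of Corollary~\ref{cor:IdealLattice} and the bookkeeping already introduced.
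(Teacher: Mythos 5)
Your proposal is correct and follows essentially the same route as the paper: take $\mathcal{U}=\mathbb{O}^{\mathbb{F}}(\partial E)$, verify the hypotheses of Corollary~\ref{cor:IdealLattice} via Theorem~\ref{thm:BPS} and Example~\ref{ex:Integers} (all stabilisers trivial or $\mathbb{Z}$, non-free locus discrete), and identify the orbits in $W_U$ with $\mathfrak{C}(H)$ and proper ideals of $C^*_r(\mathbb{Z})\cong C(\mathbb{T})$ with proper open subsets of $\mathbb{T}$. The only difference is that you spell out details (amenability giving full $=$ reduced, isolation of paths ending in entryless cycles) that the paper's two-sentence proof leaves implicit.
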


\begin{proof}
Let $\mathcal{U}:=\mathbb{O}^\mathbb{F}(\partial E)$ and observe that Corollary~\ref{cor:IdealLattice} applies due to Theorem~\ref{thm:BPS} and Example~\ref{ex:Integers}, since any stabiliser is either trivial or isomorphic to $\mathbb{Z}$. For any $U=U(H,B) \in \mathcal{U}$, the orbits of points $x \in W_U$ with $\text{Stab}(x) \cong \mathbb{Z}$ correspond to the elements of $\mathfrak{C}(H)$, so Corollary~\ref{cor:IdealLattice} reduces to the above statement.
\end{proof}

\begin{remark}
Observe that Theorem~\ref{thm:GeneralIdeals} also provides a new proof of Szyma{\'n}ski's general Cuntz-Krieger Uniqueness Theorem \cite{Szy} when applied to the boundary path space action $\theta^E$.
\end{remark}

\begin{remark}
We finally remark that Theorem~\ref{thm:GeneralIdeals} has an analogue for algebraic crossed products $C_K(\Omega) \rtimes G$, where $C_K(\Omega)$ denotes the 
algebra of compactly supported locally constant function $\Omega \to K$ when $K$ is given the discrete topology. The assumptions will 
have to be slightly different: on one hand, there is no need for exactness of the group, but on the other hand, one needs the space $\Omega$ to be totally 
disconnected to have sufficiently many continuous functions. The proof should be a bit simpler, although one will have to avoid $C^*$-techniques. The description of the lattice of ideals of the Leavitt path algebra $L_K(E)$ of an arbitrary graph $E$, obtained in \cite[Theorem 2.8.10]{AAS}, can also be obtained using this approach.  
\end{remark}

\section{$\mathcal V$-simplicity}
\label{sec:Vsimplicity}

We continue our investigation of the ideal structure with the study of $\mathcal V$-simplicity, that is, we want to compute the algebras of graphs $(E,C)$ for which the monoid $M(F_{\infty}, D^{\infty})$ is order simple. By Theorem \ref{thm:idealsOr}, this is equivalent to saying that $I\cap C(\Omega (E,C))= 0$ for every
proper ideal $I$ of $\mathcal O^r( E, C)$, or $\mathcal{H}(F_\infty,D^\infty)=\{\emptyset,F_\infty^0\}$. We will simply say that $(E,C)$ is {\it simple} in this case, and any of the algebras $\mathcal O (E,C)$, $\mathcal O^r  (E,C)$ and $\Lab (E,C)$ will be called $\mathcal V$\textit{-simple}. The second named author proves similar results in \cite[Section 4]{Lolk1} by studying minimality of the partial action $\theta^{(E,C)}$; our study here, on the other hand, is purely graph-theoretical.

We state the main result of this section right away:

   \begin{theorem}
    \label{thm:dichotomy}
Let $(E,C)$ be a simple finite bipartite separated graph.
     Then $L_K(E,C)=\Lab_K(E,C)$ and $C^*(E,C) = \mathcal O (E,C)$, and one of the following holds:
\begin{enumerate}
\item[\textup{(1)}] $L_K(E,C)$ is isomorphic to a simple Leavitt path algebra, and $C^*(E,C)$ is isomorphic to a simple graph $C^*$-algebra of a non-separated graph.
\item[\textup{(2)}] $L_K(E,C)$ is Morita-equivalent to $K[\mathbb{F}_n]$ and $C^*(E,C)$ is Morita-equivalent to $C^*(\mathbb{F}_n)$, where $\mathbb F_n$ is a free group of rank $n$ with $1 \le n<\infty $.
\end{enumerate}
 In the latter case, $\mathcal{O}^r(E,C)$ is Morita equivalent to $C^*_r(\mathbb{F}_n)$.
        \end{theorem}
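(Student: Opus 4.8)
The plan is to realize $\mathcal{O}^r(E,C)$ as a \emph{full corner} whose compression is $C^*_r(\mathbb{F}_n)$, mirroring verbatim the corner argument that produces the full-case Morita equivalence $\mathcal{O}(E,C)=C^*(E,C)\sim_M C^*(\mathbb{F}_n)$ and simply transporting it to the reduced crossed product through Lemma~\ref{lem:IsolatedPoints}. First I would recall the dynamical source of the latter case. Being in case (2) means, by the analysis preceding the theorem, that $(E,C)$ possesses a vertex $v$ admitting no choice whose group $\mathbb{F}_v$ of closed paths based at $v$ is free of rank $n$ with $1\le n<\infty$. By Remark~\ref{rem:IsolatedPoints}, the clopen set $\Omega(E,C)_v=\{\xi_0\}$ is then a single point, hence an isolated point of $\Omega(E,C)$, and $\text{Stab}(\xi_0)=\mathbb{F}_v\cong\mathbb{F}_n$. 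This $\xi_0$ is the point at which the corner will be taken.

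Next I would verify that the projection $1_{\xi_0}\in C(\Omega(E,C))$ is full in $\mathcal{O}^r(E,C)=C(\Omega(E,C))\rtimes_r\mathbb{F}$. Since $\xi_0$ is isolated and $\theta^{(E,C)}$ acts by homeomorphisms between clopen sets, the orbit $\mathbb{F}.\xi_0$ is an open, $\mathbb{F}$-invariant, nonempty subset of $\Omega(E,C)$. Because $(E,C)$ is simple, Theorem~\ref{thm:idealsOr} forces $\mathbb{O}^{\mathbb{F}}(\Omega(E,C))=\{\emptyset,\Omega(E,C)\}$, so in fact $\mathbb{F}.\xi_0=\Omega(E,C)$; in particular $\Omega(E,C)$ is finite and discrete. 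The closed ideal generated by $1_{\xi_0}$ thus meets $C(\Omega(E,C))$ in an invariant ideal (by Theorem~\ref{thm:ideals-reducedcrossed}) whose open support contains $\xi_0$ and is therefore all of $\Omega(E,C)$. As $\Omega(E,C)$ is compact, this means $C(\Omega(E,C))$ itself lies in the ideal, which being unital shows $1_{\xi_0}$ is full.

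Finally I would apply the reduced form of Lemma~\ref{lem:IsolatedPoints}: since $\xi_0$ is isolated,
\[
1_{\xi_0}\,\big(\mathcal{O}^r(E,C)\big)\,1_{\xi_0}=1_{\xi_0}\,\big(C(\Omega(E,C))\rtimes_r\mathbb{F}\big)\,1_{\xi_0}\cong C^*_r(\text{Stab}(\xi_0))=C^*_r(\mathbb{F}_n).
\]
A full corner is Morita equivalent to the ambient $C^*$-algebra, whence $\mathcal{O}^r(E,C)\sim_M C^*_r(\mathbb{F}_n)$, exactly as claimed. Note that this is the precise reduced analogue of the computation $1_{\xi_0}\mathcal{O}(E,C)1_{\xi_0}\cong C^*(\mathbb{F}_n)$ underlying the full-algebra statement in (2).

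The only genuine content lying outside the main dichotomy proof is the observation that both the fullness of $1_{\xi_0}$ and the corner identification pass without change from the full to the reduced crossed product. This is painless precisely because Lemma~\ref{lem:IsolatedPoints} is stated uniformly for $\rtimes_{(r)}$, and fullness of $1_{\xi_0}$ rests only on minimality of the partial action, which is insensitive to the choice of completion norm. Hence the hard work is all in locating the isolated point $\xi_0$ with free stabiliser (already done in establishing case (2)); once that is in hand, the reduced statement is a direct transcription of the full one.
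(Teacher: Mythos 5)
Your treatment of the final assertion is correct, and it is in fact the same mechanism the paper itself uses: in its Lemma~\ref{lem:typeBdomina} the relevant vertex $v$ has one-point configuration space $\Omega(E,C)_v=\{\xi_0\}$, fullness of $v=1_{\xi_0}$ follows from simplicity, and the corner is identified with $K[\mathbb{F}_v]$, $C^*(\mathbb{F}_v)$ and $C^*_r(\mathbb{F}_v)$ \emph{simultaneously} via Lemma~\ref{lem:IsolatedPoints}, which is stated uniformly for $\rtimes_{(r)}$. Your fullness argument (the ideal generated by $1_{\xi_0}$ meets $C(\Omega(E,C))$ in $C_0(U)$ for a nonempty open invariant $U$ by Theorem~\ref{thm:ideals-reducedcrossed}, hence $U=\Omega(E,C)$ by simplicity and Theorem~\ref{thm:idealsOr}) is sound, and indeed somewhat more explicit than the paper's one-line claim that $v$ is full.

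The genuine gap is one of scope: what you have proved is only the last sentence of the theorem, conditional on everything else. The statement also asserts (i) $L_K(E,C)=\Lab_K(E,C)$ and $C^*(E,C)=\mathcal{O}(E,C)$, and (ii) the dichotomy itself --- that every simple $(E,C)$ falls into case (1) or case (2). You assume both, writing that being in case (2) means, ``by the analysis preceding the theorem,'' that $(E,C)$ has a vertex admitting no choice with $\mathbb{F}_v$ free of finite rank, and that this hard work is ``already done in establishing case (2).'' But that analysis \emph{is} the proof of the theorem: it is precisely where the paper does all the work --- Lemma~\ref{lem:necess-simple1} (at most one $X\in C_v$ with $|X|>1$), which yields the coincidences $L=\Lab$ and $C^*=\mathcal{O}$ via Corollary~\ref{cor:C*=O}; the classification into type A and type B vertices; the $1$-connectivity Lemmas~\ref{lem:necess-simple3} and~\ref{lem:necess-simple4}; Lemma~\ref{lem:alltypeA} producing case (1); the passage to $(E_n,C^n)$ for large $n$ to ensure no type A vertex is $1$-connected to a type B vertex; and only then Lemma~\ref{lem:typeBdomina}. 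Moreover, case (2) as literally stated (a Morita equivalence of $L_K(E,C)$ with $K[\mathbb{F}_n]$) does not by itself hand you an isolated point with stabiliser $\mathbb{F}_n$; what your argument needs is that the dichotomy proof \emph{lands} in case (2) exactly by exhibiting a vertex with no choices and one-point configuration space (Remark~\ref{rem:IsolatedPoints}), and without that combinatorial analysis your corner has no starting point. So the proposal is a correct, paper-identical transcription of the reduced statement, but the dichotomy and the identification of the universal algebras with their tame quotients --- the actual substance of the theorem --- remain unproven.
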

From Theorem \ref{thm:dichotomy} and the fact that the reduced group C*-algebra $C_r^*(\mathbb F_n)$ of a free group $\mathbb F_n$ of rank $n>1$ is simple \cite{Powers}, we obtain:

   \begin{corollary}
    \label{cor:dichotomy}
   Let $(E,C)$ be a finite bipartite separated graph. If $\mathcal O^r(E,C)$ is $\mathcal V$-simple, then either $\mathcal O^r(E,C)$ is isomorphic to a simple $C^*$-algebra, or it is Morita equivalent to $C(\mathbb T)$.
   \end{corollary}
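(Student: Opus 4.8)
The plan is to read the corollary off Theorem~\ref{thm:dichotomy} once the hypothesis is translated. Since $\mathcal{O}^r(E,C)$ is $\mathcal{V}$-simple, the graph $(E,C)$ is simple by definition, so Theorem~\ref{thm:dichotomy} applies and exactly one of its two cases occurs; I will show that the first case forces $\mathcal{O}^r(E,C)$ to be simple and that the second splits into a ``simple'' branch and a ``$C(\mathbb{T})$'' branch.

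First I would handle case (1). Here $\mathcal{O}(E,C)=C^*(E,C)$ is isomorphic to a simple graph C*-algebra. The point is that the canonical quotient map $\mathcal{O}(E,C)=C(\Omega(E,C))\rtimes\mathbb{F}\twoheadrightarrow C(\Omega(E,C))\rtimes_r\mathbb{F}=\mathcal{O}^r(E,C)$ has kernel a closed ideal of the \emph{simple} algebra $\mathcal{O}(E,C)$; as $\mathcal{O}^r(E,C)\neq 0$ (it is a reduced crossed product of a unital algebra), this kernel is zero and the map is an isomorphism. Hence $\mathcal{O}^r(E,C)$ is itself simple.

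Next I would treat case (2), in which $\mathcal{O}^r(E,C)$ is Morita equivalent to $C_r^*(\mathbb{F}_n)$ for some $1\le n<\infty$. For $n=1$ we have $C_r^*(\mathbb{F}_1)=C^*(\mathbb{Z})\cong C(\mathbb{T})$, which gives precisely the second alternative of the corollary. For $n\ge 2$, Powers' theorem \cite{Powers} makes $C_r^*(\mathbb{F}_n)$ simple, and since simplicity of a C*-algebra is invariant under strong Morita equivalence (the lattices of closed ideals being isomorphic), $\mathcal{O}^r(E,C)$ is simple as well.

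Assembling the cases shows $\mathcal{O}^r(E,C)$ is either simple or Morita equivalent to $C(\mathbb{T})$, as claimed. I expect the only step requiring care to be case (1): one must invoke that a $*$-homomorphism out of a simple C*-algebra that is surjective onto a nonzero algebra is automatically injective. Everything else reduces to Powers' theorem and the Morita-invariance of simplicity.
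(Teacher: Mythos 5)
Your proposal is correct and takes essentially the same route as the paper: the paper obtains the corollary directly from Theorem~\ref{thm:dichotomy} together with Powers' theorem \cite{Powers} on the simplicity of $C^*_r(\mathbb{F}_n)$ for $n\ge 2$, exactly as you do. The details you supply---the full-to-reduced surjection argument handling case (1), the identification $C^*_r(\mathbb{F}_1)\cong C(\mathbb{T})$, and the Morita invariance of simplicity for $n\ge 2$---are precisely the steps the paper leaves implicit, and each is justified as you state.
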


We develop the proof in various steps.  We begin with a simple observation.

\begin{lemma}
 \label{lem:all-layers-simple}
 Let $(E,C)$ be a finite bipartite separated graph. Then $(E,C)$ is simple if and only if $\mathcal H (E_n,C^n) = \{ \emptyset, E_n^0 \}$ for all $n\ge 0$.
 \end{lemma}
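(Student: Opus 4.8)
The plan is to prove the two implications separately, exploiting the layered structure of the separated Bratteli diagram. Recall from Definition~\ref{def:Finftyandothers} that $E_n^{0,0}=F^{0,n}$ and $E_n^{0,1}=F^{0,n+1}$, so that $E_n^0=F^{0,n}\sqcup F^{0,n+1}$ and consecutive vertex sets overlap in a single layer, $E_n^0\cap E_{n+1}^0=F^{0,n+1}$. Throughout I would keep in mind that $(E,C)$ is \emph{simple} precisely when $\mathcal{H}(F_\infty,D^\infty)=\{\emptyset,F_\infty^0\}$, and that each layer $F^{0,j}$ is non-empty by the definition of a separated Bratteli diagram.

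For the implication ($\Leftarrow$) I would argue elementarily. Assuming $\mathcal{H}(E_n,C^n)=\{\emptyset,E_n^0\}$ for every $n$, take a non-empty $H\in\mathcal{H}(F_\infty,D^\infty)$. By Lemma~\ref{lem:H_n}, each intersection $H^{(n)}:=H\cap E_n^0$ lies in $\mathcal{H}(E_n,C^n)$ and $H=\bigcup_n H^{(n)}$. Since $H\neq\emptyset$, some $H^{(m)}$ is non-empty, forcing $H^{(m)}=E_m^0$ by hypothesis. The plan is then to propagate this equality to all layers using the overlap: from $E_m^0\subseteq H$ one gets $H\cap E_{m+1}^0\supseteq F^{0,m+1}\neq\emptyset$, whence $H^{(m+1)}=E_{m+1}^0$, and symmetrically $H\cap E_{m-1}^0\supseteq F^{0,m}\neq\emptyset$ (for $m\geq 1$) gives $H^{(m-1)}=E_{m-1}^0$. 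Iterating upward and downward yields $H^{(k)}=E_k^0$ for all $k$, so $H=F_\infty^0$ and $(E,C)$ is simple.

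For the implication ($\Rightarrow$) I would pass to the shifted graphs rather than attempt to control the $D^\infty$-saturated closure directly. The separated Bratteli diagram of $(E_n,C^n)$ is $(F_{\ge n},D^{\ge n})=\bigcup_{m\ge n}(E_m,C^m)$, and Theorem~\ref{thm:NBallIsNGraph} gives $\Lab(E_n,C^n)\cong\Lab(E,C)$, so that by Theorem~\ref{thm:K-theoryBrat} one obtains a monoid isomorphism $M(F_{\ge n},D^{\ge n})\cong\mathcal{V}(\Lab(E_n,C^n))\cong\mathcal{V}(\Lab(E,C))\cong M(F_\infty,D^\infty)$. Order-simplicity is a monoid-isomorphism invariant, so simplicity of $(E,C)$ transfers to $(E_n,C^n)$; via \cite[Corollary 6.10]{AG} this reads $\mathcal{H}(F_{\ge n},D^{\ge n})=\{\emptyset,F_{\ge n}^0\}$. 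Now I would apply Lemma~\ref{lem:hersatfromE} to $(E_n,C^n)$: it provides an \emph{injective} order-preserving map $\mathcal{H}(E_n,C^n)\to\mathcal{H}(F_{\ge n},D^{\ge n})$ into a two-element lattice. Injectivity forces $\mathcal{H}(E_n,C^n)$ to have at most two elements, and since $\emptyset$ and $E_n^0$ are two distinct members, this gives $\mathcal{H}(E_n,C^n)=\{\emptyset,E_n^0\}$.

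The main obstacle is precisely the ($\Rightarrow$) direction: the naive attempt --- take a non-trivial $H\in\mathcal{H}(E_n,C^n)$, form its hereditary $D^\infty$-saturated closure $\widehat H$, and use simplicity to get $\widehat H=F_\infty^0$ --- stalls, because one would still have to show that saturation does not enlarge $\widehat H\cap E_n^0$ beyond $H$, i.e.\ that the closure does not spill back into the layers $n$ and $n+1$. Routing the argument through the identification of the Bratteli diagram of $(E_n,C^n)$ with $(F_{\ge n},D^{\ge n})$ and the injective closure map of Lemma~\ref{lem:hersatfromE} sidesteps this entirely. The only points needing genuine care are the bookkeeping identification of the separated Bratteli diagram of $(E_n,C^n)$ with $(F_{\ge n},D^{\ge n})$ and the routine fact that order-simplicity is preserved under the monoid isomorphism supplied by Theorem~\ref{thm:NBallIsNGraph}.
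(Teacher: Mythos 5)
Your proof is correct. The ($\Leftarrow$) half is essentially the paper's own argument: decompose $H$ into the slices $H^{(n)}$ via Lemma~\ref{lem:H_n} and use triviality of each $\mathcal{H}(E_n,C^n)$; your layer-overlap propagation merely makes explicit the step the paper states without justification (``if $H^{(n)}=E_n^0$ for some $n$ then $H=F_\infty^0$''). Where you genuinely depart from the paper is the ($\Rightarrow$) half. The paper settles it in one line by asserting that Lemma~\ref{lem:hersatfromE} yields an injective order-preserving map $\mathcal{H}(E_n,C^n)\to\mathcal{H}(F_\infty,D^\infty)$; taken literally, that lemma applied to $(E_n,C^n)$ only produces a map into $\mathcal{H}(F_{\ge n},D^{\ge n})$, the lattice of the Bratteli diagram of $(E_n,C^n)$, since the hereditary closure of $H\in\mathcal{H}(E_n,C^n)$ is $D^{\ge n}$-saturated but need not be $D^\infty$-saturated. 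To land in $\mathcal{H}(F_\infty,D^\infty)$ one must additionally saturate downwards and verify that this does not enlarge the intersection with $E_n^0$ --- precisely the ``spill-back'' obstacle you name (it can in fact be excluded by an argument combining hereditariness with $C^n$-saturation, implicit in the discussion of the closure $H^n$ in Section~\ref{sec:InducedIdeals}, but the paper does not spell this out). Your alternative --- transporting simplicity to $(E_n,C^n)$ through $\Lab(E_n,C^n)\cong\Lab(E,C)$ (Theorem~\ref{thm:NBallIsNGraph}), Theorem~\ref{thm:K-theoryBrat}(2) and \cite[Corollary 6.10]{AG}, so that $\mathcal{H}(F_{\ge n},D^{\ge n})$ is a two-element lattice, and only then applying Lemma~\ref{lem:hersatfromE} in its literal form --- is rigorous as written and sidesteps the combinatorial point entirely; the cost is reliance on the dynamical and monoid machinery, whereas the paper's terser route stays purely graph-theoretic but leaves the downward-saturation issue tacit.
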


\begin{proof}
 Let $n\ge 0$ be given. It follows from Lemma \ref{lem:hersatfromE} that there is an injective order-preserving map
 $\mathcal H  (E_n,C^n) \to \mathcal H(F_{\infty}, D^{\infty})$, so $\mathcal H(E_n, C^n)$
 is trivial if $ \mathcal H (F_{\infty}, D^{\infty}) $ is trivial.

 Conversely assume that $\mathcal H (E_n,C^n)$ is trivial for all $n\ge 0$.
 If $H\in  \mathcal H(F_{\infty}, D^{\infty})$, then $H= \cup _{n=0} H^{(n)}$, where $H^{(n)}:= H\cap E_n^0\in \mathcal H (E_n,C^n)$
for all $n$. If $H^{(n)}= E_n^0$ for some $n$, then
$H=F_{\infty}^0$. Otherwise $H^{(n)} = \emptyset $ for all $n$, so
$H=\emptyset$.
 \end{proof}

A useful property of the separated graphs $(E_n,C^n)$ associated to a finite bipartite separated graph $(E,C)$ is the following: if $X\in C^n_v$ and $n\ge 1$, then $s(x)\ne s(y)$
whenever $x,y$ are different elements of $X$. This follows immediately from the definition of these graphs.

We now obtain some necessary conditions for $\mathcal V$-simplicity.

\begin{lemma}
 \label{lem:necess-simple1}
 Let $(E,C)$ be a simple finite bipartite separated graph.
 Then for all $v\in F_{\infty}^0$, there is at most one $X\in D^{\infty}_v$ such that $|X|> 1$.
 \end{lemma}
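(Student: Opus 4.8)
The plan is to argue by contradiction and manufacture a nontrivial hereditary $D^{\infty}$-saturated set, which simplicity forbids. Suppose some $v \in F_\infty^0$ carries two distinct blocks $X, Y \in D^\infty_v$ with $|X|, |Y| \ge 2$. Writing $v \in E_n^{0,0}$ so that $X, Y \in C^n_v$, Lemma~\ref{lem:all-layers-simple} reduces the task to exhibiting a set in $\mathcal{H}(E_n, C^n) \setminus \{\emptyset, E_n^0\}$. Hence I would work inside the single finite bipartite graph $(E',C') := (E_n, C^n)$ and try to build a proper nonempty hereditary $C'$-saturated $H \subsetneq E'^0$ with $v \notin H$.

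First I would record the mechanics in the bipartite picture: every edge runs from a source in $E'^{0,1}$ to a range in $E'^{0,0}$, so hereditariness propagates ``downward'' (a range in $H$ forces its sources into $H$) while $C'$-saturation propagates ``upward'' (a block all of whose sources lie in $H$ forces its range into $H$). The key localizing observation is that, for a source vertex $d \in E'^{0,1}$, the singleton $\{d\}$ is automatically hereditary, and it is $C'$-saturated precisely when $d$ is the source of no singleton block $\{e\} \in C'$; thus \emph{either} some source vertex already yields a nontrivial $H = \{d\}$ and we are done, \emph{or} every source vertex sits in a singleton block. With this in hand, the strategy is to build $H$ through its complement $Z := E'^0 \setminus H$, formed as the closure of $\{v\}$ under the two dual conditions ``$s(e) \in Z \Rightarrow r(e) \in Z$'' and ``every block based at a vertex of $Z$ meets $Z$''. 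Since $v$ is a range vertex, only the second condition acts at $v$, and the whole point of assuming \emph{two} big blocks is that each demand for a source at $v$ can be met by a chosen source of $X$ or $Y$ while one designated source $s(e_1) \in X$ is kept out of $Z$, so that $s(e_1) \in H$ witnesses $\emptyset \ne H \ne E'^0$.

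The hard part is controlling this closure so that it terminates without swallowing $s(e_1)$ --- equivalently, ruling out a cascade in which $C'$-saturation eventually forces $v$, and then all of $E'^0$, into $Z$. This is exactly where both hypotheses are spent: a source such as $s(e_1)$ can be dragged into $Z$ only through a \emph{singleton} block sourced at $s(e_1)$ whose range already lies in $Z$, so I must choose $e_1$ among the sources of $X$ so as to avoid such singleton blocks, using the second big block $Y$ to absorb the saturation demand at $v$ through a different source. I expect the bookkeeping to be cleanest for $n \ge 1$, where the property recalled just before the lemma --- distinct sources within each $C^n$-block --- guarantees $|s(X)| = |X| \ge 2$ and keeps the ``escape source'' genuinely free; the base layer $v \in E^{0,0}$, where sources inside a block may coincide, I would treat by the same closure argument, noting that coinciding sources only make the saturation demands easier to satisfy. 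Verifying that the resulting $Z$ simultaneously satisfies both closure conditions and omits $s(e_1)$ is the one genuinely delicate step, and is the crux of the proof.
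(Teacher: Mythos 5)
There is a genuine gap, and it sits in your very first reduction. From Lemma~\ref{lem:all-layers-simple}, non-simplicity only requires a nontrivial set in $\mathcal{H}(E_m,C^m)$ for \emph{some} $m$; you insist on producing it in the \emph{same} layer $(E_n,C^n)$ that contains the vertex $v$ with two big blocks, and this is in general impossible. The paper's own Example~\ref{exam:m,ndyn-system} is a concrete counterexample to your strategy: in $(E(m,n),C(m,n))$ with $m,n\ge 2$, the vertex $w\in E^{0,0}$ carries two blocks $X=\{\alpha_1,\dots,\alpha_n\}$ and $Y=\{\beta_1,\dots,\beta_m\}$ of size $\ge 2$, yet $\mathcal{H}(E,C)=\{\emptyset,E^0\}$ is trivial (as noted there). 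Your closure construction visibly collapses on this graph: $Z$ starts as $\{w\}$, and since \emph{every} edge of both blocks has the single source $v$, saturation forces $v\in Z$ with no freedom of choice, whence $Z=E^0$ and $H=\emptyset$. This also shows that your remark about the base layer is exactly backwards: coinciding sources within a block make the saturation demand \emph{harder} to dodge, not easier, because a block whose edges all share one source forces that source into $Z$. Nor does the distinct-sources property for $n\ge 1$ rescue the same-layer plan: take $E'^{0,0}=\{v,u\}$, $E'^{0,1}=\{a,b\}$, blocks $X=\{x_a,x_b\}$, $Y=\{y_a,y_b\}$ at $v$ with $s(x_a)=s(y_a)=a$, $s(x_b)=s(y_b)=b$, and singleton blocks $\{p\},\{q\}$ at $u$ with $s(p)=a$, $s(q)=b$. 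Sources are distinct within every block and $v$ has two big blocks, but one checks directly that $\mathcal{H}(E',C')$ is trivial: whichever source you choose to absorb the demands at $v$, hereditariness drags $u$ into $Z$, and the singleton block at the other source then drags that source in too. So the ``delicate step'' you flag as the crux is not merely delicate --- there is no choice of $e_1$ that works, because the object you are trying to build need not exist in layer $n$.

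The fix is to look one layer down, which is what the paper does and which makes the proof nearly trivial. Given $X,Y\in C^n_v$ with $|X|,|Y|\ge 2$, write $C^n_v=\{X,Y,Z_1,\dots,Z_t\}$, choose $x_1\in X$, $y_1\in Y$, $z_i\in Z_i$, and consider the vertex $w:=v(x_1,y_1,z_1,\dots,z_t)\in E_{n+1}^{0,1}$. The singleton $\{w\}$ is automatically hereditary (it is a source vertex of the bipartite graph $E_{n+1}$), and it is $C^{n+1}$-saturated because every block of $C^{n+1}$ sourced at $w$ has the form $X(x)$ for $x\in\{x_1,y_1,z_1,\dots,z_t\}$, whose cardinality is the product of the sizes of the \emph{other} blocks of $C^n_v$ --- hence at least $2$, since at least one of $X,Y$ always survives in that product --- and whose edges have pairwise distinct sources. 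Thus $\{w\}\in\mathcal{H}(E_{n+1},C^{n+1})$ is nontrivial, contradicting Lemma~\ref{lem:all-layers-simple}. Your instinct that two big blocks should produce a hereditary saturated singleton at a source vertex is exactly right; the point you missed is that the witnessing source vertex lives in $(E_{n+1},C^{n+1})$, not in $(E_n,C^n)$.
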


\begin{proof}
 It suffices to show the result for all $v\in E^{0,0}$. Suppose there exist distinct $X,Y$ in $C_v$ such that $|X|>1$ and $|Y| >1$.
 Take a vertex
 $$w := v(x_1,y_1,z_1,\dots ,z_t) \in E_1^{0,1} ,$$
 where $x_1\in X$, $y_1\in Y$ and $z_i\in Z_i$, where $C_v= \{ X,Y,Z_1,\dots , Z_t \}$. The singleton $\{ w \}$ is then hereditary and $C^1$-saturated
 in $E_1$, because the elements $X$ of $C^1$ having edges that start at $w$ are of the form $X=X(x)$, where $x\in \{x_1,y_1,z_1,\dots , z_t \}$, and so they all have more than one element,
 and moreover the sources of the vertices of $X$ are all different.  So $\mathcal H (E_1,C^1) $ is non-trivial, contradicting Lemma \ref{lem:all-layers-simple}.
 \end{proof}

At this point we can already show the coincidence between
the universal graph algebras and their tame quotients.

\begin{corollary}
 \label{cor:C*=O}
If $(E,C)$ is a simple finite bipartite separated graph, then the natural maps $L_K(E,C) \to \Lab_K(E,C)$ and $C^*(E,C)\to \mathcal O (E,C)$ are isomorphisms.
\end{corollary}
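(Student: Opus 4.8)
The plan is to show that under the stated hypothesis the ideal $J=\langle \alpha\alpha^*\alpha-\alpha \mid \alpha\in S\rangle$ vanishes in $L_K(E,C)$; since this is exactly the ideal whose quotient defines the passage to the tame algebras, its vanishing forces both natural maps to be isomorphisms. The relation $\alpha\alpha^*\alpha=\alpha$ is a polynomial identity in the generators using only (V), (E), (SCK1), (SCK2), so once it holds in $L_K(E,C)$ it automatically holds in the enveloping $C^*$-algebra $C^*(E,C)$; hence a single algebraic computation settles the Leavitt and the $C^*$ statement at once. Everything thus reduces to proving that every element of the multiplicative semigroup $S$ generated by $E^1\cup(E^1)^*$ is a partial isometry, i.e.\ that $E^1$ is already \emph{tame}.

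First I would record the combinatorial input. Applying Lemma~\ref{lem:necess-simple1} to $(E,C)=(E_0,C^0)$, every $v\in E^{0,0}$ carries at most one block $X\in C_v$ with $|X|>1$; all other blocks are singletons $\{f\}$, for which (SCK2) gives $ff^*=r(f)$. As $(E,C)$ is bipartite, a nonzero reduced word in $S$ is an alternating admissible monomial, and its only reduction-irreducible length-two sub-junctions are of the form $ef^*$ (with $s(e)=s(f)$) or $e^*f$ (with $r(e)=r(f)$ and $X_e\ne X_f$). A direct computation shows $ef^*$ is always a partial isometry, using $f^*f=s(f)$ and $ee^*e=e$; and $e^*f$ is a partial isometry as soon as $|X_e|=1$ or $|X_f|=1$, since then one of $ee^*=r(e)$, $ff^*=r(f)$ collapses the middle factor $ff^*ee^*$. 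By the hypothesis at least one block at each such junction is a singleton, so every basic junction is a partial isometry.

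The heart of the argument is to upgrade this to all of $S$ by induction on the length of a reduced word $w=\sigma_k\cdots\sigma_1$, peeling off one letter across a junction and using the singleton relation $ff^*=r(f)$ to absorb the obstruction, just as in the length-two case. I expect the main obstacle to be the bookkeeping of the source and range projections $w^*w$ and $ww^*$: these are idempotents but, as the length-two case already shows, need not be vertices (for instance $ef^*$ has range projection $ee^*$, while $f^*e$ with $X_e$ a singleton has source projection $e^*ff^*e$, neither of which is a vertex in general), so the naive inductive hypothesis ``the terminal projection is a vertex'' fails. The clean way around this is to prove instead the equivalent statement that all idempotents $\{\alpha\alpha^*\mid\alpha\in S\}$ commute — the second generating description of $J$ — by showing each lies in the commutative $*$-subalgebra generated by the vertices together with the projections $ee^*$ for $e\in E^1$, and that crossing a singleton junction does not leave this subalgebra.

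Finally, a conceptually cleaner route I would keep in reserve exploits the singleton edges as partial unitaries: each singleton-block edge $f$ satisfies $ff^*=r(f)$ and $f^*f=s(f)$, so it identifies the corner at $s(f)$ with the corner at $r(f)$, leaving at each vertex at most a single Cuntz--Krieger relation $\sum_{e\in X_v}ee^*=v$. This should identify $L_K(E,C)$ (and $C^*(E,C)$) with the algebra of a non-separated graph up to the identifications induced by the singleton edges, where tameness of the edge family is automatic, thereby giving $J=0$ without the delicate induction. Either way, once $J=0$ is established the natural surjections $L_K(E,C)\to\Lab_K(E,C)$ and $C^*(E,C)\to\mathcal O(E,C)$ are isomorphisms, as claimed.
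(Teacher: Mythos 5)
Your reduction of the corollary to showing $J=0$ is the right target, and your length-two computations are correct, but the induction you defer --- the step you yourself call the heart of the argument --- is a genuine gap, and it cannot be closed with the only input you allow yourself, namely Lemma~\ref{lem:necess-simple1} applied at level zero (at most one non-singleton block per vertex of $E$). That condition alone does \emph{not} imply tameness. Concretely, let $E^{0,0}=\{u,u'\}$, put at $u$ a block $X=\{x_1,x_2\}$ and a singleton $P=\{p\}$, at $u'$ a block $X'=\{x_1',x_2'\}$ and a singleton $P'=\{p'\}$, with $s(p)=s(p')=w$ and the edges $x_i,x_i'$ sourced at four further distinct vertices. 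Every vertex carries at most one big block, so all of your basic junctions are partial isometries; nevertheless the projections $q=p^*x_1x_1^*p=\beta\beta^*$ (with $\beta=p^*x_1\in S$) and $q'=p'^*x_1'(x_1')^*p'$ do \emph{not} commute in $L_K(E,C)$: under the isomorphism $L_K(E,C)\cong L_K(E_1,C^1)$ (valid here because the level-zero commutators vanish, by \cite[Theorem 5.1(a)]{AE}) they become range projections of edges lying in the two distinct blocks $X(p),X(p')\in C^1_w$, both of size two, and such commutators are non-zero in the universal algebra, as one sees by building a representation in which the two decompositions of the corner at $w$ are in general position. Hence $J\neq 0$ for this graph even though every junction is ``absorbable''. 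Of course this $(E,C)$ is not simple ($\{s(x_1)\}$ is hereditary and $C$-saturated), but your argument never invokes simplicity beyond level zero, so it cannot distinguish this graph from a simple one. Any correct induction must use, at stage $n$, that $(E_n,C^n)$ again has at most one big block per vertex, i.e.\ the full statement of Lemma~\ref{lem:necess-simple1} for all of $F_\infty^0$, whose proof runs through simplicity of every layer (Lemma~\ref{lem:all-layers-simple}). This is exactly what the paper does: $[ee^*,ff^*]=0$ in $L_K(E_n,C^n)$ for \emph{every} $n$, so each connecting map $L_K(E_n,C^n)\to L_K(E_{n+1},C^{n+1})$ is an isomorphism by \cite[Theorem 5.1(a)]{AE}, and $\Lab_K(E,C)=\varinjlim_n L_K(E_n,C^n)$; the induction on word length you are missing is precisely the induction on $n$ along the separated Bratteli diagram, already packaged in \cite{AE}.

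Two of your fallback strategies also fail as stated. The commutative subalgebra generated by the vertices and the projections $ee^*$, $e\in E^1$, is too small to contain all $\alpha\alpha^*$: already for the simple graph $(E(1,n),C(1,n))$ with $n\ge 2$ (singleton block $\{\beta\}$, big block $\{\alpha_1,\dots,\alpha_n\}$), the projection $\beta^*\alpha_1\alpha_1^*\beta$ lies under the vertex $v=s(\beta)$ but equals neither $0$ nor $v$, whereas the only elements of your subalgebra dominated by $v$ are the scalar multiples of $v$. As for the reserve route of collapsing singleton edges to reduce to a non-separated graph: this works only when all vertices are of type A and the sources within each block are distinct --- that is Lemma~\ref{lem:alltypeA}, which in turn rests on Lemmas~\ref{lem:necess-simple2} and~\ref{lem:necess-simple3}, i.e.\ again on simplicity at deeper levels. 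When type B vertices are present, the algebra is (up to Morita equivalence) a free group algebra rather than a graph algebra (Theorem~\ref{thm:dichotomy}), and handling the mixed case is the substantive content of Section~\ref{sec:Vsimplicity}, not a routine identification.
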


\begin{proof}
Let $n\ge 0$. By Lemma \ref{lem:necess-simple1}, we have
$[ee^*,ff^*]=0$ for $e,f\in E_n^1$. Therefore it follows from
\cite[Theorem 5.1(a)]{AE} that the maps $L_K(E_n,C^n) \to L_K(E_{n+1},C^{n+1})$ and $C^*(E_n,C^n) \to
C^*(E_{n+1}, C^{n+1})$ are isomorphisms. Since this holds for each
$n\ge 0$, we get $L_K(E,C) \cong \Lab_K(E,C)$ and $C^*(E,C) \cong \mathcal O (E,C)$.
\end{proof}

 \begin{lemma}
\label{lem:necess-simple2}
Let $(E,C)$ be a simple finite bipartite separated graph.
 Then, for all $w\in E_n^{0,1}$ with $n\ge 1$, there exists $X\in C^n$ such that $|X| =1$ and $s(X)= \{w\}$.
  \end{lemma}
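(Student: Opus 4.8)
The plan is to work directly with the inductive description of $(E_n,C^n)$ from Definition~\ref{def:Finftyandothers}, applied to the single step $(E_{n-1},C^{n-1}) \to (E_n,C^n)$, and to feed in the structural constraint on simple graphs supplied by Lemma~\ref{lem:necess-simple1}. Since $n \ge 1$, every $w \in E_n^{0,1}$ is of the form $w = v(x_1,\dots,x_{k_u})$ for some $u \in E_{n-1}^{0,0}$ with $C^{n-1}_u = \{X_1^u,\dots,X_{k_u}^u\}$ and $x_j \in X_j^u$. First I would record that, by the construction, the edges of $E_n^1$ with source $w$ are exactly the $k_u$ edges $e_i := \alpha^{x_i}(x_1,\dots,\widehat{x_i},\dots,x_{k_u})$ for $i = 1,\dots,k_u$, that $e_i$ lies in the partition class $X(x_i) \in C^n$, and that $|X(x_i)| = \prod_{j \ne i} |X_j^u|$, since $X(x_i)$ is obtained by fixing the distinguished coordinate $x_i$ and letting the remaining coordinates range over their classes.

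Next I would single out the correct index. By Lemma~\ref{lem:necess-simple1}, applied at the vertex $u \in E_{n-1}^{0,0} \subset F_\infty^0$, at most one of $X_1^u,\dots,X_{k_u}^u$ has more than one element. Let $j_0$ be that index if it exists, and otherwise set $j_0 := 1$; in either case $|X_j^u| = 1$ for every $j \ne j_0$. It then follows at once that $|X(x_{j_0})| = \prod_{j \ne j_0} |X_j^u| = 1$.

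Finally I would identify the unique edge of $X(x_{j_0})$ and compute its source. Because $X_j^u = \{x_j\}$ for all $j \ne j_0$, the only edge of $X(x_{j_0})$ is $e_{j_0} = \alpha^{x_{j_0}}(x_1,\dots,\widehat{x_{j_0}},\dots,x_{k_u})$, whose source is $s_n(e_{j_0}) = v(x_1,\dots,x_{k_u}) = w$. Taking $X := X(x_{j_0})$ then gives a class in $C^n$ with $|X| = 1$ and $s(X) = \{w\}$, as required. The argument is essentially bookkeeping once the input of Lemma~\ref{lem:necess-simple1} is in place, so there is no genuine obstacle; the only point demanding care is the precise reading of Definition~\ref{def:Finftyandothers}, namely that fixing the superscript $x_{j_0}$ while the other (now singleton) coordinates run over their classes leaves exactly one edge, and that this edge's source recovers the prescribed vertex $w$. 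The degenerate case $k_u = 1$ is automatically covered, as then $X(x_1)$ is a singleton by the empty product, independently of simplicity.
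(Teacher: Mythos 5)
Your proof is correct, but it follows a genuinely different route from the paper's. The paper argues by contradiction and never touches the explicit combinatorics of the construction: it observes that for $n\ge 1$ the graphs $(E_n,C^n)$ have the property that distinct edges in a common class $X\in C^n$ have distinct sources, and then shows that if every $X\in C^n$ with $w\in s(X)$ had $|X|>1$, the singleton $\{w\}$ would be a non-trivial hereditary $C^n$-saturated subset (hereditary vacuously, since $w\in E_n^{0,1}$ receives no edges; saturated because $|s(X)|=|X|>1$ prevents $s(X)\subseteq\{w\}$), contradicting Lemma~\ref{lem:all-layers-simple}. In particular the paper's argument does not use Lemma~\ref{lem:necess-simple1} at all, and it applies verbatim to any finite bipartite separated graph with the distinct-sources property and trivial $\mathcal{H}(E,C)$. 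Your argument instead unpacks Definition~\ref{def:Finftyandothers} for the step $(E_{n-1},C^{n-1})\to(E_n,C^n)$, feeds in Lemma~\ref{lem:necess-simple1} at the vertex $u$ lying below $w=v(x_1,\dots,x_{k_u})$ to conclude that at most one $X_{j}^u$ is non-singleton, and then exhibits the desired class explicitly as $X(x_{j_0})$ via the cardinality formula $|X(x_i)|=\prod_{j\ne i}|X_j^u|$; all of these computations (the enumeration of $s_n^{-1}(w)$, the cardinality of $X(x_i)$, the source of the unique edge, and the empty-product case $k_u=1$) are accurate. What your route buys is an explicit, constructive identification of the singleton class and a structural explanation of where it comes from; what the paper's route buys is brevity, independence from Lemma~\ref{lem:necess-simple1}, and a slightly more abstract statement that isolates exactly which feature of the higher layers is being used.
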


 \begin{proof}
  The only point where we use that $n\ge 1$ is the property that all the source vertices of edges coming from the same set $X\in C$ are distinct.
  Thus, it suffices to prove the result for an arbitrary finite bipartite separated graph $(E,C)$ such that, for every $X\in C$, we have $s(x)\ne s(y)$
  whenever $x,y$ are distinct elements of $X$. Using this condition, we get that if $w\in E^{0,1}$ and $| X|>1$ for all $X\in C$ such that
  $w\in s(X)$, then $\{ w \}$ is a non-trivial hereditary and $C$-saturated subset of $(E,C)$, which contradicts our hypothesis.
   \end{proof}

   \begin{definition}
    \label{def:typesA-B}
    Let $(E,C)$ be a simple finite bipartite separated graph.
    A vertex $v\in E^{0,0}$ is said to be of {\it type A} in case there is a (unique) $X\in C_v$ such that $| X | > 1$, and $v$ is said to be of {\it type B}
    in case $|X|=1$ for all $X\in C_v$. Note that, by Lemma \ref{lem:necess-simple1}, every vertex $v\in E^{0,0}$ is either of type A or of type B.

 If $v\in E^{0,0}$ is of type A, we denote by $X^v$ the unique element in $C_v$ having more than one element.
  \end{definition}

 \begin{lemma}
  \label{lem:necess-simple3}
Let $(E,C)$ be a simple finite bipartite separated graph. Then for
each $w\in E^{0,1}$ there exists at most one $X\in C$ such that
$|X|= 1$, $s(X)= \{ w  \}$, and $X\in C_v$ for a vertex $v\in
E^{0,0}$ of type A.
   \end{lemma}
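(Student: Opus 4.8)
The plan is to argue by contradiction, transferring the problem one layer up to the successor graph $(E_1,C^1)$ and invoking Lemma~\ref{lem:necess-simple1} there. Suppose that for some $w \in E^{0,1}$ there were two \emph{distinct} blocks $X=\{x\}$ and $X'=\{x'\}$ in $C$, both of cardinality one, both with $s(x)=s(x')=w$, and with $X \in C_v$, $X' \in C_{v'}$ for vertices $v:=r(x)$ and $v':=r(x')$ of type A (I will not assume $v \neq v'$, as the argument handles both cases uniformly). Since $X \neq X'$ forces $x \neq x'$, the aim is to produce from $x$ and $x'$ two distinct elements of $C^1_w$, each of cardinality greater than one, which is exactly what Lemma~\ref{lem:necess-simple1} forbids at the vertex $w \in E_1^{0,0}=E^{0,1} \subset F_\infty^0$ (recall that $D^\infty_w = C^1_w$).

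The heart of the argument is the observation that a \emph{singleton} block sitting at a \emph{type A} vertex blows up in the next layer. Indeed, from Definition~\ref{def:Finftyandothers}, writing $x = x_i \in X = X_i^v$ with $C_v=\{X_1^v,\ldots,X_{k_v}^v\}$, the associated block is $X(x)=\{\alpha^{x}(x_1,\ldots,\widehat{x_i},\ldots,x_{k_v}) \mid x_j \in X_j^v,\ j \neq i\} \in C^1_{s(x)}=C^1_w$, so that $|X(x)| = \prod_{j \neq i}|X_j^v|$. Since $v$ is of type A, the unique block $X^v \in C_v$ with $|X^v|>1$ exists (cf.\ Definition~\ref{def:typesA-B}), and because $|X|=1$ we have $X \neq X^v$; hence the factor $|X^v|$ occurs in this product and $|X(x)| \geq |X^v| > 1$. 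The identical computation at $v'$ gives $|X(x')|>1$. Finally, every edge of $X(x)$ carries the superscript $x$ while every edge of $X(x')$ carries $x'$, so $x \neq x'$ makes these two blocks disjoint, and in particular distinct.

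This yields the promised contradiction: $X(x)$ and $X(x')$ are two distinct members of $C^1_w = D^\infty_w$ of cardinality greater than one, contradicting Lemma~\ref{lem:necess-simple1}. I expect the only delicate point to be the verification that type A is \emph{precisely} the hypothesis needed to guarantee $|X(x)|>1$: one must use both that $v$ carries a block of size $>1$ and that this block is genuinely different from the singleton $X$, so that its factor survives in $\prod_{j\neq i}|X_j^v|$. Everything else is routine bookkeeping with the definition of $(E_1,C^1)$, and there is no need to pass beyond the first layer. (Alternatively, one may phrase the final step by noting that $(E_1,C^1)$ is itself simple by Lemma~\ref{lem:all-layers-simple} and applying Lemma~\ref{lem:necess-simple1} to it directly at $w$.)
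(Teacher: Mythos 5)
Your proof is correct and takes essentially the same route as the paper's: both argue by contradiction, pass to the successor graph $(E_1,C^1)$, observe that the two distinct singleton blocks at type A vertices produce two distinct blocks $X(x),X(x')\in C^1_w$ each of cardinality greater than one (because each type A vertex carries a block of size $>1$ different from the given singleton), and then contradict Lemma~\ref{lem:necess-simple1}. Your explicit product formula for $|X(x)|$ and the verification that $X(x)\neq X(x')$ simply spell out details the paper leaves implicit.
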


 \begin{proof}
  Suppose that $X,Y$ are distinct, $X\in C_v$, $Y\in C_{v'}$, for $v,v'$ vertices of type A, that $|X| = |Y| = 1$, and that
  $s(X)=s(Y)= \{ w \}$. Let $X= \{x\}$ and $Y=\{ y \}$. Then $X(x)$ and $X(y)$ are two distinct elements of $C^1_w$, and $|X(x)|>1$, $| X(y)| >1$ because
  there are $X'\in C_v$ and $Y'\in C_{v'}$ with $|X'| >1$ and $|Y'| >1$. This contradicts Lemma \ref{lem:necess-simple1}.
  \end{proof}

 We say that a type B vertex $v$ is {\it of type $B_1$} if, given any $w\in E^{0,1}$, there is at most one $X\in C_v$ such that $|X|=1 $
 and $s(X)= \{ w \}$. Say that $v$ is type $B_2$ in case $v$ is not type $B_1$.

 The following definitions apply to a general finite bipartite separated graph $(E,C)$. Recall that, for $e\in E^1$, we denote by $X_e$ the unique element of $C$
 such that $e\in X_e$.

 \begin{definition}
  \label{def:1-connectedness}
  Let $(E,C)$ be a finite bipartite separated graph. An admissible path $\gamma $ is said to a {\it $1$-path} in case all the edges $e\in E^1$ appearing in the path
  (with exponent $\pm 1$) satisfy that $|X_e|=1$. Length zero paths are also considered $1$-paths.  Two vertices $v,w$ of $E$ are said to be $1$-connected, denoted by $v\sim w$, if there
  is a $1$-path from $v$ to $w$. A $1$-cycle is a $1$-path which is also a cycle.
  \end{definition}

 \begin{remark}
  \label{rem:1-conn-equivrel}
  Observe that  the relation $\sim$ on $E^0$ is an equivalence relation. In fact one can easily show that if $\gamma _1, \gamma _2$ are $1$-paths with $r(\gamma_1) = s(\gamma_2)$, then the reduced
  product $\gamma_2 \cdot \gamma _1$ (i.e. the path obtained after cancellation of terms $ee^{-1}$ or $e^{-1}e$ in the concatenation of $\gamma_2$ and $\gamma_1$)
  is also a $1$-path, which gives the transitivity of the relation $\sim$.
  It is obvious that $\sim $ is symmetric and reflexive.
  \end{remark}

  \begin{lemma}
  \label{lem:necess-simple4}
  Let $(E,C)$ be a simple finite bipartite separated graph.
 The following hold:
 \begin{enumerate}[{\normalfont (a)}]
  \item Two distinct vertices of type A are not $1$-connected.
 \item The vertices of type A and the vertices of type $B_2$ are not $1$-connected.
  \item Let $v$ be a vertex of type A and let $w$ a vertex of type B such that there is a $1$-cycle based at $w$. Then $v$ is not $1$-connected to $w$.
  \end{enumerate}
  \end{lemma}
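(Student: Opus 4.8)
The plan is to argue each part by contradiction, ultimately producing a nonempty proper hereditary $C^n$-saturated subset of some $E_n^0$, which is forbidden by simplicity through Lemma~\ref{lem:all-layers-simple}. Two tools drive everything. First, a structural description of $1$-paths: since all edges run from $E^{0,1}$ to $E^{0,0}$, a $1$-path joining two vertices of $E^{0,0}$ must alternate, hence decomposes into length-two \emph{hops} $u_{i-1}\to w_i\to u_i$ with $u_{i-1},u_i\in E^{0,0}$, $w_i\in E^{0,1}$, each recording singleton classes $\{e_i\}\in C_{u_{i-1}}$ and $\{f_i\}\in C_{u_i}$ with a common source $s(e_i)=s(f_i)=w_i$; admissibility forces $e_i\ne f_i$ and $\{e_{i+1}\}\ne\{f_i\}$ at the junctions, and by minimising the number of hops I may assume no vertex repeats. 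Second, the passage to the successor graph $(E_1,C^1)$ of Definition~\ref{def:Finftyandothers} propagates the relevant features: \textup{(i)} if an endpoint $u$ of a hop is of type A, then the middle vertex $w_i\in E_1^{0,0}$ becomes of type A in $(E_1,C^1)$, since the class $X(e_i)$ picks up the big factor $X^u$; \textup{(ii)} if $u_i$ is of type B, then $\{f_i\}$ and $\{e_{i+1}\}$ become singleton classes of $C^1$ with a common source in $E_1^{0,1}$, so that $w_i\sim w_{i+1}$ in $(E_1,C^1)$; and \textup{(iii)} \emph{no vertex can be simultaneously of type A and carry two distinct singleton classes with a common source}, for this would yield two big classes at a single vertex of $(E_1,C^1)$, contradicting Lemma~\ref{lem:necess-simple1}. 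Note also that each $(E_n,C^n)$ is again simple by Lemma~\ref{lem:all-layers-simple}, so these reductions may be iterated.

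For part (a) I would induct on the number $k$ of hops of a minimal $1$-path between two distinct type-A vertices $v,v'$. When $k=1$ the endpoints share a source $w_1$ through distinct singleton classes $\{e_1\}\in C_v$ and $\{f_1\}\in C_{v'}$, which is exactly the pair of witnesses forbidden by Lemma~\ref{lem:necess-simple3}. When $k\ge 2$, minimality forces every interior vertex to be of type B, so applying \textup{(ii)} at each interior vertex collapses the path to a $1$-path of $k-1$ hops in $(E_1,C^1)$ joining $w_1$ and $w_k$; by \textup{(i)} both are of type A in $(E_1,C^1)$, and they remain distinct because the original path was reduced. The inductive hypothesis, applied to the simple graph $(E_1,C^1)$, then closes the argument (if instead $w_1=w_k$ or a new interior type-A vertex appears, a shorter instance or fact \textup{(iii)} finishes it).

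I would then isolate the common heart of (b) and (c) as the single claim $(\star)$: \emph{a type-A vertex is never $1$-connected to a vertex carrying a $1$-cycle}. That (c) is an instance of $(\star)$ is immediate. For (b), if $v'$ is of type $B_2$ with distinct singleton classes $\{x\},\{y\}\in C_{v'}$ of common source $w$, one checks that $\delta:=y^{-1}x$ is a $1$-cycle based at $w$, so that in $(E_1,C^1)$ the vertex $w\in E_1^{0,0}$ carries a $1$-cycle while the propagated $w_1$ is of type A and $1$-connected to $w$; thus (b) reduces to $(\star)$ for $(E_1,C^1)$. To prove $(\star)$ I would first invoke the already-established part (a): the only type-A vertex in the $1$-connected class of $v$ is $v$ itself. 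I then split according to whether the $1$-cycle meets $v$. If it does, $v$ lies on a $1$-cycle, and propagating to $(E_1,C^1)$ produces two \emph{distinct} $1$-connected type-A vertices (the two boundary sources of the cycle, distinct by \textup{(iii)} when they would coincide), contradicting (a).

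The remaining case of $(\star)$, where the $1$-cycle avoids $v$, is the main obstacle. Here the hop-length descent used for (a) fails: under $(E,C)\mapsto(E_1,C^1)$ the cycle reappears with the same combinatorial length, and the $1$-distance from the migrated type-A vertex to the nearest cycle vertex does not strictly decrease, since a minimal $1$-path to the nearest cycle vertex is forced to arrive through a non-cycle class. I expect to resolve this either by producing a genuine monovariant for the descent — showing that after finitely many successor steps some cycle vertex must acquire type A (reducing to (a)) or two of its singleton-class sources must coincide (reducing to \textup{(iii)}) — or by a direct construction of the forbidden hereditary $C^n$-saturated set that exploits the independent branching at the type-A vertex $v$ against the existence of a $1$-loop confined to its $1$-connected class. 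Making this incompatibility precise, and in particular controlling how the cycle and the type-A branch interact across all levels $(E_n,C^n)$, is where the real work lies.
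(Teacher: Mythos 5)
Your part (a) is correct and is essentially the paper's own argument: induction on the length of a minimal $1$-path, propagation to $(E_1,C^1)$ where the length drops by two while both endpoints remain of type A, and a base case killed at the common source vertex (the paper contradicts Lemma~\ref{lem:necess-simple1} one level up; your appeal to Lemma~\ref{lem:necess-simple3} amounts to the same thing). Your preparatory facts (i)--(iii) are also all correct, as is the reduction of (b) and (c) to the claim $(\star)$, and your treatment of the case where the $1$-cycle passes through $v$. The genuine gap is the remaining case of $(\star)$ --- the $1$-cycle avoids $v$ --- which is precisely where the entire content of (b) and (c) lives, and which you explicitly do not prove: you only name two strategies that might work. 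As written, neither (b) nor (c) is established.

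Moreover, the obstruction you cite to justify stopping there appears to be illusory: the descent you dismiss does terminate. Let $\gamma$ be a minimal $1$-path of length $\ell$ from $v$ to the nearest vertex $w$ of the $1$-cycle $c$. By part (a) and minimality, every $E^{0,0}$-vertex visited by $\gamma$ other than $v$, and every $E^{0,0}$-vertex of $c$, is of type B (a type-A one would be a second type-A vertex $1$-connected to $v$, since $c$ avoids $v$). Hence every class $X(f)$, for $f$ an edge of $\gamma$ with range different from $v$ or an edge of $c$, is a singleton in $C^1$, so $\gamma$ propagates to a $1$-path of length $\ell-1$ in $(E_1,C^1)$ from the migrated type-A vertex (the source of the first edge of $\gamma$, type A by your fact (i)) to a vertex of the propagated $1$-cycle: to $w$ itself if $w\in E^{0,1}$, and to the unique tuple vertex over $w$ if $w\in E^{0,0}$ (unique because $w$ is type B). Your worry about ``arriving through a non-cycle class'' is immaterial, because the connecting path and the cycle are never concatenated into a single admissible path; each propagates separately, and they meet because a type-B vertex has exactly one vertex of $E_1^{0,1}$ above it. Thus the distance strictly decreases by one per level, and after finitely many steps the type-A vertex sits on a $1$-cycle, which is exactly the case you already know how to kill via (a) and fact (iii). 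For comparison, the paper proves (b) without mentioning cycles at all: it inducts directly on the length of a minimal $1$-path from the type-A vertex to the type-$B_2$ vertex, splits the base case into two subcases according to whether $v'$ has a second singleton class sourced at the common vertex, and contradicts Lemma~\ref{lem:necess-simple1} in $(E_2,C^2)$ resp. $(E_3,C^3)$; it then declares (c) ``similar''. Your $(\star)$ reformulation, once completed by the monovariant above, would be a legitimate alternative, but at present it is not a proof.
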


 \begin{proof}
(a) Let $v, v'$ be two distinct vertices of type A. We show that $v$ is not $1$-connected to $v'$ by induction on the length of a minimal $1$-path between $v$ and $v'$.
Suppose that there is a path $e_2e_1^{-1}$ from $v$ to $v'$ such that $|X_{e_i}| = 1$ for $i=1,2$. Let $w$ be the vertex in $E^{0,1}$ such that $\{ w \} = s(X_{e_1})=s(X_{e_2})$.
Then $X(e_1)$ and $X(e_2)$ are two different elements of $C^1_w$ having more than one element each, contradicting Lemma \ref{lem:necess-simple1}.

Assume that there are no $1$-paths of length $\le 2(n-1)$ between
two vertices of type A, and let $\gamma $ be a path of length $2n$
between two vertices $v,v'$ of type A. Write $\gamma =
e_{2n}e_{2n-1}^{-1}\cdots e_2 e_1^{-1}$, where each $X_{e_i}$ has
only one element (namely $e_i$). Then the vertices $w,w'$ of $E_1$
such that $\{ w \} = s(X_{e_1})$ and $\{ w' \} = s(X_{e_{2n}})$ are
of type A in $E_1$, because $v$ and $v'$ are of type A (in $E$), and
moreover there is a $1$-path in $E_1$ from $w$ to $w'$ of length
$2(n-1)$, leading to a contradiction. This shows that there is no
$1$-path of length $\le 2n$ between distinct vertices of type A.

(b) Let $v$ be a vertex of type A and let $v'$ be a vertex of type $B_2$. We show that $v$ is not $1$-connected to $v'$ by induction on the length of a minimal $1$-path between $v$ and $v'$.
Suppose that there is a path $e_2e_1^{-1}$ from $v$ to $v'$ such that $|X_{e_i}| = 1$ for $i=1,2$. Then in the separated graph $(E_1, C^1)$,
  we have a vertex $w$ with $\{ w \} = s(X_{e_1})= s(X_{e_2})$ and $X, Y \in C^1_w$, where $X= X(e_1)$ and $Y= X(e_2)$. If there is $Z\in C_{v'}$ such that $Z\ne X_{e_2}$,  $|Z| = 1$ and $s(Z)=s(X_{e_2})$, then
  setting $Z=\{ z \}$, we get $X(z)\ne Y$ and $s(X(z))= s(Y)$. Now let $Y= \{ y' \}$ and $X(z) = \{z' \}$. Note that, since $v$ is of type  A, we have $| X| = |X(e_1)| >1$. In conclusion, we have
  that $C^1_w$ has at least three sets $X, Y, X(z)$, with $|X|>1$, $|Y|= |X(z)|= 1$ and $s(Y)=s(X(z))$. Thus, in $(E_2,C^2)$ we have a vertex $\{w'\}= s(Y)=s(X(z))$ with $C_{w'}$ containing two sets $X(y')$ and $X(z')$
  with more than one element. This contradicts Lemma \ref{lem:necess-simple1}.

  Assume now that there is no $Z\in C_{v'}$ such that $Z\ne X_{e_2}$,  $|Z| = 1$ and $s(Z)=s(X_{e_2})$.
  Since $v'$ is of type $B_2$ by hypothesis, there are two distinct sets $Z,T\in C_{v'}$ such that $|Z| = |T| =1 $ and $s(Z)=s(T)$.
  Now the vertex $w$ is type $A$ in $(E_1,C^1)$, and the vertex $w'= s(Z)=s(T)$ is type $B_2$ in $(E_1,C^1)$. Indeed, if $w'$ is of type A, then there is an edge $f$ from $w'$ to a
  vertex $v''\in E^{0,0}$ of type A such that $| X_f | = 1$, and then we could apply the argument in the above paragraph to
  the $1$-path $f^{-1}z$, where $Z= \{ z\}$, to arrive at a contradiction.
  Moreover there is a $1$-path
  $e'_2(e'_1)^{-1}$ from $w$ to $w'$ such that $X(e_2) = \{ e_1'\}$ and $X(z) = \{ e_2' \}$.
  Set $T= \{ t \}$. Then $|X(z)| = |X(t)|= 1$ and $s(X(z))= s(X(t))$ in $(E_1, C^1)$, so that we are in the same situation as before, but replacing
  $(E,C)$ with $(E_1,C^1)$. Consequently, we arrive at a contradiction with the fact that $\mathcal H (E_3,C^3) $ only contains the trivial subsets.

  This shows the case where the length of the path is $2$. If we have a path of length $2n$, then it gives rise to a path of length $2n-2$ between
  a vertex of type $A$ and a vertex of type $B_2$ in the graph $(E_2,C^2)$, leading to a contradiction.

(c) The proof is similar to the proof of (b), we leave the details to the reader.
   \end{proof}

   In the following lemma, we describe the $\mathcal V$-simple algebras corresponding to graphs with only vertices of type A.

 \begin{lemma}
  \label{lem:alltypeA}
  Let $(E,C)$ be a simple finite bipartite separated graph, and suppose that the source vertices of edges coming from the same set $X\in C$ are all distinct.
  If all the vertices in $E^{0,0}$ are of type A, then $L_K(E,C)=\Lab_K(E,C)$ is isomorphic to a Leavitt path algebra $L_K(\overline{E})$, and $C^*(E,C)=\mathcal O (E,C)$ is isomorphic to a graph C*-algebra $C^*(\overline{E})$ of a non-separated graph $\overline{E}$ with $\mathcal{H}(\overline{E})=\{\emptyset,\overline{E}^0\}$.
   \end{lemma}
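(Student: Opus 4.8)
The plan is to first trade the tame algebras for the universal ones and then realise $L_K(E,C)$ as the Leavitt path algebra of a graph obtained by \emph{contracting} the singleton edges. By Corollary~\ref{cor:C*=O}, simplicity already yields $L_K(E,C)=\Lab_K(E,C)$ and $C^*(E,C)=\mathcal O(E,C)$, so it is enough to exhibit a non-separated graph $\overline E$ and a $*$-isomorphism $L_K(\overline E)\to L_K(E,C)$ carrying vertices to vertices; passing to enveloping $C^*$-algebras will then give $C^*(\overline E)\cong C^*(E,C)$ for free. The key elementary observation is that a singleton class $\{z\}\in C_v$ forces $zz^*=v$ by (SCK2) and $z^*z=s(z)$ by (SCK1), so each singleton edge is a corner unitary implementing the equivalence $s(z)\sim r(z)=v$.

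The first real step is to pin down the singleton edges. Because $(E,C)$ satisfies the distinct-source hypothesis under which Lemmas~\ref{lem:necess-simple2} and~\ref{lem:necess-simple4} were established, every $w\in E^{0,1}$ is the source of \emph{at least} one singleton edge, while Lemma~\ref{lem:necess-simple3} (all vertices being of type A) shows it is the source of \emph{at most} one. Hence each $w\in E^{0,1}$ emits a unique singleton edge $z_w$, with $r(z_w)\in E^{0,0}$; I write $[w]:=r(z_w)$ and $c_w:=z_w^*$, so that $c_w$ is the corner unitary with $c_wc_w^*=w$ and $c_w^*c_w=[w]$. Since in a bipartite graph the vertices of $E^{0,0}$ receive all edges and emit none, no $w$ can be $1$-connected to two distinct vertices of $E^{0,0}$ (this is also contained in Lemma~\ref{lem:necess-simple4}(a)), so the singleton subgraph is a disjoint union of stars centred at the vertices of $E^{0,0}$.

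I would then define $\overline E$ by $\overline E^0:=E^0$, declaring the edges into $v\in E^{0,0}$ to be the big class $X^v$ re-routed to start at $[s(x)]$, and attaching to each $w\in E^{0,1}$ a single incoming \emph{tail} edge $g_w$ from $[w]$ to $w$; the separation on $\overline E$ is the trivial one, which is legitimate precisely because after contraction each $v$ receives only the big class. The isomorphism $\Phi\colon L_K(\overline E)\to L_K(E,C)$ sends $v\mapsto v$, $w\mapsto w$, the edge $\hat x$ of $\overline E$ coming from $x\in X^v$ to $xc_{s(x)}$, and $g_w$ to $c_w$. The relation at $v$ then reads $\sum_{x\in X^v}xc_{s(x)}c_{s(x)}^*x^*=\sum_{x\in X^v}xx^*=v$ by (SCK2) for $X^v$, the identity $x^*x'=\delta_{x,x'}s(x)$ (valid since the members of $X^v$ have distinct sources) gives (CK1), and the tail relations $g_wg_w^*=c_wc_w^*=w$, $g_w^*g_w=[w]$ are immediate. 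Surjectivity is clear because $c_w$ and $xc_{s(x)}$ recover every singleton and every big edge, and injectivity follows by writing the inverse on generators, namely $z_w\mapsto g_w^*$ and $x\mapsto \hat xg_{s(x)}^*$.

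Finally, the triviality $\mathcal H(\overline E)=\{\emptyset,\overline E^0\}$ is inherited from simplicity: for the ordinary graph $\overline E$ one has $\mathcal H(\overline E)\cong\mathcal L(M(\overline E))\cong\Idem(L_K(\overline E))$ by \cite[Corollary 6.10]{AG} and Theorem~\ref{thm:ideals-alg}, while $\Idem(L_K(\overline E))\cong\Idem(L_K(E,C))=\Idem(\Lab_K(E,C))\cong\mathcal H(F_\infty,D^\infty)=\{\emptyset,F_\infty^0\}$ through the isomorphism just built and Theorem~\ref{thm:VidealsLab}. The point requiring the most care is the matching of relations in the construction of $\Phi$: one must be certain that contracting the singleton stars leaves each $v\in E^{0,0}$ receiving \emph{exactly} the big class, so that the trivial separation on $\overline E$ is correct and no spurious summand appears in (CK2). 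It is precisely the uniqueness of $z_w$ supplied by Lemma~\ref{lem:necess-simple3} that secures this — without it, extra singleton edges would contract to unitary loops at $v$ that are incompatible with a single Cuntz--Krieger relation there.
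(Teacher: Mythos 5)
Your proof is correct, and its combinatorial core coincides with the paper's: both use Lemma~\ref{lem:necess-simple2} (whose proof, as you observe, applies to $(E,C)$ itself under the distinct-source hypothesis) together with Lemma~\ref{lem:necess-simple3} and the all-type-A assumption to extract a unique singleton edge $z_w$ at each $w\in E^{0,1}$, viewed via (SCK1)--(SCK2) as a partial isometry between the projections $w$ and $r(z_w)$. You diverge at the de-separation step. The paper keeps every edge and merely \emph{inverts} each $z_w$ --- this is a non-separated orientation in the sense of \cite[Definition 3.8]{Lolk1} --- and then quotes \cite[Proposition 3.11]{Lolk1} for the isomorphisms; in its $\overline{E}$ the big-class edges still run from $E^{0,1}$ to $E^{0,0}$. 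You instead \emph{contract} the singleton stars, so your edges $\hat{x}\mapsto xc_{s(x)}$ run between vertices of $E^{0,0}$ and the vertices of $E^{0,1}$ become sinks fed by the tails $g_w$; the two graphs are genuinely different (in yours every $w\in E^{0,1}$ has out-degree zero), and your route trades the external citation for a self-contained, hands-on verification of mutually inverse maps on generators. Likewise, for $\mathcal{H}(\overline{E})=\{\emptyset,\overline{E}^0\}$ the paper reads the correspondence of hereditary saturated sets directly off the re-orientation, whereas your detour through $\Idem(L_K(\overline{E}))\cong\Idem(\Lab_K(E,C))\cong\mathcal{H}(F_\infty,D^\infty)$ (Theorems~\ref{thm:ideals-alg} and~\ref{thm:VidealsLab} together with \cite[Corollary 6.10]{AG}) is exactly the right substitute, since for the contracted graph the direct graph-theoretic correspondence is less transparent. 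The only point you leave implicit is that the proposed inverse $\Psi$ (with $z_w\mapsto g_w^*$ and $x\mapsto\hat{x}g_{s(x)}^*$) respects all defining relations of $L_K(E,C)$, including the singleton instances of (SCK2), namely $z_wz_w^*=r(z_w)$, which become the relations $g_w^*g_w=[w]$ in $L_K(\overline{E})$; these checks are routine and all go through.
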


  \begin{proof}
   By Lemma \ref{lem:necess-simple2} and Lemma \ref{lem:necess-simple3}, for each $w\in E^{0,1}$ there exists a unique $Y\in C$ such that $|Y| = 1$ and $s(Y)= \{ w \}$. We denote by $y_w$ the unique edge in such a group $Y$. It is clear that $E_-^1:=E^1 \setminus \{y_w \mid w \in E^{0,1}\}$ and $E_+^1:=\{y_w \mid w \in E^{0,1}\}$ defines a non-separated orientation in the sense of \cite[Definition 3.8]{Lolk1}, so $L_K(E,C) \cong L_K(\overline{E})$ and $C^*(E,C) \cong C^*(\overline{E})$ by \cite[Proposition 3.11]{Lolk1}, where $\overline{E}$ is the graph obtained from $(E,C)$ by inverting all the edges of $E_+^1$. Moreover, $\overline{E}$ contains no hereditary and saturated subsets since $(E,C)$ contains no hereditary and $C$-saturated subsets.
     \end{proof}

\begin{lemma}\label{lem:typeBdomina}
Let $(E,C)$ be a simple finite bipartite separated graph.  Assume that $v \in E^0$ is of type B, and that $v$ is not $1$-connected to a vertex of type A. Then $v$ is a full projection and there are identifications
$$v\Lab(E,C)v \cong K[\mathbb{F}_v], \quad v\mathcal{O}(E,C)v \cong C^*(\mathbb{F}_v) \andspace v\mathcal{O}^r(E,C)v \cong C^*_r(\mathbb{F}_v),$$
where $\mathbb{F}_v$ is the group of closed paths based at $v$.
\end{lemma}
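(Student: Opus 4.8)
The plan is to pass to the dynamical description $\mathcal{O}^{(r)}(E,C) \cong C(\Omega(E,C)) \rtimes_{(r)} \mathbb{F}$ and $\Lab(E,C) \cong C_K(\Omega(E,C)) \rtimes \mathbb{F}$, and to show that under the hypotheses the clopen set $\Omega(E,C)_v$ collapses to a single isolated point whose stabiliser is exactly $\mathbb{F}_v$; the three corner identifications are then immediate consequences of Lemma~\ref{lem:IsolatedPoints}. First I would establish fullness. By Theorem~\ref{thm:idealsOr} (resp. Theorem~\ref{thm:VidealsLab}), $\mathcal{V}$-simplicity of $(E,C)$ is equivalent to $\mathbb{O}^{\mathbb{F}}(\Omega(E,C)) = \{\emptyset, \Omega(E,C)\}$, i.e. to minimality of $\theta^{(E,C)}$. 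Since $\Omega(E,C)_v$ is a nonempty clopen set, the smallest invariant open set containing it is all of $\Omega(E,C)$, and hence the projection $v = 1_{\Omega(E,C)_v}$ generates each of $\Lab(E,C)$, $\mathcal{O}(E,C)$ and $\mathcal{O}^r(E,C)$ as a two-sided ideal; that is, $v$ is full.

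The heart of the argument is to prove that $v$ admits no choice. I would isolate this in the following claim: for a vertex $v \in E^{0,0}$ of type B in a simple graph, $v$ admits a choice if and only if $v$ is $1$-connected to a vertex of type A. The ``if'' direction is immediate: a $1$-path $\gamma$ from $v$ to a type A vertex $u$ enters $u$ through a singleton class (all edges of a $1$-path lie in singleton classes), and since $u$ carries its big class $X^u$, the pair exhibits a choice. For the ``only if'' direction — which is what I need, in contrapositive form — I would argue by induction on the length of a shortest admissible path $\beta = e_{2k}\cdots e_1^{-1}$ from $v$ reaching a type A vertex $u = r(e_{2k})$ through a singleton class $X_{e_{2k}}$. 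When $k=1$ the path $e_2 e_1^{-1}$ is automatically a $1$-path, because $|X_{e_1}|=1$ ($v$ is type B) and $|X_{e_2}|=1$ (we enter through a singleton class), so $v \sim u$. For $k \ge 2$ I would descend to $(E_1,C^1)$: minimality of $\beta$ forces, via Lemma~\ref{lem:necess-simple1}, that every intermediate type A vertex is entered through its big class, and the layer correspondence of Theorem~\ref{thm:NBallIsNGraph} turns $\beta$ into an admissible path of length $2(k-1)$ in the simple graph $(E_1,C^1)$ realizing a choice of the same kind, exactly as in the reductions carried out in the proof of Lemma~\ref{lem:necess-simple4}. Applying the inductive hypothesis and translating the resulting $1$-connection back to $(E,C)$ yields $v \sim u'$ for some type A vertex $u'$.

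This bookkeeping — keeping track of how the type-A/singleton-class data descends to $(E_1,C^1)$ — is the main obstacle, and is the step I expect to require the most care. Since $v$ is assumed not $1$-connected to any type A vertex, the claim gives that $v$ admits no choice.

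Finally, having shown that $v$ admits no choice, Remark~\ref{rem:IsolatedPoints} applies and gives that $\Omega(E,C)_v = \{\xi\}$ is a single point with $\mathrm{Stab}(\xi) = \mathbb{F}_v$; as $\Omega(E,C)_v$ is clopen, $\xi$ is isolated and $v = 1_{\Omega(E,C)_v} = 1_\xi$. Lemma~\ref{lem:IsolatedPoints} then yields
$$v\,\mathcal{O}^{(r)}(E,C)\,v = 1_\xi\big(C(\Omega(E,C)) \rtimes_{(r)} \mathbb{F}\big)1_\xi \cong C^*_{(r)}(\mathrm{Stab}(\xi)) = C^*_{(r)}(\mathbb{F}_v),$$
and the evident algebraic analogue of Lemma~\ref{lem:IsolatedPoints}, applied to $\Lab(E,C) = C_K(\Omega(E,C)) \rtimes \mathbb{F}$, gives $v\,\Lab(E,C)\,v \cong K[\mathbb{F}_v]$. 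Together these are the three claimed identifications.
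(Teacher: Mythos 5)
Your proposal is correct and follows the same skeleton as the paper's proof --- fullness from simplicity, then the key claim that $v$ admits no choice, then Remark~\ref{rem:IsolatedPoints} plus Lemma~\ref{lem:IsolatedPoints} (and its evident algebraic analogue) for the three corner identifications --- but you implement the key claim differently. The paper's argument is a one-shot splitting that uses Lemma~\ref{lem:necess-simple4} as a black box: take a \emph{minimal} choice path $e\gamma$ based at $v$; if $\gamma$ visits a type A vertex, write $\gamma=\gamma_2\gamma_1$ with $\gamma_1$ the maximal initial subpath ending at a type A vertex; minimality forces $\gamma_1$ to enter $r(\gamma_1)$ through its big class, whence admissibility and maximality make $e\gamma_2$ a $1$-path between two type A vertices, contradicting Lemma~\ref{lem:necess-simple4}(a); otherwise $e\gamma$ is itself a $1$-path from $v$ to the type A vertex $r(e)$, contradicting the hypothesis. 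You instead re-run, inside the proof, the layer-descent induction by which Lemma~\ref{lem:necess-simple4} is itself proved. This can be made to work, but it obliges you to settle two points that you only gesture at: (i) the descended base vertex $w=s(e_1)$ need not be of type B in $(E_1,C^1)$, so your inductive hypothesis does not literally apply to it; the repair is that $w$ is type A in $(E_1,C^1)$ exactly when some $x\in s^{-1}(w)$ has $r(x)$ of type A in $(E,C)$ with $X_x\ne X^{r(x)}$, and in that case $xe_1^{-1}$ is already a $1$-path from $v$ to a type A vertex, giving the conclusion outright. (ii) You must pull $1$-connectedness \emph{back} from $(E_1,C^1)$ to $(E,C)$, a direction the paper never needs; this does hold, because $\vert X(x)\vert =1$ in $C^1$ forces every class of $C_{r(x)}$ other than $X_x$ to be a singleton, so a $1$-path in $(E_1,C^1)$ between vertices of $E_1^{0,0}=E^{0,1}$ lifts pairwise to a $1$-path in $(E,C)$, and a type A vertex of $(E_1,C^1)$ sits one singleton edge away from a type A vertex of $(E,C)$. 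With (i) and (ii) supplied --- bookkeeping you yourself flagged as the delicate step --- your induction closes. The net comparison: the paper's route is shorter precisely because Lemma~\ref{lem:necess-simple4}(a) already encapsulates all of this layer-descent work, while your route re-derives it and in exchange never needs that lemma's statement, only its technique.
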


\begin{proof}
From $(E,C)$ being simple, we see that $v$ is full. We claim that $\Omega(E,C)_v$ is in fact a one-point space. If it were not, then there would exist some admissible path $e\gamma$ with $s(\gamma)=v$ along with $X \in C_{r(e)}$ satisfying $\vert X \vert > 1$ and $X \ne X_e$. We can of course assume that $e \gamma$ is minimal with these properties. Assuming that $\gamma$ passes through a type A vertex, we can write $\gamma=\gamma_2\gamma_1$, where $\gamma_1$ is the maximal initial subpath for which $r(\gamma_1)$ is of type A. But then $e\gamma_2$ is a $1$-path between vertices of type A, contradicting Lemma~\ref{lem:necess-simple4}. We deduce that $e\gamma$ is itself a $1$-path, so that $v$ is $1$-connected to a type A vertex, in conflict with our assumption. It follows that the partial action $\theta^{(E,C)}$ restricts to the trivial global action $\mathbb{F}_v \act \Omega(E,C)_v$, hence
$$v\Lab_K(E,C)v \cong  C_K(\Omega(E,C)_v) \rtimes \mathbb{F}_v \cong K \rtimes \mathbb{F}_v \cong K[\mathbb{F}_v]$$
and
$$v\mathcal{O}^{(r)}(E,C)v \cong  C(\Omega(E,C)_v) \rtimes_{(r)} \mathbb{F}_v \cong \mathbb{C} \rtimes_{(r)} \mathbb{F}_v \cong C^*_{(r)}(\mathbb{F}_v)$$
by Lemma~\ref{lem:IsolatedPoints}.
\end{proof}

     We are now ready to prove Theorem \ref{thm:dichotomy}.

\begin{proof}[Proof of Theorem \ref{thm:dichotomy}]
    Passing, if necessary, to $(E_1, C^1)$, we can assume that for each $X\in C$ we have that $s(x)\ne s(x')$ whenever $x,x'$ are distinct elements of $X$.
    If $E^{0,0}$ consists entirely of vertices of type A, then we can reach (1) by Lemma \ref{lem:alltypeA}.
    If $E^{0,0}$ contains both vertices of type A and of type B, and a vertex of type A is $1$-connected to a vertex of type B, then its $1$-connected
    component cannot contain any $1$-cycle, by Lemma \ref{lem:necess-simple4}. In that case, the vertices of type B in it will vanish in the
    graph $(E_n,C^n)$ for some $n$.

Indeed, let $v$ be a vertex of type A and let $\gamma $ be a
non-trivial $1$-path starting at $v$. By Lemma
\ref{lem:necess-simple4}(a), all vertices in $E^{0,0}$ visited by
$\gamma $ are of type B. If the $1$-connected component of $v$ does
not contain $1$-cycles, then all the vertices visited by $\gamma $
must be distinct. So we may assume that $\gamma $ is of maximal
length. Suppose for instance that
$$\gamma = e_{2r}e_{2r-1}^{-1}\cdots e_2e_1^{-1}$$
is of even length. Then the vertex $s(e_1)$ is of type A in
$(E_1,C^1)$, and there is a $1$-path $\gamma ' =
(e'_{2r})^{-1}e_{2r-1}'\cdots (e_2')^{-1}$ of length $2r-1$ in
$(E_1,C^1)$, where each $e_i'$ belongs to $X(e_i)$, for $i=2,\dots ,
2r$. Moreover all the $1$-paths of $(E_1, C^1)$ starting at $s(e_1)$
are of this form, and we conclude that the $1$-connected component
of the vertex $s(e_1)$ only contains $1$-paths of length $\le 2r-1$.
The case where $\gamma $ is of odd length is treated in the same
way. Now let $n$ be the maximum of the lengths of all the maximal
$1$-paths starting at vertices of type A in $(E,C)$. The above
argument shows that the graph $(E_n, C^n)$ has the property that no
vertex of type A is $1$-connected to a vertex of type B.

    So we can assume, in addition, that no vertex of type A is $1$-connected to a vertex of type B.
    But then Lemma \ref{lem:typeBdomina} applies.
\end{proof}

\section{Primeness}\label{sect:Primeness}

Recall that a ring is called \textit{prime} if the product of any two non-zero ideals is non-zero. In this final section, we characterize primeness of the algebras $\Lab_K(E,C)$ and $\mathcal{O}^r(E,C)$ in purely graph theoretical terms when the configuration space $\Omega(E,C)$ is a Cantor space. In order to check this hypothesis, we also develop a test for the existence of isolated points.
\medskip \\
We first introduce a partial version of a well known concept.

\begin{definition}
A partial action $\theta \colon G \act \Omega$ is called \textit{topologically transitive} if for any two non-empty open subsets $U,U' \subset G$, there exists $g \in G$ such that $\theta_g(U \cap \Omega_{g^{-1}}) \cap U' \ne \emptyset$.
\end{definition}

The main technical tool for our analysis is the fact that for a topologically free partial action $\theta \colon G \act \Omega$ on a totally disconnected compact Hausdorff space, the algebras $C_K(\Omega) \rtimes G$ and $C(\Omega) \rtimes_r G$ are prime if and only if $\theta$ is topologically transitive. This should be clear from the fact that both these crossed products enjoy the intersection property (see \cite[Lemma 4.2]{BCFS} and \cite[Theorem 29.5]{Exel}).

Throughout this section, we will write $\mathfrak{i}_d(\alpha)$ and $\mathfrak{t}_d(\alpha)$ for the initial (i.e right most) and terminal (i.e. left most) edge, respectively, of a non-trivial admissible $\alpha$ when viewed as a path in the double $\widehat{E}$.

\begin{definition}
Let $(E,C)$ denote a finite bipartite separated graph.  If $B \in \mathcal{B}_n(\Omega(E,C))$, then we define a clopen subspace by
$$\Omega(E,C)_B:=\{ \xi \in \Omega(E,C) \mid \xi^n = B\},$$
and we will say that $B$ is an $n$-ball \textit{at} $v$, where $v \in E^0$ is such that $\Omega(E,C)_B \subset \Omega(E,C)_v$. By the \textit{boundary of} $B$, we shall mean the set
$$\partial B:= \{\ter_d(\alpha) \mid \alpha \in B \text{ is maximal}\}.$$
\end{definition}

\begin{definition}
Let $(E,C)$ denote a finite bipartite graph, and consider a \textit{path closed} subset $A \subset E^1 \cup (E^1)^{-1}$, that is, a subset satisfying 
$$\ini_d(\alpha) \in A \Rightarrow \ter_d(\alpha) \in A$$
for any admissible path $\alpha$. From $(E,C)$ being bipartite, this may also be phrased as follows:
\begin{itemize}
\item If $ef^{-1}$ is admissible and $f^{-1} \in A$, then $e \in A$.
\item If $e^{-1}f$ is admissible and $f \in A$, then $e^{-1} \in A$.
\end{itemize}
We will say that such $A$ is $\partial$\textit{-closed} if the following holds:
\begin{enumerate}
\item Assume $\vert s^{-1}(v) \vert \ge 2$ and let $e \in s^{-1}(v)$. If $s^{-1}(v) \setminus \{e\} \subset A$, then $e^{-1} \in A$ as well.
\item Assume $\vert C_v \vert \ge 2$ and let $X \in C_v$. If $Y^{-1} \cap A \ne \emptyset$ for any $Y \in C_v \setminus \{X\}$, then $X \subset A$ as well.
\end{enumerate}
It is clear that an intersection of $\partial$-closed sets is again $\partial$-closed, so every $A \subset E^1 \cup (E^1)^{-1}$ is contained in a minimal $\partial$-closed subset $\overline{A}$. We then associate a set of vertices to $A$ by
$$V(A):=\{v \in E^{0,0} \mid X^{-1} \cap \overline{A} \ne \emptyset \text{ for all } X \in C_v\} \cup \{v \in E^{0,1} \mid s^{-1}(v) \subset \overline{A}\}.$$
\end{definition}

\begin{lemma}\label{lem:Boundary}
Let $(E,C)$ denote a finite bipartite separated graph, and consider a path closed subset $A \subset E^1 \cup (E^1)^{-1}$. Then $v \in V(A)$ if and only if there exists some $n \ge 1$ and an $n$-ball $B\in \mathcal B _n(\Omega (E,C))$ at $v$ such that $\partial B \subset A$.
\end{lemma}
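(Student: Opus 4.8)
The plan is to translate both sides of the equivalence into a single combinatorial statement about configurations and then match that statement against the inductive generation of $\overline{A}$. First I would reformulate the right-hand side. Every maximal path $\alpha$ of a ball $B=\xi^n$ contributes $\ter_d(\alpha)$ to $\partial B$, and since $A$ is path closed, once a branch of $\xi$ meets $A$ all of its later edges — in particular its terminal edge — lie in $A$; conversely $\ter_d(\alpha)\in A$ forces $\alpha$ to meet $A$. Hence the existence of an $n$-ball $B=\xi^n$ at $v$ with $\partial B\subseteq A$ is equivalent to the existence of a configuration $\xi\in\Omega(E,C)_v$ every branch of which eventually enters $A$; the uniform radius $n$ is then recovered by a K\"onig's-lemma argument applied to the locally finite (hence finite) subtree of branches not yet meeting $A$.

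For the implication ``ball $\Rightarrow v\in V(A)$'' I would argue by induction on the depth at which branches meet $A$, proving the auxiliary statement: if $\alpha\in\xi$ is entered along an edge $\sigma$ and every branch through $\alpha$ meets $A$ at or beyond $\sigma$, then $\sigma\in\overline{A}$. If $\sigma\in A$ this is immediate. Otherwise set $u=\hat{r}(\sigma)$. When $u\in E^{0,1}$ the node branches along all of $s^{-1}(u)\setminus\{\sigma^{-1}\}$, each edge of which lies in $\overline{A}$ by induction, so rule (1) forces the incoming inverse edge $\sigma$ into $\overline{A}$; when $u\in E^{0,0}$ each class $X\neq X_\sigma$ contributes a branch whose first edge is an inverse edge of $\overline{A}$, so $X^{-1}\cap\overline{A}\neq\emptyset$ for all such $X$ and rule (2) forces the incoming forward edge $\sigma$ into $\overline{A}$. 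Applying this at the root yields $s^{-1}(v)\subseteq\overline{A}$ when $v\in E^{0,1}$, and $X^{-1}\cap\overline{A}\neq\emptyset$ for all $X\in C_v$ when $v\in E^{0,0}$, i.e. $v\in V(A)$. This direction uses only rules (1) and (2).

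For the converse ``$v\in V(A)\Rightarrow$ ball'' I would construct $\xi$ from the membership data, guided by an invariant proved by induction on the generation of $\overline{A}$: every forward edge of $\overline{A}$ is \emph{buildable} (it admits a finite configuration segment, started along it, all of whose branches meet $A$), and every inverse edge $g^{-1}\in\overline{A}$ has a buildable class-mate in $X_g$. The base case and rules (1),(2) are direct — a forward edge adjoined by (2) is buildable because each other class supplies a buildable inverse class-mate, and an inverse edge adjoined by (1) is buildable because all its competing forward edges are. Given $v\in V(A)$ one then branches at the root using buildable representatives (all of $s^{-1}(v)$ for a source vertex; one buildable inverse per class for a range vertex), splices the finite segments, and truncates at their common depth to obtain the desired $n$-ball.

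The \textbf{main obstacle} is the path-closure rule in this last induction. Unlike (1) and (2), it can adjoin a forward edge $e''$ from an inverse edge $h^{-1}\in\overline{A}$ \emph{across} the source vertex $s(h)=s(e'')$, where buildability of $h^{-1}$ (a range-side property) gives no direct control over $e''$ (whose range is a different vertex). I would resolve this using path-closedness of $A$ together with the least-fixed-point, finite-derivation structure of $\overline{A}$: trace the chain of path-adjunctions producing $e''$ back to where it is grounded in $A$, note that the successive length-two witnesses concatenate into a single admissible path with initial edge in $A$, and propagate membership forward by path-closedness to conclude $e''\in A$, whence $e''$ is trivially buildable. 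A final point to watch is the treatment of finite branches, equivalently isolated points of $\Omega(E,C)$: the first reduction should be carried out in the setting where branches are infinite (consistent with the Cantor-space hypothesis of this section), so that ``maximal path'' and ``depth-$n$ path'' coincide and $\partial B$ records genuine boundary edges.
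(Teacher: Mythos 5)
Your K\"onig-type reformulation and your rootward induction for the implication ``ball $\Rightarrow v\in V(A)$'' are both sound; the latter is essentially the paper's first half run in reverse (the paper, assuming $v\notin V(A)$, grows inside every ball a maximal path whose terminal edge escapes $\overline{A}$), and the skeleton of your converse -- a buildability invariant proved inductively along the generation of $\overline{A}$ -- parallels the paper's stratification $\overline{A}=\bigsqcup_m A_m$ and its layer-by-layer ball construction. The genuine gap is exactly at the point you flag as the main obstacle, and your proposed resolution is wrong. A chain of path-adjunctions producing a new edge need \emph{not} be grounded in $A$: it can terminate at an edge that entered $\overline{A}$ by rule (2). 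If $g$ enters $\overline{A}$ by rule (2), path-closure would then force \emph{every} $h^{-1}$ with $r(h)=r(g)$ and $X_h\ne X_g$ into $\overline{A}$, whereas rule (2) only supplies one witness per class; the remaining elements need not lie in $A$ (so ``propagate forward by path-closedness of $A$'' fails) and need not be buildable.

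In fact the defect is not repairable within your reading of $\overline{A}$: if path-adjunction is admitted as a generation rule, the lemma itself becomes false. Take $E^{0,0}=\{u_1,u_2\}$, $E^{0,1}=\{w,p_1,p_2,q_1,q_2\}$ and, for $i=1,2$, edges $g_i,f_i,f_i'$ with $r(g_i)=r(f_i)=r(f_i')=u_i$, $s(g_i)=p_i$, $s(f_i)=w$, $s(f_i')=q_i$, separated by $C_{u_i}=\{\{g_i\},\{f_i,f_i'\}\}$; let $A=\{(f_1')^{-1},(f_2')^{-1}\}$, which is path closed since $q_1,q_2$ emit no other edges. The closure of $A$ under rules (1) and (2) alone is $A\cup\{g_1,g_2\}$, so $V(A)=\{p_1,p_2\}$, in agreement with the statement: every configuration at $w$ contains the branch $g_1^{-1}f_1$, which dead-ends at $p_1$ without meeting $A$, so no ball at $w$ has boundary in $A$. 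But closing under path-adjunction as well, $g_1$ forces $f_1^{-1}$, then $f_1^{-1}$ forces $f_2$, then $f_2$ forces $g_2^{-1}$, then rule (2) fires for $\{f_2,f_2'\}$, and symmetrically the closure becomes everything; in particular $w$ would lie in $V(A)$, contradicting the previous sentence. The same example refutes the tracing argument: the chain producing $f_2$ concatenates to the admissible path $f_2f_1^{-1}g_1$, whose initial edge $g_1$ is \emph{not} in $A$ (it entered by rule (2)), and indeed $f_2\notin A$ and $f_2$ is not buildable (its unique continuation $g_2^{-1}f_2$ dead-ends without meeting $A$). The conclusion is that $\overline{A}$ must be taken to be the closure of $A$ under rules (1) and (2) only; path-closedness is a hypothesis on $A$ itself, not a closure rule. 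This is what the paper's own proof does via the stratification $A_0=A$, $A_{m+1}=$ rule-(1),(2) additions (the wording ``such $A$'' in the definition invites your reading, but the proof and the truth of the statement force this one). With that reading your obstacle disappears and your induction closes exactly as you sketch: rule-(1) additions are handled by invariant (i), rule-(2) additions by invariant (ii).

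Separately, your closing remark is not a permissible reduction. The lemma carries no Cantor-space hypothesis, and it is applied in Proposition~\ref{prop:Cantor} precisely to decide when $\Omega(E,C)$ has isolated points, so assuming all branches infinite would be circular (note also that having a finite branch is not equivalent to being an isolated point). No assumption is needed: a dead-end node forces the branch through it to have already met $A$ -- in your step 2 this is the base of the induction, and in the paper's proof these are the edges of the set $T$, for which $\overline{A}\cap T=A\cap T$ -- so both the K\"onig argument and the buildability induction accommodate maximal paths of any length.
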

\begin{proof}
First suppose that $v \notin V(A)$. We will show that $\partial B \not\subset \overline{A}$ for any $n \ge 1$ and any $n$-ball $B$ at $v$, and we shall argue by induction on $n$. 
If $n=1$, then this is clear by definition of $V(A)$. Assuming that it holds for some $n \ge 1$, consider any $(n+1)$-ball $B$
and write
$$B^n:=\{\alpha \in B \colon \vert \alpha \vert \le n \}.$$
From our inductive assumption, there exists a maximal admissible path $\alpha \in B^n$ such that $\ter_d(\alpha) \notin \overline{A}$. 
If $\alpha$ is also maximal in $B$, then surely $\partial B \not\subset \overline{A}$, so we may assume that it is not. We then divide into the two cases
$r(\alpha) \in E^{0,0}$ and $r(\alpha) \in E^{0,1}$. In the former, we can write $\alpha=e \beta$, and there exists 
some $X_e \ne Y \in C_{r(\alpha)}$ with $Y^{-1} \cap \overline{A} = \emptyset$. 
By definition of $\Omega(E,C)$, we then have $y^{-1}\alpha \in B$ for some $y \in Y$. In the other case, 
we may instead write $\alpha=e^{-1}\beta$. Since $e^{-1} \notin \overline{A}$, we see that $f \notin \overline{A}$ for some $e \ne f \in s^{-1}(r(\alpha))$, 
and we consider the path $f \alpha \in B$. Either way, we have found an element of the boundary $\partial B$ which is not contained in $\overline{A}$, so in particular not in $A$.

For the converse implication, we first introduce a bit of handy notation, specifically we define a partition $\overline{A}=\bigsqcup_{m=0}^\infty A_m$. First set $A_0:=A$, let $m \ge 0$ and assume that $A_k$ has been defined for all $k \le m$. Then for any $e \in E^1$, we declare that $e^{-1} \in A_{m+1}$ if
$$e^{-1} \notin \bigsqcup_{k=0}^m A_k, \quad \vert s^{-1}(s(e)) \vert \ge 2 \andspace s^{-1}(s(e)) \setminus \{e\} \subset \bigsqcup_{k=0}^m A_k.$$
Similarly, $e \in A_{m+1}$ if
$$e \notin \bigsqcup_{k=0}^m A_k , \quad \vert C_{r(e)} \vert \ge 2 \andspace Y^{-1} \cap \bigsqcup_{k=0}^m A_k \ne \emptyset \text{ for any } Y \in C_{r(e)} \setminus \{X_e\}.$$
Now set 
$$T:=\{e \in E^1 \colon \vert C_{r(e)} \vert = 1 \} \cup \{e^{-1} \in (E^1)^{-1} \colon \vert s^{-1}(s(e)) \vert = 1\}$$
and observe that $A \cap T= \overline{A} \cap T$.
Whenever $\sigma \in \overline{A}$, we write $m_\sigma$ for the number satisfying $\sigma \in A_{m_\sigma}$, and if $X^{-1} \cap \overline{A}\ne \emptyset $ for some $X \in C$, we set $m_X:=\min\{m_{x^{-1}} \mid x^{-1} \in X^{-1}\cap A \}$. Given $v \in V(A)$, we then define
$$n_v:=\left\{\begin{array}{cl}
\max\{m_e \mid e \in s^{-1}(v)\}+1 & \If v \in E^{0,1} \\
\max\{m_X \mid X \in C_v\}+1 & \If v \in E^{0,0}
\end{array}  \right. ,$$
claiming that $\partial B \subset A$ for an $n_v$-ball $B$ at $v$. Specifically, we will show that whenever $n \le n_v$, there is an $n$-ball $B_n$ at $v$ satisfying $\partial B_n \subset \bigsqcup_{k=0}^{n_v-n}A_k$, 
and we proceed by induction over $n$. For $n=1$, this is clear. Assuming that the claim has been verified for some $1 \le n < n_v$,
and letting $i$ be such that $v \in E^{0,i}$, we consider the cases of $i+n$ being odd and even, separately. If it is odd, then there is a unique $(n+1)$-ball $B_{n+1}$ containing $B_n$ given by
$$B_{n+1}=B_n \cup \big\{e \alpha \colon \alpha \in B_n, \vert \alpha \vert = n \text{ and } e \in s^{-1}(r(\alpha)) \setminus \{\ter_d(\alpha)^{-1}\} \big\}$$
with boundary
$$\partial B_{n+1} =(\partial B_n \cap T) \cup \bigcup_{f \in \partial B_n \setminus T} s^{-1}(s(f)) \setminus \{f\}  ,$$
so it follows immediately from the above definition and $A$ being path closed that $\partial B_{n+1} \subset \bigsqcup_{k=0}^{n_v-n-1} A_k$. If $i+n$ is even, then $\partial B_n \setminus T \subset E^1$, and for any $f \in \partial B_n \setminus T$, $Y \in C_{r(f)} \setminus \{X_f\}$, we can choose an edge $e_{f,Y} \in Y$ such that $e_{f,Y}^{-1} \in \bigsqcup_{k=0}^{n_v-n-1} A_k$. Now define an $(n+1)$-ball $B_{n+1}$ by
$$B_{n+1}:=B_n \cup \big\{e_{\ter_d(\alpha),Y}^{-1}\alpha \colon \alpha \in B_n, \vert \alpha \vert = n \text{ and } Y \in C_{r(\alpha)} \setminus \{X_{\ter_d(\alpha)}\} \big\}$$
and observe that
$$\partial B_{n+1} = (\partial B_n \cap T) \cup \big\{e_{f,Y}^{-1} \mid f \in \partial B_n \setminus T \text{ and } Y \in C_{r(f)} \setminus \{X_f\} \big\},$$
hence $\partial B_{n+1} \subset \bigsqcup_{k=0}^{n_v-n-1} A_k$ as desired. Considering the particular case $n=n_v$ and $B:=B_{n_v}$, we finally see that $\partial B \subset A$.
\end{proof}

\begin{definition}
Let $(E,C)$ denote a bipartite separated graph. A \textit{choice path} is an admissible path $\alpha=e \beta$ such that there exists $X_e \ne X \in C_{r(\alpha)}$ with $\vert X \vert \ge 2$. Given 
any edge $f \in E^1$, we will say that $f$ is a \textit{dead end} if there is no choice path starting with $f$, and likewise $f^{-1}$ is a dead end if no choice path starts with $f^{-1}$.
\end{definition}

\begin{proposition}\label{prop:Cantor}
Let $(E,C)$ denote a finite bipartite separated graph, and define
$$A_{\textup{DE}}:=\{\text{dead ends of } E^1 \cup (E^1)^{-1}\}.$$
Then $\Omega(E,C)_v$ contains an isolated point if and only if $v \in V(A_\textup{DE})$. Consequently, $\Omega(E,C)$ is a Cantor space if and only if $V(A_\textup{DE})=\emptyset$.
\end{proposition}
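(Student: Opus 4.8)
The plan is to derive Proposition~\ref{prop:Cantor} from Lemma~\ref{lem:Boundary} by first isolating the purely dynamical content, namely the equivalence
$$\Omega(E,C)_v \text{ has an isolated point} \iff \text{there is } B \in \mathcal{B}_n(\Omega(E,C)) \text{ at } v \text{ with } \partial B \subset A_{\textup{DE}}.$$
Granting this, the proposition follows quickly. Applying Lemma~\ref{lem:Boundary} with $A=A_{\textup{DE}}$ turns the right-hand side into the condition $v \in V(A_{\textup{DE}})$, which is the first assertion. Since $\Omega(E,C)=\bigsqcup_{v}\Omega(E,C)_v$ is a disjoint union of clopen pieces, a point of $\Omega(E,C)$ is isolated if and only if it is isolated in the $\Omega(E,C)_v$ containing it; hence $\Omega(E,C)$ has no isolated points exactly when no $\Omega(E,C)_v$ does, i.e. exactly when $V(A_{\textup{DE}})=\emptyset$. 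As $\Omega(E,C)$ is a nonempty, compact, metrizable, totally disconnected space, it is a Cantor space precisely when it is perfect, which gives the \emph{consequently} clause. Before invoking Lemma~\ref{lem:Boundary} one must also check its hypothesis, that $A_{\textup{DE}}$ is path closed: if $\ini_d(\alpha)\in A_{\textup{DE}}$ while $\ter_d(\alpha)\notin A_{\textup{DE}}$, then a choice path $\delta$ with $\ini_d(\delta)=\ter_d(\alpha)$ concatenates with $\alpha$ (they share the edge $\ter_d(\alpha)$) to yield an admissible choice path with initial edge $\ini_d(\alpha)$, contradicting that $\ini_d(\alpha)$ is a dead end.

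For the $(\Leftarrow)$ direction of the equivalence I would show that a ball $B$ at $v$ with $\partial B\subset A_{\textup{DE}}$ has $\Omega(E,C)_B=\{\xi\}$, so $\xi$ is isolated. The point is that a dead-end frontier edge forces the extension: if $f=\ter_d(\alpha)$ for a maximal $\alpha\in B$ is a dead end, then no choice path begins with $f$; in particular the length-one path $f$ is not a choice path, so at $r(\alpha)\in E^{0,0}$ every class $X\ne X_f$ is a singleton, and the rule (c2) of Definition~\ref{def:DynamicalPicture} (together with the fact that the incoming class $X_f$ is always forced to its incoming representative) admits no freedom; at an $E^{0,1}$ vertex the rule (c1) prescribes forced full branching. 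Path-closedness of $A_{\textup{DE}}$ then guarantees that each newly produced frontier edge is again a dead end, so this determinism propagates and the configuration extending $B$ is unique. For $(\Rightarrow)$, given an isolated $\xi\in\Omega(E,C)_v$ choose $n$ with $\Omega(E,C)_{\xi^n}=\{\xi\}$ and put $B:=\xi^n$; if some $f\in\partial B$ were not a dead end, a choice path starting with $f$ would let me extend $B$ past radius $n$ to a configuration admitting two inequivalent continuations, and completing these to global configurations would produce two distinct elements of $\Omega(E,C)_B$, contradicting $\Omega(E,C)_{\xi^n}=\{\xi\}$.

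The main obstacle is the careful combinatorial bookkeeping in this extension analysis: one must align the three notions \emph{choice path}, \emph{dead end}, and \emph{boundary} with the branching dictated by (c1)/(c2) and with the admissibility constraints in the double $\widehat{E}$, keeping track of the fact that the incoming-edge class is forced while only the other classes offer genuine choices. Concretely, the two delicate points are (a) that dead-end boundaries are preserved under extension—so that the forced continuation never re-encounters a choice, which is exactly where path-closedness of $A_{\textup{DE}}$ is used—and (b) that an admissible extension furnished by a choice path beginning at a boundary edge can actually be realised inside a global configuration restricting to $B$, producing the required bifurcation. Both are finiteness/greedy-completion arguments, but they are the substance of the proof; once they are in place, the appeal to Lemma~\ref{lem:Boundary} and the topological characterisation of Cantor spaces finish the argument.
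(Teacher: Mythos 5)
Your proposal is correct and takes essentially the same route as the paper's proof: observe that $A_{\textup{DE}}$ is path closed, apply Lemma~\ref{lem:Boundary} to translate $v \in V(A_{\textup{DE}})$ into the existence of a ball $B$ at $v$ with $\partial B \subset A_{\textup{DE}}$, and then note that such a ball forces $\Omega(E,C)_B$ to be a one-point space, while a non-dead-end boundary edge of any ball yields at least two configurations in $\Omega(E,C)_B$. The only difference is that you spell out details the paper leaves implicit (the path-closedness check and the forced-extension/bifurcation bookkeeping), which is harmless.
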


\begin{proof}
Observe first that $A:=A_{\textup{DE}}$ is path closed. It follows from Lemma~\ref{lem:Boundary} that $v \in V(A)$ if and only if there exists a ball $B$ at $v$ such that $\partial B \subset A$. If this is the case, then $\Omega(E,C)_B$ is a one-point space, so $\Omega(E,C)_v$ does indeed contain an isolated point. If $v \notin V(A)$, then given any ball $B$ at $v$, there exists $\sigma \in \partial B$ which is not a dead end. Consequently, $\Omega(E,C)_B$ contains at least two configurations, so $\Omega(E,C)_v$ does not contain any isolated points.
\end{proof}

\begin{remark}\label{rem:CantorImpliesTopFree}
It is worth mentioning that if $\Omega(E,C)$ is a Cantor space, then $\theta^{(E,C)}$ is automatically topologically free. Indeed, by \cite[Theorem 10.5]{AE}, $\theta^{(E,C)}$ is topologically free if and only if for every vertex $v \in E^{0,1}$ on a cycle, there exists some $e \in s^{-1}(v)$ which is not a dead end. And if no such $e$ existed, then $\Omega(E,C)_v$ would be a one-point space.
\end{remark}

\begin{definition}
Let $(E,C)$ denote a finite bipartite separated graph. We will say that two sets $A, A' \subset E^1 \cup (E^1)^{-1}$ \textit{can be linked} if there exist $\sigma \in A$, $\sigma' \in A'$ and an admissible path $\alpha$ such that the concatenation $\sigma^{-1}\alpha \sigma'$ is admissible. If this is not the case, then the pair $A,A'$ is \textit{unlinkable}. Morover, it is \textit{maximal unlinkable} if for any larger pair $A \subset D$, $A' \subset D'$ we have
$$D \text{ and } D' \text{ are unlinkable} \Rightarrow A=D, A'=D'.$$
Finally, $(E,C)$ is said to have \textit{the Linking Property} if $\partial B$ and $\partial B'$ can be linked for any two balls $B,B' \in \mathcal{B}(\Omega(E,C))$.
\end{definition}

\begin{lemma}\label{lem:TopTrans}
If $(E,C)$ has the Linking Property, then $\theta^{(E,C)}$ is topologically transitive, and if $\Omega(E,C)$ is a Cantor space, then the reverse implication holds as well.
\end{lemma}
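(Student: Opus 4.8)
The plan is to prove both directions by translating the combinatorial Linking Property into the topological condition of topological transitivity, using the clopen sets $\Omega(E,C)_B$ indexed by balls $B$ as a basis. The key structural fact I would rely on is that the sets $\{\Omega(E,C)_B \mid B \in \mathcal{B}(\Omega(E,C))\}$ form a basis of clopen sets for $\Omega(E,C)$, since every basic clopen set is determined by finitely many coordinates of a configuration $\xi$, i.e. by a finite ball. Thus testing topological transitivity on arbitrary non-empty open sets reduces to testing it on pairs $\Omega(E,C)_B$, $\Omega(E,C)_{B'}$.

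First I would prove the forward direction. Suppose $(E,C)$ has the Linking Property and let $\Omega(E,C)_B$ and $\Omega(E,C)_{B'}$ be two non-empty basic clopen sets, with $B$ an $n$-ball at $v$ and $B'$ an $m$-ball at $w$. By the Linking Property, $\partial B$ and $\partial B'$ can be linked: there are $\sigma \in \partial B$, $\sigma' \in \partial B'$ and an admissible path $\alpha$ such that $\sigma^{-1}\alpha\sigma'$ is admissible. The idea is to build a single configuration $\xi$ whose ball near the root equals $B'$ and whose ball after translating by the group element $g:=\sigma^{-1}\alpha\sigma'$ (or an appropriate word built from it) equals $B$, so that $\xi \in \Omega(E,C)_{B'}$ while $\theta_g(\xi) \in \Omega(E,C)_B$; this witnesses $\theta_g(\Omega(E,C)_{B'} \cap \Omega(E,C)_{g^{-1}}) \cap \Omega(E,C)_B \ne \emptyset$. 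Concretely, I would take the convex closure of $B' \cup g^{-1}.B$ inside $\mathbb{F}$ and verify using right-convexity and conditions (c1)/(c2) of Definition~\ref{def:DynamicalPicture} that it extends to an element of $\Omega(E,C)$; the admissibility of $\sigma^{-1}\alpha\sigma'$ is exactly what guarantees no illegal cancellation or partition conflict occurs at the splice point, since $\sigma$ and $\sigma'$ are boundary edges of $B$ and $B'$ respectively.

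For the reverse direction, I would assume $\Omega(E,C)$ is a Cantor space and that $\theta^{(E,C)}$ is topologically transitive, and deduce the Linking Property. Given balls $B,B'$, both $\Omega(E,C)_B$ and $\Omega(E,C)_{B'}$ are non-empty open sets (non-empty precisely because $B,B'$ are allowed balls), so topological transitivity yields $g \in \mathbb{F}$ with $\theta_g(\Omega(E,C)_{B'} \cap \Omega(E,C)_{g^{-1}}) \cap \Omega(E,C)_B \ne \emptyset$. Picking a witnessing configuration $\xi \in \Omega(E,C)_{B'}$ with $\theta_g(\xi) \in \Omega(E,C)_B$, the element $g \in \xi$ together with its occurring path decomposition furnishes an admissible path joining a boundary edge of $B'$ to one of $B$, which upon extracting the relevant subword gives a linking of $\partial B$ and $\partial B'$. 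Here I expect the Cantor hypothesis to enter: since every $\Omega(E,C)_B$ contains at least two points (by Proposition~\ref{prop:Cantor}, $V(A_{\textup{DE}})=\emptyset$ ensures no isolated points, so no ball determines a single configuration), the linking path obtained from $g$ can be arranged to pass \emph{through} genuine boundary edges rather than dead ends, which is what the definition of linking requires.

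The main obstacle will be the bookkeeping at the splice in the forward direction: verifying that the union $B' \cup g^{-1}.B$ really does extend to a legitimate element of $\Omega(E,C)$, i.e. that the local configuration conditions (c1) and (c2) are simultaneously satisfiable at the junction without forcing additional edges that would corrupt either $B$ or $B'$. This is where the precise definition of $\partial B$ as the set of terminal edges of maximal admissible paths in $B$ is essential, since these are exactly the edges at which the ball can be freely extended; the admissibility condition $e \ne f$ for subpaths $ef^{-1}$ and $X_e \ne X_f$ for subpaths $e^{-1}f$ must be checked at the single splice vertex, and confirming there is always a compatible choice is the technical heart of the argument.
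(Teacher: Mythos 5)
Your forward direction is essentially the paper's own argument: reduce to the basic clopen sets $\Omega(E,C)_B$, $\Omega(E,C)_{B'}$, use a linking $\sigma^{-1}\alpha\sigma'$ to splice the two balls along maximal paths $\gamma \in B$, $\gamma' \in B'$ ending in $\sigma$, $\sigma'$, and observe that the spliced configuration witnesses $\theta_\beta\big(\Omega(E,C)_B \cap \Omega(E,C)_{\beta^{-1}}\big) \cap \Omega(E,C)_{B'} \ne \emptyset$ for $\beta$ the spliced word. The verification that the spliced set extends to a point of $\Omega(E,C)$ is exactly the step the paper also leaves essentially to a picture, so no complaint there.

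The reverse direction, however, has a genuine gap. Topological transitivity applied directly to $\Omega(E,C)_{B'}$ and $\Omega(E,C)_B$ only gives you \emph{some} $g$ with $\theta_g\big(\Omega(E,C)_{B'} \cap \Omega(E,C)_{g^{-1}}\big) \cap \Omega(E,C)_B \ne \emptyset$, and nothing forces $g$ to be long enough to reach the boundaries of both balls. If both balls have radius $n$, a linking can be read off from the witness only when $\vert g \vert \ge 2n$, because one must decompose $g$ as (inverse of a maximal path of one ball) times an admissible middle times (a maximal path of the other ball); for $\vert g \vert < 2n$ the two length-$n$ segments of $g$ overlap and no such decomposition exists. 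The degenerate case $B = B'$ and $g = 1$ makes the failure plain: the intersection is all of $\Omega(E,C)_B$, yet the Linking Property still demands a genuine path linking $\partial B$ to itself, and the trivial element contains no subword to extract. Your appeal to the Cantor hypothesis (``the linking path obtained from $g$ can be arranged to pass through genuine boundary edges'') names the right hypothesis but supplies no mechanism, and that mechanism is the actual content of the paper's proof: since $\Omega(E,C)$ has no isolated points, for each of the finitely many $\beta$ with $\vert\beta\vert < 2n$ whose translate meets the cylinders one chooses two distinct points in the overlap, separates them by disjoint open neighbourhoods, and thereby shrinks to smaller non-empty open sets for which that particular $\beta$ is no longer a witness; iterating over all short $\beta$ yields non-empty open sets $V \subseteq \Omega(E,C)_B$ and $V' \subseteq \Omega(E,C)_{B'}$ admitting no witness of length $< 2n$, and only then does topological transitivity (applied to $V$, $V'$) produce a $\beta$ with $\vert\beta\vert \ge 2n$, from which the linking follows. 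Without this shrinking step, your extraction of a linking from $g$ does not go through.
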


\begin{proof}
Assume first that $(E,C)$ has the Linking Property. Then given any two open sets $U,U' \subset \Omega(E,C)$, there are balls $B,B'$ such that $\Omega(E,C)_B \subset U$ and $\Omega(E,C)_{B'} \subset U'$. 
Since $B$ and $B'$ can be linked, there exist $\sigma \in \partial B$, $\sigma' \in \partial B'$ and an admissible path $\alpha$ for which $\sigma^{-1}\alpha\sigma'$ is admissible. Let $\gamma \in B$ and $\gamma' \in B'$ be such that $\ter_d(\gamma)=\sigma$ and $\ter_d(\gamma')=\sigma'$, and set $\beta:=\gamma^{-1}\alpha\gamma'$. The situation is depicted below in Figure~\ref{fig:Links}.
\newcommand{\Lo}{0.866}
\begin{center}\begin{figure}[H]
\begin{tikzpicture}[font=\scriptsize,scale=0.7]
  \tikzset{VertexStyle/.style = {draw,shape = circle,minimum size=14pt,inner sep=1pt}}

  \Vertex[x=0,y=0,L=$1$]{u1}
  \Vertex[x=9,y=0, L=$\beta$]{w3}

  \tikzset{VertexStyle/.style = {draw,shape = circle,minimum size=1pt,inner sep=1pt}}
  \SetVertexNoLabel  
  
  \Vertex[x=1,y=0]{u2}
  \Vertex[x=-0.5,y=\Lo]{u3}
  \Vertex[x=-0.5,y=-\Lo]{u4}
  
  \Vertex[x=1.5,y=\Lo]{u5}
  \Vertex[x=1.5,y=-\Lo]{u6}
  \Vertex[x=-1,y=-2*\Lo]{u7}
  \Vertex[x=-1,y=2*\Lo]{u8}
     
  \Vertex[x=1,y=2*\Lo]{u9}
  \Vertex[x=2.5,y=\Lo]{u10}
  \Vertex[x=1,y=-2*\Lo]{u11}
  \Vertex[x=2.5,y=-\Lo]{u12}
  \Vertex[x=-0.5,y=3*\Lo]{u13}
  \Vertex[x=-2,y=2*\Lo]{u14}
  \Vertex[x=-0.5,y=-3*\Lo]{u15}
  \Vertex[x=-2,y=-2*\Lo]{u16}
  
  \Vertex[x=3.5,y=\Lo]{u17}
  \Vertex[x=3.5,y=-\Lo]{u18}
  \Vertex[x=0.5,y=-3*\Lo]{u19}
  \Vertex[x=-1.5-\Lo,y=-3*\Lo]{u21}
  \Vertex[x=-1.5-\Lo,y=-\Lo]{u22}
  \Vertex[x=-3,y=2*\Lo]{u23}
  \Vertex[x=0.5,y=3*\Lo]{u24}
  \Vertex[x=-1,y=4*\Lo]{u25}

  \Vertex[x=4.5,y=-\Lo]{v1}
  \Vertex[x=5.5,y=-\Lo]{v2}
  \Vertex[x=6.5,y=-\Lo]{v3}
  \Vertex[x=7.5,y=-\Lo]{v4}          

  \Vertex[x=8,y=0]{w1}
  
  \Vertex[x=7.5,y=\Lo]{w2}
  \Vertex[x=7,y=2*\Lo]{w4}
  \Vertex[x=10.5,y=\Lo]{w5}
  \Vertex[x=11,y=2*\Lo]{w9}
  \Vertex[x=11,y=0]{w10}  
  \Vertex[x=9,y=2*\Lo]{w8}
  \Vertex[x=8.5,y=3*\Lo]{w11}

  \Vertex[x=9.5,y=\Lo]{w6}      
  \Vertex[x=9.5,y=-\Lo]{w7}        
  
\tikzset{VertexStyle/.style = {draw,shape = circle, minimum size=160,inner sep=1pt}}

\Vertex[x=0.05,y=0]{B1}    

\tikzset{VertexStyle/.style = {draw,shape = circle, minimum size=120,inner sep=1pt}}

\Vertex[x=9,y=0]{B2}        
  \tikzset{EdgeStyle/.style = {->,color=\niceblue}}
  \Edge[labelcolor=none, labelstyle=above](u2)(u1)
  \Edge[labelcolor=none, labelstyle=right](u2)(u5)
  \Edge[labelcolor=none, labelstyle=right](u4)(u7)
  \Edge[labelcolor=none, labelstyle=left](u3)(u8)
  \Edge[labelcolor=none, labelstyle=left](u11)(u6)
  \Edge[labelcolor=none, labelstyle=above](u10)(u17)  
  \Edge[label=$\sigma'$,labelcolor=none, labelstyle=above](u12)(u18)  
  \Edge[labelcolor=none, labelstyle=left](u16)(u21)
  \Edge[labelcolor=none, labelstyle=left](u16)(u22)     
  \Edge[labelcolor=none, labelstyle=above](u14)(u23)
  \Edge[labelcolor=none, labelstyle=above](u13)(u24)
  \Edge[labelcolor=none, labelstyle=left](u13)(u25)          
  \Edge[labelcolor=none, labelstyle=above, label=$\sigma^{-1}$](v3)(v4)            
  \Edge[labelcolor=none, labelstyle=left](w6)(w3)          
  \Edge[labelcolor=none, labelstyle=left](w6)(w5)            
  
  \tikzset{EdgeStyle/.style = {->,color=\nicered}}
  \Edge[labelcolor=none, labelstyle=left](u3)(u1)
  \Edge[labelcolor=none, labelstyle=left](u2)(u6)
  \Edge[labelcolor=none, labelstyle=above](u10)(u5)
  \Edge[labelcolor=none, labelstyle=right](u13)(u8)  
  \Edge[labelcolor=none, labelstyle=above](u16)(u7)
  \Edge[labelcolor=none, labelstyle=above](v1)(u18)
  \Edge[labelcolor=none, labelstyle=above](v1)(v2)
  \Edge[labelcolor=none, labelstyle=above](w1)(v4)          
  \Edge[labelcolor=none, labelstyle=above](w1)(w2)            
  \Edge[labelcolor=none, labelstyle=left](w7)(w3)  
  \Edge[labelcolor=none, labelstyle=left](w6)(w8)
  \Edge[labelcolor=none, labelstyle=left](w10)(w5)      

  \tikzset{EdgeStyle/.style = {->,color=\nicegreen}}
  \Edge[labelcolor=none, labelstyle=right](u4)(u1)
  \Edge[labelcolor=none, labelstyle=left](u9)(u5)
  \Edge[labelcolor=none, labelstyle=above](u12)(u6)
  \Edge[labelcolor=none, labelstyle=above](u14)(u8)  
  \Edge[labelcolor=none, labelstyle=right](u15)(u7)
  \Edge[labelcolor=none, labelstyle=left](u11)(u19)
  \Edge[labelcolor=none, labelstyle=above](v3)(v2)        
  \Edge[labelcolor=none, labelstyle=above](w1)(w3)            
  \Edge[labelcolor=none, labelstyle=above](w4)(w2)              
  \Edge[labelcolor=none, labelstyle=above](w9)(w5)              
  \Edge[labelcolor=none, labelstyle=above](w11)(w8)                  
  
  \draw[decoration={brace,mirror,raise=4pt},decorate,thick]
  (3.4,-\Lo) -- node[below=6pt] {$\alpha$} ((6.6,-\Lo);
\end{tikzpicture}
\caption{$B'$ (to the left) and $B$ (to the right) linked by the admissible path $\alpha$.}\label{fig:Links}
\vspace{-1cm}
\end{figure}
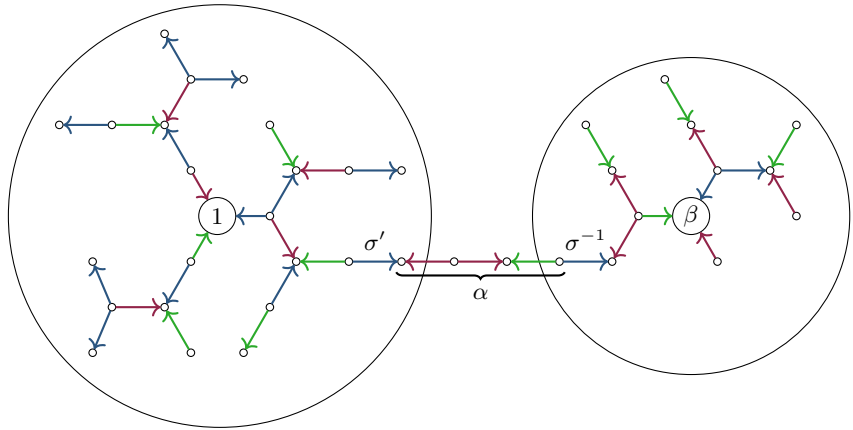
\end{center}
It follows that 
$$\emptyset \ne \theta_\beta(\Omega(E,C)_B \cap \Omega(E,C)_{\beta^{-1}}) \cap \Omega(E,C)_{B'} \subset \theta_\beta(U \cap \Omega(E,C)_{\beta^{-1}}) \cap U',$$
so $\theta^{(E,C)}$ is indeed topologically transitive. Conversely, assume now that $\theta^{(E,C)}$ is topologically transitive and $\Omega(E,C)$ is a Cantor space. Consider any two balls $B,B'$ and assume without loss of generality that both have radius $n$. If $\beta \in \mathbb{F}$ with $\vert \beta \vert < 2n$ is such that 
$$\theta_\beta \big(\Omega(E,C)_B \cap \Omega(E,C)_{\beta^{-1}}\big) \cap \Omega(E,C)_{B'} \ne \emptyset,$$
then take any two distinct points $\xi,\xi'$ in this open set, using the assumption about isolated points. By the Hausdorff property, these can be separated by open neighbourhoods $U_1,U_1'$ of $\xi$ and $\xi'$, respectively, 
contained in the above intersection. We have $\theta _{\beta^{-1}} (U_1) \subseteq \Omega (E,C)_B$, $U_1'\subseteq \Omega (E,C)_{B'}$, and
$$\theta_\beta \big(\theta _{\beta^{-1}} (U_1) \cap \Omega(E,C)_{\beta^{-1}}\big) \cap U_1' = \emptyset.$$
By applying this procedure sufficiently many times, we see that there are non-empty open subsets $V\subseteq \Omega (E,C)_B$ and $V'\subseteq \Omega (E,C)_{B'}$, such that 
$$\theta _{\beta} \big( V\cap \Omega (E,C)_{\beta^{-1}}\big) \cap V' = \emptyset$$
for all $\beta \in \mathbb{F}$ with length $\vert \beta \vert < 2n$. However, by topological transitivity, there is some $\beta \in \mathbb{F}$ for which this intersection is non-empty, 
so $\vert \beta \vert \ge 2n$. It follows that $B$ and $B'$ can be linked.
\end{proof}

We are now in a position to characterize primeness.

\begin{theorem}\label{thm:Prime}
Assume that $\Omega(E,C)$ is a Cantor space. Then either algebra $\Lab_K(E,C)$ or $\mathcal{O}^r(E,C)$ is prime if and only if $V(A)=\emptyset$ or $V(A')=\emptyset$ for all maximal unlinkable pairs $A,A' \subset E^1 \cup (E^1)^{-1}$.
\end{theorem}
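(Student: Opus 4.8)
The plan is to reduce the statement to the topological-transitivity criterion already assembled in the section. Since $\Omega(E,C)$ is assumed to be a Cantor space, Remark~\ref{rem:CantorImpliesTopFree} tells us that $\theta^{(E,C)}$ is topologically free. As recalled at the start of the section, for a topologically free partial action on a totally disconnected compact Hausdorff space, both $C_K(\Omega) \rtimes \mathbb{F}$ and $C(\Omega) \rtimes_r \mathbb{F}$ are prime if and only if $\theta^{(E,C)}$ is topologically transitive. Thus primeness of $\Lab_K(E,C)$ (resp. $\mathcal{O}^r(E,C)$) is equivalent to topological transitivity of $\theta^{(E,C)}$. By Lemma~\ref{lem:TopTrans}, and again using the Cantor hypothesis, topological transitivity is equivalent to $(E,C)$ having the Linking Property, i.e.\ $\partial B$ and $\partial B'$ can be linked for all balls $B,B' \in \mathcal{B}(\Omega(E,C))$.

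\textbf{Translating the Linking Property into the $V(-)$ condition.} It therefore remains to show that the Linking Property is equivalent to the graph-theoretic condition: for every maximal unlinkable pair $A,A'$, at least one of $V(A), V(A')$ is empty. First I would establish the forward direction by contraposition. Suppose there is a maximal unlinkable pair $A,A'$ with $V(A) \ne \emptyset$ and $V(A') \ne \emptyset$. Pick $v \in V(A)$ and $v' \in V(A')$. By Lemma~\ref{lem:Boundary}, there is a ball $B$ at $v$ with $\partial B \subset A$ and a ball $B'$ at $v'$ with $\partial B' \subset A'$. Since $A$ and $A'$ are unlinkable and boundaries are subsets, $\partial B$ and $\partial B'$ cannot be linked either, so the Linking Property fails. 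For the converse, suppose the Linking Property fails, so there are balls $B,B'$ with $\partial B$ and $\partial B'$ unlinkable. The pair $(\partial B, \partial B')$ is then contained in some maximal unlinkable pair $(A,A')$ (one enlarges greedily, which terminates since $E^1 \cup (E^1)^{-1}$ is finite). Because $\partial B \subset A$ and $B$ is a ball at its base vertex $v$, Lemma~\ref{lem:Boundary} gives $v \in V(A)$, and symmetrically the base vertex $v'$ of $B'$ lies in $V(A')$; hence neither $V(A)$ nor $V(A')$ is empty, and the $V(-)$ condition fails.

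\textbf{The main obstacle.} The delicate point is the interaction between the $\partial$-closure $\overline{A}$ appearing in the definition of $V(A)$ and the maximality of the unlinkable pair. In the converse direction I need $\partial B \subset A$ to yield $v \in V(A)$ via Lemma~\ref{lem:Boundary}, which requires $\partial B \subset A$ rather than merely $\partial B \subset \overline{A}$; this is fine since we choose $A$ to contain $\partial B$. The subtler issue is whether enlarging $(\partial B,\partial B')$ to a maximal unlinkable pair preserves unlinkability at each greedy step and whether the resulting $A,A'$ are genuinely \emph{path closed} (a standing hypothesis in the definition of $V$ and in Lemma~\ref{lem:Boundary}). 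I expect the key lemma to verify is that the maximal unlinkable enlargements $A,A'$ can be taken path closed: if $\sigma^{-1}\alpha\tau$ being admissible forces a link, then adding terminal edges of admissible paths starting at elements of $A$ cannot create a new link with $A'$, because any such link would already have existed through the original edge. Establishing this path-closedness carefully — and confirming that $V(A)$ is computed from the minimal $\partial$-closed set $\overline{A} \supseteq A$ consistently on both sides — is where the real work lies; once it is in place, the two implications above close the argument.
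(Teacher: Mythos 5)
Your proposal is correct and takes essentially the same route as the paper: Cantor space $\Rightarrow$ topological freeness (Remark~\ref{rem:CantorImpliesTopFree}), primeness $\Leftrightarrow$ topological transitivity $\Leftrightarrow$ the Linking Property (Lemma~\ref{lem:TopTrans}), and then the Linking Property is matched against the $V(-)$ condition via Lemma~\ref{lem:Boundary}, with maximality supplying path-closedness. The one point you defer as ``the real work'' is exactly what the paper dispatches in a single observation, and your sketch of it is the right argument: if $\gamma$ is admissible with $\ini_d(\gamma)=\sigma\in A$ and $\tau=\ter_d(\gamma)\notin A$, maximality would produce an admissible word $\tau^{-1}\beta\rho'$ with $\rho'\in A'$, and then $\gamma^{-1}\beta\rho'$ is admissible (all consecutive pairs occur either in $\gamma^{-1}$ or in $\tau^{-1}\beta\rho'$) and has the form $\sigma^{-1}\delta\rho'$, linking $A$ to $A'$ --- a contradiction.
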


\begin{proof}
Observe that, by maximality, $A$ and $A'$ as above are path closed. By Remark~\ref{rem:CantorImpliesTopFree}, $\theta^{(E,C)}$ is topologically free, so $\Lab_K(E,C)$ and $\mathcal{O}^r(E,C)$ are both prime if and only if $\theta^{(E,C)}$ is topologically transitive, which is again equivalent to the Linking Property by Lemma~\ref{lem:TopTrans}. So we really have to check that the Linking Property is equivalent to the above condition. But this is clear from Lemma~\ref{lem:Boundary}.
\end{proof}

\begin{remark}
It is of course also natural to ask if interesting prime algebras can be constructed from a finite bipartite separated graph $(E,C)$, where the configuration space $\Omega(E,C)$ does contain isolated 
points. However, this is not the case: For sufficiently big $n$, there must exist a vertex $v \in E_n^{0,1}$ such that $\Omega(E_n,C^n)_v$ is a one-point space, i.e. every $e \in s^{-1}(v)$ is a dead end. 
By topological transitivity, the orbit of this single point is dense, so in particular $v$ can be connected to any other vertex by an admissible path. But then $(E_n,C^n)$ must satisfy 
Condition (C) of \cite[Definition 3.5]{Lolk1}. Note that $v$ cannot admit 
exactly one simple closed path (up to inversion), for then it would generate an ideal Morita equivalent to $K[\mathbb{Z}]$ or $C(\mathbb{T})$, depending on the situation, 
by \ref{lem:IsolatedPoints}. If $v$ does not admit a closed path, then both algebras degenerate to graph algebras of a non-separated graph by \cite[Theorem 5.7]{Lolk1}. Finally, if $v$ admits at 
least two simple closed paths (up to inversion), then it generates an ideal Morita equivalent with the group algebra $K[\mathbb{F}_v]$ in the algebraic setting and $C^*_r(\mathbb{F}_v)$ in the $C^*$-algebraic (here $\mathbb{F}_v$ denotes the group of all closed paths based at $v$), and the quotient is a classical graph algebra.
\end{remark}

\begin{example}
We now apply our work to a few examples:
\begin{enumerate}
\item If $(E,C)=(E(m,n),C(m,n))$ as in Example~\ref{exam:m,ndyn-system}, then $A_\textup{DE}= \emptyset$ and every pair of edges can be linked, so $\Omega(E,C)$ is a Cantor space and the algebras $\Lab_{m,n}$ and $\mathcal{O}_{m,n}^r$ are prime.
\item Consider the graph $(E,C)$ as pictured just below:
\begin{center}{
\begin{figure}[htb]
\begin{tikzpicture}[scale=0.8]
 \SetUpEdge[lw         = 1.5pt,
            labelcolor = white]
  \tikzset{VertexStyle/.style = {draw, shape = circle,fill = white, minimum size=15pt, inner sep=2pt,outer sep=1pt}}

\SetVertexNoLabel

  \Vertex[x=0,y=0]{1}
  \Vertex[x=3,y=0]{2}
  \Vertex[x=6,y=0]{3}  
  \Vertex[x=3,y=3]{v}  

  \tikzset{EdgeStyle/.style = {->,bend left=45,color={\nicered}}}  
  \Edge[label=$x_1$, style={circle,inner sep=0pt }](1)(v)
  \tikzset{EdgeStyle/.style = {->,bend left=30,color={\nicered}}}  
  \Edge[label=$x_2$, style={circle,inner sep=0pt }](2)(v)  
  \tikzset{EdgeStyle/.style = {->,bend right=0,color={\niceblue}}}  
  \Edge[label=$y_1$, style={circle,inner sep=0pt }](1)(v) 
  \tikzset{EdgeStyle/.style = {->,bend right=30,color={\niceblue}}}  
  \Edge[label=$y_2$, style={circle,inner sep=0pt }](2)(v) 
  \tikzset{EdgeStyle/.style = {->,bend right=45,color={\niceblue}}}  
  \Edge[label=$y_3$, style={circle,inner sep=0pt }](3)(v) 
\end{tikzpicture}
\end{figure}}
\end{center}
Note that $A_{\textup{DE}}=\{y_3^{-1}\}$ has closure $\overline{A_{\textup{DE}}}=\{x_1,x_2,y_1^{-1},y_2^{-1},y_3^{-1}\}$, so $V(A_{\textup{DE}})=\emptyset$. It follows that $\Omega(E,C)$ is a Cantor space. However, the set $A:=\{x_1^{-1},x_2^{-1},y_1,y_2,y_3\}$ is not linked to itself and $V(A)=\{s(y_3)\}$, so the algebras are not prime.
\item Now consider the following variation of the above graph:
\begin{center}{
\begin{figure}[htb]
\begin{tikzpicture}[scale=0.8]
 \SetUpEdge[lw         = 1.5pt,
            labelcolor = white]
  \tikzset{VertexStyle/.style = {draw, shape = circle,fill = white, minimum size=15pt, inner sep=2pt,outer sep=1pt}}

\SetVertexNoLabel

  \Vertex[x=0,y=0]{1}
  \Vertex[x=3,y=0]{2}
  \Vertex[x=6,y=0]{3}  
  \Vertex[x=3,y=3]{v}  

  \tikzset{EdgeStyle/.style = {->,bend left=60,color={\nicered}}}  
  \Edge[label=$x_1$, style={circle,inner sep=0pt }](1)(v)
  \tikzset{EdgeStyle/.style = {->,bend left=30,color={\nicered}}}  
  \Edge[label=$x_2$, style={circle,inner sep=0pt }](1)(v)  
  \tikzset{EdgeStyle/.style = {->,bend left=30,color={\nicered}}}  
  \Edge[label=$x_3$, style={circle,inner sep=0pt }](2)(v)    
  \tikzset{EdgeStyle/.style = {->,bend right=0,color={\niceblue}}}  
  \Edge[label=$y_1$, style={circle,inner sep=0pt }](1)(v) 
  \tikzset{EdgeStyle/.style = {->,bend right=30,color={\niceblue}}}  
  \Edge[label=$y_2$, style={circle,inner sep=0pt }](2)(v) 
  \tikzset{EdgeStyle/.style = {->,bend right=45,color={\niceblue}}}  
  \Edge[label=$y_3$, style={circle,inner sep=0pt }](3)(v) 
\end{tikzpicture}
\end{figure}}
\end{center}
Once again we have $V(A_{\textup{DE}})=\emptyset$, so $\Omega(E,C)$ is indeed a Cantor space. Observe that there is a unique pair of 
maximal unlinkable subsets, namely 
$$A=A':=\{x_1,x_2,x_3,y_1^{-1},y_2^{-1},y_3^{-1}\},$$ 
and $V(A)=\emptyset$. It follows that $\Lab_K(E,C)$ and $\mathcal{O}^r(E,C)$ \textit{are} prime in this case.
\end{enumerate}
\end{example}

\begin{remark}
We finally remark that topological transitivity of $\theta^{(E,C)}$ can be phrased very simply in terms of the separated Bratteli diagram $(F_\infty,D^\infty)$. Since the vertices of $F_\infty$ correspond to the balls of $\Omega(E,C)$, and there is a direct dynamical equivalence $\theta^{(E_n,C^n)} \xrightarrow{\approx} \theta^{(E,C)}$ for any $n$, the partial action $\theta^{(E,C)}$ is topologically transitive if and only if for all $n$, any two vertices $u,v \in E_n^0$ 
can be connected by an admissible path in $(E_n,C^n)$, or, alternatively, that any two vertices in $F_ {\infty}$ lay over a common vertex $w\in F_{\infty}^0$. Consequently, one can give another proof of Theorem~\ref{thm:Prime} by checking 
that the graph theoretical condition passes from $(E,C)$ to $(E_1,C^1)$. 
\end{remark}


\begin{thebibliography}{12}

\bibitem{Abadie}
  F. Abadie,
  \emph{Enveloping actions and Takai duality for partial actions},
J. Funct. Anal. {\bf 197} (2003), 14--67.

\bibitem{Abadie2}
F. Abadie, \emph{On partial actions and groupoids}, Proc. Amer. Math. Soc. {\bf 132} (2004), 1037–-1047.

\bibitem{AAS} G. Abrams, P. Ara, M. Siles Molina, \emph{Leavitt Path Algebras}, to appear in Lecture Notes in Mathematics (Springer). 

\bibitem{AA1} G. Abrams, G. Aranda Pino, \emph{ The Leavitt path algebra of a
graph}, J. Algebra  {\bf 293}  (2005),  319--334.

\bibitem{Aone-rel} P. Ara, \emph{Purely infinite simple reduced
C*-algebras of one-relator separated graphs}, J. Math. Anal. Appl.
{\bf 393} (2012),  493--508.

\bibitem{AC} P. Ara, J. Claramunt, \emph{Approximating the group algebra of the lamplighter by finite-dimensional algebras}.
In preparation.

\bibitem{AE} P. Ara, R. Exel, \emph{Dynamical systems associated to
separated graphs, graph algebras, and paradoxical decompositions},
Adv. Math. {\bf 252} (2014), 748--804.

\bibitem{AE2} P. Ara, R. Exel, \emph{K-theory for the tame C*-algebra of a separated graph}. Journal
of Functional Analysis {\bf 269} (2015), 2995-3041.

\bibitem{AEK} P. Ara, R. Exel, T. Katsura, \emph{Dynamical systems
of type $(m,n)$ and their C*-algebras},  Ergodic Theory Dynam.
Systems {\bf 33} (2013), 1291--1325.

\bibitem{AG2} P. Ara, K. R. Goodearl, \emph{C*-algebras of
separated graphs}, J. Funct. Anal. {\bf 261} (2011), 2540--2568.

\bibitem{AG} P. Ara, K. R. Goodearl, \emph{Leavitt path algebras of
separated graphs}. J. reine angew. Math. {\bf 669} (2012), 165--224.

\bibitem{AGreal} P. Ara, K. R. Goodearl, \emph{The realization problem for some wild monoids and the Atiyah problem},
Trans. Amer. Math. Soc. http:dx.doi.org/10.1090/tran/6889. Article published electronically on December 7, 2016.
ArXiv:1410.8838v2 [math.RA].

\bibitem{AGOP} P. Ara, K. R. Goodearl, K.C. O'Meara, E. Pardo,
\emph{Separative cancellation for projective modules over exchange
rings}, Israel J. Math. {\bf 105} (1998), 105--137.

\bibitem{AMP} P. Ara, M. A. Moreno, E. Pardo, \emph{Nonstable $K$-theory for graph
algebras}, Algebr. Represent. Theory {\bf 10} (2007), 157--178.

\bibitem{BCFS} J. Brown, L. O. Clark, C. Farthing, A. Sims, \emph{Simplicity of algebras associated to \'etale groupoids}, Semigroup Forum \textbf{88} (2014), 433--452.

  \bibitem{BHRSByRuy}
  T. Bates, J. Hong, I. Raeburn and W. Szyma{\'n}ski,
  \emph{The ideal structure of the C*-algebras of infinite graphs},
  Illinois J. Math. \bf 46 \rm (2002), 1159--1176.

  \bibitem{BPRSByRuy}
  T. Bates, D. Pask, I. Raeburn and W. Szyma{\'n}ski,
  \emph{The C*-algebras of row-finite graphs},
  New York J. Math. \bf 6 \rm (2000), 307--324 (electronic).

\bibitem{Brat} O. Bratteli, \emph{Inductive limits of finite
dimensional C*-algebras}, Trans. Amer. Math. Soc. \bf 171 \rm
(1972), 195--234.


\bibitem{CL} Toke M. Carlsen and Nadia S. Larsen, \emph{Partial actions and KMS states on relative graph C∗-algebras}, J. Funct. Anal. {\bf 271} (2016) 2090–-2132.

\bibitem{CS} L. O. Clark and A. Sims, \emph{Equivalent groupoids have {M}orita equivalent {S}teinberg
              algebras}, J. Pure Appl. Algebra \textbf{219}  (2015), 2062--2075. 

\bibitem{CRS} Toke M. Carlsen, Efren Ruiz, and Aidan Sims, \emph{Equivalence and stable isomorphism of groupoids, and diagonal-preserving stable isomorphisms of graph C*-algebras and Leavitt path algebras}, 
Proc. Amer. Math. Soc. \textbf{145} (2017), 1581--1592.

\bibitem{DE} M. Dokuchaev, R. Exel, \emph{Partial actions and subshifts}, preprint 2015,
arXiv:1511.00939v1 [math.OA].

\bibitem{DM} T. Downarowicz, A. Maass, \emph{Finite-rank Bratteli-Vershik diagrams are expansive},  Ergodic Theory Dynam. Systems {\bf 28} (2008), 739--747.

\bibitem{Elliott93}  G. A. Elliott, \emph{On the classification of C*-algebras of real rank zero},  J. Reine Angew. Math. {\bf 443} (1993), 179--219.

\bibitem{Exel} R. Exel, \emph{Partial Dynamical Systems, Fell Bundles and Applications}. \url{www.mtm.ufsc.br/~exel/papers/pdynsysfellbun.pdf}.

 \bibitem{ExelAmena}
  R. Exel,
  \emph {Amenability for Fell bundles},
  J. Reine Angew. Math. {\bf 492} (1997), 41--73.

 \bibitem{CombinByRuy}
  Ruy Exel,
  \emph{Inverse semigroups and combinatorial C*-algebras},
  Bull. Braz. Math. Soc. (N.S.) \bf 39 \rm (2008), 191--313.

\bibitem{ExelLaca}
  R. Exel and M. Laca,
  \emph {Cuntz--Krieger algebras for infinite matrices},
  J. reine angew. Math. {\bf 512} (1999), 119--172.

\bibitem{ELQ} R. Exel, M.  Laca, J.  Quigg, \emph{Partial dynamical systems and
C*-algebras generated by partial isometries}, J. Operator Theory {\bf 47}
(2002), 169--186.

\bibitem{GPS} T. Giordano, I. F. Putnam, G. F. Skau,  \emph{Topological orbit equivalence and C∗-crossed products},
J. Reine Angew. Math. {\bf 469} (1995), 51--111.

\bibitem{GR} D. Gon\c calves, D. Royer, \emph{Leavitt path algebras as
partial skew group rings},  Comm. Algebra {\bf 42} (2014), 3578--3592.


\bibitem{HS} J.H. Hong and W. Szyma{\'n}ski, \emph{The primitive ideal space of the $C^∗$
-algebras of infinite graphs}, J. Math. Soc. Japan \textbf{56} (2004), 45–64.

\bibitem{hoynes} S-M. H\o ynes, {\emph Finite-rank Bratteli-Vershik diagrams are expansive - -a new proof}, arXiv:1411.3371v1[math.DS].


\bibitem{KN} D. Kerr, P. W. Nowak, \emph{Residually finite actions
and crossed products}, Ergodic Theory and Dynamical Systems {\bf 32}
(2012), 1585--1614.

\bibitem{Li}
X. Li, \emph{Partial transformation groupoids attached to graphs and semigroups}, arXiv:1603.09165 [math.OA].

\bibitem{Li2}
X. Li, \emph{Dynamic characterizations of quasi-isometry, and applications to cohomology}, arXiv:1604.07375v3 [math.GR].

 \bibitem{LM}
D.  Lind, B. Marcus, An introduction to symbolic dynamics and coding. Cambridge University Press, Cambridge, 1995.

 \bibitem{Lolk1}
M. Lolk, \emph{Exchange rings and real rank zero $C^*$-algebras associated with finitely separated graphs}, preprint 2017.

 \bibitem{Lolk2}
M. Lolk, \emph{On nuclearity and exactness of the tame $C^*$-algebras associated with finitely separated graphs}, preprint 2017.

\bibitem{MRW} P. S. Muhly, J. N. Renault and D. P. Williams, \emph{Equivalence and isomorphism for groupoid {$C^\ast$}-algebras}, J. Operator Theory \textbf{17} (1987), 3--22.

\bibitem{PatGraphByRuy}
A. L. T. Paterson,
\emph{Graph inverse semigroups, groupoids and their C*-algebras},
J. Operator Theory \bf 48 \rm (2002), 645--662.

\bibitem{Put90} I. F. Putnam, \emph{On the topological stable rank of certain transformation group C*-algebras},
Ergodic Theory Dynam. Systems {\bf 10} (1990), 197--207.

\bibitem{Powers}
Robert T. Powers, \emph{Simplicity of the $C^*$-algebra associated with the free group on two generators}, Duke Math. J. 42 (1975), no. 1, 151--156.

\bibitem{Raeburn} I. Raeburn, Graph algebras. CBMS Regional Conference Series in
Mathematics, 103. Published for the Conference Board of the
Mathematical Sciences, Washington, DC; by the American Mathematical
Society, Providence, RI, 2005.

\bibitem{Renault}
J. Renault, \emph{A groupoid approach to C∗-algebras}, Lecture Notes in Mathematics, 793. Springer, Berlin, 1980.

\bibitem{RS} M. R\o rdam, A. Sierakowski, \emph{Purely infinite
C*-algebras arising from crossed products}, Ergodic Theory and
Dynamical Systems {\bf 32} (2012), 273--293.

\bibitem{Steinberg} B. Steinberg, \emph{A groupoid approach to discrete inverse semigroup algebras},
Adv. Math. {\bf 223} (2010), 689–727.

\bibitem{SW} A. Sims and D. P. Williams, \emph{Renault's equivalence theorem for reduced groupoid
              {$C^\ast$}-algebras}, J. Operator Theory \textbf{68} (2012), 223--239.
   
 \bibitem{sugisaki} F. Sugisaki, \emph{On the subshift within a strong orbit equivalence class for minimal homeomorphisms},  Ergodic Theory Dynam. Systems {\bf 27} (2007), 971--990.
 
\bibitem{Szy} W. Szyma{\'n}ski, \emph{General Cuntz-Krieger uniqueness
theorem}, Internat. J. Math. \textbf{13} (2002) 549–555.

\end{thebibliography}
\end{document}